\newcommand{\blob}{\rule[.2ex]{.8ex}{.8ex}}
\newcommand{\R}{\mathbb{R}}
\newcommand{\C}{\mathbb{C}}
\newcommand{\N}{\mathbb{N}}
\newcommand{\Z}{\mathbb{Z}}
\newcommand{\Q}{\mathbb{Q}}
\newcommand{\T}{\mathbb{T}}
\newcommand{\SL}{{\rm SL}}
\newcommand{\GL}{{\rm GL}}
\newcommand{\Mat}{{\rm Mat}}
\newcommand{\tops}{\mathscr{X}}
\newcommand{\strip}{\mathbb{S}}
\newcommand{\sabs}[1]{\left| #1 \right|} 
\newcommand{\abs}[1]{\bigl| #1 \bigr|} 
\newcommand{\norm}[1]{\lVert#1\rVert} 
\newcommand{\normr}[1]{\lVert#1\rVert_r} 
\newcommand{\intpart}[1]{\lfloor #1 \rfloor} 
\newcommand{\transl}{{\rm T}} 
\newcommand{\less}{\lesssim}
\newcommand{\ep}{\epsilon} 
 \newcommand{\ka}{\kappa} 
\newcommand{\la}{\lambda}
\newcommand{\ga}{\gamma}
\newcommand{\La}{\Lambda}
\newcommand{\om}{\omega}
\newcommand{\vpsi}{\vec{\psi}}
\newcommand{\GLmR}{\GL(m, \R)}
\newcommand{\cocycles}{C^{\om}_{r} (\T, \GL(m, \R))}
\newcommand{\cocyclesTd}{C^{\om}_{r} (\T^d, \GL(m, \R))}
\newcommand{\cocycle}[3]{C^{\om}_{r} ({#1}, \GL({#2}, {#3}))}
\newcommand{\gabar}{\bar{\gamma}}
\newcommand{\etabar}{\bar{\eta}}
\newcommand{\deltabar}{\bar{\delta}}
\newcommand{\scale}{\mathscr{N}}  
\newcommand{\An}[1]{A^{({#1})}}  
\newcommand{\Bn}[1]{B^{({#1})}}  
\newcommand{\Larn}[1]{\Lambda^{({#1})}_\rho}  
\newcommand{\Lapn}[1]{\Lambda^{({#1})}_\pi}   
\newcommand{\Lasn}[1]{\Lambda^{({#1})}_s}   
\newcommand{\Lajn}[1]{\Lambda^{({#1})}_{p_j}}   
\newcommand{\Lamn}[1]{\Lambda^{({#1})}_{p_m}}   
\newcommand{\usn}[1]{u^{({#1})}_s}   
\newcommand{\upjn}[1]{u^{({#1})}_{p_j}}   
\newcommand{\Lar}{\Lambda_\rho}  
\newcommand{\Lap}{\Lambda_\pi}   
\newcommand{\Las}{\Lambda_s}   
\newcommand{\Laj}{\Lambda_{p_j}}   
\newcommand{\Lam}{\Lambda_{p_m}}   
\newcommand{\Gr}{{\rm Gr}}
\newcommand{\Pp}{\mathbb{P}}
\newcommand{\FF}{\mathscr{F}}
\newcommand{\hatv}{\hat{v}}
\newcommand{\hatV}{\hat{V}}
\newcommand{\tausvp}{\tau\text{ - s.v.p.}} 
\newcommand{\tausvr}{\tau\text{ - s.v.r.}}  
\newcommand{\taugp}{\tau\text{-gap pattern}}  
\newcommand{\taublockL}{\text{Lyapunov spectrum } \tau\text{-block}}  
\newcommand{\taugapL}{\text{Lyapunov spectrum } \tau\text{-gap}}  
\newcommand{\nzero}{n_{00}}
\newcommand{\B}{\mathscr{B}}
\newcommand{\Bt}{\tilde{\mathscr{B}}}
\newcommand{\Bbar}{\bar{\mathscr{B}}}
\newcommand{\allsvf}{\mathscr{S}}
\newcommand{\E}{\mathscr{E}}
\newcommand{\dist}{{\rm dist}}
\newcommand{\filt}{\underline{F}}
\newcommand{\filtn}[1]{\filt^{({#1})}}
\newcommand\restr[2]{{
  \left.\kern-\nulldelimiterspace 
  #1 
  \vphantom{\big|} 
  \right|_{#2} 
  }}
\theoremstyle{plain}
\newtheorem{theorem}{Theorem}[section]
\newtheorem{proposition}{Proposition}[section]
\newtheorem{corollary}[proposition]{Corollary}
\newtheorem{lemma}[proposition]{Lemma}
\newtheorem{remark}{Remark}[section]
\numberwithin{equation}{section}
\title[Continuity of the Lyapunov exponents]{Continuity of the Lyapunov exponents for quasiperiodic cocycles}
\date{}
\begin{document}

\author[P. Duarte]{Pedro Duarte}
\address{Departamento de Matem\'atica and CMAF\\
Faculdade de Ci\^encias\\
Universidade de Lisboa\\
Portugal 
}
\email{pduarte@ptmat.fc.ul.pt}

\author[S. Klein]{Silvius Klein}
\address{CMAF\\ Faculdade de Ci\^encias\\
Universidade de Lisboa\\
Portugal\\ 
and IMAR, Bucharest, Romania }
\email{silviusaklein@gmail.com}

\begin{abstract} Consider the Banach manifold of real analytic linear cocycles with values in the general linear group of any dimension and base dynamics given by a Diophantine translation on the circle. 
We prove a precise higher dimensional Avalanche Principle and use it in an inductive scheme to show that the Lyapunov spectrum blocks associated to a gap pattern in the Lyapunov spectrum of such a cocycle are locally H\"{o}lder continuous. Moreover, we show that all Lyapunov exponents are continuous everywhere in this Banach manifold, irrespective of any gap pattern in their spectra. \\
These results also hold for Diophantine translations on higher dimensional tori,  albeit with a loss in  the modulus of continuity of the Lyapunov spectrum blocks.
\end{abstract}

\maketitle

\section{Introduction and main statements}\label{main-thms_section}


A linear cocycle is a  dynamical system on a vector bundle 
such that the  action on the base is fiber independent,  and the action on each fiber is linear. 
In particular, a linear cocycle determines
a dynamical system on the base space, usually referred to as its {\em base dynamics}. 
To simplify matters, the bundle is usually assumed to be trivial, i.e.
of the form $B=X\times\R^m$, in which case
the cocycle acts on fibers  through linear transformations in  the group $\GL(m,\R)$.
Given a subgroup $G$ of $\GL(m,\R)$, we call a $G$-valued cocycle  one that
acts on fibers through linear transformations in   $G$.
The {\em Lyapunov exponents} of a linear cocycle {\em measure} the growth rate of fiber vectors
along the cocycle dynamics. Precise definitions will be given below.

Given a discrete Schr\"odinger operator, solving formally the
corresponding finite differences equation  gives rise to a 
linear cocycle, which is usually called  a
{\em Schr\"odinger cocycle}.
When applied to the particular case of Schr\"odinger cocycles, properties such as positivity and continuity 
of the Lyapunov exponents have important implications to 
spectral problems for the corresponding operator.
Such implications, regarding the nature of the spectrum and the modulus of continuity of the integrated density of states of the operator  follow from  results of 
Ishii, Pastur, Kotani and Thouless.

Two  classes of general linear cocycles have
been extensively studied so far, 
reflecting  two paradigmatic ergodic behaviors
of the base dynamics:
the class of {\em random cocycles}, where the base dynamics is  a Bernoulli shift,
and the class of {\em quasiperiodic  cocycles}, where the base dynamics is
a torus translation.

The main  purpose of this work is to establish the continuity
of all Lyapunov exponents for quasiperiodic cocycles.

The first continuity result (in fact  H\"older  continuity) for general random cocycles   is due to Émile Le Page. This result requires strong irreducibility and contraction assumptions on the cocycle (see \cite{lePage} and  references therein).

The case of {\em random}, $\GL (2, \C)$ valued cocycles (with no additional assumptions) has been studied by C. Bocker-Neto and M. Viana in \cite{BV}. They prove continuity  of the Lyapunov exponents $L_1$, $L_2$ and of the invariant Oseledets subspaces $E_1 (x)$, $E_2 (x)$ as functions of the input data (cocycle and probability distribution). We are not aware of any higher dimensional version of this result, or of any quantitative (e.g. H\"older) description of the continuity in this setting.

The problem of continuity of the Lyapunov exponents for {\em quasiperiodic, two dimensional} cocycles has been widely studied. 

Sharp results are available especially for {\em Schr\"odinger}, $\SL (2, \R)$ - valued cocycles, where the Lyapunov exponent is  seen as a function of energy (see \cite{GS-Holder}, on which many ideas of this paper are based), or jointly as a function of frequency and energy (see \cite{BJ-jointcont}, \cite{B-contpos-Td}). 

Joint continuity in frequency and cocycle has been obtained even for cocycles with singularities, i.e. $\Mat (2, \C)$-valued cocycles (see \cite{JMarx-CMP12}).

Similar problems for {\em higher dimensional} quasiperiodic cocycles have been studied more recently.   
Our paper was originally motivated by \cite{Schlag}, where H\"older continuity is proven for Schr\"odinger - like cocycles, under the assumption that the Lyapunov spectrum is simple. 

A result on joint continuity in frequency and cocycle of all Lyapunov exponents has been announced in 2012 by A. \'{A}vila, J. Jitomirskaya and C. Sadel (see \cite{AJS}). Their method does not require invertibility of the cocycle. 

The results in this paper differ from ~\cite{AJS} in that while we require invertibility of the cocycle at all points, and we only consider the problem of continuity of the Lyapunov exponents as functions of the cocycle, we obtain a {\em quantitative} (i.e. H\"{o}lder) description of the modulus of continuity. Moreover, our method and results also apply to translations on higher dimensional tori.

\smallskip
 

Given a probability space $(X, \mu)$ and given an ergodic transformation $\transl \colon X \to X$, any measurable function 
  $A: X \to \GLmR$ determines a skew-product map
$F = (\transl, A) : X \times\R^m \to  X \times \R^m$ defined by $F (x,v)=(\transl x, A(x)\,v)$.
The dynamical  system underlying such a map is called a {\em linear cocycle}
over the transformation $T$.
When $\transl$ is fixed, the measurable function $A$ is also referred to as a linear cocycle.
The iterates of $F$ are given by  $(\transl^n \, x , \An{n} (x)\,v)$,  where
$$\An{n} (x)= A(\transl^{n-1} \, x) \cdot  \ldots \cdot A(\transl x) \cdot A(x)$$
The cocycle is called {\em integrable }\, when
$\int_X \log^+ \norm{A(x)^{\pm 1}}\, d\mu(x) <\infty$,
where \, $\log^+(x) = \max\{\log x, 0\}$.
In 1965 Oseledets proved his famous Multiplicative Ergodic Theorem, 
which when applied to cocycles  $(A, \transl)$ with $\transl$ invertible,  says that if 
$A$ is integrable  then there are: numbers $L_1 (A) \geq L_2 (A) \geq \ldots \geq L_m (A) $,
 an $F$-invariant measurable decomposition
$\R^m=\oplus_{j=1}^{n} E_{j} (A) \, (x)$,
and a non decreasing surjective map $k:\{1,\ldots, m\}\to \{1,\ldots, n\}$ such that
for $\mu$-almost every $x\in X$, every $1\leq i\leq m$ and every $v\in E_{k_i} (A) \, (x) \setminus \{0\}$,
$$L_i (A) = \lim_{n\to\pm \infty}\frac{1}{n}\,\log \norm{\An{n} (x)\, v}\;$$ 
Moreover,   $L_i (A) =L_{i+1} (A)$ \, if and only if $k_{i}=k_{i+1}$,
and the subspace  $E_{j} (A) \,  (x)$ has dimension equal to $\#k^{-1}(j)$.
The numbers $L_i (A) $ are called the {\em Lyapunov exponents}  of $F$ (or of $A$).

If we denote by $s_1 (g) \ge s_2 (g) \ge \ldots \ge s_m (g)$  the singular values of a matrix $g \in \GLmR$, 
then  it is easy to verify that  the Lyapunov exponents of $A$ are
\begin{equation} \label{formula-lyap-i}
 L_i (A) = \lim_{n\to\infty}\frac{1}{n}\,\log s_i (\An{n} (x)) \quad \text{ for }\; \mu \text{-a.e.}\, x\in X
\quad (1\leq i\leq m)
\end{equation}
In particular,  the largest Lyapunov exponent is
\begin{align}
L_1 (A)  &= \lim_{n\to\infty}\frac{1}{n}\,\log \norm{ \An{n} (x) }\;\;
\text{ for }\; \mu \text{-a.e.}\, x\in X  \nonumber \\
&= \lim_{n\to\infty}\frac{1}{n}\,\int_X \log \norm{ \An{n} (x) }\,d\mu(x)\label{formula-lyap}
\end{align}

\smallskip


Our paper is concerned with certain quasiperiodic cocycles defined as follows. 

Let the phase space $X$ be the additive 
group $\T=\R/\Z$.  Given a frequency  $\omega\in\R\setminus\Q$,
consider the translation $\transl = \transl_\om \colon \T\to\T$, $\transl x := x+\om \, { \rm mod} \, \Z$,
which is an invertible and ergodic transformation with respect to the Haar measure $\mu = d x $ on $\T$.
In fact, we assume more on the frequency, namely that it satisfies a (strong) Diophantine condition:
\begin{equation}\label{DCt}
\norm{k \cdot \om} \ge \frac{t}{\abs{k} \cdot (\log \abs{k})^2 } \quad \text{for all } k \ge 2
\end{equation}
where $\norm{k \cdot \om}$ denotes the distance from $k \cdot \om$ to the nearest integer and $t > 0$ is some small constant. We denote by $\rm{D C}_t$ the set of all such frequencies. Clearly $\mu( \rm{D C}_t ) = o (t)$. We note that all estimates in this paper that depend on the frequency $\om \in \rm{D C}_t$ in fact depend only on the fixed parameter $t$. 

This quantitative description of the irrationality of $\om$ implies a quantitative Birkhoff ergodic theorem for the corresponding translation and for subharmonic sample functions (see Section~\ref{LDT_section}). 

We also consider the case of the multifrequency translation $\transl \, \underline{x} = \transl_{\underline{\om}} \, \underline{x} := \underline{x} + \underline{\om} $
on the torus $\T^d$ of dimension $ d \ge 1$. In this case we only make a standard Diophantine assumption on the multifrequency $ \underline{\om} $, as a stronger condition will have no additional benefit. The same comments and general strategy apply to this multivariable case.

\smallskip

Returning to the Lyapunov exponents of a cocycle $A$, if for some $1 \le i  < m$, $L_i (A) > L_{i+1} (A)$, we say that the Lyapunov spectrum of $A$ has a {\em gap} at dimension $i$. If  the Lyapunov spectrum of $A$ has gaps at dimensions $1 \le \tau_1 < \tau_2 < \ldots < \tau_k < m$, then calling the sequence $\tau = (\tau_1, \tau_2, \ldots, \tau_k)$ a {\em signature}, we say that the Lyapunov spectrum of $A$ has a {\em gap pattern} encoded by the signature $\tau$, or in short, a $\taugp$.

In particular, the case of {\em simple} Lyapunov spectrum (i.e. all distinct Lyapunov exponents) is encoded by the (full) signature $\tau = (1, 2, \ldots, m-1)$.

\smallskip

In a forthcoming paper we will provide sufficient conditions for certain types of quasiperiodic cocycles to have simple spectra, or more generally, any kind of gap pattern. 

\smallskip

This paper is concerned with proving {\em continuity} of the Lyapunov exponents as functions of quasiperiodic, {\em analytic} cocycles.  Moreover, given a cocycle that satisfies a gap pattern, we show that every sum of Lyapunov exponents between two consecutive gaps is a {\em H\"older} continuous function near the given cocycle.

More precisely, let $\cocyclesTd$ be the set of all real analytic cocycles $A \colon \T^d \to \GLmR$ which have a continuous  extension 
to the strip $\strip_r := \{ z \in \C^d \colon \abs{ \Im z } \le r \}$ that is holomorphic in the interior of $\strip_r$. 
Endowed with the the norm 
$$\normr{A} := \sup_{z \in \strip_r} \, \norm{ A (z) }$$
$\cocyclesTd$ becomes a Banach manifold.

Let $A \in \cocyclesTd$ and let  $\tau = (\tau_1, \tau_2, \ldots, \tau_k)$ be a signature. We define the $\taublockL$s of $A$ as:
\begin{align*}
\La_{\pi, 1} (A) & := L_1 (A) + \ldots + L_{\tau_1} (A) \\
\La_{\pi, 2} (A) & := L_{\tau_1 + 1}  (A) + \ldots + L_{\tau_2} (A)
\end{align*}
and in general, with the convention that $\tau_0 = 0$,
$$\La_{\pi, j} (A) := L_{\tau_j + 1}  (A) + \ldots + L_{\tau_{j+1}} (A) \quad \text{for all } 0 \le j \le k-1$$

\medskip

We are now ready to formulate the main results of this paper. 

\begin{theorem} \label{main-thm1}
Let $A \in \cocycles$, let   $\transl = \transl_\om$ be a translation on $\T$ where $\om \in \T$ satisfies the (strong) Diophantine condition \eqref{DCt} and  let $\tau$ be a signature. 

If $A$ has a $\taugp$, then the corresponding Lyapunov spectrum $\tau$-blocks are H\"older continuous functions in a neighborhood of $A$.

In particular, if $A$ has simple Lyapunov spectrum, then all Lyapunov exponents of $A$ are H\"older continuous in a neighborhood of $A$. 
\end{theorem}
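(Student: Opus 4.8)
The plan is to follow the standard paradigm for proving Hölder continuity of Lyapunov exponents via a multiscale / Avalanche Principle (AP) argument, adapted here to the higher-dimensional block setting. The starting point is the formula $\La_{\pi,j}(A) = \lim_{n\to\infty} \frac1n \int_{\T} \log \|\wedge^{\tau_{j+1}} \An{n}(x)\| \, dx - \lim_{n\to\infty}\frac1n \int_{\T}\log\|\wedge^{\tau_j}\An{n}(x)\|\,dx$, which expresses each $\tau$-block as a difference of (normalized, integrated) norms of exterior power cocycles $\wedge^{\tau_i} A$. The gap at dimension $\tau_i$ means precisely that the exterior power cocycle $\wedge^{\tau_i}A$ has a dominant top Lyapunov exponent with a positive gap over the second one, i.e. it is ``$L^{(1)} - L^{(2)} > 0$'' in the sense needed to apply a (higher-dimensional) Avalanche Principle. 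So the problem reduces to establishing Hölder continuity of $L_1$ for a cocycle (namely $\wedge^{\tau_i}A$) with a uniform spectral gap $L_1 - L_2 \ge \gamma > 0$ in a neighborhood.

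First I would set up the quantitative tools: the large deviation type (LDT) estimate for the subharmonic sample functions $u_n(x) := \frac1n \log \|\An{n}(x)\|$ (and its exterior-power analogues), which is the content forecast in Section~\ref{LDT_section} and rests on the strong Diophantine condition \eqref{DCt}; and the higher-dimensional Avalanche Principle announced in the abstract. The LDT gives, for a scale $n$, a bound $\mu\{ x : |u_n(x) - \int u_n| > \ep \} < e^{-c n \ep}$ (or a suitable polynomial/near-exponential version). Next, I would run the inductive step: assuming control at scale $n$, write $\An{2n}(x) = \An{n}(\transl^n x) \cdot \An{n}(x)$, more generally chain $N$ blocks of length $n$, verify the AP hypotheses (lower bound on each $\|\An{n}(\transl^{jn}x)\|$ and on the ``angle'' between the relevant singular directions, supplied by the gap plus the LDT on the exterior power cocycle), and conclude that $\frac1{Nn}\log\|\An{Nn}(x)\|$ is close to the average $\frac1n \int u_n$, with an error term from the AP. Iterating this across scales $n_{k+1} \approx n_k^{C}$ (or geometric scales) yields both the rate of convergence $|L_1(A) - \frac1n \int u_n| \lesssim 1/n$ (or $\lesssim e^{-cn}$-type, depending on the LDT strength) and, crucially, stability under perturbation: if $\normr{A - B}$ is small, the cocycles agree at all scales up to some $n_0 \sim \log(1/\normr{A-B})$, hence $|L_1(A) - L_1(B)| \lesssim 1/n_0 \sim 1/\log(1/\normr{A-B})$ — and upgrading this from $\log$-continuity to Hölder continuity is done by the usual bootstrapping: once one has $\log$-continuity of $L_1$ of $\wedge^{\tau_i}A$ for all $i$ in a neighborhood, one re-runs the AP scheme with the improved a priori regularity to get the error $\lesssim \normr{A-B}^{\alpha}$ for some $\alpha = \alpha(\gamma, r, t) > 0$. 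Summing/differencing over the relevant $\tau_i$ gives Hölder continuity of each $\La_{\pi,j}$.

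The main obstacle, and the genuinely new ingredient, is the \emph{higher-dimensional Avalanche Principle} and the bookkeeping of the finitely many exterior power cocycles simultaneously. In dimension two the AP is a clean statement about products of $2\times 2$ (or $\SL(2,\R)$) matrices with a dominant direction; here one needs a version valid for $\GL(m,\R)$ (equivalently, one passes to $\wedge^{\tau_i}$ and uses the rank-one-dominance there) with explicit constants, and one must ensure the ``angle'' / transversality conditions propagate — this is where the LDT for the second exterior power $\wedge^{\tau_i+1}A$, controlling $L^{(2)}$ of $\wedge^{\tau_i}A$, enters, and it is what forces the gap hypothesis. A secondary technical point is the passage from $\T$ to $\T^d$: the Birkhoff/LDT estimate is weaker for $d \ge 2$ (only a standard Diophantine condition is assumed), which is exactly why the theorem as stated is for $\T$ and the $\T^d$ case incurs a loss in the modulus of continuity. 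The final claim about simple spectrum is then immediate: the full signature $\tau = (1,2,\ldots,m-1)$ has $\La_{\pi,j}(A) = L_j(A)$ for each $j$, so Hölder continuity of the $\tau$-blocks is literally Hölder continuity of the individual Lyapunov exponents.
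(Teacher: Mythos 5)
Your overall architecture matches the paper's: reduce the $\tau$-blocks to norms of exterior-power cocycles, prove an LDT for the relevant subharmonic sample functions, verify the hypotheses of a higher-dimensional Avalanche Principle along a chain of blocks, and iterate over scales to get both a rate of convergence and stability under perturbation of the cocycle. Up to that point the proposal is sound and is essentially the paper's Sections 3--4 (the AP, the LDT, the inductive step theorem, and the lemma making the gap uniform in a neighborhood).

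The genuine gap is in the final step, where you pass from log-H\"older to H\"older continuity. You correctly observe that comparing $\Lap(B)$ directly with $\Lapn{n_0}(B)$, where $n_0\sim\log(1/\normr{B_1-B_2})$, only yields an error of order $1/n_0\sim 1/\log(1/\normr{B_1-B_2})$, i.e. log-H\"older continuity. But your proposed remedy --- ``the usual bootstrapping: re-run the AP scheme with the improved a priori regularity'' --- is not a working mechanism: having log-continuity of the limits as an a priori input does not change the AP error terms or the deviation sets, so re-running the same scheme reproduces the same $1/n_0$ loss. What actually closes the argument in the paper is an algebraic, not a bootstrap, device: the AP applied from scale $n_0$ up to an intermediate scale $n_1\approx e^{\bar\delta n_0}$ yields
$$\abs{\,\Lapn{n_1}(B)+\Lapn{n_0}(B)-2\,\Lapn{2n_0}(B)\,}\lesssim C\,\frac{n_0}{n_1}\approx K\,\frac{\log n_1}{n_1}\,,$$
and combined with the rate-of-convergence estimate $\abs{\Lapn{n_1}(B)-\Lap(B)}\lesssim K\log n_1/n_1$ this shows that $\Lap(B)$ is \emph{exponentially close in $n_0$} to the specific finite-scale combination $2\,\Lapn{2n_0}(B)-\Lapn{n_0}(B)$. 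Since $\normr{B_1-B_2}=h\approx e^{-4Cn_0}$ and the finite-scale quantities at scales $n_0,2n_0$ are $h^{1/2}$-close by the finite-scale continuity lemma, one gets $\abs{\Lap(B_1)-\Lap(B_2)}\lesssim h^{\theta}$ with $\theta<\min\{1/2,\bar\delta/(4C)\}$. Without identifying this second-order combination (or an equivalent device that makes the limit exponentially, rather than polynomially, close to computable finite-scale data), your argument stalls at log-H\"older continuity.
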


\begin{theorem} \label{main-multifreq}
Let $A \in \cocycle{\T^d}{m}{\R}$, let   $\transl = \transl_{\underline{\om}}$ be a translation on $\T^d$, $d \ge 1$, where $\underline{\om} \in \T^d$ satisfies a standard Diophantine condition and  let $\tau$ be a signature. 

If the cocycle $A$ has a $\taugp$, then the corresponding Lyapunov spectrum $\tau$-blocks are log-H\"older continuous functions in a neighborhood of $A$.

In particular, if $A$ has simple Lyapunov spectrum, then all Lyapunov exponents of $A$ are log-H\"older continuous in a neighborhood of $A$. 
\end{theorem}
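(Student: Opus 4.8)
The plan is to re-run, almost verbatim, the inductive scheme behind Theorem~\ref{main-thm1}, isolating the one place where the base dynamics enters and tracking how a weaker input there degrades the conclusion from H\"older to log-H\"older. That scheme rests on two ingredients: (i) the higher dimensional Avalanche Principle, which is a purely linear-algebraic statement about long products of matrices in $\GLmR$ and is therefore completely insensitive to whether the base torus is $\T$ or $\T^d$; and (ii) a large deviation type (i.e.\ quantitative Birkhoff) estimate for subharmonic observables sampled along the translation, which is the only genuinely dynamical input. On $\T^d$ with a standard Diophantine $\underline{\om}$ one does not have the almost-exponential deviation bound available on $\T$ under the strong condition \eqref{DCt}; instead one has only a weaker, essentially polynomial-in-$n$ large deviation estimate (see Section~\ref{LDT_section}). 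The first step is thus to record this $\T^d$ version of the LDT in the precise quantitative form that the scheme consumes.

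Next I would carry out the induction on scales exactly as for Theorem~\ref{main-thm1}. At each scale $n$ one forms the finite-scale Lyapunov spectrum $\tau$-blocks $\La^{(n)}_{\pi,j}(A)$, obtained by replacing each $L_i(A)$ in the definition of $\La_{\pi,j}(A)$ with its scale-$n$ approximant $\tfrac1n\int_{\T^d}\log s_i(\An{n}(x))\,d\underline{x}$. These are manifestly real analytic in $A$, with a Lipschitz constant of size at most $e^{Cn}$ on the relevant neighbourhood; this part needs no change. The role of the Avalanche Principle, fed with the $\T^d$ LDT, is to propagate from scale $n$ to a suitably larger scale the two structural facts that (a) the gap pattern persists at finite scale, meaning the singular value ratios of $\An{n}(x)$ stay bounded below along the signature $\tau$ for most $x$, and (b) $\La^{(n)}_{\pi,j}(A)$ converges to the true block $\La_{\pi,j}(A)$, together with quantitative control on the speed. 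The only difference from the $\T$ case is quantitative: with the polynomial LDT, (b) yields $\sabs{\La^{(n)}_{\pi,j}(A)-\La_{\pi,j}(A)}\less n^{-a}$ for some $a>0$ depending on the Diophantine and cocycle parameters and on $d$, rather than an exponentially small error.

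Finally I would combine the two bounds as in the single-frequency argument: for $A,B$ in the neighbourhood with $\normr{A-B}=\eta$ small and any admissible scale $n$,
\[
\sabs{\La_{\pi,j}(A)-\La_{\pi,j}(B)}\;\less\; e^{Cn}\,\eta \;+\; n^{-a},
\]
and then optimize in $n$. Choosing $n\sim\log(1/\eta)$ makes the first term at most $\eta^{1/2}$ and the second at most a constant times $(\log(1/\eta))^{-a}$, i.e.\ a log-H\"older modulus; the simple-spectrum case is the special signature $\tau=(1,2,\ldots,m-1)$, for which each block is a single Lyapunov exponent. As in Theorem~\ref{main-thm1}, the main obstacle is not this optimization but \emph{closing} the induction: one must verify that the polynomial deviation bound is still strong enough, relative to the error terms in the higher dimensional Avalanche Principle and to the chosen growth rate of the scales, to propagate both the finite-scale gap condition (a) and the almost-additivity underlying (b). Checking that the scheme nowhere requires the sharp $\T$-type LDT in an essential way — that it is used only for the \emph{rate} of convergence, not for the \emph{closure} of the induction — is the technical heart of the argument, and it is precisely there that one pays with a weaker modulus of continuity instead of losing the result altogether.
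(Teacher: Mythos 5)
Your proposal follows essentially the same route as the paper: keep the Avalanche Principle and the finite-scale continuity bound of order $e^{Cn}\,\normr{A-B}$ unchanged, replace the sharp one-dimensional LDT by its weaker $\T^d$ analogue as the only dynamical input, re-run the inductive scheme with a slower growth of scales, and balance the resulting convergence rate $n^{-a}$ against $e^{Cn}\eta$ at $n\sim\log(1/\eta)$ to obtain the log-H\"older modulus. The one imprecision is calling the $\T^d$ large deviation estimate ``polynomial'': the form the paper actually uses, \eqref{LDT-other-dyns}, has a polynomial deviation \emph{threshold} $n^{-a}$ but a stretched-exponentially small deviation \emph{set} of measure $e^{-cn^{b}}$, which is what lets the next scale be taken as large as $e^{cn_0^{b}}$; on either reading the induction still closes and yields a rate of the form you claim, so your conclusion stands.
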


The H\"older (and log-H\"older respectively) constant and the size of the neighborhood in each of the two  theorems above depend on some measurements of $A$ and on $\om$.

\begin{theorem} \label{main-thm2}
Let $A \in \cocycle{\T^d}{m}{\R}$ and let   $\transl = \transl_{\underline{\om}}$ be a translation on $\T^d$ ($d \ge 1$) by a frequency satisfying a standard Diophantine condition.
Then all Lyapunov exponents are continuous functions on $ \cocycle{\T^d}{m}{\R}$.
\end{theorem}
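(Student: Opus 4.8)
The plan is to deduce Theorem~\ref{main-thm2} from Theorems~\ref{main-thm1} and~\ref{main-multifreq} by a soft semicontinuity argument that upgrades ``continuity under a gap hypothesis'' to ``continuity everywhere''. Set $\LE{k}(A) := L_1(A) + \dots + L_k(A)$ for $1 \le k \le m$, with $\LE{0} \equiv 0$. Since $L_i = \LE{i} - \LE{i-1}$, it is enough to show that each $A \mapsto \LE{k}(A)$ is continuous on $\cocycle{\T^d}{m}{\R}$.

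First I would record that $\LE{k}$ is automatically \emph{upper} semicontinuous. The $k$-fold exterior power cocycle $\Lambda^k A$ is again real analytic, and since $A(x) \in \GLmR$ and $\T^d$ is compact it is an integrable cocycle whose top Lyapunov exponent equals $\LE{k}(A)$; applying \eqref{formula-lyap} to it gives $\LE{k}(A) = \lim_{n\to\infty}\tfrac1n a_n(A)$, where $a_n(A) := \int_{\T^d}\log\norm{\Lambda^k \An{n}(x)}\,dx$. From $\An{n+l}(x) = \An{l}(\transl^n x)\,\An{n}(x)$, submultiplicativity of $\norm{\Lambda^k(\cdot)}$ and translation invariance of $\mu$, the sequence $(a_n(A))_n$ is subadditive, hence $\LE{k}(A) = \inf_n \tfrac1n a_n(A)$. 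For $A \in \cocycle{\T^d}{m}{\R}$ the integrand is bounded and depends continuously on $A$ in the $\normr{\cdot}$ norm, so each $\tfrac1n a_n$ is continuous on the Banach manifold, and the infimum of continuous functions is upper semicontinuous. It remains to prove \emph{lower} semicontinuity of $\LE{k}$ at an arbitrary $A$.

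The one genuinely useful observation is that, because $L_1(B) \ge \dots \ge L_m(B)$, the increments $\LE{j}(B) - \LE{j-1}(B) = L_j(B)$ are non-increasing in $j$, i.e.\ the finite sequence $j \mapsto \LE{j}(B)$ is \emph{concave}. Fix $A$ and let $\tau$ be the signature consisting of \emph{all} gaps of the Lyapunov spectrum of $A$, so $A$ has a $\taugp$. By Theorem~\ref{main-thm1} (for $\T$) or Theorem~\ref{main-multifreq} (for $\T^d$), the $\tau$-block functions of $A$ are (log-)H\"older, hence continuous, in a neighborhood of $A$; therefore so is every partial sum $\LE{\tau_i}$, being a sum of consecutive $\tau$-blocks. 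If the given $k$ is itself a gap of $A$, this already shows $\LE{k}$ is continuous near $A$. Otherwise, put $p := \max\bigl(\{0\} \cup \{\,\tau_i : \tau_i < k\,\}\bigr)$ and $q := \min\bigl(\{m\} \cup \{\,\tau_i : \tau_i > k\,\}\bigr)$; then $p < k < q$, $A$ has no gap strictly between dimensions $p$ and $q$, so $L_{p+1}(A) = \dots = L_q(A)$ and $j \mapsto \LE{j}(A)$ is affine on $\{p, \dots, q\}$. Both $\LE{p}$ and $\LE{q}$ are continuous near $A$: if $p \ge 1$ (resp.\ $q \le m-1$) this is the previous sentence; the cases $p = 0$, where $\LE{0} \equiv 0$, and $q = m$, where the $k = m$ instance of the subadditivity computation gives $\LE{m}(B) = \int_{\T^d}\log\abs{\det B(x)}\,dx$, are immediate.

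To conclude, concavity of $j \mapsto \LE{j}(B)$ yields, for every $B$ in that neighborhood,
\begin{equation*}
\LE{k}(B) \;\ge\; \frac{q-k}{q-p}\,\LE{p}(B) \;+\; \frac{k-p}{q-p}\,\LE{q}(B),
\end{equation*}
and as $B \to A$ the right-hand side tends to $\frac{q-k}{q-p}\LE{p}(A) + \frac{k-p}{q-p}\LE{q}(A) = \LE{k}(A)$, the last equality because $\LE{\cdot}(A)$ is affine on $\{p, \dots, q\}$. Hence $\liminf_{B\to A}\LE{k}(B) \ge \LE{k}(A)$; combined with upper semicontinuity this gives continuity of $\LE{k}$ at $A$, and letting $A$ and $k$ vary, continuity of all the $\LE{k}$ and hence of all $L_i$. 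The whole weight of the argument rests on Theorems~\ref{main-thm1}--\ref{main-multifreq}, which require a genuine gap at the dimension being estimated; the concavity-and-bracketing step is precisely the device that handles the dimensions $k$ at which $A$ has \emph{no} gap, by substituting the two nearest genuine gaps $p < k < q$. Everything else --- subadditivity, continuity of the truncated averages $\tfrac1n a_n$, and the two endpoint cases $p = 0$ and $q = m$ --- is routine.
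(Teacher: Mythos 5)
Your proposal is correct. It rests on the same two pillars as the paper's own proof (Theorem~\ref{cont-all-Lyap-thm}) --- continuity of the Lyapunov spectrum $\tau$-blocks near a cocycle with a $\tau$-gap pattern, plus a soft semicontinuity argument to handle the dimensions at which $A$ has no gap --- but you organize the semicontinuity step differently. The paper works with the individual exponents: its Lemma~\ref{abstract-cont-lemma} shows that within a block of equal exponents whose sum is continuous, continuity of all of them follows once the top one is upper semicontinuous; the u.s.c.\ of $L_1$ is classical, and u.s.c.\ is then propagated to the top of each subsequent block via the lower semicontinuity of the spectrum gaps (Corollary~\ref{lscgaps}). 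You instead work with the partial sums $\LE{k}$, obtain their upper semicontinuity once and for all from subadditivity of $n \mapsto \int \log\norm{\wedge_k \An{n}}$ together with finite-scale continuity (both ingredients appear in the paper, in Section~\ref{defs-nots_section} and Lemma~\ref{cont-finite-lemma}), and get lower semicontinuity at a non-gap dimension $k$ from the concavity of $j \mapsto \LE{j}(B)$, bracketing $k$ between the nearest genuine gaps $p<k<q$ at which $\LE{p}$ and $\LE{q}$ are already continuous. Your route avoids invoking Corollary~\ref{lscgaps} at this stage, at the price of making the u.s.c.\ of every $\LE{k}$ explicit; the two arguments are of comparable length, and yours also handles cleanly the degenerate case of a cocycle with no gaps at all, where the bracket is $p=0$, $q=m$ and only $\LE{0}\equiv 0$ and the determinant formula for $\LE{m}$ are needed.
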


This theorem holds irrespectively of any gap pattern. 
  
\medskip

Given a natural number $1\leq k\leq m$, let ${\rm L}^1(\T^d,\Gr_k^m)$ denote  the space
of measurable functions $E:\T^d\to \Gr^m_k$, that to each $x\in\T^d$ associate a $k$-dimensional
linear subspace $E(x)\subset \R^m$, i.e. an element of the
Grassmann  manifold $\Gr^m_k$. Equipped with the metric
$$ d(E,E')=\int_{\T^d} d(E(x),E'(x))\, dx \;,$$
the space   ${\rm L}^1(\T^d,\Gr_k^m)$  is a Banach manifold.
Consider a cocycle $(\transl,A)$, where  $A\in \cocyclesTd$
and $\transl:\T^d\to\T^d$ is an ergodic translation. Given a signature
$\tau=(\tau_1,\ldots, \tau_k)$, if $A$ has a $\tau$-gap pattern,
by Oseledets theorem there are $A$-invariant sub-bundles 
$E^{A}_j\in {\rm L}^1(\T^d,\Gr^m_{k_j})$, with $k_j=\tau_j-\tau_{j-1}$ for $1\leq j\leq k$, such that
on $E^{A}_j$ the cocycle $A$ has Lyapunov exponents in the range $[L_{\tau_j}(A),L_{\tau_{j-1}+1}(A)]$.
Furthermore, these sub-bundles are such that $\R^m=\oplus_{j=1}^k E^{A}_j(x)$, for almost every $x\in\T^d$.
Regard this family of sub-bundles $E^A$ 
as an element of the product Banach manifold
$\prod_{j=1}^k {\rm L}^1(\T^d, \Gr^m_{k_j})$, equipped with the component
max distance. We refer to $E^A$  
as the {\em Oseledets $\tau$-decomposition} of  $A$.
Define $\underline{F}^A(x)$ to be the sequence of  subspaces 
$F^A_1(x)\subset \ldots \subset F^A_k(x)$, where for each $j=1,\ldots, k$, $F^A_j(x)$ is the linear span of $E^A_1(x)\cup \ldots \cup E^A_j(x)$.
The sequence $\underline{F}^A(x)$ is a $\tau$-flag, an element in the flag manifold $\mathscr{F}^m_\tau$.
 The function $\underline{F}^A:\T^d\to\mathscr{F}^m_\tau$ is a point in the Banach manifold
 ${\rm L}^1(\T^d,\mathscr{F}^m_\tau)$,  equipped with the metric
 $$   d(\underline{F},\underline{F}')=\int_{\T^d} d(\underline{F}(x),\underline{F}'(x))\, dx  \;. $$
 We refer to this function as the 
{\em Oseledets $\tau$-filtration} of  $A$.

\begin{theorem}\label{oseledets:continuity} Given  $A\in\cocycle{\T^d}{m}{\R}$ with a $\tau$-gap pattern,
there is a neighbourhood $\mathscr{U}$ of $A$ where every
cocycle has the same  $\tau$-gap pattern. Moreover, the functions
$B \mapsto E^B\in \prod_{j=1}^k {\rm L}^1(\T^d, \Gr^m_{k_j})$,
resp. $B\mapsto \underline{F}^B\in {\rm L}^1(\T^d,\mathscr{F}^m_\tau)$,
that associate  the Oseledets $\tau$-decomposition, resp. the Oseledets $\tau$-filtration
to each  cocycle $B\in \mathscr{U}$, are continuous.
\end{theorem}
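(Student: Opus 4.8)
The plan is to derive Theorem~\ref{oseledets:continuity} as a consequence of the quantitative machinery already set up for Theorems~\ref{main-thm1}--\ref{main-thm2}, in particular the finite-scale estimates that produce the Lyapunov spectrum $\tau$-blocks as limits of Birkhoff averages of $\log$ of products of singular values. First I would recall that the continuity of the $\tau$-block sums $\La_{\pi,j}$ together with the persistence of the gap pattern on a neighbourhood $\mathscr{U}$ of $A$ is exactly the content of Theorem~\ref{main-thm2} (and the quantitative Theorems~\ref{main-thm1}, \ref{main-multifreq}); so the existence of $\mathscr{U}$ with the stated property is immediate, and the only genuinely new assertion is the continuity of the maps $B\mapsto E^B$ and $B\mapsto \underline F^B$ into the $L^1$ Banach manifolds of measurable section-valued functions. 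Since the flag $\underline F^B$ is built functorially from the decomposition $E^B$ by taking spans, and the span map $\Gr^m_{k_1}\times\cdots\times\Gr^m_{k_j}\to\Gr^m_{\tau_j}$ is continuous (indeed real-analytic on the open set of transversal tuples, and $E^B(x)$ is a.e.\ in direct sum), continuity of $B\mapsto \underline F^B$ follows from continuity of $B\mapsto E^B$ by dominated convergence; likewise the filtration determines the decomposition back on the transversal locus, so the two statements are equivalent and it suffices to treat $B\mapsto\underline F^B$, which is the more robust object because flags are obtained from the ``most expanded'' directions of finite products without needing to split off exact Oseledets pieces.

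The key step is the \emph{finite-scale approximation of the Oseledets filtration}. For a cocycle $B$ with a $\tau$-gap pattern, at dimension $\tau_j$ there is a genuine gap $L_{\tau_j}(B)>L_{\tau_j+1}(B)$; the finite-scale analysis underlying the Avalanche Principle scheme shows that for $n$ large the ratio $s_{\tau_j+1}(\Bn{n}(x))/s_{\tau_j}(\Bn{n}(x))$ is exponentially small off a set of $x$ of small measure, uniformly for $B$ in a neighbourhood. Consequently the ``top $\tau_j$ singular subspace'' $U^{(n)}_{\tau_j}(B)(x)$ — the span of the left singular vectors of $\Bn{n}(x)$ associated to its $\tau_j$ largest singular values — converges in $L^1(\T^d,\Gr^m_{\tau_j})$ to the Oseledets subspace $F^B_j(x)$, with an explicit rate (a gap-dependent exponential in $n$ plus a small-measure error term). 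Assembling these over $j=1,\dots,k$ gives $\underline F^{(n)}(B)\to\underline F^B$ in $L^1(\T^d,\mathscr F^m_\tau)$. Crucially, for each fixed $n$ the map $B\mapsto \Bn{n}(\cdot)$ is continuous from $\cocyclesTd$ into, say, $C(\strip_r)$, and the singular value decomposition depends continuously on the matrix on the (open, persistent) locus where the relevant singular values are simple-at-level-$\tau_j$; hence $B\mapsto \underline F^{(n)}(B)\in L^1(\T^d,\mathscr F^m_\tau)$ is continuous for every $n$, again by dominated convergence (the integrand is bounded since $\mathscr F^m_\tau$ is compact, and pointwise convergence holds a.e.).

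Continuity of $B\mapsto\underline F^B$ then follows from a standard uniform-approximation argument: given $\varepsilon>0$, choose $n$ so large that $d(\underline F^{(n)}(B'),\underline F^{B'})<\varepsilon/3$ for all $B'$ in a fixed neighbourhood of $A$ (this uniformity is precisely what the gap-dependent finite-scale estimates, uniform over $\mathscr U$, provide), then use continuity of $B'\mapsto\underline F^{(n)}(B')$ at $A$ to control the middle term $d(\underline F^{(n)}(B'),\underline F^{(n)}(A))<\varepsilon/3$ for $B'$ close to $A$, and conclude $d(\underline F^{B'},\underline F^A)<\varepsilon$ by the triangle inequality. Transporting this to $B\mapsto E^B$ uses that on a neighbourhood the Oseledets pieces are uniformly transversal, so $E^B_j(x)=F^B_j(x)\cap(\text{complement built from the other blocks via the backward filtration})$, a continuous operation on transversal flags; alternatively one runs the same finite-scale argument directly for the singular subspaces \emph{differences} $U^{(n)}_{\tau_j}\ominus U^{(n)}_{\tau_{j-1}}$.

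The main obstacle, and the step requiring the most care, is establishing the \emph{uniform} convergence rate of the finite-scale singular subspaces to the Oseledets subspaces over a whole neighbourhood of $A$ — i.e.\ making quantitative the classical (qualitative) fact that the top-$\tau_j$ frame of $\Bn{n}(x)$ stabilizes. This is where one must invoke the large deviation estimates for $\log\norm{\wedge^{\tau_j}\Bn{n}(x)}$ and the Avalanche Principle for the exterior power cocycles $\wedge^{\tau_j}B$ (whose top exponent gap is $\La_{\rho,\tau_j}(B):=L_1(B)+\cdots+L_{\tau_j}(B)-L_{\tau_j+1}(B)-\cdots$, strictly positive under the $\tau$-gap hypothesis and continuous in $B$), and show that the angle between $U^{(n)}_{\tau_j}(B)(x)$ and $U^{(n+1)}_{\tau_j}(B)(x)$ is summable in $n$ with a rate governed by that gap, uniformly on $\mathscr U$; the Cauchy-sequence limit is then identified with $F^B_j$ by Oseledets' theorem. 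Everything else — the passage from pointwise a.e.\ convergence to $L^1$ convergence, the continuity of SVD and of span, the triangle-inequality wrap-up — is routine given the compactness of Grassmannians and flag manifolds and the dominated convergence theorem.
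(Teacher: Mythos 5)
Your proposal is correct and follows essentially the same route as the paper: persistence of the $\tau$-gap pattern comes from the lower semicontinuity of the gaps (Corollary~\ref{lscgaps}), the finite-scale most-expanding flags $\filtn{n}(B)$ are continuous in $B$ (Lemma~\ref{cont-filtr-finite-scale}), they converge to $\filt(B)$ uniformly over a neighbourhood of $A$ with a rate supplied by the AP/LDT inductive scheme (estimate~\eqref{rateconv-flags} of Lemma~\ref{rateconv-lemma}), and the decomposition is recovered from the filtration by a continuous splitting operation. The only cosmetic difference is that the paper recovers $E^B$ via the orthogonal differences $F_{\tau_j}\ominus F_{\tau_{j-1}}$ rather than via the backward filtration, but both reductions rest on the same continuity-of-the-filtration core.
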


\medskip

The paper is organized as follows. In Section~\ref{defs-nots_section} we introduce some definitions and notations, and describe some uniform measurements on the size of the cocycle. In Section~\ref{AP_section} we establish a precise higher dimensional version of the Avalanche Principle (AP) of M. Goldstein and W. Schlag, using a geometrical approach. In Section~\ref{LDT_section} we derive a large deviation theorem (LDT) for logarithmic averages of quantities related to singular values of the iterates of the cocycle. In Section~\ref{ind-step-thm_section} we present a general inductive scheme, based on the AP and on the LDT, which will be used repeatedly throughout the paper. Section~\ref{proofs_section} contains the proofs of our main statements, while Section~\ref{consequences_section} presents some consequences and extensions of the main statements. 

\smallskip

\section{Definitions and notations}\label{defs-nots_section}

\subsection*{Singular values}
By definition, a {\em singular value} of a matrix $g\in\GLmR$
is an eigenvalue of one of the conjugate matrices
$\sqrt{g^\ast\, g}$ and $\sqrt{g\,g^\ast}$. We order the 
singular values $s_j (g)$, $1 \le j \le m$ of the matrix $g\in\GLmR$ from the largest to the smallest:
$$s_1(g)\geq s_2(g) \geq \ldots \geq  s_m(g) >0$$ We have
$s_1(g)=\norm{g}$ and $s_m(g)= \norm{g^{-1}}^{-1} = m (g)$ (i.e. the minimum expansion of $g$). 

We call  a {\em singular value formula}, abbreviated s.v.f., any expression obtained by taking products and ratios of singular values of a matrix $g \in \GLmR$.  We will not use all s.v.formulas, but only those obtained as products of singular values with distinct indices, namely expressions of the form 
$$s (g) = s_{j_1} (g) \cdot \ldots \cdot s_{j_l} (g) \quad \text{where} \quad 1 \le j_1 < \ldots < j_l \le m$$
as well as ratios of two \textit{consecutive} singular values, namely expressions of the form $$s (g) = \frac{s_j (g)}{s_{j+1}(g)} \ \text{ or } \ s (g) = \frac{s_{j+1} (g)}{s_{j}(g)} \quad  \text{where} \quad 1\le j < m$$
We denote the set of all these particular s.v.formulas by $\allsvf$. Note that $\allsvf$ is a finite set, whose cardinality depends only on the dimension $m$.

\smallskip

We define some special types of s.v.formulas $s \in \allsvf$.

\smallskip

For $1 \leq j \leq m$, let 
$$p_j (g) := s_1 (g) \cdot \ldots \cdot s_j (g) = \norm{\wedge_j (g)}$$ 
where $\wedge_j (g)$ is the $j$th exterior power of $g$.

Note that every individual singular value can be described in terms of these s.v.formulas:
$$s_j(g)=\frac{p_j (g)}{p_{j-1}(g)}$$
Therefore, any s.v.f. $s\in \allsvf$ can be described in terms of products and ratios of some s.v.f. of the form $p_j$. This will be important later, as  $p_j (g)$ is the {\em norm} of a matrix related to $g$.

\smallskip


\subsection*{Gap patterns}
We call a  {\em signature} any sequence of integers $\tau=(\tau_1,\ldots, \tau_k)$ such that
$1\leq \tau_1 <\tau_2 <\ldots <\tau_k<m$. 

Signatures are used to describe certain flag manifolds and to encode (known) gap patterns in the singular spectrum of a matrix $g \in \GLmR$ or in the Lyapunov spectrum of a cocycle $A \in \cocycles$. 

More precisely, given a matrix $g \in \GLmR$ and a signature $\tau=(\tau_1,\ldots, \tau_k)$, we say that the singular spectrum of $g$ has a {\em $\tau$-gap pattern} if for every $1 \le j \le k$,  we have:
$$s_{\tau_j} (g) > s_{\tau_j + 1} (g)$$
The other pairs of consecutive singular values may (or may not) be equal.    

Similarly, given a cocycle $A \in \cocycles$, we say that the Lyapunov spectrum of $A$ has a {\em $\tau$-gap pattern} if for every $1 \le j \le k$,  we have:
$$L_{\tau_j} (A) > L_{\tau_j + 1} (A) $$
Again, the other pairs of consecutive Lyapunov exponents may (or may not) be equal. However, when we do know (which will be the case in the proof of Theorem~\ref{main-thm2}) that the other pairs of consecutive Lyapunov exponents are equal, we say that the gap pattern of the Lyapunov spectrum of $A$ is {\em precisely} described by the signature $\tau$.

\smallskip

Given a signature $\tau=(\tau_1,\ldots, \tau_k)$,
we define two kinds of {\em $\tau$-singular value formulas}, which will be elements of the set $\allsvf$:  $\tau$-singular value products, abbreviated $\tausvp$, and $\tau$-singular value ratios, abbreviated $\tausvr$.

\smallskip

A $\tausvp$ is either a product
$$p_{\tau_j} (g) = s_1 (g) \cdot \ldots  \cdot s_{\tau_j} (g)$$
or a block product of the form 
$$
\pi_{\tau,j}(g):=  s_{\tau_{j-1}+1}(g) \cdot \ldots \cdot s_{\tau_j}(g)
$$
with $1\leq j\leq k$ and the convention that $\tau_0=0$.

Throughout this paper, $\pi (g)$ will refer to any such $\tausvp$

Note that for every $1 \le j \le k$ we have:
$$\pi_{\tau,j}(g) = \frac{p_{\tau_j} (g)}{p_{\tau_{j-1}} (g)}$$
and
$$p_{\tau_j} (g) = \pi_{\tau,1}(g) \cdot \ldots \cdot \pi_{\tau,j}(g)$$

\smallskip

A $\tausvr$ is either a ratio of the form
$$\rho_{\tau_j}(g)  := \frac{s_{\tau_j}(g)}{s_{\tau_j + 1} (g)} \quad  \ge 1$$
or one of the form
$$\sigma_{\tau_j}(g)  := \frac{s_{\tau_j + 1}(g)}{s_{\tau_j} (g)} \quad \le 1$$
 where $1 \le j \le k$.
 
 Throughout this paper, $\rho (g)$ will refer to any $\tausvr$ of the form $\rho_{\tau_j}(g)$, while $\sigma (g)$ will refer to any $\tausvr$ of the form $\sigma_{\tau_j}(g)$.
 
 \smallskip
 
If the matrix $g \in \GLmR$ has a $\taugp$  then 
 $$\rho_\tau (g) := \min_{1\le j \le k} \ \rho_{\tau_j}(g) > 1
 \quad
 \text{  and  } 
 \quad
  \sigma_\tau (g) := \max_{1\le j \le k} \ \sigma_{\tau_j}(g) < 1$$
 
 \smallskip
 
 A note on these notations: the $\tausvr$ $\rho$ and $\sigma$ are of course inverse to each other. We will use $\sigma$ (which is less than $1$ and  correlated with the rate of {\em contraction} of $g$'s projective action)  in the initial formulation and proof of the Avalanche Principle.  However, throughout the rest of the paper, we will only use $\rho$ (which is greater than $1$
 and correlated with the size of gaps in terms of the Lyapunov spectrum).

\smallskip


\subsection*{Lyapunov spectrum blocks and gaps}
Let $A \in \cocycles$ be a cocycle, let $s \in \allsvf$ be a singular value formula and let $n \in \N$ be an integer. We define the following quantities:
\begin{equation*} 
\Lasn{n} (A) := \int_{\T} \frac{1}{n} \, \log s (\An{n} (x)) \, dx
\end{equation*}
and
\begin{equation} \label{LasA}
\Las (A) := \lim_{n \to \infty} \, \Lasn{n} (A) 
\end{equation}

In particular, if $s = p_j$, where $1 \le j \le m$, we have:
$$\Lajn{n} (A) = \int_{\T} \, \frac{1}{n} \, \log \norm{\wedge_j \An{n} (x) } \, dx = \int_{\T} \, \frac{1}{n} \, \log \norm{(\wedge_j A)^{(n)} (x) } \, dx$$

Due to the sub-multiplicativity  of the norm, the sequence $\Lajn{n} (A)$ is sub-additive, so it converges to
$$\inf_{n \ge 1} \, \Lajn{n} (A) =: \Laj (A)$$

Since any s.v.f. $s \in \allsvf$ is a product or ratio of s.v.formulas of the form $p_j$, $1 \le j \le m$, and since the logarithm turns these expressions into sums and differences, we conclude that the limits in \eqref{LasA} exist, and we get the following formulas.

If for some $1 \le j \le m$, $s = s_j$, i.e. $s$ is the $j$th singular value, then
$$\La_{s_j} (A) = L_j (A) =  \text{the } j\text{th Lyapunov exponent of } A$$ 

Then if  for some $1 \le j \le m$, $s = p_j$, we get
$$\Laj (A) = L_1 (A) + \ldots + L_j (A)$$

\smallskip

Let  $\tau=(\tau_1,\ldots, \tau_k)$ be a signature and let $1 \le j \le k$. 

If $s = \pi_{\tau,j}$, i.e. the $j$th (block) $\tausvp$, then: 
\begin{equation*} 
\La_{\pi_{\tau,j}} (A) = L_{\tau_{j-1}+1} (A) + \ldots +  L_{\tau_{j}} (A)
\end{equation*}
and we call this quantity the $j$th  {\em Lyapunov spectrum $\tau$-block}.

\smallskip

If $s = \rho_{\tau_j}$ i.e. the $j$th $\tausvr$, then: 
\begin{equation*} 
\La_{\rho_{\tau_j}} (A) = L_{\tau_{j}} (A) - L_{\tau_{j}+1} (A)
\end{equation*}
and we call this quantity the $j$th  {\em Lyapunov spectrum $\tau$-gap}.

\medskip


\subsection*{Example}
Here is an example to make these notations easier to understand. 
Let $\tau=(1, 3)$, and  assume that the matrix $g \in \GLmR$ has the $\taugp$. This means that:
$$s_1(g) > s_2(g) \ge s_3 (g) > s_4 (g) \ge \ldots \ge s_m (g)$$
In other words, there are (known) ``gaps" (i.e. different consecutive singular values) precisely at positions $\tau_1 =1$, so $s_1(g) > s_2(g)$ and $\tau_2 =3$, so $s_3(g) > s_4(g)$. There could, of course, be other gaps elsewhere.

The $\tausvp$ are:
\begin{align*}
\ \pi_{\tau,1}  (g) & =  s_1 (g)\\
\ \pi_{\tau,2}  (g) & =  s_2 (g) \cdot s_3 (g)\\
p_3 (g)  & =  [s_1 (g)] \cdot [s_2 (g) \cdot s_3 (g)]
\end{align*}
The block $s_4 (g) \cdot \ldots \cdot s_m (g)$ could also be thought of as a $\tausvp$, but that would be redundant. 

The $\tausvr$ are:
\begin{align*}
\rho_1 (g)   = \frac{s_1 (g)}{s_2 (g)} \quad \text{ and }
\quad 
\rho_3 (g)   = \frac{s_3 (g)}{s_4 (g)} 
\end{align*}
and their inverses $\sigma_1 (g)$ and  $\sigma_3 (g)$.

\smallskip

Now assume that the cocycle $A \in \cocycles$ has the $\taugp$, where $\tau=(1, 3)$. This means that:
$$L_1 (A) > L_2 (A) \ge L_3 (A) > L_4 (A) \ge \ldots  \ge L_m (A)$$

The $\taublockL$s are
$$\La_{\pi_{\tau,1}} (A)  =  L_1 (A) \; \text{ and } \; 
 \La_{\pi_{\tau,2}} (A)   =  L_2 (A) + L_3 (A)\,, $$
while the $\taugapL$s are 
$$\La_{\rho_{\tau_1}} (A)  = L_1 (A) - L_2 (A) \; \text{ and } \; \La_{\rho_{\tau_2}} (A) = L_3 (A) - L_4 (A)\,.$$

\smallskip


\subsection*{A uniform measurement of the cocycle}
Given a cocycle $A \in \cocycles$, we define a scaling constant $C (A)$ which will dominate any multiplicative constant (dependent on the size of $A$) that will appear in various estimates throughout this paper.

We first derive some simple bounds on the size of the cocycle, on its exterior powers and on its iterates.

We have:
$$\norm{A(x) } \ge \norm{A(x)^{-1}}^{-1} \ge \normr{A^{-1}}^{-1}$$
so
\begin{equation}\label{scalingct-eq1}
 \normr{A^{-1}}^{-1} \le \norm{A(x)} \le \normr{A} \quad \text{for all } x \in \T
\end{equation}

For all $1 \le j \le m$, clearly
$$\norm{\wedge_j A (x) } \le \norm{A(x)}^j \le \normr{A}^j$$
and
$$\norm{[ \wedge_j A ]^{-1} (x) } = \norm{\wedge_j A^{-1} (x)} \le \normr{A^{-1}}^j$$

Then applying \eqref{scalingct-eq1} to $\wedge_j A (x)$ we get:
\begin{equation}\label{scalingct-eq2}
 \normr{A^{-1}}^{-j} \le \norm{\wedge_j A(x)} \le \normr{A}^j \quad \text{for all } x \in \T
\end{equation}

Moreover, for all $n \ge 1$ we have
$$\norm{\An{n} (x)} \le  \prod_{i=n-1}^{0} \, \norm{A (\transl^i x)} \le \normr{A}^n$$
and similarly
\begin{equation*} 
\norm{[\An{n} (x)]^{-1}} \le  \normr{A^{-1}}^n
\end{equation*}

Then applying  \eqref{scalingct-eq1} to $\An{n} (x)$ we obtain:
\begin{equation}\label{scalingct-eq3}
 \normr{A^{-1}}^{-n} \le \norm{\An{n} (x)} \le \normr{A}^n \quad \text{for all } x \in \T
\end{equation}

Combining \eqref{scalingct-eq2} and \eqref{scalingct-eq3}, for all $1 \le j \le m$ and $n \ge 1$ we get:
\begin{equation}\label{scalingct-eq4}
 \normr{A^{-1}}^{-n \, j} \le \norm{\wedge_j \An{n} (x)} \le \normr{A}^{n \, j}  \quad \text{for all } x \in \T
\end{equation}

From \eqref{scalingct-eq3}  we have:
$$\abs{ \frac{1}{n} \log \norm{\An{n} (x)} } \le \max \{  \abs{\log \normr{A} }, \abs{ \log \normr{A^{-1}}}   \}$$

In fact, from \eqref{scalingct-eq4}, for all $1 \le j \le m$,
\begin{align*}
\abs{ \frac{1}{n} \log p_j (\An{n} (x)) }   &= \abs{ \frac{1}{n} \log \norm{\wedge_j \An{n} (x)} }\\
&\le j \,  \max \{  \abs{\log \normr{A} }, \abs{ \log \normr{A^{-1}}}   \}
\end{align*}

Since any $s \in \allsvf$ is obtained by taking products and ratios of s.v.f. of the form  $p_j$, for $1 \le j \le m$, we then conclude that for all $x \in \T$ we have:
\begin{equation} \label{scalingct-eq5}
\abs{ \frac{1}{n} \log s (\An{n} (x)) }   
\less  \max \{  \abs{\log \normr{A} }, \abs{ \log \normr{A^{-1}}}   \}
\end{equation}
and in particular,
\begin{equation} \label{scalingct-eq6}
\abs{ \Lasn{n} (A) }   
\less  \max \{  \abs{\log \normr{A} }, \abs{ \log \normr{A^{-1}}}   \}
\end{equation}
The inherent constant in \eqref{scalingct-eq5}, \eqref{scalingct-eq6} depends only on the dimension $m$, and in fact this constant  is not more than $m \, (m+1)$.

\smallskip

To prove the Large Deviation Theorem~\ref{LDT-thm} in Section~\ref{LDT_section}, we will use a {\em quantitative} Birkhoff ergodic theorem for subharmonic sample functions, namely Theorem 3.8. in \cite{GS-Holder}. The scaling factor there depends on the sup norm of the given subharmonic function, and on the width $r$ of the strip on which it is defined. Applied to our case, the scaling factor will be of order 
$$r^{-1}\cdot  \max \{  \abs{\log \normr{A} }, \abs{ \log \normr{A^{-1}}} \}  \;.$$ 

Accounting for other estimates that will appear in this paper (namely in Lemma~\ref{scales-divide-lemma} and in Lemma~\ref{cont-finite-lemma}) we then define:
\begin{equation} \label{scalingct-star}
C (A) := \frac{m (m+1)}{r} \ [  \,  \abs{\log \normr{A} } +  \abs{ \log \normr{A^{-1}}} + \log (1 + \normr{A}) \, ]
\end{equation} 
We also denote (mind the letter case) \,
 $c (A) :=  C(A)^{-2}$.

The width $r$ of the domain and the dimension $m$ are of course fixed. For simplicity of notations, from now on we will write $a \less b$ whenever $a \le \text{constant} \cdot b$, where the multiplicative constant is either universal or depends only on $m$, $r$. 

It is clear that the scaling constant $C (A)$ defined in \eqref{scalingct-star} depends continuously on $A \in \cocycles$, so if $C (A) < C$ and $B \in \cocycles$ such that $\normr{A - B} \ll 1$, then $C (B) < C$. This will ensure that any estimate that depends only on the size $C (A)$ of $A$ will hold uniformly for any cocycle $B$ in a neighborhood of $A$.

In fact, it is easy to verify that if $B \in \cocycles$ such that
$$\normr{A - B} \le \frac{\normr{A^{-1}}^{-1}}{4} \ (< \frac{\normr{A}}{4})$$
then
\begin{align*}
\frac{1}{2} \, \normr{A^{-1}} &\le \normr{B^{-1}} \le 2 \normr{A^{-1}}\\
\frac{3}{4} \, \normr{A} &\le \normr{B} \le \frac{5}{4} \, \normr{A}
\end{align*}
so
$$C (B) < C(A) + 2$$

Hence if $A \in \cocycles$ is {\em fixed}, we can take $C := C (A) + 2$, $c := \frac{1}{C^2}$ and all estimates depending on the size of $A$ will hold with these multiplicative scaling factors $C, \, c$  for all cocycles $B$ in a ball of {\em fixed} radius $ \frac{\normr{A^{-1}}^{-1}}{4}$ around $A$.

\smallskip

\section{A higher dimensional avalanche principle}\label{AP_section}
\subsection*{Shadowing Lemma}
The following is an abstract lemma on chains 
of continuous mappings  $g_i:X \to X$ ($0\leq i\leq n $) 
on a  compact metric space $X$. Each mapping $g_i$ is assumed to be contractive away from a critical set $\Sigma_i$. The lemma
states that any `pseudo orbit' of this chain 
that stays away from the critical sets $\Sigma_i$ is shadowed by
an orbit of the mapping chain. In particular, any closed pseudo-orbit is shadowed
by a periodic orbit of the mapping chain.

\begin{center}
\includegraphics*[scale={1.2}]{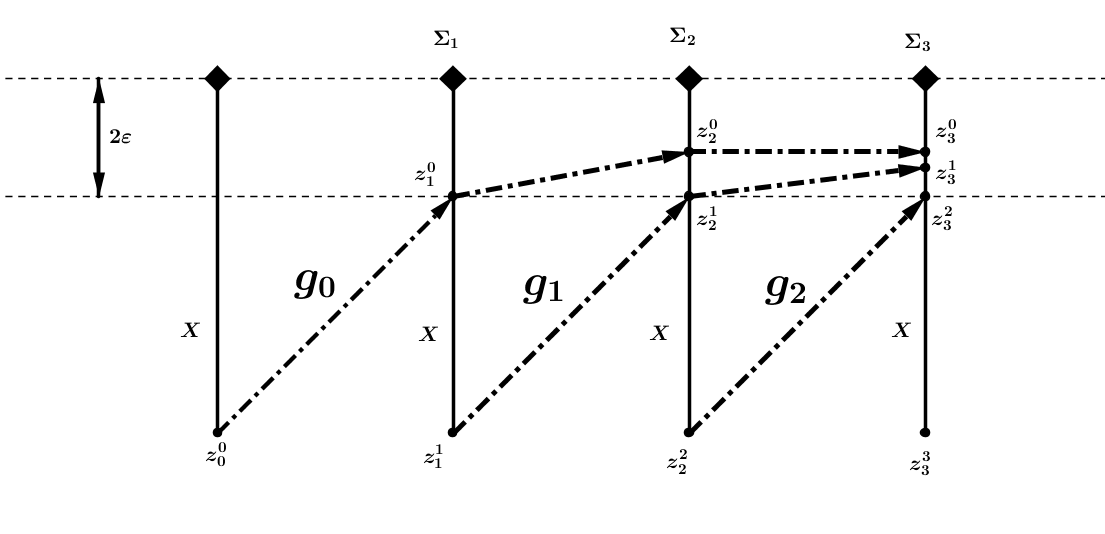} 
\end{center}

We denote by $B_\varepsilon(\Sigma)$ the $\varepsilon$-neighbourhood of a subset $\Sigma\subset X$,
$$ B_\varepsilon(\Sigma)=\{\, x\in X\,:\, d(x, \Sigma)<\varepsilon\,\}\;.$$

\begin{lemma}\label{shadow:lemma}
Let $\varepsilon, \delta>0$ and $0<\kappa<1$ such that
$\delta/(1-\kappa)<\varepsilon<1/2$ and $\delta<\kappa$. 
Given a compact metric space $X$ with diameter $1$, 
a chain of continuous
mappings  $g_0, g_1,\ldots, g_{n-1}:X \to X$,
some closed subsets $\Sigma_0,\ldots, \Sigma_{n-1}\subset X$,
and some pairs of points 
$(x_0, y_0),(x_1, y_1),\ldots, (x_{n-1}, y_{n-1})$ in $X\times X$ such that for every $0\leq i\leq n-1$:
\begin{enumerate}
\item[(a)] $g_i(x_i)=y_i$,
\item[(b)] $d(x_i,\Sigma_i)=1$ and $d(y_i,\Sigma_{i+1})\geq 2\,\varepsilon$,
\item[(c)] $g_i\vert_{X\setminus B_\varepsilon(\Sigma_i)}:
X\setminus B_\varepsilon(\Sigma_i)\to X$ has Lipschitz constant
$\leq \kappa$, 
\item[(d)] $g_i(X\setminus B_\varepsilon(\Sigma_i))\subset B_\delta(y_i)$.
\end{enumerate}
Then setting $g^{(n)} :=g_{n-1}\circ \ldots\circ g_1\circ g_0$, the following hold:
\begin{enumerate}
\item[(1)] 
$d\left(\, y_{n-1}, \, g^{(n)}(x_0)\,\right)\leq 
\frac{\delta}{1-\kappa}$,
\item[(2)] if $x_0=y_{n-1}$ then there is 
a unique point $x^\ast\in X$
such that $g^{(n)}(x^\ast)=x^\ast$ and
$d\left(  x_0, x_\ast \right)\leq 
\frac{\delta}{(1-\kappa)(1-\kappa^n)}$.
\end{enumerate}
\end{lemma}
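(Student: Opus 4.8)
My plan is to treat this as a standard hyperbolic-fixed-point / contraction-mapping argument, but carried out carefully through the chain of maps so that the neighborhood conditions (b)--(d) are never violated. The key observation is that hypothesis (d) says each $g_i$ maps the good region $X\setminus B_\varepsilon(\Sigma_i)$ into the small ball $B_\delta(y_i)$, and hypothesis (b) says $y_i$ is at distance $\geq 2\varepsilon$ from the next critical set $\Sigma_{i+1}$; since $\delta/(1-\kappa)<\varepsilon$ (and in particular $\delta<\varepsilon$), any point within $\delta$ — indeed within $\delta/(1-\kappa)$ — of $y_i$ still lies at distance $>\varepsilon$ from $\Sigma_{i+1}$, hence in the good region where $g_{i+1}$ is $\kappa$-Lipschitz. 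So I would first record this elementary consequence: \emph{if $d(z,y_i)\leq \delta/(1-\kappa)$ then $z\in X\setminus B_\varepsilon(\Sigma_{i+1})$}, which makes the composition $g_{i+1}\circ g_i\circ\cdots$ well-behaved on the relevant points.

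For part (1), I would prove by induction on $j$ that the partial composition $g^{(j)}:=g_{j-1}\circ\cdots\circ g_0$ satisfies $d(y_{j-1},g^{(j)}(x_0))\leq \delta(1+\kappa+\cdots+\kappa^{j-1})<\delta/(1-\kappa)$. The base case $j=1$ is exactly hypothesis (d) applied at $x_0$, using $d(x_0,\Sigma_0)=1>\varepsilon$ from (b) so that $x_0\in X\setminus B_\varepsilon(\Sigma_0)$ and $g_0(x_0)=y_0$ by (a), with $g^{(1)}(x_0)=y_0$ trivially at distance $0\leq\delta$ — actually here the estimate is just $d(y_0,g^{(1)}(x_0))=0$, so let me instead phrase the induction as bounding $d(y_{j-1},g^{(j)}(x_0))$ where the new contribution at each step comes from comparing $g_j(g^{(j)}(x_0))$ with $g_j(x_j)=y_j$: by the inductive hypothesis $g^{(j)}(x_0)$ is within $\delta/(1-\kappa)<\varepsilon$ of $y_{j-1}$, hence (by the observation above, with the roles shifted) in the good region of $g_j$, and $x_j$ is also there by (b); the $\kappa$-Lipschitz bound (c) gives $d(y_j,g^{(j+1)}(x_0))\leq \kappa\, d(g^{(j)}(x_0),x_j)$. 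To close the induction I need to also control $d(g^{(j)}(x_0),x_j)$ — this is where (d) re-enters: since $g^{(j)}(x_0)$ lies in the good region of $g_j$, part (d) forces $g^{(j+1)}(x_0)\in B_\delta(y_j)$, giving directly $d(y_j,g^{(j+1)}(x_0))<\delta$, and then iterating with the Lipschitz contraction turns the telescoping sum into the geometric series $\delta\sum\kappa^{i}$. Taking $j=n$ yields (1).

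For part (2), when $x_0=y_{n-1}$ the pseudo-orbit closes up and I would set up a contraction on a small closed ball. Consider $\Phi:=g^{(n)}$ restricted to the closed ball $\bar B_{r_0}(x_0)$ with $r_0:=\delta/((1-\kappa)(1-\kappa^n))$. First, $\bar B_{r_0}(x_0)$ is contained in the good region of $g_0$ (since $r_0<\varepsilon$, using $d(x_0,\Sigma_0)=1$), and by the argument of part (1) applied with the starting point $x$ ranging over this ball — noting that at each stage the iterate stays within $\delta/(1-\kappa)<\varepsilon$ of the corresponding $y_i$, hence in the good region — one gets both that $\Phi$ is $\kappa^n$-Lipschitz on $\bar B_{r_0}(x_0)$ (composition of $n$ maps each $\kappa$-Lipschitz on the relevant sets) and that $\Phi$ maps this ball into itself: indeed $d(\Phi(x),x_0)=d(\Phi(x),y_{n-1})\leq d(\Phi(x),\Phi(x_0))+d(\Phi(x_0),y_{n-1})\leq \kappa^n r_0 + \delta/(1-\kappa) = r_0$ by the choice of $r_0$. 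The Banach fixed point theorem then gives a unique $x_\ast\in\bar B_{r_0}(x_0)$ with $\Phi(x_\ast)=x_\ast$ and $d(x_0,x_\ast)\leq r_0$; uniqueness as a fixed point of $g^{(n)}$ in all of $X$ is not claimed (only within the ball), but I should double-check whether the intended statement is local uniqueness — the displayed bound suggests it is.

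The main obstacle, and the only place requiring genuine care rather than routine estimation, is the \emph{bookkeeping that keeps every intermediate iterate inside the good region} $X\setminus B_\varepsilon(\Sigma_i)$ at each of the $n$ steps, uniformly in the starting point, so that hypotheses (c) and (d) actually apply at every application. This is exactly what the hypotheses $\delta/(1-\kappa)<\varepsilon$ and $\delta<\kappa$ are engineered to guarantee, and the proof is essentially the verification that the geometric-series tail $\delta/(1-\kappa)$ never pushes an orbit back across the $\varepsilon$-buffer around a critical set. Everything else — the telescoping Lipschitz estimate and the contraction mapping argument — is standard once that invariant region is secured.
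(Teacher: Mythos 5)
Your overall strategy is the same as the paper's: the buffer inequality $2\varepsilon-\delta/(1-\kappa)>\varepsilon$ keeps every iterate in the good region, hypothesis (d) pins $g^{(j)}(x_0)$ within $\delta$ of $y_{j-1}$, and part (2) is a contraction-mapping argument on an invariant ball. Your part (1) is correct; in fact your direct use of (d) at each step yields the stronger bound $d(y_{n-1},g^{(n)}(x_0))\leq\delta$, whereas the paper's more elaborate bookkeeping with the points $z^i_j=(g_{j-1}\circ\cdots\circ g_i)(x_i)$ is there because those finer telescoping estimates are reused later (e.g.\ in the proof of the Avalanche Principle).

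There is, however, a genuine gap in your part (2): the assertion $r_0=\delta/((1-\kappa)(1-\kappa^n))<\varepsilon$ does not follow from the hypotheses. You only know $\delta/(1-\kappa)<\varepsilon$, and dividing by $1-\kappa^n<1$ can push $r_0$ well above $\varepsilon$, and even above the diameter $1$: take $n=1$ and $\kappa$ close to $1$ with $\delta$ just below $\varepsilon(1-\kappa)$, so that $r_0=\delta/(1-\kappa)^2\approx\varepsilon/(1-\kappa)$. In that case $\bar B_{r_0}(x_0)$ is not contained in $X\setminus B_\varepsilon(\Sigma_0)$, hypotheses (c) and (d) do not apply on it, and neither the invariance $\Phi(\bar B_{r_0}(x_0))\subset\bar B_{r_0}(x_0)$ nor the $\kappa^n$-Lipschitz bound is justified. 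You also cannot simply shrink the radius, because your self-map estimate $\kappa^n r+\delta/(1-\kappa)\leq r$ forces $r\geq r_0$. The repair — which is what the paper does — is to run the contraction on $\overline{B_\varepsilon(x_0)}$, which does lie in the good region of $g_0$ since $\varepsilon<1/2\leq 1-\varepsilon$: for $x$ in this ball each iterate $g^{(j)}(x)$ lands in $B_\delta(y_{j-1})$ by (d), so $\Phi(\overline{B_\varepsilon(x_0)})\subset B_{\delta/(1-\kappa)}(x_0)\subset B_\varepsilon(x_0)$ and $\Phi$ is $\kappa^n$-Lipschitz there. Banach's theorem then gives a unique fixed point $x_\ast$ in $\overline{B_\varepsilon(x_0)}$, and the stated distance bound is obtained not as the radius of the invariant ball but as the a priori fixed-point estimate $d(x_0,x_\ast)\leq d(x_0,\Phi(x_0))/(1-\kappa^n)\leq\frac{\delta}{(1-\kappa)(1-\kappa^n)}$, using part (1).
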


\begin{proof}
Define $z^i_j:= (g_{j-1}\circ \ldots \circ g_{i+1}\circ g_i)(x_i)$
for $i\leq j\leq n$. Note that
$z^i_i=x_i$ and $z^i_{i+1}=y_i$.
Clearly
$d(z_j^{j-1},z_j^{j})=d(y_{j-1},x_j)\leq  {\rm diam}(X)=1$.
We claim that for $1\leq j\leq n$ and $0\leq i\leq j-2$,\,
$d(z_j^{i},z_j^{i+1})\leq \kappa^{j-i-2}\,\delta$.
For $j=1$ there is nothing to prove.
For $j=2$  we only need to see that item (d) implies
$d(z_2^{0},z_2^{1})=d(g_1(y_0),g_1(x_1))\leq \delta$.
Assuming these inequalities hold for some $j$,
we have $d(z_j^j, \Sigma_j)=d(x_j,\Sigma_j)=1$, while
for $i<j$
\begin{align*}
d(z^i_j, \Sigma_j) &\geq d(y_{j-1},\Sigma_j)- 
\sum_{\ell=i}^{j-2} d( z^\ell_j, z^{\ell+1}_j) \\
&\geq 2\varepsilon -(\delta+\kappa\,\delta+\ldots + \kappa^{j-i-2}\,\delta)\\
&\geq 2\varepsilon -\frac{\delta}{1-\kappa} > \varepsilon\;
\end{align*}
Hence, because of (c), the map $g_j$ acts as a Lipschitz $\kappa$-contraction on the 
points $z_j^i$ with $0\leq i\leq j$.
Since $z_{j+1}^i=g_j(z_j^i)$,
we have for $0\leq i\leq j-2$
\begin{align*}
d(z^i_{j+1}, z^{i+1}_{j+1}) &= 
d(g_j(z_j^i), g_j(z_j^{i+1}))
\leq \kappa\, d( z_j^i, z_j^{i+1})
\leq \kappa^{j-i-1}\,\delta\;
\end{align*}
On the other hand, for $i=j-1$, because of item (d) we have
$$ d(z^{j-1}_{j+1}, z^{j}_{j+1}) 
= d(g_j(z_j^{j-1}), g_j(z_j^{j}))
= d(g_j(y_{j-1}), g_j(x_{j}))
 \leq \delta\;$$
and this establishes the claim.

Thus 
\begin{align*}
 d(y_{n-1}, g^{(n)}(x_0)) &=
 d(z_n^{n-1}, z^0_n)
 \leq \sum_{\ell=1}^{n-1} d(z_n^{\ell}, z^{\ell-1}_n)\\
 &\leq \delta + \kappa\, \delta + \ldots +  \kappa^{n-2}\,\delta
 \leq \frac{\delta}{1-\kappa}\;
\end{align*}
which proves (1).
 
Let us now assume that $y_{n-1}=x_0$.
Then from item (1) we get that 
$d(x_0, g^{(n)}(x_0))<\delta/(1-\kappa)$.
It is now enough to see that
$g^{(n)}(\overline{B_\varepsilon(x_0)})\subset B_\varepsilon(x_0)$
and $g^{(n)}$ acts as a $\kappa^n$-contraction on the closed ball $\overline{B_\varepsilon(x_0)}$. 
We claim that 
$(g_i\circ\ldots\circ g_0)(\overline{B_\varepsilon(x_0)})\cap B_\varepsilon(\Sigma_{i+1})=\emptyset$, for every $i=0,1,\ldots, n-2$.
Let us assume the claim and finish the proof.
It follows from the claim, together with  assumption (c), that 
$g^{(n)}$ acts as a $\kappa^n$-contraction on the ball $\overline{B_\varepsilon(x_0)}$.
Given $x\in  \overline{B_\varepsilon(x_0)}$,
\begin{align*}
 d( g^{(n)}(x), x_0) &\leq
 d(g^{(n)}(x), g^{(n)}(x_0)) + d(g^{(n)}(x_0), x_0)\\
 &\leq \kappa^{n-1}\, d(g(x),g(x_0)) + \delta + \kappa\,\delta  + \ldots + \kappa^{n-2}\,\delta\\
  &\leq \delta + \kappa\,\delta  + \ldots + \kappa^{n-1}\,\delta
  \leq \frac{\delta}{1-\kappa} < \varepsilon\;.
\end{align*}
Thus $g^{(n)}(x)\in B_\varepsilon(x_0)$, which proves that
$g^{(n)}(B_\varepsilon(x_0))\subset B_\varepsilon(x_0)$.

We now prove the claim above  by induction in $i=0,1,\ldots, n-2$.
Consider first the case $i=0$. 
Given $x\in \overline{B_\varepsilon(x_0)}$, \,
$d(x,\Sigma_0)\geq d(x_0,\Sigma_0)-d(x,x_0)\geq 1-\varepsilon>\varepsilon$.
This implies that $d(g_0(x),g_0(x_0))\leq\kappa\,d(x,x_0)$.
Thus
\begin{align*}
 d( g_0(x), \Sigma_1) &\geq
 d(  y_0, \Sigma_1) - d(y_0, g_0(x))\\
 &\geq 2\,\varepsilon -  d(g_0(x_0), g_0(x))\\
 &\geq 2\,\varepsilon -  \kappa\, d(x_0,x)\geq 
 2\,\varepsilon -  \kappa\, \varepsilon >\varepsilon\;
\end{align*}
which proves that $g_0(\overline{B_\varepsilon(x_0)})\cap B_\varepsilon(\Sigma_1)=\emptyset$.

Assume now that for every $\ell\leq i-1$,
$(g_\ell\circ \ldots\circ g_0)(\overline{B_\varepsilon(x_0)})\cap B_\varepsilon(\Sigma_{\ell+1})$ is empty.
Then $g^{(i)}=g_{i-1}\circ \ldots \circ g_0$ acts
as a $\kappa^i$ contraction on $\overline{B_\varepsilon(x_0)}$
and $g^{(i)}(\overline{B_\varepsilon(x_0)})\cap B_\varepsilon(\Sigma_{i})=\emptyset$.
Thus for every $x\in\overline{B_\varepsilon(x_0)}$,
\begin{align*}
 d( g^{(i+1)}(x), \Sigma_{i+1}) &\geq
 d(  y_i, \Sigma_{i+1}) - d(y_i, g^{(i+1)}(x))\\
 &\geq 2\,\varepsilon -  d(z_{i+1}^0, z_{i+1}^i) -
 d( z_{i+1}^0,  g^{(i+1)}(x))\\
  &\geq 2\,\varepsilon -  \sum_{\ell=0}^{i-1} d(z_{i+1}^\ell, z_{i+1}^{\ell+1}) -
 d( g^{(i+1)}(x_0),  g^{(i+1)}(x))\\
 &\geq 2\,\varepsilon -   (\delta+\kappa\, \delta +\ldots+\kappa^{i-1}\, \delta)
  -  \kappa^{i}\, d(g_0(x_0),g_0(x))  \\
 &\geq 2\,\varepsilon -  (\delta+ \kappa\, \delta +  \ldots+\kappa^{i}\, \delta)
 \geq 2\,\varepsilon-\frac{\delta}{1-\kappa}>\varepsilon\;
\end{align*}
which proves that $g^{(i+1)}(\overline{B_\varepsilon(x_0)})\cap B_\varepsilon(\Sigma_{i+1})=\emptyset$.
\end{proof}

\medskip

\subsection*{Flag Manifolds}
Given a signature $\tau=(\tau_1,\ldots, \tau_k)$, any sequence $\underline{F}=(F_1,\ldots, F_{k})$ of vector subspaces 
$F_1\subset F_2\subset \ldots \subset F_{k}\subset \R^m$
such that $\dim F_j=\tau_j$, for each $j=1,\ldots, k$,
will be called a {\em $\tau$-flag}.
Denote by $\FF^m_\tau$ the manifold of all  $\tau$-flags.
If $\tau=(1)$ the flag manifold $\FF^m_\tau$ is the projective
space $\Pp^{m-1}$. Similarly, if $\tau=(k)$,\, $\FF^m_\tau$ coincides with the Grassmannian manifold  $\Gr^m_k$.

The general linear group $\GL(m,\R)$ acts on flags.
Each linear automorphism $g$ induces the diffeomorphism
$\varphi_g  :\FF^m_\tau\to\FF^m_\tau$ defined by $\varphi_g\underline{F}=(g F_1,\ldots, g F_{k})$.

The Grassmannian $\Gr^m_k$ can be identified with 
a submanifold of the projective space
$\Pp(\wedge_k\R^m)$, by identifying each vector subspace
$V\subset \R^m$ of dimension $k$ with the exterior
product $v_1\wedge v_2\wedge \ldots \wedge v_k$, where
$\{v_1,v_2,\ldots, v_k\}$ is any orthonormal basis of $V$.
With the canonical Riemannian metric, projective spaces
have diameter $\pi/2$. We normalize the distance so that
the Grassmannian manifolds $\Gr^m_k$ have diameter one,
and consider $\FF^m_\tau$ as a submanifold of
$\Gr^m_{\tau_1}\times \Gr^m_{\tau_2}\times \ldots
\times \Gr^m_{\tau_k}$ which with the induced (max) product distance,
has also diameter $1$.
With this metric, each flag manifold $\FF_\tau^m$ is a compact metric space with diameter $1$.


\subsection*{Singular Eigenspaces}

Assuming $\norm{g}=s_1(g)>s_2(g)$, denote by $\hatv_\pm (g)\in\Pp^{m-1}$ the most expanding singular directions of $g$.
Letting $v_\pm(g)\in\R^m$ be a 
unit vector in the line $\hatv_\pm (g)$,
we have
\begin{align*}
g\,v_-(g) &=\norm{g}\,v_+(g) \;\\
g^\ast v_+(g) &=\norm{g}\,v_-(g) \;
\end{align*}
Suppose that $g$ has a $\tau$-gap pattern and
let $v_\pm^1(g),\ldots, v_\pm^m(g)$ be orthonormal singular eigenbases
of $g$. These bases are characterized by the relations
$$ g\, v_-^ j(g)= s_j(g)\, v_+^ j(g),\quad 1\leq j \leq m\;$$
We define the {\em most expanding $\tau$-flags} of $g$ as
$$ \hatv_{\tau,\pm}(g):=(\hatV_\pm^{\tau_1}(g),\ldots, \hatV_\pm^{\tau_k}(g))\in \FF^m_\tau $$
where
$\hatV_\pm^{\tau_j}(g)$ is the linear span $\langle v_\pm^ 1(g),\ldots, v_\pm^ {\tau_j}(g)\rangle$
for $1\leq j \leq k$.
The subspaces $\hatV_\pm^{\tau_j}(g)$ do not depend on the choice
of the singular eigen-basis, precisely because $g$ has a $\tau$-gap pattern.
Identifying each vector subspace $\hatV_\pm^{\tau_j}(g)\subset \R^m$ with 
a simple $\tau_j$-vector  $V_\pm^{\tau_j}(g) \in \wedge_{\tau_j}\R^m$,
the following relation holds
$$ (\wedge_{\tau_j} g)\, V_-^{\tau_j}(g)= p_{\tau_j}(g)\, V_+^{\tau_j} (g),\;
\text{ where }\;
p_{\tau_j}(g) = \norm{\wedge_{\tau_j} g} \,.$$ 
We also have 
$$ \varphi_g\, \hatv_{\tau,-}(g)= \hatv_{\tau,+}(g)$$

\bigskip

\subsection*{Angles and Orthogonality}
Let us call the {\em correlation  of} $u$ and $v\in\Pp^{m-1}$
the number
$$\alpha(u,v):= \abs{\langle u,v\rangle}=\cos \angle(u,v)\;$$
Because we have normalized the projective metric,
$\angle(u,v)=\frac{\pi}{2}\,d(u,v)$, and hence
$$\alpha(u,v) = \cos\left(\frac{\pi}{2}\, d(u,v)\right)\;$$
It is also clear that for $u,v\in\Pp^{m-1}$,
$$ \alpha(u,v)=0\; \Leftrightarrow \;
d(u,v)=1 \; \Leftrightarrow \; u\perp v$$
We generalize these  concepts to Grassmannian
manifolds. Consider two vector subspaces $U,V\subset \R^m$ of dimension $k\in\{1,\ldots, m\}$ and fix two orthonormal
bases $\{u_1,\ldots, u_k\}$ of $U$ and $\{v_1,\ldots, v_k\}$ of $V$.
Define the {\em correlation of} $U$ and $V$ to be
\begin{align*}
\alpha(U,V) & :=
  \abs{ \langle \, u_1\wedge  \ldots \wedge u_k, \,
v_1\wedge   \ldots \wedge v_k\, \rangle}\\
&=\abs{\det \left( \begin{array}{cccc}
\langle u_1,v_1\rangle & \langle u_1,v_2\rangle & \ldots & \langle u_1,v_k\rangle\\
\langle u_2,v_1\rangle & \langle u_2,v_2\rangle & \ldots & \langle u_2,v_k\rangle\\
\vdots & \vdots & \ddots & \vdots \\
\langle u_k,v_1\rangle & \langle u_k,v_2\rangle & \ldots & \langle u_k,v_k\rangle 
\end{array}\right)}\\
&= \abs{\det (\pi_{U,V})}
\;, 
\end{align*}
where $\pi_{U,V}:U\to V$ is the restriction to $U$ of the
orthogonal projection to $V$. 
Define, as above, $\angle(U,V)$ to be the angle between the vectors
$u_1\wedge \ldots \wedge u_k$ and
$v_1\wedge \ldots \wedge v_k$ in $\wedge_k\R^m$.
By the  normalization of the Grassmannian  metric, we have 
$\angle(U,V)=\frac{\pi}{2}\,d(U,V)$, and hence
$$\alpha(U,V) = \cos\left(\frac{\pi}{2}\, d(U,V)\right)$$
We say that the  vector subspaces $U,V\subset \R^m$ are
{\em orthogonal}, and write $U\perp V$,
if there is some pair of unit vectors
$(u,v)$ such that $u\in U\cap V^\perp$ and $v\in V\cap U^\perp$.
It is not difficult  to check that for $U,V\in\Gr^m_k$,
$$ \alpha(U,V)=0\; \Leftrightarrow \;
d(U,V)=1 \; \Leftrightarrow \; U\perp V$$
Finally, let us consider two flags $\underline{F},\underline{G}\in 
 \FF^m_\tau$, and define
\begin{align*}
d(\underline{F},\underline{G})&:=
\max_{1\leq j\leq k} d(F_j,G_j)\\ 
\alpha(\underline{F},\underline{G})&:=
\min_{1\leq j\leq k} \alpha(F_j,G_j)\\
\angle(\underline{F},\underline{G})&:=
\max_{1\leq j\leq k} \angle(F_j,G_j)\\ 
\underline{F}\perp \underline{G} &\; :\Leftrightarrow\;
F_j\perp G_j,\, \text{ for some }\, j=1,\ldots, k.
\end{align*}
 
 With these definitions,
 $$\alpha(\underline{F},\underline{G}) = 
 \cos\left( \angle(\underline{F},\underline{G})\right) = \cos\left(\frac{\pi}{2}\, d(\underline{F},\underline{G})\right)$$
 and
$$ \alpha(\underline{F},\underline{G})=0\; \Leftrightarrow \;
d(\underline{F},\underline{G})=1 \; \Leftrightarrow \; \underline{F}\perp \underline{G}$$

\bigskip

For each flag  $\underline{F}\in\FF^{m}_\tau$ we define the {\em orthogonal
flag hyperplane}:
$$\Sigma(\underline{F})=\{\, \underline{G}\in\FF^{m}_\tau\,:\, 
 \alpha(\underline{G},\underline{F})=0\,\}
 = \{\, \underline{G}\in\FF^{m}_\tau\,:\, 
  \underline{G}\perp \underline{F} \,\}$$

\begin{lemma} \label{alpha:dist}
For any flags $\underline{F},\underline{G}\in\FF^m_\tau$,
\begin{enumerate}
\item[(a)] $\displaystyle  \alpha(\underline{F},\underline{G})=\sin\left( \frac{\pi}{2}\,d(\underline{G},\Sigma(\underline{F}))\right)$,
\item[(b)] $\displaystyle d(\underline{G},\Sigma(\underline{F}))\leq \alpha(\underline{F},\underline{G})\leq \frac{\pi}{2}\,d(\underline{G},\Sigma(\underline{F}))\;. $
\end{enumerate}
\end{lemma}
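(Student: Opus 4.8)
The plan is to establish both statements by reducing everything to the projective case $k=1$, which was essentially fixed by the normalization $\angle(u,v)=\tfrac{\pi}{2}d(u,v)$ and the definitions of $\alpha$ and $\Sigma$ on flags as maxima/minima over components. First I would observe that since $d(\underline{G},\Sigma(\underline{F}))$ is a distance to a set defined by $\alpha(\,\cdot\,,\underline{F})=0$, it suffices to understand, for a fixed flag $\underline{F}$, how small $d(\underline{G},\underline{G}')$ can be made over flags $\underline{G}'\perp\underline{F}$ in terms of $\alpha(\underline{F},\underline{G})$; equivalently over flags $\underline{G}'$ with $\alpha(G'_j,F_j)=0$ for some $j$. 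Because $d(\underline{F},\underline{G})=\max_j d(F_j,G_j)$ while $\alpha(\underline{F},\underline{G})=\min_j\alpha(F_j,G_j)$, the infimum defining $d(\underline{G},\Sigma(\underline{F}))$ will be attained (up to a natural identification) by perturbing only the single component $G_j$ realizing the minimal correlation, leaving the others untouched — this is the key structural remark that collapses the flag problem to a single Grassmannian. Then on a fixed Grassmannian $\Gr^m_k$ the statement becomes: for $U,V\in\Gr^m_k$, $d(V,\{W:W\perp U\})=\min\{d(V,W):\alpha(W,U)=0\}$, and this minimal distance equals $\tfrac{2}{\pi}\arcsin\alpha(U,V)$ (equivalently $\alpha(U,V)=\sin(\tfrac{\pi}{2}d(V,\Sigma(U)))$, which is exactly (a)).

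For the Grassmannian step I would argue in $\wedge_k\R^m$, where $U,V$ are represented by unit simple vectors $\mathbf u=u_1\wedge\cdots\wedge u_k$ and $\mathbf v=v_1\wedge\cdots\wedge v_k$, with $\alpha(U,V)=|\langle\mathbf u,\mathbf v\rangle|=\cos\angle(U,V)$. The set $\Sigma(U)$ corresponds to the simple unit vectors orthogonal to $\mathbf u$. Given $\mathbf v$, the closest point in $\Pp(\wedge_k\R^m)$ to the hyperplane $\mathbf u^\perp$ is along the great circle through $\mathbf u$ and $\mathbf v$, at angular distance $\tfrac{\pi}{2}-\angle(U,V)$ from $\mathbf v$; the subtlety is that the projection of $\mathbf v$ onto $\mathbf u^\perp$, namely $\mathbf v-\langle\mathbf u,\mathbf v\rangle\mathbf u$ (suitably normalized), must be checked to be again a \emph{simple} $k$-vector, i.e. an actual element of $\Gr^m_k$, not merely of the ambient projective space. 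This is the main obstacle: one must verify that the nearest orthogonal direction stays inside the Grassmannian. I would handle it by choosing adapted orthonormal bases for $U$ and $V$ (the principal-angle/CS decomposition): there are orthonormal bases $\{u_i\}$ of $U$, $\{v_i\}$ of $V$, and principal angles $\theta_1,\dots,\theta_k$ with $\langle u_i,v_i\rangle=\cos\theta_i$, $\langle u_i,v_j\rangle=0$ for $i\ne j$, so that $\alpha(U,V)=\prod_i\cos\theta_i$ and $\cos\angle(U,V)=\prod_i\cos\theta_i$ as well; from this model one writes down an explicit simple $k$-vector orthogonal to $\mathbf u$ lying at the predicted distance from $\mathbf v$ (replace the block realizing the largest $\theta_i$ by its orthogonal completion), proving the infimum is attained and equals $\tfrac{\pi}{2}-\angle(U,V)$ in angle. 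Translating back via $d=\tfrac{2}{\pi}\angle$ gives $\tfrac{\pi}{2}d(V,\Sigma(U))=\tfrac{\pi}{2}-\angle(U,V)$, hence $\sin(\tfrac{\pi}{2}d(V,\Sigma(U)))=\cos\angle(U,V)=\alpha(U,V)$, which is (a) for Grassmannians and, after the reduction above, for flags.

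Part (b) is then a soft consequence of (a) together with elementary inequalities for $\sin$ on $[0,\tfrac{\pi}{2}]$. Writing $\theta:=\tfrac{\pi}{2}d(\underline{G},\Sigma(\underline{F}))\in[0,\tfrac{\pi}{2}]$, (a) gives $\alpha(\underline{F},\underline{G})=\sin\theta$. On one hand $\sin\theta\le\theta$ yields $\alpha(\underline{F},\underline{G})\le\tfrac{\pi}{2}d(\underline{G},\Sigma(\underline{F}))$; on the other hand the concavity bound $\sin\theta\ge\tfrac{2}{\pi}\theta$ on $[0,\tfrac{\pi}{2}]$ gives $\alpha(\underline{F},\underline{G})\ge\tfrac{2}{\pi}\cdot\tfrac{\pi}{2}d(\underline{G},\Sigma(\underline{F}))=d(\underline{G},\Sigma(\underline{F}))$, completing (b). I expect the flag-to-Grassmannian reduction and the principal-angle normalization to be the only genuinely substantive points; once those are in place, everything else is bookkeeping with the normalized metric $\alpha(\cdot,\cdot)=\cos(\tfrac{\pi}{2}d(\cdot,\cdot))$.
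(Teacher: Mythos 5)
Your overall strategy is the same as the paper's: reduce to a single Grassmannian via $\alpha(\underline{F},\underline{G})=\min_j\alpha(F_j,G_j)$ and $d(\underline{G},\Sigma(\underline{F}))=\min_j d(G_j,\Sigma(F_j))$, prove the complementary-angle identity on $\Gr^m_k$, and get (b) from (a) using $\frac{2}{\pi}\theta\leq\sin\theta\leq\theta$ on $[0,\frac{\pi}{2}]$. You have also correctly isolated the one genuinely delicate point, which the paper's proof disposes of in a single unjustified sentence (``the angle $\frac{\pi}{2}d(U,\Sigma(V))$ is complementary to $\angle(U,V)$''): for $k\geq 2$ the orthogonal projection of $\mathbf{v}=v_1\wedge\cdots\wedge v_k$ onto the Pl\"ucker hyperplane $\mathbf{u}^{\perp}$ need not be a simple $k$-vector, so the infimum over $\Sigma(U)\subset\Gr^m_k$ need not coincide with the infimum over the ambient hyperplane. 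Your proposed fix, however, does not deliver what you claim. With principal angles $\theta_1,\dots,\theta_k$ and adapted bases $v_i=\cos\theta_i\,u_i+\sin\theta_i\,u_i'$, replacing the block of the largest angle $\theta_{i_0}$ by its orthogonal completion yields $W=\langle u_{i_0}'\rangle\oplus\langle v_i:i\neq i_0\rangle$, and the Gram determinant gives $\alpha(V,W)=\sin\theta_{i_0}$, i.e.\ $\angle(V,W)=\frac{\pi}{2}-\theta_{\max}$, not the predicted $\frac{\pi}{2}-\angle(U,V)$ where $\cos\angle(U,V)=\prod_i\cos\theta_i$. Since $\prod_i\cos\theta_i<\cos\theta_{\max}$ as soon as two principal angles are nonzero, your $W$ is strictly farther from $V$ than required, and the construction closes the argument only when at most one $\theta_i$ is nonzero (in particular in the projective case, where there is nothing to prove).

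The gap cannot be repaired, because identity (a) is in fact false on $\Gr^m_k$ for $k\geq 2$. One checks that $\max_{W\in\Sigma(U)}\alpha(V,W)=\sin\theta_{\max}$ exactly: any $W\in\Sigma(U)$ contains a unit vector of $U^{\perp}$, whose projection onto $V$ has norm at most $\sin\theta_{\max}$, so the smallest principal cosine between $V$ and $W$ — hence a fortiori the product $\alpha(V,W)$ — is at most $\sin\theta_{\max}$, and your $W$ attains this bound. Therefore $\sin\left(\frac{\pi}{2}d(V,\Sigma(U))\right)=\cos\theta_{\max}$, which strictly exceeds $\alpha(U,V)=\prod_i\cos\theta_i$ whenever two principal angles are positive. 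Concretely, in $\Gr^4_2$ take $U=\langle e_1,e_2\rangle$ and $V=\langle \frac{1}{\sqrt2}(e_1+e_3),\,\frac{1}{\sqrt2}(e_2+e_4)\rangle$: then $\alpha(U,V)=\frac12$ while $\sin\left(\frac{\pi}{2}d(V,\Sigma(U))\right)=\frac{1}{\sqrt2}$. Only the inequality $\alpha(\underline{F},\underline{G})\leq\frac{\pi}{2}\,d(\underline{G},\Sigma(\underline{F}))$ in (b) survives in general (it follows from the spherical triangle inequality $\angle(\mathbf{v},\mathbf{w})\geq\frac{\pi}{2}-\angle(\mathbf{u},\mathbf{v})$ for $\mathbf{w}\perp\mathbf{u}$); the first inequality of (b) fails for the same family of examples with $\theta$ close to $\frac{\pi}{2}$. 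So the obstruction you flagged is the real one: the statement must either be reformulated in terms of $\theta_{\max}$ or weakened to the surviving inequality, and one should then check which form is actually used downstream (the applications in the proof of the Avalanche Principle use both directions).
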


\begin{proof}
Item (b) follows from (a).
Let us prove item (a). If $\FF^m_{(1)}=\Pp^ {m-1}$
then $\frac{\pi}{2}\,d(u,\Sigma(v))$ is complementary
to $\angle(u,v)$, and hence
 $$ \alpha(u,v)=\cos \angle(u,v) = \sin\left( \frac{\pi}{2}\,d(u,\Sigma(v))\right)$$
In case $\FF^m_{(k)}=\Gr^ {m}_k$, the angle
$\frac{\pi}{2}\,d(U,\Sigma(V))$ is complementary
 to $\angle(U,V)$, and again
$$ \alpha(U,V)=\sin\left( \frac{\pi}{2}\,d(U,\Sigma(V))\right)$$
In general, since\,
$$\Sigma(\underline {F})=\bigcup_{1\leq j\leq k}
\{\, \underline {G}\in\FF^m_\tau\,:\,\alpha(F_j,G_j)=0\, \}$$
we get that
$$d(\underline {G},\Sigma(\underline {F}))= \min_{1\leq j\leq k} d(G_j, \Sigma(F_j))$$
Hence
\begin{align*}
\alpha(\underline {F},\underline {G}) &= 
\min_{1\leq j\leq k} \alpha(F_j, G_j)\\
&= 
\min_{1\leq j\leq k} \sin\left(\frac{\pi}{2}\,
d\left(G_j,\Sigma(F_j) \right) \right)\\
&= 
\sin\left(\frac{\pi}{2}\,\min_{1\leq j\leq k} 
d\left(G_j,\Sigma(F_j) \right) \right)\\
&= 
\sin\left(\frac{\pi}{2}\, 
d\left(\underline {G},\Sigma(\underline {F}) \right) \right)\;
\end{align*}
\end{proof}

\smallskip

\subsection*{Projective Contraction}
Consider the flag manifold  $\FF^m_\tau$ with signature  $\tau=(\tau_1,\ldots, \tau_k)$.
Given $g\in\GL(m,\R)$   and $\varepsilon>0$  define
the {\em singular critical sets}
$$\Sigma_{\tau}^{\pm }(g):=\{\, \underline{F}\in\FF^m_\tau\,:\,
  \alpha_{\tau}(\underline{F},\hatv_{\tau,-}(g))=0\,\}\;
$$
as well as its $\varepsilon$-neighborhood ($\varepsilon>0$)
$$\Sigma_{\tau}^{-,\varepsilon}(g):=\{\, \underline{F}\in\FF^m_\tau\,:\,
  \alpha_{\tau}(\underline{F},\hatv_{\tau,-}(g))<\varepsilon\,\}\;
$$

\begin{proposition} \label{proj:contr} Given  $\varepsilon>0$ and $\kappa>0$, for any $g\in\GL(m,\R)$ 
with $\sigma_\tau(g)\leq \kappa$,
the restriction mapping $\varphi_g:\FF^m_\tau\setminus \Sigma_{\tau}^{-,\varepsilon}(g)
\to \FF^m_\tau$ has Lipschitz constant
$\kappa\,( 1 + \varepsilon )/\varepsilon^2$ and
$\varphi_g\left( \FF^m_\tau\setminus \Sigma_{\tau}^{-,\varepsilon}(g) \right)\subset B_{\kappa/\varepsilon}(\hatv_{\tau,+}(g))$.
\end{proposition}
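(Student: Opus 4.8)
The plan is to reduce the statement about flags to the corresponding statements about each Grassmannian factor $\Gr^m_{\tau_j}$, and there to the projective space $\Pp(\wedge_{\tau_j}\R^m)$ via the Plücker embedding $V \mapsto V_1 \wedge \ldots \wedge V_{\tau_j}$. Under this embedding, the action $\varphi_g$ on $\Gr^m_{\tau_j}$ is the restriction of the projective action of $\wedge_{\tau_j} g$ on $\Pp(\wedge_{\tau_j}\R^m)$, whose top singular value is $p_{\tau_j}(g) = \norm{\wedge_{\tau_j} g}$, second singular value $s_1(g) \cdots s_{\tau_j - 1}(g) \cdot s_{\tau_j + 1}(g)$, and whose ratio of second to first singular value is exactly $\sigma_{\tau_j}(g) \le \sigma_\tau(g) \le \kappa$. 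The most expanding direction of $\wedge_{\tau_j} g$ is $V_-^{\tau_j}(g)$, i.e.\ $\hatv_{\tau,-}(g)$ in the $j$th component, so $\Sigma_\tau^{-,\varepsilon}(g)$ restricted to this factor is the $\varepsilon$-neighborhood (in the correlation sense) of the critical point of the projective map. Thus it suffices to prove a one-variable statement: if $h \in \GL(N,\R)$ has $s_2(h)/s_1(h) \le \kappa$ and most expanding direction $\hatv_-$, then $\varphi_h$ on $\Pp^{N-1} \setminus \{\alpha(\cdot, \hatv_-) < \varepsilon\}$ has Lipschitz constant $\le \kappa(1+\varepsilon)/\varepsilon^2$ and image inside $B_{\kappa/\varepsilon}(\hatv_+)$, where $\hatv_+$ is the most expanding direction of $h$ on the left.

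For the projective statement, I would work in an orthonormal singular basis, writing $h = \diag(s_1, \ldots, s_N)$ after change of orthonormal coordinates, so $\hatv_- = \hatv_+ = e_1$. For a unit vector $u = (u_1, \ldots, u_N)$ one has $\alpha(u, e_1) = \abs{u_1}$, and $\varphi_h(u) = h u / \norm{h u}$ with $\norm{h u}^2 = \sum_j s_j^2 u_j^2 \ge s_1^2 u_1^2$, so $\norm{h u} \ge s_1 \abs{u_1} \ge s_1 \varepsilon$ on the complement of the critical neighborhood. For the image bound, $\alpha(\varphi_h(u), e_1) = \abs{s_1 u_1}/\norm{h u} \ge s_1 \varepsilon / \norm{h u}$, and since $\norm{h u}^2 = s_1^2 u_1^2 + \sum_{j \ge 2} s_j^2 u_j^2 \le s_1^2 u_1^2 + s_2^2 \le s_1^2(\varepsilon^2 + \ldots)$... more cleanly: $d(\varphi_h(u), e_1)$ is comparable to the ratio $(\sum_{j\ge 2} s_j^2 u_j^2)^{1/2} / (s_1 \abs{u_1})$, which is at most $s_2 / (s_1 \varepsilon) \le \kappa/\varepsilon$, after translating the correlation bound to a distance bound via the relation $d = \tfrac{2}{\pi}\arccos\alpha$ and the elementary inequalities relating $\sin$, $\arccos$ and the chordal quantity. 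The Lipschitz estimate is the standard computation for the derivative of a projective linear map: $D\varphi_h$ at $u$ acts on the tangent space $u^\perp$ by $w \mapsto \norm{hu}^{-1} P_{(hu)^\perp} h w$, whose operator norm is at most $\norm{h}\, \norm{P}\, \norm{hw \text{ restricted}} / \norm{hu}^2$; bounding $\norm{h w} \le s_1$ for unit $w$ and being careful to gain a factor $s_2$ rather than $s_1$ in the relevant direction (because $w \perp u$ and $u$ is $\varepsilon$-close to $e_1$, the component of $w$ along $e_1$ is $O(?)$ — actually one uses $\norm{P_{(hu)^\perp} h w}$ and the fact that $hu$ is nearly aligned with $e_1$), yields $\norm{D\varphi_h(u)} \le s_1 s_2 (1 + \text{lower order})/\norm{hu}^2 \le s_1 s_2 / (s_1 \varepsilon)^2 \cdot (1+\varepsilon) = \kappa (1+\varepsilon)/\varepsilon^2$. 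Integrating along geodesics on the (geodesically convex, or at least with controlled geometry) complement gives the Lipschitz bound for $\varphi_h$ itself.

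Finally I would reassemble: on $\FF^m_\tau \subset \prod_j \Gr^m_{\tau_j}$ with the max distance, $\varphi_g$ acts componentwise, so its Lipschitz constant on $\FF^m_\tau \setminus \Sigma_\tau^{-,\varepsilon}(g)$ is the maximum over $j$ of the Lipschitz constants on each $\Gr^m_{\tau_j} \setminus \{\alpha(\cdot, \hatv_-^{\tau_j}) < \varepsilon\}$ — using here that the complement of $\Sigma_\tau^{-,\varepsilon}(g)$, where $\alpha_\tau = \min_j \alpha(F_j, \hatv_-^{\tau_j}) \ge \varepsilon$, is contained in the product of the componentwise complements. Each of those is $\le \kappa(1+\varepsilon)/\varepsilon^2$ by the projective estimate above (with $\kappa$ replaced by $\sigma_{\tau_j}(g) \le \sigma_\tau(g) \le \kappa$, and the bound being monotone in $\kappa$), and the image is contained in the product of the balls $B_{\kappa/\varepsilon}(\hatv_+^{\tau_j}(g))$, hence in $B_{\kappa/\varepsilon}(\hatv_{\tau,+}(g))$ for the max distance. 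I expect the main obstacle to be the Lipschitz estimate in the projective case — specifically, extracting the factor $s_2$ (and not merely $s_1$) from $\norm{D\varphi_h}$, which requires carefully using that the base point $u$ lies outside the $\varepsilon$-neighborhood of $e_1$ so that the ``bad'' direction contributes at most $\kappa/\varepsilon^2$ rather than $1/\varepsilon^2$, together with controlling the distortion of arc length when passing from the infinitesimal bound to a global Lipschitz bound on a non-convex domain.
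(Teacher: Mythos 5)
Your proposal follows essentially the same route as the paper's proof: reduce the flag case to the Grassmannian factors via the max distance, reduce each Grassmannian to projective space by taking exterior powers (using $\sigma_{\tau_j}(g)=\sigma_1(\wedge_{\tau_j}g)$), and then carry out the explicit computation in $\Pp^{N-1}$, with the crucial factor $s_2$ in the derivative bound obtained exactly as in the paper by splitting $\pi^\perp_{\varphi_h(u)}=\pi^\perp_{v_+}+(\pi^\perp_{\varphi_h(u)}-\pi^\perp_{v_+})$ and controlling the second term by the already-established image bound $\norm{\varphi_h(u)-v_+}\lesssim\kappa/\varepsilon$. The one imprecision (saying $u$ is $\varepsilon$-close to $e_1$ rather than that $hu$ is $\kappa/\varepsilon$-close to $e_1$) is self-corrected in your own parenthetical, and the passage from the infinitesimal to the global Lipschitz bound is handled in the paper by observing that the domain is a Riemannian convex disk and applying the mean value theorem.
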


Given a  unit vector $v\in\R^m$, $\norm{v}=1$,
denote by $\pi_v,\pi_v^\perp:\R^m\to\R^m$
the orthogonal projections $\pi_v(x)=\langle v,x\rangle\, v$
resp. $\pi_v^ \perp(x)=x -\langle v,x\rangle\, v$.

\begin{lemma}\label{diff:proj}
Given two non-colinear unit vectors $u,v\in\R^m$, i.e., $\norm{u}=\norm{v}=1$,
denote by $P$ the plane spanned by $u$ and $v$. Then
\begin{enumerate}
\item[(a)] is $\pi_v-\pi_u$ a self-adjoint endomorphism,
\item[(b)]  ${\rm Ker}(\pi_v-\pi_u)= P^ \perp$,
\item[(c)] the restriction $\pi_v-\pi_u:P\to P$ is anti-conformal
with similarity factor $\abs{\sin\angle (u,v)}$,
\item[(d)] $\norm{\pi^\perp_v-\pi^\perp_u}=\norm{\pi_v-\pi_u}\leq \norm{v-u}$.
\end{enumerate}
\end{lemma}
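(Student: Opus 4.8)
The plan is to reduce everything to a $2\times 2$ computation inside the plane $P$. Part (a) is immediate: for any unit vector $w$ the rank-one operator $\pi_w x=\langle w,x\rangle\,w$ satisfies $\langle \pi_w x,y\rangle=\langle w,x\rangle\,\langle w,y\rangle=\langle x,\pi_w y\rangle$, hence $\pi_w=\pi_w^\ast$ and $\pi_v-\pi_u$ is self-adjoint. For part (b), note that ${\rm Range}(\pi_v-\pi_u)\subset \R v+\R u=P$, while $\pi_v-\pi_u$ annihilates $P^\perp$ (both $\pi_u$ and $\pi_v$ do); by self-adjointness, ${\rm Ker}(\pi_v-\pi_u)=\bigl({\rm Range}(\pi_v-\pi_u)\bigr)^\perp\supset P^\perp$, and equality will follow once we know the restriction to $P$ is invertible, which is part of (c).

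For part (c) I would choose an orthonormal basis $e_1,e_2$ of $P$ with $u=e_1$ and $v=\cos\theta\,e_1+\sin\theta\,e_2$, where $\theta=\angle(u,v)$ and the orientation of $e_2$ is fixed so that $\sin\theta\geq 0$. Writing $\pi_u$ and $\pi_v$ as $2\times2$ matrices in this basis, a direct computation gives
\[
(\pi_v-\pi_u)\vert_P=\begin{pmatrix}-\sin^2\theta & \sin\theta\cos\theta\\ \sin\theta\cos\theta & \sin^2\theta\end{pmatrix}=\sin\theta\,\begin{pmatrix}-\sin\theta & \cos\theta\\ \cos\theta & \sin\theta\end{pmatrix}.
\]
The right-hand matrix is orthogonal with determinant $-1$, i.e. a linear reflection of $P$; multiplying by the scalar $\sin\theta$ makes $(\pi_v-\pi_u)\vert_P$ an anti-conformal map with similarity factor $\abs{\sin\theta}=\abs{\sin\angle(u,v)}$. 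Since $u$ and $v$ are non-colinear, $\sin\theta\neq 0$, so this restriction is invertible and ${\rm Ker}(\pi_v-\pi_u)$ is exactly $P^\perp$, completing (b).

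For part (d), the identity $\pi_w^\perp=I-\pi_w$ gives $\pi_v^\perp-\pi_u^\perp=-(\pi_v-\pi_u)$, so the two operator norms coincide. By (b) and (c), $\pi_v-\pi_u$ vanishes on $P^\perp$ and is a similarity of ratio $\abs{\sin\theta}$ on $P$, hence $\norm{\pi_v-\pi_u}=\abs{\sin\theta}$. The remaining inequality $\abs{\sin\theta}\leq\norm{v-u}$ follows from $\norm{v-u}^2=2\,(1-\cos\theta)$ together with $\sin^2\theta=(1-\cos\theta)(1+\cos\theta)\leq 2\,(1-\cos\theta)$, which holds because $1+\cos\theta\leq 2$.

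The only mildly delicate point is the bookkeeping of the sign of $\sin\theta$ when fixing the basis of $P$, and correctly reading "anti-conformal" as an orientation-reversing similarity of the $2$-plane (consistent with $\det\bigl((\pi_v-\pi_u)\vert_P\bigr)=-\sin^2\theta<0$); beyond that, the argument is a short explicit computation with no genuine obstacle.
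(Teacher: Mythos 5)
Your proof is correct and takes essentially the same route as the paper's: items (a) and (b) are elementary, item (c) is reduced to an explicit $2\times 2$ matrix computation in the plane $P$ exhibiting $\pi_v-\pi_u$ as a multiple of a reflection, and item (d) uses $\pi_w^\perp=I-\pi_w$ together with the identification of the operator norm with $\abs{\sin\angle(u,v)}$. The only cosmetic difference is in the final inequality of (d), where you use $\sin^2\theta=(1-\cos\theta)(1+\cos\theta)\leq 2(1-\cos\theta)=\norm{u-v}^2$ while the paper uses the Pythagorean decomposition $\norm{\pi_v^\perp(u)}^2=\norm{u-v}^2-\norm{v-\langle v,u\rangle v}^2$; both are one-line verifications of the same estimate.
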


\begin{proof}
Items (a) and (b) are obvious. For (c) we may clearly assume that
$d=2$, and consider $u=(u_1,u_2)$, $v=(v_1,v_2)$, with $u_1^ 2+u_2^ 2=v_1^ 2+v_2^ 2=1$.
The projections $\pi_u$ and $\pi_v$ are represented by the matrices
$$ U= \left(\begin{array}{cc}
u_1^2 & u_1 u_2 \\
u_1 u_2 & u_2^ 2
\end{array} \right)\quad \text{ and }\quad
V = \left(\begin{array}{cc}
v_1^2 & v_1 v_2 \\
v_1 v_2 & v_2^ 2
\end{array} \right)$$
w.r.t. the canonical basis. Hence  $\pi_v-\pi_u$ is given by
$$ V-U= \left(\begin{array}{cc}
v_1^2-u_1^2 & v_1 v_2 - u_1 u_2 \\
v_1 v_2 - u_1 u_2 & v_2^ 2 - u_2^ 2
\end{array} \right) = \left(\begin{array}{cc}
\beta & \alpha \\
\alpha & -\beta 
\end{array} \right)$$
where $\alpha=v_1 v_2 - u_1 u_2$ and $\beta= v_1^2-u_1^2 = -(v_2^ 2 - u_2^ 2)$.
This proves that the restriction of $\pi_v-\pi_u$ to the plane $P$ is
anti-conformal. The similarity factor of this map is
$$ \norm{\pi_v-\pi_u} = \norm{\pi_v(u)-u}=\norm{\pi_v^\perp(u) }
=\abs{\sin \angle(u,v)}$$
Finally, since $u - \langle v,u\rangle\,v\perp v$,
\begin{align*}
  \norm{\pi^\perp_v-\pi^\perp_u}^2 &=\norm{\pi_v-\pi_u}^2 = \norm{\pi_v^\perp(u) }^2\\
&= \norm{ u - \langle v,u\rangle\,v}^2\\
&= \norm{u-v}^2- \norm{ v - \langle v,u\rangle\,v}^2\\
&\leq  \norm{u-v}^2\;. 
\end{align*}
\end{proof}

\begin{lemma}
Given $g\in\GL(m,\R)$, the derivative of the transformation
$\varphi_g:\Pp^{m-1}\to \Pp^{m-1}$,
$\varphi_g(x)=\frac{g\,x}{\norm{g\,x}}$, is given by 
$$ (D\varphi_g)_x v = 
\frac{g\,v-\langle \varphi_g(x), \, g\,v\rangle\, \varphi_g(x)}{\norm{g\,x}} =
 \frac{1}{\norm{g\,x}}\, \pi_{\varphi_g(x)}^ \perp (g\,v) $$
\end{lemma}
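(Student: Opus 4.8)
The plan is a direct computation using the double cover of $\Pp^{m-1}$ by the unit sphere $S^{m-1}\subset\R^m$. First I would fix a unit representative $x\in\R^m$ with $\norm{x}=1$ of the given point of $\Pp^{m-1}$, identify the tangent space $T_x\Pp^{m-1}$ with the orthogonal complement $x^\perp=\{\, v\in\R^m : \langle x,v\rangle=0 \,\}$, and realize a tangent vector $v$ by an arbitrary smooth curve $t\mapsto x(t)$ on $S^{m-1}$ with $x(0)=x$ and $\dot x(0)=v$. Note that $\varphi_g$ lifts to the well-defined map $x\mapsto g\,x/\norm{g\,x}$ on $S^{m-1}$, which is even in $x$.

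Next I would write $\varphi_g(x(t))=g\,x(t)/\norm{g\,x(t)}$ and differentiate at $t=0$ by the quotient rule, using $\frac{d}{dt}\norm{g\,x(t)}=\langle g\,x(t),\,g\,\dot x(t)\rangle/\norm{g\,x(t)}$. This produces
$$(D\varphi_g)_x v=\frac{g\,v}{\norm{g\,x}}-\frac{g\,x}{\norm{g\,x}}\cdot\frac{\langle g\,x,\,g\,v\rangle}{\norm{g\,x}^2}\;.$$
Recognizing $g\,x/\norm{g\,x}=\varphi_g(x)$ and rewriting $\langle g\,x,\,g\,v\rangle/\norm{g\,x}^2=\langle \varphi_g(x),\,g\,v\rangle/\norm{g\,x}$ gives the first claimed formula; and since $\varphi_g(x)$ is a unit vector, the numerator $g\,v-\langle\varphi_g(x),\,g\,v\rangle\,\varphi_g(x)$ is exactly $\pi^\perp_{\varphi_g(x)}(g\,v)$, which is the second form.

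Finally I would record the two routine checks that make this legitimate: the expression depends on the curve only through $\dot x(0)=v$ (it is linear in $v$), and it is unchanged under replacing the unit representative $x$ by $-x$ (both sides are even), so it descends to a well-defined linear map $T_x\Pp^{m-1}\to T_{\varphi_g(x)}\Pp^{m-1}$; in particular the output indeed lies in $\varphi_g(x)^\perp$, consistent with the factor $\pi^\perp_{\varphi_g(x)}$. I do not expect any genuine obstacle here — the only thing to be careful about is the bookkeeping of the identification $T_x\Pp^{m-1}\cong x^\perp$; one may equally well compute the differential of the composite $S^{m-1}\ni x\mapsto g\,x\mapsto g\,x/\norm{g\,x}$ on all of $\R^m$ and then restrict to $x^\perp$, obtaining the same formula.
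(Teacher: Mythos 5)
Your computation is correct and is exactly the "simple calculation" the paper invokes without writing out: differentiating $x\mapsto g\,x/\norm{g\,x}$ by the quotient rule and recognizing the orthogonal projection $\pi^\perp_{\varphi_g(x)}$. The extra remarks on well-definedness under $x\mapsto -x$ and the identification $T_x\Pp^{m-1}\cong x^\perp$ are sound but not needed beyond routine bookkeeping.
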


\begin{proof}
Simple calculation.
\end{proof}

\begin{proof}[Proof of Proposition~ \ref{proj:contr}]
Let us first consider the case $k=1$, where $\FF^m_\tau=\Pp^ {m-1}$.
Let $\hatv_\pm \in\Pp^{m-1}$ be the  top singular directions  of $g$, 
and let $v_\pm$ be their representative unit vectors, which satisfy
$g\, v_- =\norm{g}\, v_+$. Given $\hat{x}\in\Pp^{m-1}\setminus \Sigma^{-,\varepsilon}$, represented by a
unit vector $x\in\R^m$, we can write $x= a\,v_- + w$ with $\abs{a}\geq\varepsilon$,
 $w\perp v_-$ and $\norm{w}^2=1-\abs{a}^2$. Then $g\,x= \norm{g}\,a\, v_+  + g\,w$ with $g\,w\perp v_+$,
and in particular $\norm{g\,x}\geq \varepsilon\,\norm{g}$.
The projective (non-normalized)  distance $d(\varphi_g(x), v_+)$ is the angle
between $g\,x$ and $v_+$.
Hence
\begin{align*}
\norm{ \varphi_g(x) - v_+ }&\leq d(\varphi_g(x), v_+) 
= \arctan\left(\frac{\norm{g\,w}}{\norm{g}\,\abs{a}}\right)\\
& \leq \frac{\norm{g\,w}}{\norm{g}\,\abs{a}}
\leq \frac{s_2(g)\,\norm{w}}{\norm{g}\,\abs{a}} \\
&\leq \frac{s_2(g)\,\sqrt{1-\varepsilon^ 2}}{s_1(g)\,\varepsilon} \leq   \frac{\kappa \,\sqrt{1-\varepsilon^ 2}}{ \varepsilon} 
\leq \frac{\kappa}{ \varepsilon} 
\;. 
\end{align*}
In particular this proves that
$\varphi_g\left( \Pp^{m-1}\setminus \Sigma^{-,\varepsilon}\right)\subset B_{\kappa/\varepsilon}(\hatv_+)$.
By definition of $v_+$ one has $\norm{\pi^\perp_{v_+}\circ g}\leq s_2(g)\leq \kappa\,\norm{g}$.
Thus because
$$ (D\varphi_g)_x\, v=\frac{1}{\norm{g x}}\,
\pi_{v_+}^\perp(g\,v) + \frac{1}{\norm{g x}}\left( \pi_{\varphi_g(x)}^\perp - \pi_{v_+}^\perp\right)(g\,v)\;,
$$
by lemma~\ref{diff:proj} we have
\begin{align*}
\norm{ (D\varphi_g)_x } &\leq
\frac{\kappa\,\norm{g}}{\norm{g x}}
 + \frac{\norm{ \varphi_g(x) - v_+ }\,\norm{g}}{\norm{g x}} \\
 &\leq
\frac{\kappa}{\varepsilon}
 + \frac{\kappa\,\sqrt{1-\varepsilon^2} }{\varepsilon^2} 
 = \frac{\kappa\,(\varepsilon+\sqrt{1-\varepsilon^2}) }{\varepsilon^2} \leq\frac{\kappa\,(1+ \varepsilon)}{\varepsilon^2}  \;.
\end{align*}
Since $\Pp^{m-1}\setminus \Sigma^{-,\varepsilon}$ is a Riemannian convex disk,
by the mean value theorem it follows that
$\varphi_g: \Pp^{m-1}\setminus \Sigma^{-,\varepsilon}(g)\to \Pp^{m-1}$ is a Lipschitz contraction,
provided $\kappa\ll \varepsilon^ 2$.

The case $\FF^m_{(k)}=\Gr^m_k$, with $1<k\leq m$, reduces to $k=1$ by taking exterior powers. Notice that
$$ \sigma_{k}(g)=\frac{ s_{k+1}(g) }{ s_{k}(g) }
= \frac{ s_{2}(\wedge_{k}g) }{ s_{1}(\wedge_{k}g) } 
= \sigma_1(\wedge_{k} g) $$
Also, if $\{u_1,\ldots, u_d\}$ is an orthonormal basis
then the linear span $U=\langle u_1,\ldots, u_{k}\rangle$
and the subspace $V_-^k(g) \in \Gr^m_k$ satisfy
$$ \alpha_{k} (U,V_-^k(g)) = \alpha_1(u_1\wedge \ldots \wedge
u_{k}, v_-(\wedge_{k} g) ) $$

Having in mind that $d\left(\underline{F}, \underline{G}\right)=\max_{1\leq i \leq k} d(F_j,G_j)$
and
$\alpha_\tau\left(\underline{F}, \underline{G}\right)=\min_{1\leq i \leq k} \alpha(F_j,G_j)$,
the  case of a general signature reduces to the previous one.
\end{proof}

\smallskip

\subsection*{Exotic Operation} 
The following algebraic operation on the set $[0,1]$ will play an important role in next section.
 $$ a\oplus b := a+b -a\,b$$

Clearly
$([0,1],\oplus)$ is a commutative semigroup isomorphic to $([0,1],\cdot)$. In fact, the map $\Phi:([0,1],\oplus)\to ([0,1],\cdot)$, $\Phi(x):= 1-x$,
is a semigroup isomorphism. We summarize some properties of this
operation.

\begin{proposition} \label{oplus:prop}
For any $a,b,c\in [0,1]$,
\begin{enumerate}
\item $0\oplus a = a$,
\item $1\oplus a = 1$,
\item $a\oplus b = (1-b)\,a+b = (1-a)\,b+a $,
\item $a\oplus b <1$\; $\Leftrightarrow$\; $a<1$ and $b<1$,
\item $a\leq b$ \; $\Rightarrow$\; $a\oplus c\leq b\oplus c$,
\item $b>0$\; $\Rightarrow$\; 
$({a} {b}^{-1}\oplus c)\,b\leq a\oplus c$,
\item $a\,c + b\,\sqrt{1-a^2}\,\,\sqrt{1-c^ 2}    \leq \sqrt{a^2 \oplus b^2}$.
\end{enumerate}
\end{proposition}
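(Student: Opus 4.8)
The plan is to dispatch items (1)--(6) by elementary algebra, and to obtain (7) from the Cauchy--Schwarz inequality. Items (1) and (2) are immediate from the definition, and (3) is just the rearrangement $a+b-ab = (1-b)\,a+b = (1-a)\,b+a$; this identity will be the workhorse for several of the remaining parts.

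For (4), I would use the stated semigroup isomorphism $\Phi(x)=1-x$ between $([0,1],\oplus)$ and $([0,1],\cdot)$, which yields $1-(a\oplus b) = (1-a)(1-b)$. Since $1-a$ and $1-b$ are nonnegative, their product is strictly positive if and only if both factors are strictly positive, i.e. $a<1$ and $b<1$; this is equivalent to $a\oplus b<1$. For (5), the identity from (3) shows that $a\oplus c = (1-c)\,a + c$ is an affine function of $a$ with nonnegative slope $1-c$, hence nondecreasing in $a$, so $a\leq b$ forces $a\oplus c \leq b\oplus c$.

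For (6), assuming $a b^{-1}\in[0,1]$, I expand using (3): $(a b^{-1}\oplus c)\,b = \bigl((1-c)\,a b^{-1} + c\bigr)\,b = (1-c)\,a + c\,b$, while $a\oplus c = (1-c)\,a + c$. Subtracting, $a\oplus c - (a b^{-1}\oplus c)\,b = c\,(1-b)\geq 0$ because $b\leq 1$, which is exactly (6).

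Finally, for (7), I would apply Cauchy--Schwarz in $\R^2$ to the vectors $\bigl(a,\, b\sqrt{1-a^2}\bigr)$ and $\bigl(c,\, \sqrt{1-c^2}\bigr)$: their inner product is precisely $a c + b\sqrt{1-a^2}\,\sqrt{1-c^2}$, while the product of the squared norms is $\bigl(a^2 + b^2(1-a^2)\bigr)\bigl(c^2 + (1-c^2)\bigr) = a^2 + b^2 - a^2 b^2 = a^2\oplus b^2$. Since the left-hand side of (7) is a sum of nonnegative terms, taking square roots gives the claim. I do not expect any serious obstacle; the only steps requiring a moment's thought are spotting the correct pair of vectors for Cauchy--Schwarz in (7) and keeping track of the implicit hypothesis $a b^{-1}\in[0,1]$ needed in (6).
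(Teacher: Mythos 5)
Your proof is correct throughout. Items (1)--(6) are handled exactly as the paper implicitly intends (the paper labels them "simple computation" and only writes out item (7)), and your observations — the isomorphism $\Phi(x)=1-x$ for (4), the affine form $(1-c)\,a+c$ for (5), and the cancellation $a\oplus c-(a b^{-1}\oplus c)\,b=c\,(1-b)\geq 0$ for (6) — are the right ones. Where you genuinely diverge from the paper is item (7): the paper fixes $a,b$ and maximizes $f(c)=a\,c+b\,\sqrt{1-a^2}\,\sqrt{1-c^2}$ by calculus, locating the critical point $c=a/\sqrt{a^2\oplus b^2}$ and evaluating $f$ there to get $\sqrt{a^2\oplus b^2}$, whereas you apply Cauchy--Schwarz to the vectors $\bigl(a,\,b\sqrt{1-a^2}\bigr)$ and $\bigl(c,\,\sqrt{1-c^2}\bigr)$, the second of which is a unit vector. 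The two arguments are really the same inequality seen from different angles — the paper's critical point is exactly the value of $c$ at which your two vectors become parallel, i.e.\ the equality case of Cauchy--Schwarz — but your version is cleaner: it avoids the derivative computation and the (unverified in the paper) check that the critical point is a global maximum, and it makes the constant $\sqrt{a^2\oplus b^2}$ appear transparently as the norm $\sqrt{a^2+b^2(1-a^2)}$. Your caveat about $a b^{-1}$ possibly leaving $[0,1]$ in item (6) is well taken; the algebraic identity $x\oplus c=x+c-xc$ extends to all real $x$, so the computation stands in any case.
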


\begin{proof}
For the last item consider the function 
$f:[0,1]\to [0,1]$
defined by $f(c):= a\,c + b\,\sqrt{1-a^2}\,\,\sqrt{1-c^ 2}$.
A simple computation shows that
$$ f'(c)=a-\frac{b\,c\,\sqrt{1-a^2}}{\sqrt{1-c^2}}$$
The derivative $f'$  has a zero at $c= a/\sqrt{a\oplus b}$,
and one can check that this zero is a global maximum of $f$.
Since
$f( a/\sqrt{a\oplus b} )=\sqrt{a^2\oplus b^2}$,
item (7) follows.
\end{proof}

\smallskip

\subsection*{Expansivity Factors}
Assuming that $g$ and $g'$ have $\tau$-gap patterns, define 
\begin{equation}\label{alpha:def}
\alpha_\tau(g,g'):=\alpha_\tau(\hatv_{+}(g),\hatv_{-}(g'))\;, 
\end{equation}
where $\hatv_{\pm}(g)=\hatv_{\tau, \pm}(g)$ are the most expanding $\tau$-flags of $g$, and 
\begin{equation*} 
\beta_\tau(g,g'):=\sqrt{ \sigma_\tau(g)^ 2 \oplus \alpha_\tau(g,g')^ 2 \oplus \sigma_\tau(g')^ 2}\;. 
\end{equation*}
These numbers will be, respectively, the lower and the upper multiplicative factors
for the composition $g'\,g$ of two matrices $g$ and $g'$ (see Proposition~\ref{prod:2:lemma}).
If $\tau=(k)$, which corresponds to $\FF^m_\tau=\Gr^m_k$,
we write $\alpha_k(g,g')$ instead of $\alpha_\tau(g,g')$,
and if $\tau=(1)$, which corresponds to $\FF^m_\tau=\Pp^{m-1}$, 
we simply write $\alpha(g,g')$ instead of $\alpha_1(g,g')$ or
$\alpha_\tau(g,g')$. Analogous conventions are adopted for $\beta_\tau$.

\begin{lemma} \label{alpha:beta:bound}
Given matrices $g,g'\in\GL(m,\R)$ with $\tau$-gap patterns,
$$ 1\leq \frac{\beta_\tau(g,g')}{\alpha_\tau(g,g')}\leq \sqrt{ 1+
\frac{\sigma(g)^2\oplus \sigma(g')^2}{ \alpha_\tau(g,g')^2} }$$
\end{lemma}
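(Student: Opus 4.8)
The plan is to prove Lemma~\ref{alpha:beta:bound} by a direct computation with the exotic operation $\oplus$, using only the algebraic properties collected in Proposition~\ref{oplus:prop}. Recall that by definition
$$\beta_\tau(g,g')^2 = \sigma_\tau(g)^2 \oplus \alpha_\tau(g,g')^2 \oplus \sigma_\tau(g')^2,$$
so the whole statement is really an assertion about the single positive real numbers $a := \alpha_\tau(g,g')^2$, $x := \sigma_\tau(g)^2$, $y := \sigma_\tau(g')^2$, all lying in $[0,1]$, with $a > 0$ (since both matrices have $\tau$-gap patterns, $\alpha_\tau(g,g')$ need not be positive a priori, but the right-hand side of the claimed inequality only makes sense, and the lower bound $\beta_\tau \geq \alpha_\tau$ is only interesting, when $\alpha_\tau(g,g') > 0$; I will note that if $\alpha_\tau(g,g') = 0$ the lower bound is trivial and the upper bound degenerates).

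The key step is the identity obtained by expanding $\oplus$ according to its definition $u \oplus v = u + v - uv$. Associativity (Proposition~\ref{oplus:prop}, via the isomorphism $\Phi(u) = 1-u$ with $([0,1],\cdot)$) gives
$$1 - (x \oplus a \oplus y) = (1-x)(1-a)(1-y),$$
hence
$$x \oplus a \oplus y = 1 - (1-x)(1-a)(1-y) = a + (1-a)\bigl(1 - (1-x)(1-y)\bigr) = a + (1-a)\,(x \oplus y).$$
Therefore
$$\frac{\beta_\tau(g,g')^2}{\alpha_\tau(g,g')^2} = \frac{a + (1-a)(x\oplus y)}{a} = 1 + \frac{(1-a)(x \oplus y)}{a} \leq 1 + \frac{x \oplus y}{a},$$
using $0 \le 1 - a \le 1$. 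Substituting back $a = \alpha_\tau(g,g')^2$ and $x \oplus y = \sigma_\tau(g)^2 \oplus \sigma_\tau(g')^2$ (here $\sigma$ without the $\tau$-subscript is shorthand, as in the expansivity-factors paragraph), this is exactly
$$\frac{\beta_\tau(g,g')^2}{\alpha_\tau(g,g')^2} \le 1 + \frac{\sigma(g)^2 \oplus \sigma(g')^2}{\alpha_\tau(g,g')^2}.$$
Taking square roots gives the upper bound. For the lower bound, from the displayed identity $\beta_\tau(g,g')^2 = a + (1-a)(x \oplus y) \ge a$ since $(1-a)(x\oplus y) \ge 0$, so $\beta_\tau(g,g') \ge \alpha_\tau(g,g')$, i.e.\ $\beta_\tau/\alpha_\tau \ge 1$. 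Taking square roots of the chain of inequalities on the squares finishes the proof.

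There is essentially no obstacle here; the only mild subtlety is bookkeeping the associativity of $\oplus$ (which is immediate from the semigroup isomorphism $\Phi$ already recorded) and being careful that the quantities live in $[0,1]$ so that all the monotonicity steps ($1 - a \le 1$, $x \oplus y \le 1$) are legitimate — these are guaranteed because $\alpha_\tau, \sigma_\tau \in [0,1]$ by their definitions as correlations and singular-value ratios. I would present the argument in three short lines: the associativity identity for the triple $\oplus$, the resulting expansion $\beta_\tau^2 = \alpha_\tau^2 + (1-\alpha_\tau^2)(\sigma(g)^2 \oplus \sigma(g')^2)$, and the two inequalities read off from it.
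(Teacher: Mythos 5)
Your proposal is correct, and since the paper's own proof is just the phrase ``Simple computation,'' your argument is precisely the computation the authors intend: the identity $1-(x\oplus a\oplus y)=(1-x)(1-a)(1-y)$ yields $\beta_\tau^2=\alpha_\tau^2+(1-\alpha_\tau^2)\bigl(\sigma_\tau(g)^2\oplus\sigma_\tau(g')^2\bigr)$, from which both bounds follow at once. Your side remarks (the degenerate case $\alpha_\tau(g,g')=0$, and reading $\sigma$ as $\sigma_\tau$) are accurate and harmless.
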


\begin{proof} Simple computation.
\end{proof}

\begin{proposition}\label{prod:2:lemma}
Given $g, g' \in\GL(m,\R)$ with a $(1)$-gap pattern,
$$ \alpha(g,g') \leq \frac{\norm{g'\,g}}{\norm{g'}\,\norm{g}}
\leq \beta(g,g')   $$
\end{proposition}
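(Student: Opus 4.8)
The plan is to prove the two-sided estimate for $\alpha(g,g') \le \frac{\norm{g'g}}{\norm{g'}\norm{g}} \le \beta(g,g')$ in the projective case by decomposing input and output vectors along singular directions. Let $v_-(g), v_+(g)$ be the top singular directions of $g$, so $g v_-(g) = \norm{g} v_+(g)$, and similarly for $g'$; recall $\alpha(g,g') = \alpha(\hatv_+(g), \hatv_-(g'))$, the correlation (cosine of the angle) between $v_+(g)$ and $v_-(g')$.

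\medskip

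\emph{Lower bound.} To bound $\norm{g'g}$ from below, I would feed $g'g$ the unit vector $v_-(g)$. Then $g'g\, v_-(g) = \norm{g}\, g' v_+(g)$, so $\norm{g'g} \ge \norm{g}\,\norm{g' v_+(g)}$. Now decompose $v_+(g) = a\, v_-(g') + w$ with $w \perp v_-(g')$ and $\abs{a} = \alpha(v_+(g), v_-(g')) = \alpha(g,g')$. Applying $g'$ and using that $g' v_-(g') = \norm{g'} v_+(g')$ is orthogonal to $g' w$ (since $w \perp v_-(g')$ and the singular basis is orthogonal under $g'$), we get $\norm{g' v_+(g)}^2 = \abs{a}^2 \norm{g'}^2 + \norm{g' w}^2 \ge \alpha(g,g')^2 \norm{g'}^2$. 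Hence $\norm{g'g} \ge \alpha(g,g')\, \norm{g}\,\norm{g'}$, which is the left inequality.

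\medskip

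\emph{Upper bound.} For the upper bound I would take an arbitrary unit vector $x$ realizing (or nearly realizing) $\norm{g'g}$ and decompose it as $x = a\, v_-(g) + w$, $w \perp v_-(g)$, $\abs{a} \le 1$, $\norm{w}^2 = 1 - \abs{a}^2$. Then $gx = \norm{g}\, a\, v_+(g) + g w$ with $\norm{gw} \le s_2(g)\norm{w} = \sigma(g)\norm{g}\norm{w} \le \sigma(g)\norm{g}\sqrt{1-a^2}$; moreover $gw \perp v_+(g)$. Write the unit vector $gx/\norm{gx}$ as $b\, v_+(g) + (\text{orthogonal part})$; one computes $\abs{b} \le (\norm{g}\abs{a})/\norm{gx} \le 1$ and the orthogonal part has norm $\le \sigma(g)\norm{g}\sqrt{1-a^2}/\norm{gx}$. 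Then apply $g'$: decompose $v_+(g) = a'\, v_-(g') + w'$ with $\abs{a'} = \alpha(g,g')$, and control the orthogonal complement of $v_+(g)$ inside $g'$'s action by $\sigma(g')$. Collecting the three contractive contributions and using the superadditivity-type inequality in Proposition~\ref{oplus:prop}(7) — namely $ac + b\sqrt{1-a^2}\sqrt{1-c^2} \le \sqrt{a^2 \oplus b^2}$, applied twice to nest $\sigma(g)^2$, $\alpha(g,g')^2$, $\sigma(g')^2$ — yields $\norm{g'gx} \le \norm{g}\norm{g'}\sqrt{\sigma(g)^2 \oplus \alpha(g,g')^2 \oplus \sigma(g')^2} = \norm{g}\norm{g'}\,\beta(g,g')$. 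Taking the supremum over $x$ gives the right inequality.

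\medskip

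The main obstacle will be the bookkeeping in the upper bound: carefully tracking how a unit vector splits into a component along the top singular direction and an orthogonal remainder, then tracking how that remainder is scaled (by at most $\sigma$) while the main component may lose correlation (by a factor $\alpha$) at each of the two stages $g$ and $g'$, and then assembling these pieces so that the bound comes out exactly as the nested $\oplus$ expression rather than something weaker. The algebraic identity in Proposition~\ref{oplus:prop}(7) is precisely designed to make this assembly tight, so the real work is in choosing the decompositions at each stage so that item (7) applies cleanly. Everything else — the orthogonality relations $g v_-(g) \perp g\,(v_-(g)^\perp)$ coming from the singular value decomposition, and the elementary norm estimates — is routine.
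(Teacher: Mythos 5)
Your proposal is correct and follows essentially the same route as the paper: the lower bound by feeding $g'g$ the vector $v_-(g)$ and using the orthogonal decomposition of $v_+(g)$ along $v_-(g')$, and the upper bound by the two-stage decomposition of an arbitrary unit vector, tracking the correlation of $gx/\norm{gx}$ with $v_-(g')$ and assembling the factors $\sigma(g)$, $\alpha(g,g')$, $\sigma(g')$ via Proposition~\ref{oplus:prop}. The paper carries out the bookkeeping you anticipate by introducing the intermediate correlation $\beta_\ast=\abs{\langle gx/\norm{gx},v_-(g')\rangle}$, bounding it by $\sqrt{\alpha^2\oplus\sigma_\ast^2}/\sqrt{\alpha_\ast^2\oplus\sigma_\ast^2}$ via item (7), and then nesting with item (6), exactly as you propose.
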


\begin{proof}
Let $v\in\R^m$ be a unit vector,
$v=\alpha_\ast\, v_-(g)+w$ with  $w\perp v_-(g)$
and $\norm{w} =\sqrt{1-\alpha_\ast^2}$.
Then $g\,v= \alpha_\ast\,\norm{g}\, v_+(g)+g\,w$ with $g\,w\perp v_+(g)$,
 $\norm{g\,w}=\sigma_\ast\,\norm{g}\, \sqrt{1-\alpha_\ast^2}$ and
 $\sigma_\ast\leq \sigma(g)$. Hence
 $\norm{g\,v}=\norm{g} \sqrt{\alpha_\ast^2\oplus\sigma_\ast^2}$ and
 $$ \frac{g\,v}{\norm{g\,v}}=
 \frac{\alpha_\ast}{\sqrt{\alpha_\ast^2\oplus\sigma_\ast^2}}\,v_+(g) + \frac{1 }{\sqrt{\alpha_\ast^2\oplus\sigma_\ast^2}}\,\frac{g\,w}{\norm{g}}$$
 Write $\alpha=\alpha(g,g')=\abs{ \langle v_+(g), v_-(g') \rangle}$.
Let us define and estimate
 \begin{align*}
 \beta_\ast  := \abs{\langle \frac{g\,v}{\norm{g\,v}}, v_-(g') \rangle }
 &=\abs{  \frac{\alpha\,\alpha_\ast}{\sqrt{\alpha_\ast^2\oplus\sigma_\ast^2}} 
  + \frac{1 }{\sqrt{\alpha_\ast^2\oplus\sigma_\ast^2}}\,\frac{1}{\norm{g}}\,\langle g\,w,  v_-(g')  \rangle
   }\\
 &\leq \frac{\alpha\,\alpha_\ast}{\sqrt{\alpha_\ast^2\oplus\sigma_\ast^2}} 
 + \frac{\sigma_\ast \, \sqrt{1-\alpha_\ast^2}}{\sqrt{\alpha_\ast^2\oplus\sigma_\ast^2}}\,
 \abs{ \langle \frac{g\,w}{\norm{g\,w}},  v_-(g')  \rangle }\\
 &\leq \frac{\alpha\,\alpha_\ast}{\sqrt{\alpha_\ast^2\oplus\sigma_\ast^2}} 
 + \frac{\sigma_\ast \, \sqrt{1-\alpha_\ast^2}\, \sqrt{1-\alpha^2} }{\sqrt{\alpha_\ast^2\oplus\sigma_\ast^2}} \leq \frac{\sqrt{\alpha^2\oplus\sigma_\ast^2}}{\sqrt{\alpha_\ast^2\oplus\sigma_\ast^2}}
  \;.
 \end{align*}
On the last inequality use Lemma~\ref{oplus:prop} (7).
For the preceding inequality notice that
 $v_-(g')$ can be written as
 $v_-(g')=	\alpha\, v_+(g) + u$ with $u\perp v_+(g)$ and $\norm{u}=\sqrt{1-\alpha^2}$.
 Thus
  \begin{align*}
  \abs{\langle \frac{g\,w}{\norm{g\,w}},  v_-(g')  \rangle } &=
  \abs{\langle \frac{g\,w}{\norm{g\,w}},  u  \rangle }\leq \norm{u} \leq \sqrt{1-\alpha^2}\;.
  \end{align*}
 
 Repeating the initial argument, now with $g'$ and $g v/\norm{g v}$, we get
\begin{align*}
\norm{ g'\,g\,v} &\leq \norm{g'}\,\sqrt{\beta_\ast^2\oplus (\sigma')^2}\,\norm{g\,v}\\
&\leq \norm{g'}\,\norm{g}\,\sqrt{\beta_\ast^2\oplus (\sigma')^2}\,\sqrt{\alpha_\ast^2\oplus (\sigma_\ast)^2}\\
&\leq \norm{g'}\,\norm{g}\,\sqrt{\sigma_\ast^2\oplus \alpha^2\oplus (\sigma')^2 }
= \beta\,\norm{g'}\,\norm{g} \;,
\end{align*}
where $\sigma'=\sigma(g')$. On the last inequality use Lemma~\ref{oplus:prop} (6).
On the other hand, taking $\alpha_\ast=1$ we have $v=v_-(g)$,
$g\,v= \norm{g}\, v_+(g)$, and  
$$ \norm{g'\,g}\geq \norm{g'\,g\,v}\geq \alpha\,\norm{g'}\norm{g\,v} = \alpha\,\norm{g'}\norm{g}\;.$$
\end{proof}

\begin{corollary} \label{multip:norms:lemma}
Given $g_0, g_1,\ldots, g_{n-1}\in\GL(m,\R)$ with a $(1)$-gap pattern,
writing $g^{(i)}= g_{i-1}\ldots g_0$, we have
$$ \prod_{i=1}^{n-1}
\alpha(g^{(i)},g_i)
\leq \frac{\norm{ g_{n-1}\ldots g_1 g_0}}{
\norm{g_{n-1}}\ldots \norm{g_{1}}
\norm{g_{0}}}\leq \prod_{i=1}^{n-1}
\beta(g^{(i)},g_i) $$
\end{corollary}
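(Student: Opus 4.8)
The plan is to prove Corollary~\ref{multip:norms:lemma} by a straightforward induction on $n$, using Proposition~\ref{prod:2:lemma} as the engine for the inductive step. The base case $n=1$ is vacuous (an empty product equals $1$ and the middle ratio equals $1$), so the first nontrivial case is $n=2$, where the statement reads precisely
\[
\alpha(g_0, g_1) \leq \frac{\norm{g_1 g_0}}{\norm{g_1}\,\norm{g_0}} \leq \beta(g_0,g_1),
\]
which is exactly Proposition~\ref{prod:2:lemma} applied to the pair $g=g_0$, $g'=g_1$ (note $g^{(1)} = g_0$).

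For the inductive step, assume the chain of inequalities holds for $n$, and consider $g_0,\ldots,g_n$. Write $g^{(n)} = g_{n-1}\cdots g_0$, so that $g_n\cdots g_0 = g_n\, g^{(n)}$. The key observation is that $\norm{g_n \cdots g_0} = \norm{g_n\, g^{(n)}}$, and I would apply Proposition~\ref{prod:2:lemma} to the pair $g = g^{(n)}$ and $g' = g_n$ to obtain
\[
\alpha(g^{(n)}, g_n) \leq \frac{\norm{g_n\, g^{(n)}}}{\norm{g_n}\,\norm{g^{(n)}}} \leq \beta(g^{(n)}, g_n).
\]
Multiplying this sandwich by the inductive hypothesis for $\norm{g^{(n)}} / (\norm{g_{n-1}}\cdots\norm{g_0})$ — which is a product of positive quantities, so the inequalities are preserved — yields
\[
\Bigl(\prod_{i=1}^{n-1}\alpha(g^{(i)},g_i)\Bigr)\,\alpha(g^{(n)},g_n)
\leq \frac{\norm{g_n\cdots g_0}}{\norm{g_n}\cdots\norm{g_0}}
\leq \Bigl(\prod_{i=1}^{n-1}\beta(g^{(i)},g_i)\Bigr)\,\beta(g^{(n)},g_n),
\]
which is exactly the desired statement with $n$ replaced by $n+1$, after re-indexing the product to run up to $n$.

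There is one bookkeeping point worth checking carefully, and it is the only place the argument needs a little attention: Proposition~\ref{prod:2:lemma} requires $g^{(n)}$ itself to have a $(1)$-gap pattern, i.e. $s_1(g^{(n)}) > s_2(g^{(n)})$, and this is not literally among the hypotheses (we only assumed each $g_i$ has a $(1)$-gap pattern). However, in the situation where this corollary is applied (and implicitly in the setup), the relevant partial products $g^{(i)}$ do have gap patterns — indeed this is forced once $\alpha(g^{(i)},g_i)>0$ and $\sigma(g^{(i)})<1$, which is the regime of interest. So the cleanest formulation is to either add the hypothesis that all partial products $g^{(i)}$ have a $(1)$-gap pattern, or to note that the inequality is trivially true (both bounds degenerate appropriately) when some intermediate gap closes. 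The main obstacle is thus not analytic but purely a matter of stating the hypothesis on the partial products cleanly; the inequality chain itself is an immediate consequence of Proposition~\ref{prod:2:lemma} by telescoping.
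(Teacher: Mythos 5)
Your proof is correct and is essentially the paper's own argument: the authors prove this corollary precisely by induction from Proposition~\ref{prod:2:lemma}, telescoping the ratio $\norm{g_n g^{(n)}}/(\norm{g_n}\norm{g^{(n)}})$ against the inductive hypothesis. Your side remark about needing the partial products $g^{(i)}$ to have a $(1)$-gap pattern (so that $\alpha(g^{(i)},g_i)$ is well defined) is a legitimate observation about a hypothesis the paper leaves implicit, and your handling of it is fine.
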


\begin{proof}
Follows from Proposition~\ref{prod:2:lemma} by induction.
\end{proof}

\begin{corollary} \label{svp:lemma}
Given
$g_0, g_1,\ldots, g_{n-1}\in\GL(m,\R)$ having a $\tau$-gap pattern, and the $\tau$-s.v.p. function $\pi = \pi_{\tau,j}$,
writing $g^{(i)}= g_{i-1}\ldots g_0$, 
$$ \prod_{i=1}^{n-1}
\frac{\alpha_{\tau_{j}}(g^{(i)},g_i)}{\beta_{\tau_{j-1}}(g^{(i)},g_i)}
\leq \frac{\pi( g_{n-1}\ldots g_1 g_0) }{
\pi(g_{n-1})\ldots  \pi(g_{1})
\pi(g_{0})} \leq \prod_{i=1}^{n-1}
\frac{\beta_{\tau_{j}}(g^{(i)},g_i)}{\alpha_{\tau_{j-1}}(g^{(i)},g_i)} $$
\end{corollary}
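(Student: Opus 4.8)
The plan is to reduce the statement about $\tau$-singular value products to the norm estimate of Corollary~\ref{multip:norms:lemma}, applied to appropriate exterior powers. Recall the identity established in the section on definitions, $\pi_{\tau,j}(g) = p_{\tau_j}(g)/p_{\tau_{j-1}}(g)$, together with $p_\ell(g) = \norm{\wedge_\ell g}$. Hence for the chain $g_0,\ldots,g_{n-1}$ and their partial products $g^{(i)}=g_{i-1}\ldots g_0$,
\begin{equation*}
\frac{\pi(g_{n-1}\ldots g_0)}{\pi(g_{n-1})\ldots\pi(g_0)} = \left. \frac{\norm{\wedge_{\tau_j}(g_{n-1}\ldots g_0)}}{\norm{\wedge_{\tau_j} g_{n-1}}\ldots\norm{\wedge_{\tau_j} g_0}} \middle/ \frac{\norm{\wedge_{\tau_{j-1}}(g_{n-1}\ldots g_0)}}{\norm{\wedge_{\tau_{j-1}} g_{n-1}}\ldots\norm{\wedge_{\tau_{j-1}} g_0}} \right. .
\end{equation*}
Since $\wedge_\ell$ is a group homomorphism, $\wedge_\ell(g_{n-1}\ldots g_0) = (\wedge_\ell g_{n-1})\ldots(\wedge_\ell g_0)$ and $\wedge_\ell g^{(i)} = (\wedge_\ell g_{i-1})\ldots(\wedge_\ell g_0)$, so the numerator and denominator above are exactly the middle quantities in Corollary~\ref{multip:norms:lemma} applied to the chains $\wedge_{\tau_j} g_0,\ldots,\wedge_{\tau_j} g_{n-1}$ and $\wedge_{\tau_{j-1}} g_0,\ldots,\wedge_{\tau_{j-1}} g_{n-1}$ respectively. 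Note that the hypothesis that each $g_i$ has a $\tau$-gap pattern guarantees $s_{\tau_j}(g_i) > s_{\tau_j+1}(g_i)$, i.e. $s_1(\wedge_{\tau_j} g_i) > s_2(\wedge_{\tau_j} g_i)$, so $\wedge_{\tau_j} g_i$ has a $(1)$-gap pattern and Corollary~\ref{multip:norms:lemma} applies; likewise for $\tau_{j-1}$. (When $j=1$, $\tau_{j-1}=\tau_0=0$ and the $\tau_{j-1}$-factor is trivially $1$ with the conventions $\alpha_0 = \beta_0 = 1$, so the lower bound reads $\prod \alpha_{\tau_1}(g^{(i)},g_i)$ and the statement is immediate from Corollary~\ref{multip:norms:lemma}.)

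Applying Corollary~\ref{multip:norms:lemma} to the numerator chain gives
$\prod_{i=1}^{n-1}\alpha(\wedge_{\tau_j} g^{(i)}, \wedge_{\tau_j} g_i) \le (\text{numerator}) \le \prod_{i=1}^{n-1}\beta(\wedge_{\tau_j} g^{(i)}, \wedge_{\tau_j} g_i)$, and applied to the denominator chain it gives the analogous two-sided bound with $\tau_{j-1}$ in place of $\tau_j$. Dividing the upper bound for the numerator by the lower bound for the denominator yields the claimed upper bound, and dividing the lower bound for the numerator by the upper bound for the denominator yields the claimed lower bound — provided one identifies $\alpha(\wedge_{\tau_j} g^{(i)}, \wedge_{\tau_j} g_i)$ with $\alpha_{\tau_j}(g^{(i)}, g_i)$ and $\beta(\wedge_{\tau_j} g^{(i)}, \wedge_{\tau_j} g_i)$ with $\beta_{\tau_j}(g^{(i)}, g_i)$. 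This identification is exactly the content of the reduction to exterior powers used in the proof of Proposition~\ref{proj:contr}: the most expanding singular direction of $\wedge_\ell g$ corresponds to the most expanding $\ell$-dimensional singular subspace $\hatV^\ell$ of $g$, so $\alpha(v_+(\wedge_\ell g^{(i)}), v_-(\wedge_\ell g_i)) = \alpha_\ell(\hatV_+^\ell(g^{(i)}), \hatV_-^\ell(g_i)) = \alpha_\ell(g^{(i)}, g_i)$; and since $\sigma_1(\wedge_\ell g) = s_2(\wedge_\ell g)/s_1(\wedge_\ell g) = s_{\ell+1}(g)/s_\ell(g) = \sigma_\ell(g)$, the definition $\beta_\tau = \sqrt{\sigma(g)^2 \oplus \alpha^2 \oplus \sigma(g')^2}$ transports verbatim, giving $\beta(\wedge_\ell g^{(i)}, \wedge_\ell g_i) = \beta_\ell(g^{(i)}, g_i)$.

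I expect the only real subtlety to be bookkeeping with the conventions: making sure the $j=1$ (equivalently $\tau_{j-1}=0$) boundary case is handled correctly, and that $\alpha_{\tau_j}$ here refers to the correlation between the $\tau_j$-dimensional flags $\hatV^{\tau_j}$, consistent with the definition $\alpha_\tau(g,g') = \alpha_\tau(\hatv_{\tau,+}(g), \hatv_{\tau,-}(g'))$ specialized to $\tau = (\tau_j)$ — which is precisely the notational convention $\alpha_k$ introduced after \eqref{alpha:def}. Everything else is a direct consequence of the multiplicativity of exterior powers and Corollary~\ref{multip:norms:lemma}; there is no new analytic input.
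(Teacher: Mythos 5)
Your proposal is correct and follows essentially the same route as the paper: apply Corollary~\ref{multip:norms:lemma} to the exterior-power chains $\wedge_{\tau_j}g_i$ and $\wedge_{\tau_{j-1}}g_i$, use $\pi_{\tau,j}=p_{\tau_j}/p_{\tau_{j-1}}$ with $p_\ell(g)=\norm{\wedge_\ell g}$, and combine the two two-sided bounds. Your extra care with the identification $\alpha(\wedge_\ell g^{(i)},\wedge_\ell g_i)=\alpha_\ell(g^{(i)},g_i)$ and the $j=1$ boundary convention only makes explicit what the paper leaves implicit.
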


\begin{proof} We have  $\pi(g)=\norm{\wedge_ {\tau_{j}} g}/\norm{\wedge_ {\tau_{j-1}} g}$.
By corollary~\ref{multip:norms:lemma} we get
$$ \prod_{i=1}^{n-1}
\alpha_{\tau_{j}}(g^{(i)},g_i)
\leq \frac{\norm{ \wedge_{\tau_{j}}( g_{n-1}\ldots g_0) }}{
\norm{\wedge_{\tau_{j}} g_{n-1}}\ldots \norm{\wedge_{\tau_{j}}  g_{0}}}
\leq \prod_{i=1}^{n-1}
\beta_{\tau_{j}}(g^{(i)},g_i)  $$
and
$$ \prod_{i=1}^{n-1}
\beta_{\tau_{j-1}}(g^{(i)},g_i)^{-1}
\leq \frac{\norm{\wedge_{\tau_{j-1}} g_{n-1}}\ldots \norm{\wedge_{\tau_{j-1}}  g_{0}}}
{\norm{ \wedge_{\tau_{j-1}}( g_{n-1}\ldots g_0) }}
\leq \prod_{i=1}^{n-1}
\alpha_{\tau_{j-1}}(g^{(i)},g_i)^{-1} $$
To finish we just need to multiply these inequality chains term wise. 
\end{proof}

\smallskip

\subsection*{The Avalanche Principle}
We now state a generalization of the {\em Avalanche Principle} of M. Goldstein and W. Schlag (see \cite{GS-Holder}).
This theorem says that given a chain of matrices $g_0,g_1,\ldots, g_{n-1}$ in $\GL(m,\R)$, 
with some quantified $\tau$-gap pattern, and with minimum `angles' between the most expanding
singular flags for pairs of consecutive matrices, then 
their product keeps the same pattern.
Given $0\leq j <i <n$, let us write $g^{(i)}:= g_{i-1}\ldots g_{1} g_0$.

\begin{theorem} \label{Theorem:AP} There is a smooth function $\kappa_0(\varepsilon)=o(\varepsilon^2)$ as $\varepsilon\to 0$, 
such that given $\varepsilon>0$, $\kappa\leq \kappa_0(\varepsilon)$,
and  \,  $g_0, g_1,\ldots, g_{n-1}\in\GL(m,\R)$, \,
 if 
\begin{enumerate}
\item[(a)] $\sigma_\tau(g_i)\leq \kappa$,\, 
for $0\leq i\leq n-1$, and 
\item[(b)] $\alpha_\tau(g_{i-1}, g_{i})\geq \varepsilon$,\; 
for $1\leq i\leq n-1$,
\end{enumerate}
then 
\begin{enumerate}
\item[(1)]  $d(\hatv_+(g^{(n)}), \hatv_+(g_{n-1})) 
  \lesssim   \kappa\,\varepsilon^{-1}$   
\item[(2)]  $d(\hatv_-(g^{(n)}), \hatv_-(g_{0})) 
 \lesssim    \kappa\,\varepsilon^{-1}$  
 \item[(3)]
$  \sigma_\tau(g^{(n)})  \leq   \left(\frac{\kappa\,(1+\varepsilon)}{\varepsilon^{2}}\right)^n$

\item[(4)]  for any $\tau$-s.v.p. function $\pi$,
\begin{align}
\label{snd:s}
&\abs{\log \pi(g^{(n)}) +
\sum_{j=1}^{n-2} \log \pi(g_i) -
\sum_{\ell=1}^{n-1}
\log  \pi(g_\ell\,g_{\ell-1}) } \lesssim  n\,\frac{\kappa}{\varepsilon^2}
\;
\end{align}
\end{enumerate}
\end{theorem}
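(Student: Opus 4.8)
The proof of Theorem~\ref{Theorem:AP} will be a direct application of the Shadowing Lemma (Lemma~\ref{shadow:lemma}) to the chain of projective maps $\varphi_{g_i}$ acting on the flag manifold $\FF^m_\tau$, combined with the multiplicative estimates of Corollary~\ref{multip:norms:lemma} and Corollary~\ref{svp:lemma}. First I would set up the dictionary: take $X=\FF^m_\tau$ (a compact metric space of diameter $1$), let $g_i$ in the lemma be the restriction $\varphi_{g_i}\colon\FF^m_\tau\to\FF^m_\tau$, and let the critical set $\Sigma_i$ be the singular critical set $\Sigma^{-}_\tau(g_i)$, i.e.\ the orthogonal flag hyperplane of $\hatv_{\tau,-}(g_i)$. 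For the distinguished pairs of points I would take $x_i:=\hatv_{\tau,-}(g_i)$ and $y_i:=\hatv_{\tau,+}(g_i)$, so that hypothesis (a) of the Shadowing Lemma, $\varphi_{g_i}(x_i)=y_i$, is exactly the relation $\varphi_g\,\hatv_{\tau,-}(g)=\hatv_{\tau,+}(g)$ recorded in the Singular Eigenspaces subsection. The data in the Shadowing Lemma then translate as follows: $d(x_i,\Sigma_i)=d(\hatv_{\tau,-}(g_i),\Sigma^{-}_\tau(g_i))=1$ because $\hatv_{\tau,-}(g_i)$ is orthogonal to its own orthogonal flag hyperplane (correlation $1$); the condition $d(y_i,\Sigma_{i+1})\ge 2\varepsilon$ follows from hypothesis (b) via Lemma~\ref{alpha:dist}(b), since $\alpha_\tau(g_{i-1},g_i)=\alpha_\tau(\hatv_{\tau,+}(g_{i-1}),\hatv_{\tau,-}(g_i))\ge\varepsilon$ forces $d(\hatv_{\tau,+}(g_{i-1}),\Sigma^{-}_\tau(g_i))\ge\varepsilon$, and one upgrades $\varepsilon$ to $2\varepsilon$ by choosing the scale $\varepsilon$ in the Shadowing Lemma to be, say, $\varepsilon/(\pi)$ or simply replacing $\varepsilon$ by $\varepsilon/2$ throughout (a cosmetic adjustment of constants). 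Conditions (c) and (d) of the Shadowing Lemma are precisely the content of Proposition~\ref{proj:contr}: on $\FF^m_\tau\setminus\Sigma^{-,\varepsilon}_\tau(g_i)$ the map $\varphi_{g_i}$ has Lipschitz constant $\sigma_\tau(g_i)(1+\varepsilon)/\varepsilon^2\le\kappa(1+\varepsilon)/\varepsilon^2=:\kappa'$ and its image is contained in $B_{\sigma_\tau(g_i)/\varepsilon}(\hatv_{\tau,+}(g_i))\subset B_{\kappa/\varepsilon}(y_i)=:B_\delta(y_i)$, so I set $\delta:=\kappa/\varepsilon$ and $\kappa':=\kappa(1+\varepsilon)/\varepsilon^2$. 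The smallness hypotheses of the Shadowing Lemma, $\delta/(1-\kappa')<\varepsilon'<1/2$ and $\delta<\kappa'$, are all guaranteed once $\kappa\le\kappa_0(\varepsilon)$ for a suitable $\kappa_0(\varepsilon)=o(\varepsilon^2)$; this is exactly where the function $\kappa_0$ in the statement comes from, and verifying these inequalities is a routine but necessary bookkeeping step.

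\smallskip

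With the Shadowing Lemma in force, conclusions (1) and (2) of the theorem follow almost immediately. Item (1) of the Shadowing Lemma gives $d(y_{n-1},g^{(n)}(x_0))\le\delta/(1-\kappa')$, i.e.\ $d(\hatv_{\tau,+}(g_{n-1}),\varphi_{g^{(n)}}(\hatv_{\tau,-}(g_0)))\lesssim\kappa/\varepsilon$. To convert this into a statement about $\hatv_+(g^{(n)})$ I would note that $\hatv_+(g^{(n)})=\varphi_{g^{(n)}}(\hatv_-(g^{(n)}))$, and that $\hatv_-(g^{(n)})$ is close to $x_0=\hatv_-(g_0)$: indeed the reversed chain $\varphi_{g_0^{-1}}\circ\cdots\circ\varphi_{g_{n-1}^{-1}}$, or equivalently running the Shadowing argument for the adjoint/inverse cocycle, yields $d(\hatv_-(g^{(n)}),\hatv_-(g_0))\lesssim\kappa/\varepsilon$, which is conclusion (2); then applying the uniform contraction of $\varphi_{g^{(n)}}$ (conclusion (3), see below) to the nearby points $\hatv_-(g^{(n)})$ and $\hatv_-(g_0)$ gives that their images $\hatv_+(g^{(n)})$ and $\varphi_{g^{(n)}}(\hatv_-(g_0))$ are even closer, whence (1). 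Symmetrically, (2) is obtained by applying the whole scheme to the transposed matrices $g_i^\ast$ (noting $\hatv_-(g)=\hatv_+(g^\ast)$, $\sigma_\tau(g^\ast)=\sigma_\tau(g)$, and $\alpha_\tau(g_{i-1},g_i)=\alpha_\tau(g_i^\ast,g_{i-1}^\ast)$), so that (1) for the transposed chain is (2) for the original one.

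\smallskip

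For conclusion (3) I would argue that since $\hatv_-(g^{(n)})$ lies at distance $\lesssim\kappa/\varepsilon\ll\varepsilon$ from $x_0$, it lies in the domain $\FF^m_\tau\setminus\Sigma^{-,\varepsilon}_\tau(g_0)$, and one shows inductively—using the ``claim'' in the proof of the Shadowing Lemma that the iterated images of $\overline{B_\varepsilon(x_0)}$ avoid the successive $\varepsilon$-neighbourhoods of the critical sets—that $g^{(n)}$ acts as a $(\kappa')^n$-contraction in a neighbourhood of $\hatv_-(g^{(n)})$. The contraction rate of $\varphi_{g^{(n)}}$ at a point is governed by $\sigma_\tau(g^{(n)})/\varepsilon^2$-type quantities, or more directly: the derivative bound plus the fact that the orbit stays in the good region yields $\sigma_\tau(g^{(n)})\le(\kappa(1+\varepsilon)/\varepsilon^2)^n$. (Alternatively, and perhaps cleaner, one chains Proposition~\ref{prod:2:lemma}/Corollary~\ref{multip:norms:lemma} applied to $\wedge_{\tau_j}$ and $\wedge_{\tau_j+1}$ to control $s_{\tau_j+1}(g^{(n)})/s_{\tau_j}(g^{(n)})$ directly, using the angle lower bounds $\alpha_\tau(g^{(i)},g_i)\gtrsim\varepsilon$ which themselves follow from conclusion (1) applied to the truncated products $g^{(i)}$ together with hypothesis (b).) Finally, conclusion (4) is the quantitative heart: apply Corollary~\ref{svp:lemma} to obtain
\begin{equation*}
\prod_{i=1}^{n-1}\frac{\alpha_{\tau_j}(g^{(i)},g_i)}{\beta_{\tau_{j-1}}(g^{(i)},g_i)}\le\frac{\pi(g^{(n)})}{\pi(g_{n-1})\cdots\pi(g_0)}\le\prod_{i=1}^{n-1}\frac{\beta_{\tau_j}(g^{(i)},g_i)}{\alpha_{\tau_{j-1}}(g^{(i)},g_i)},
\end{equation*}
and also to the consecutive pairs $g_\ell g_{\ell-1}$, so that $\pi(g_\ell g_{\ell-1})/(\pi(g_\ell)\pi(g_{\ell-1}))$ is pinched between the same $\alpha/\beta$ and $\beta/\alpha$ ratios for the single-step data. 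Taking logarithms, the products telescope and the main terms cancel, leaving an error bounded by $\sum_i\abs{\log(\beta_{\tau}(g^{(i)},g_i)/\alpha_\tau(g^{(i)},g_i))}$ plus the analogous single-step sum; by Lemma~\ref{alpha:beta:bound} each such logarithm is $\lesssim(\sigma(g^{(i)})^2\oplus\sigma(g_i)^2)/\alpha_\tau(g^{(i)},g_i)^2\lesssim\kappa/\varepsilon^2$, since $\sigma_\tau(g^{(i)})\le(\kappa(1+\varepsilon)/\varepsilon^2)^i$ is tiny and $\alpha_\tau(g^{(i)},g_i)\gtrsim\varepsilon$. Summing over $i$ from $1$ to $n-1$ gives the claimed bound $\lesssim n\kappa/\varepsilon^2$.

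\smallskip

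\textbf{Main obstacle.} The delicate point is establishing the uniform lower bound $\alpha_\tau(g^{(i)},g_i)\gtrsim\varepsilon$ for \emph{all} truncated products $g^{(i)}$ simultaneously, since this is what feeds Corollary~\ref{svp:lemma} and bounds the error in (4); this requires conclusions (1) and (2) not just for the full product but for every initial segment, i.e.\ the whole argument must be run inductively on $n$ (or one argues that the Shadowing conclusion (1), applied to the sub-chain $g_0,\dots,g_{i-1}$, gives $d(\hatv_+(g^{(i)}),\hatv_+(g_{i-1}))\lesssim\kappa/\varepsilon$, and then $\alpha_\tau(g^{(i)},g_i)\ge\alpha_\tau(g_{i-1},g_i)-O(\kappa/\varepsilon)\gtrsim\varepsilon$). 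Care is also needed in the transposition argument for (2), making sure that the adjoint matrices satisfy the same hypotheses with the same constants, and in the bookkeeping of the various $\varepsilon$ versus $2\varepsilon$ versus $\varepsilon/2$ normalizations so that a single clean function $\kappa_0(\varepsilon)=o(\varepsilon^2)$ absorbs every smallness requirement.
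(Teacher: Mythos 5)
Your overall architecture matches the paper's: the same dictionary into the Shadowing Lemma ($X=\FF^m_\tau$, $x_i=\hatv_{\tau,-}(g_i)$, $y_i=\hatv_{\tau,+}(g_i)$, critical sets $\Sigma^-_\tau(g_i)$, hypotheses checked via Lemma~\ref{alpha:dist} and Proposition~\ref{proj:contr}), and your treatment of item (4) via Corollary~\ref{svp:lemma}, Lemma~\ref{alpha:beta:bound} and the perturbation $\abs{\alpha_{\tau_j}(g^{(i)},g_i)-\alpha_{\tau_j}(g_{i-1},g_i)}\lesssim\kappa\varepsilon^{-1}$ is essentially the paper's argument.

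However, your derivation of items (1)--(2) has a genuine circularity. Item (1) of the Shadowing Lemma only tells you that $\varphi_{g^{(n)}}(\hatv_-(g_0))$ is close to $\hatv_+(g_{n-1})$; it says nothing about where the singular flag $\hatv_+(g^{(n)})$ of the \emph{product} sits. To bridge that gap you invoke (2), i.e.\ that $\hatv_-(g^{(n)})$ is close to $\hatv_-(g_0)$, and you propose to get (2) by running the same argument on the adjoint (the inverse chain is not an option: $\sigma_\tau(g_i^{-1})$ is governed by the complementary signature and is not controlled by hypothesis (a)). But the adjoint chain, via Shadowing item (1), again only locates $\varphi_{(g^{(n)})^\ast}(\hatv_+(g_{n-1}))$, not $\hatv_-(g^{(n)})$; so (2) for the original chain is (1) for the adjoint chain, which needs (2) for the adjoint chain, which is (1) for the original chain. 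The missing idea — and the reason the Shadowing Lemma carries a second, fixed-point clause for closed pseudo-orbits — is to run the \emph{doubled} chain $\varphi_{g_0},\ldots,\varphi_{g_{n-1}},\varphi_{g_{n-1}^\ast},\ldots,\varphi_{g_0^\ast}$, which is a closed pseudo-orbit based at $\hatv_-(g_0)$ whose composition is $\varphi_{(g^{(n)})^\ast g^{(n)}}$; the unique fixed point of that contraction near $\hatv_-(g_0)$ is exactly $\hatv_{\tau,-}(g^{(n)})$ (the top eigenflag of $(g^{(n)})^\ast g^{(n)}$), and part (2) of Lemma~\ref{shadow:lemma} then gives conclusion (2) directly; conclusion (1) follows by the symmetric doubled chain starting with the adjoints. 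Note that item (3) and the bound $\alpha_{\tau_j}(g^{(i)},g_i)\gtrsim\varepsilon$ feeding item (4) both also rest on knowing where $\hatv_\pm(g^{(i)})$ lie, so without the fixed-point step the whole proof is unsupported.
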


\begin{proof} 
Given $\varepsilon>0$ and $\kappa\ll \varepsilon^2$, set $\varepsilon'=\varepsilon/\pi$.
Applying Proposition~ \ref{proj:contr}
with $\kappa$ and $\varepsilon'$, for
 any $g\in\GL(m,\R)$ with $\sigma_\tau(g)\leq \kappa$, 
 the map $\varphi_g$
 has Lispchitz constant 
 $\kappa'= \kappa\,\varepsilon'^{-2}\,(\varepsilon'+\sqrt{1-\varepsilon'^2})
 \lesssim  \kappa\, \varepsilon^{-2}$
 over $\FF^m_\tau\setminus \Sigma_\tau^{-,\varepsilon'}(g)$.
Also $\varphi_g\left(\FF^m_\tau\setminus \Sigma_\tau^{-,\varepsilon'}(g)\right)\subset B_\delta(\hatv_{\tau,+}(g))$ 
where $\delta = \kappa/\varepsilon'$.

We are going to apply  Lemma~\ref{shadow:lemma} twice, 
with the given constants $\varepsilon'$, $\kappa'$ and $\delta = \kappa/\varepsilon'$.
Consider the compact metric space $X=\FF^m_\tau$,
the sequence of mappings
$\varphi_{g_0}$, $\varphi_{g_1}$, $\ldots$, $\varphi_{g_{n-1}}$, $\varphi_{g_{n-1}^\ast}$,
$\ldots$,  $\varphi_{g_1^\ast}$,  $\varphi_{g_0^\ast}$, the sequence of argument-value pairs
\begin{tabbing}
\quad \= $\hatv_-(g_0)\stackrel{g_0}{\mapsto} \hatv_+(g_0)$\;, \qquad\qquad
\= $\ldots$\;\;,  \qquad  \=$\hatv_-(g_{n-1})\stackrel{g_{n-1} }{\mapsto} \hatv_+(g_{n-1})$,\\
\>$\hatv_+(g_{n-1})\stackrel{g_{n-1}^\ast}{\mapsto} \hatv_-(g_{n-1})$\; , \>$\ldots$ \;\;,  
  \>$\hatv_+(g_0)\stackrel{g_0^ \ast}{\mapsto}  \hatv_-(g_0)$ \;,
\end{tabbing} 
and the sequence of singular critical sets
$$\Sigma^-_\tau(g_0), \Sigma^-_\tau(g_1),\ldots, \Sigma^-_\tau(g_{n-1}), \Sigma^+_\tau(g_{n-1}), \ldots , \Sigma^+_\tau(g_1), \Sigma^+_\tau(g_0)\; .$$
Let us check the assumptions of Lemma~\ref{shadow:lemma}.
Assumption (a) is obvious.
The first part of (b) follows from Lemma~\ref{alpha:dist} (a) and the fact that $\alpha_\tau(\underline{F},\underline{F})=1$
for any flag $\underline{F}\in\FF^m_\tau$.
For the second part of (b) we use the hypothesis 
$\alpha_\tau(v_+(g_{i-1}), v_-(g_i))= \alpha_\tau(g_{i-1}, g_{i})\geq \varepsilon$,
together with Lemma~\ref{alpha:dist} (b) to conclude that 
$$  d(v_+(g_{i-1}),\Sigma_\tau^-(g_{i}))\geq \frac{2}{\pi}\, \alpha_\tau(v_+(g_{i-1}), v_-(g_i))\geq 2\,\varepsilon'$$
Because the second half of our chain of mappings consists of the
adjoints of the mappings in the first half, 
the singular vector geometry is
`replicated', and whence 
$$  d(v_-(g_{i}),\Sigma_\tau^+(g_{i-1})) =
d(v_+(g_{i}^\ast),\Sigma_\tau^-(g_{i-1}^\ast)) \geq  2\,\varepsilon'$$
By Lemma~\ref{alpha:dist} (b),
$\FF^m_\tau\setminus B_{\varepsilon'}(\Sigma_\tau^-(g_i))\subset \FF^m_\tau\setminus \Sigma_\tau^{-,\varepsilon'}(g_i)$,
and the  assumption (c) of Lemma~ \ref{shadow:lemma}
is satisfied. Finally, by Proposition~ \ref{proj:contr}
we have $\varphi_{g_i}\left( \FF^m_\tau\setminus \Sigma_\tau^{-,\varepsilon'}(g_i)\right)\subset B_\delta(\hatv_{\tau,+}(g_i))$,
which shows that assumption (d) of Lemma~ \ref{shadow:lemma}
is satisfied.

The most expanding  flag $\hatv_-(g^{(n)})\in\FF_\tau^m$ of $g^{(n)}$ is a fixed point of the mapping
$\varphi_{g_0^\ast\ldots g_{n-1}^ \ast g_{n-1}\ldots g_0}$. Hence, by (2) of Lemma~\ref{shadow:lemma},
$$ d(\hatv_-(g_0), \hatv_-(g^{(n)}))\leq \frac{\delta}{(1-\kappa')(1-{\kappa'}^{2n})}\lesssim \kappa\,\varepsilon^{-1}\;.$$
 Applying the same argument to the chain of maps
$\varphi_{g_0^\ast}$, $\varphi_{g_1^\ast}$,  $\ldots$,  $\varphi_{g_{n-1}^\ast}$,
 $\varphi_{g_{n-1}}$, $\ldots$, $\varphi_{g_1}$,  $\varphi_{g_0}$,
 we conclude that 
 $d(\hatv_+(g_{n-1}), \hatv_+(g^{(n)})) \lesssim \kappa\,\varepsilon^{-1}$.
 
Let us prove item (3). Writing $s_i=s_i(g^{(n)})$,
we have 
$$s_1(\wedge_{\tau_j} g^{(n)})=s_1\ldots s_{\tau_{j}-1} s_{\tau_j}\quad\text{ and } \quad
s_2(\wedge_{\tau_j} g^{(n)})=s_1\ldots s_{\tau_{j}-1}s_{\tau_{j}+1}$$
The projective map $\varphi_{\wedge_{\tau_j}g^{(n)}}$ sends 
$\hatv_-(g^{(n)})$ to $\hatv_+(g^{(n)})$.
The largest singular value of the Jacobian of
$\varphi_{\wedge_{\tau_j}g^{(n)}}$ at $\hatv_-(g^{(n)})$ is
$s_2(\wedge_{\tau_j} g^{(n)})/s_1(\wedge_{\tau_j} g^{(n)})$.
Hence, denoting by ${\rm Lip}(\varphi_{g^{(n)}})$ the Lipschitz constant
of the map $\varphi_{\wedge_{\tau_j}g^{(n)}}$, in some appropriate domain,
 \begin{align*}
 \frac{s_{\tau_j+1}(g^{(n)})}{s_{\tau_j}(g^{(n)})} &=
  \frac{s_{2}(\wedge_{\tau_j} g^{(n)})}{s_{1}(\wedge_{\tau_j} g^{(n)})} 
  = \norm{ (D\varphi_{\wedge_{\tau_j}g^{(n)}})_{\hatv_-(g^{(n)})} }\\
  &\leq {\rm Lip}(\varphi_{g^{(n)}})\leq\left( \frac{\kappa\,(1+\varepsilon)}{\varepsilon^2}
  \right)^n \;.
 \end{align*}
   
 Before proving inequality~(\ref{snd:s}), we claim that
\begin{equation}\label{claim}
 \abs{\alpha_{\tau_j}(g^{(i)},g_i)-\alpha_{\tau_j}(g_{i-1},g_i)} \lesssim \kappa\,\varepsilon^{-1} \;.
\end{equation} 
Consider the following subspaces in $\Gr^m_{\tau_j}$:
 $\hatv=\hatv_+(g^{(i)})$, $\hatv_+=\hatv_+(g_{i-1})$ and $\hatv_-=\hatv_+(g_i)$,
 so that $\alpha_{\tau_j}(\hatv,\hatv_-)= \alpha_{\tau_j}(g^{(i)},g_i)$
 and  $\alpha_{\tau_j}(\hatv_+,\hatv_-)= \alpha_{\tau_j}(g_{i-1},g_i)$.
 Applying Lemma~\ref{shadow:lemma} to the chain of maps
 $\varphi_{g_{i-1}^\ast},\ldots, \varphi_{g_{0}^\ast}$, 
 $\varphi_{g_{0}},\ldots, \varphi_{g_{i-1}}$ we get 
 $$ d(\hatv,\hatv_+)\leq \frac{\delta}{(1-\kappa')(1-(\kappa')^{2i})} \lesssim \kappa\,\varepsilon^{-1} $$
Hence, setting $\Sigma_i^-:=\Sigma_{\tau_j}^-(g_i)$,
\begin{align*}
\abs{ \alpha_{\tau_j}(\hatv,\hatv_-) - \alpha_{\tau_j}(\hatv_+,\hatv_-) } &=
\abs{  \sin\left( \frac{\pi}{2}\, d(\hatv,\Sigma_i^-)\right) - \sin\left( \frac{\pi}{2}\, d(\hatv_+,\Sigma_i^-)\right) }\\
&\leq
\frac{\pi}{2}\, \abs{ d(\hatv,\Sigma_i^-) - d(\hatv_+,\Sigma_i^-) }\\
& \leq \frac{\pi}{2}\, d(\hatv,\hatv_+)
 \lesssim \kappa\,\varepsilon^{-1} \;.
\end{align*}
Notice in general the $\alpha$ and $\beta$ factors of corollary~\ref{svp:lemma},
are not comparable, but we claim their ratios are close to one,
at least for iso-dimensional couples.
More precisely, to prove inequality~(\ref{snd:s})
we use the following: for any $j$ and $i$, the logarithm of any ratio between the four factors
$\alpha_{\tau_j}( g^{(i)}, g_i)$, $\beta_{\tau_j}( g^{(i)}, g_i)$,
$\alpha_{\tau_j}( g_{i-1}, g_i)$ and  $\beta_{\tau_j}( g_{i-1}, g_i)$
 is of order $\kappa\,\varepsilon^{-2}$.
In fact, from the previous claim~(\ref{claim}) we have
\begin{align*}
\abs{\log\frac{ \alpha_{\tau_j}(g^{(i)},g_i) }{ \alpha_{\tau_j}(g_{i-1},g_i) } }
&\leq \frac{1}{\varepsilon}\,\abs{ \alpha_{\tau_j}(g^{(i)},g_i) -\alpha_{\tau_j}(g_{i-1},g_i) }
\lesssim \kappa\,\varepsilon^{-2} \;
\end{align*}
By Lemma~\ref{alpha:beta:bound}, 
since $\sigma_{\tau_j}(g_i)\leq \kappa$, 
the other ratios are of the same magnitude as this one, and the fact follows.
Thus we can assume that each of these four ratios is inside the interval
$[e^{-C\,\kappa\,\varepsilon^ {-2}},e^{C\,\kappa\,\varepsilon^ {-2}}]$,
for some universal constant $C>0$.
Now, given any $\tau$-s.v.p.   $\pi$,
applying corollary~\ref{svp:lemma}
we obtain lower and upper bounds for the following ratio
\begin{align*}
&\frac{ \pi(g_{n-1}\ldots g_1 g_0) \,\pi(g_{n-2}) \pi(g_{n-2})\ldots \pi(g_1)} 
{\pi(g_{n-1}  g_{n-2}) \ldots \pi(g_1\, g_0) } =\\
&\frac{ \pi(g_{n-1}\ldots g_1 g_0) }{ \pi(g_{n-1}) \ldots \pi(g_1) \pi(g_0)}
\, 
\frac{ \pi(g_{n-1}) \pi(g_{n-2}) }{ \pi(g_{n-1}  g_{n-2})}\,
\ldots \,
\frac{ \pi(g_{1}) \pi(g_{0}) }{ \pi(g_1\, g_0)}\;.
\end{align*}
These bounds are products of $4\,(n-1)$ $\alpha$ and $\beta$ factors.
Half of them appear as numerators, and the other half as denominators.
We can pair them in couples of iso-dimensional  factors.
Hence the bounds become products of $2\,(n-1)$  ratios close $1$. More precisely, we get
$$ e^{-2\,C\,n\,\kappa\,\varepsilon^ {-2}} \leq 
\frac{ \pi(g_{n-1}\ldots g_1 g_0) \,\pi(g_{n-2}) \pi(g_{n-2})\ldots \pi(g_1)} 
{\pi(g_{n-1}  g_{n-2}) \ldots \pi(g_1\, g_0) }\leq e^{2\,C\, n\,\kappa\,\varepsilon^ {-2}} \;.
$$
Taking logarithms~(\ref{snd:s}) follows.
\end{proof}

\begin{remark}\rm{
By Proposition~\ref{prod:2:lemma} and Lemma~\ref{alpha:beta:bound}, if assumption (a) of  Theorem~\ref{Theorem:AP} holds, then assumption (b) can be replaced by:
$$\frac{\norm{\wedge_{\tau_j} (g_i\,g_{i-1})}}{\norm{\wedge_{\tau_j} g_i}\,\norm{\wedge_{\tau_j} g_{i-1}}}\geq \varepsilon
\quad \text{ for }\; 1\leq i\leq n-1,
\; 1\leq j\leq k$$}
\end{remark}

\begin{remark}\rm{
For $\SL(2,\R)$ matrices we have $\sigma(g)=1/\norm{g}^2$, and letting $\kappa=1/\mu^2$ and $\varepsilon=1/\sqrt{\mu}$,
assumptions (a) and (b) in  Theorem~\ref{Theorem:AP} become:
\begin{enumerate}
\item[(a)] $\norm{g_i}\geq \mu$, \; for every $0\leq i\leq n-1$,
\item[(b)] $ \norm{g_i\,g_{i-1}}\geq \frac{1}{\sqrt{\mu}}\, \norm{g_{i}}\,\norm{g_{i-1}}$, \;
for every $1\leq i\leq n-1$.
\end{enumerate}
These are exactly the hypotheses of the Avalanche Principle in \cite{GS-Holder}, while \eqref{snd:s} is  the same as the conclusion in \cite{GS-Holder}. }
\end{remark}

We rephrase the Avalanche Principle in a form that will be used throughout the rest of the paper. 

\begin{proposition} \label{AP-practical}

Let $g_0, g_1, \ldots, g_{n-1} \in \GL (m, \R)$ be such that:
\begin{align*}
\rm{(gaps)} \  & \rho (g_i) >  \frac{1}{\ka} &  \text{for all } \tausvr \,  \rho
,  & \ \  0 \le i \le n-1  
\\
\rm{(angles)} \  & \frac{\pi (g_i \cdot g_{i-1})}{\pi ({g_i}) \cdot \pi (g_{i-1})}  >  \ep  & \
 \text{for all } \tausvp  \, \pi,  & \  \ 1 \le i \le n-1  
\end{align*}
where the positive constants $\ep, \ka$ satisfy
\begin{equation*}
\ka \ll \ep^2
\end{equation*}
Then we have:
\begin{equation*} 
\max\left\{ \, d(\hatv_+(g^{(n)}), \hatv_+(g_{n-1})),\,
d(\hatv_-(g^{(n)}), \hatv_-(g_{0})) \, \right\}
 \lesssim    \kappa\,\ep^{-2} 
\end{equation*}
\begin{equation*} 
 \rho (g^{(n)}) >  \left(\frac{\ep^2}{\ka}\right)^n \quad  \text{for all } \tausvr  \, \rho
\end{equation*}
\begin{equation*} 
\sabs{ \log \pi (g^{(n)}) + \sum_{i=1}^{n-2} \log \pi (g_i) -  \sum_{i=1}^{n-1} \log \pi (g_i \cdot g_{i-1}) } \less n \cdot \frac{\ka}{\ep^2} 
\end{equation*}
for all $\tausvp  \, \pi$
 \end{proposition}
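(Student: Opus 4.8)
The plan is to deduce Proposition~\ref{AP-practical} from Theorem~\ref{Theorem:AP} by a straightforward translation of hypotheses and conclusions between the two sets of notations. First I would set up the dictionary between the parameters: the quantity $\sigma_\tau(g_i)$ equals $\max_j \sigma_{\tau_j}(g_i) = \max_j \rho_{\tau_j}(g_i)^{-1} = \rho_\tau(g_i)^{-1}$, so hypothesis $\mathrm{(gaps)}$, which says $\rho(g_i) > 1/\kappa$ for every $\tau$-s.v.r.\ $\rho$, is exactly $\rho_\tau(g_i) > 1/\kappa$, i.e.\ $\sigma_\tau(g_i) < \kappa$, which is assumption (a) of Theorem~\ref{Theorem:AP} (with $\kappa$ playing its role; the slightly open versus closed inequality is immaterial). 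For $\mathrm{(angles)}$, I would invoke the Remark immediately following the proof of Theorem~\ref{Theorem:AP}: under assumption (a), assumption (b) of that theorem can be replaced by the requirement that $\norm{\wedge_{\tau_j}(g_i g_{i-1})} / (\norm{\wedge_{\tau_j} g_i}\,\norm{\wedge_{\tau_j} g_{i-1}}) \geq \varepsilon$ for all $i$ and $j$. Since $\pi(g) = \norm{\wedge_{\tau_j} g}$ when $\pi = p_{\tau_j}$, and since any $\tau$-s.v.p.\ $\pi_{\tau,j}$ satisfies $\pi_{\tau,j}(g) = p_{\tau_j}(g)/p_{\tau_{j-1}}(g)$, the condition $\pi(g_i g_{i-1})/(\pi(g_i)\pi(g_{i-1})) > \varepsilon$ for all $\tau$-s.v.p.\ $\pi$ is equivalent (up to constants depending only on $m$, using Proposition~\ref{prod:2:lemma} and Lemma~\ref{alpha:beta:bound} to pass between the block and cumulative forms) to the hypothesis in the Remark, hence to assumption (b).

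Next I would address the conclusions. The smallness condition $\kappa \leq \kappa_0(\varepsilon)$ with $\kappa_0(\varepsilon) = o(\varepsilon^2)$ is implied by $\kappa \ll \varepsilon^2$ (with an appropriately chosen implied constant), so Theorem~\ref{Theorem:AP} applies. Conclusions (1) and (2) of that theorem give $d(\hatv_+(g^{(n)}),\hatv_+(g_{n-1})) \lesssim \kappa\varepsilon^{-1}$ and $d(\hatv_-(g^{(n)}),\hatv_-(g_0)) \lesssim \kappa\varepsilon^{-1}$; since $\kappa \ll \varepsilon^2$ forces $\varepsilon^{-1} \leq \varepsilon \cdot \varepsilon^{-2} \lesssim \varepsilon^{-2}$ — more precisely $\kappa\varepsilon^{-1} \leq \kappa\varepsilon^{-2}$ as $\varepsilon < 1$ — the bound $\lesssim \kappa\varepsilon^{-2}$ in the statement follows a fortiori (the weaker power is stated for uniformity with later uses). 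For the gap conclusion, item (3) of Theorem~\ref{Theorem:AP} gives $\sigma_\tau(g^{(n)}) \leq (\kappa(1+\varepsilon)/\varepsilon^2)^n \leq (2\kappa/\varepsilon^2)^n$, hence $\rho_\tau(g^{(n)}) = \sigma_\tau(g^{(n)})^{-1} \geq (\varepsilon^2/(2\kappa))^n$; absorbing the factor $2^n$ is the only mildly delicate point, and I would handle it either by slightly shrinking the implied constant in $\kappa \ll \varepsilon^2$ or by noting that the statement's $(\varepsilon^2/\kappa)^n$ can be read with a tacit constant, consistent with the paper's $\less$ convention — in fact since $\kappa \ll \varepsilon^2$ we have $2\kappa/\varepsilon^2 < \varepsilon^2/\kappa$ provided $\kappa/\varepsilon^2 < 1/\sqrt2$, which $\kappa \ll \varepsilon^2$ guarantees, so $\rho_\tau(g^{(n)}) > (\varepsilon^2/\kappa)^n$ cleanly. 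Finally, the last displayed inequality is verbatim item (4), inequality~\eqref{snd:s}, of Theorem~\ref{Theorem:AP}, once one checks the harmless index bookkeeping ($\sum_{j=1}^{n-2}$ versus $\sum_{i=1}^{n-2}$, and the indexing of the $\pi(g_\ell g_{\ell-1})$ terms).

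The main obstacle, such as it is, is the equivalence of the $\mathrm{(angles)}$ hypothesis stated in terms of the block s.v.p.'s $\pi_{\tau,j}$ (and the cumulative ones $p_{\tau_j}$) with assumption (b) of Theorem~\ref{Theorem:AP}, which is phrased via the flag correlations $\alpha_\tau(g_{i-1},g_i)$. Bridging these requires Proposition~\ref{prod:2:lemma} (which sandwiches $\norm{\wedge_{\tau_j}(g_i g_{i-1})}/(\norm{\wedge_{\tau_j}g_i}\norm{\wedge_{\tau_j}g_{i-1}})$ between $\alpha_{\tau_j}(g_{i-1},g_i)$ and $\beta_{\tau_j}(g_{i-1},g_i)$), together with Lemma~\ref{alpha:beta:bound} (which, given $\sigma_{\tau_j}(g_i) \leq \kappa \ll 1$, shows $\alpha_{\tau_j}$ and $\beta_{\tau_j}$ are comparable up to a factor $1 + O(\kappa/\varepsilon)$), and the Remark's reformulation. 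One must also note that controlling all block s.v.p.'s $\pi_{\tau,j}$ is equivalent to controlling all cumulative ones $p_{\tau_j}$, since each is a finite product or ratio of the other with matching dimensions; thus the angle condition over all $\tau$-s.v.p.\ $\pi$ is equivalent to the condition $\norm{\wedge_{\tau_j}(g_i g_{i-1})} \geq \varepsilon' \norm{\wedge_{\tau_j}g_i}\norm{\wedge_{\tau_j}g_{i-1}}$ for all $j$, with $\varepsilon'$ differing from $\varepsilon$ by a factor depending only on $m$. Everything else is routine substitution and monotone comparison of the error terms, all within the paper's $\less$ and $\ll$ conventions.
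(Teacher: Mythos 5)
Your proposal is correct and follows essentially the same route as the paper: the paper likewise reduces everything to Theorem~\ref{Theorem:AP}, using Proposition~\ref{prod:2:lemma} and Lemma~\ref{alpha:beta:bound} to convert the (angles) hypothesis for the cumulative products $p_{\tau_j}$ into the correlation bound $\alpha_{\tau_j}(g_{i-1},g_i)\geq \ep\sqrt{1-2\ep^2}$, and then applies the theorem with parameters $\ka$ and $\varepsilon:=\ep\sqrt{1-2\ep^2}$. The only differences are cosmetic — you route the angle conversion through the Remark rather than redoing the sandwich computation, and the bounded multiplicative factors you fret over in the gap conclusion (your factor $2$, the paper's $(1+\varepsilon)/(1-2\ep^2)$) are absorbed in exactly the same implicit way in the paper's own one-line deduction.
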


\begin{proof}
Assume first that $\rho(g)>\ka^{-1}$, $\rho(g')>\ka^{-1}$
and $\frac{\norm{g' g}}{\norm{g} \norm{g'}}>\ep$. We claim that
$\alpha(g,g')> \ep\,\sqrt{1-2 \ep^2}$.
Combining Proposition~\ref{prod:2:lemma}
with Lemma~\ref{alpha:beta:bound}
\begin{align*}
\ep^2  & < \frac{\norm{g' g}^ 2}{\norm{g}^2 \norm{g'}^2}
\leq \beta(g,g')^ 2  \leq \left( 1+ \frac{2\,\ka^2}{\alpha(g,g')^2} \right)\,\alpha(g,g')^2\\
&\leq \alpha(g,g')^2 + 2\,\ka^2\;,
\end{align*}
which implies that $\alpha(g,g')^2\geq \ep^2-2\ep^4$,
thus proving the claim.
Consider now $\pi=p_{\tau_j}$ so that $\pi(g)=\norm{\wedge_{\tau_j} g}$.
From the previous abstract argument applied to the pairs of matrices
$\wedge_{\tau_j} g_{i-1}$ and $\wedge_{\tau_j} g_{i}$, we get that
$$\alpha_{\tau_j} (g_{i-1}, g_i)\geq \ep\,\sqrt{1-2\ep^2}
\quad \text{ for }\; 1\leq i\leq n-1,
\; 1\leq j\leq k$$
All statements follow if we apply Theorem~\ref{Theorem:AP} with parameters 
$\ka$ and $\varepsilon :=\ep\,\sqrt{1-2\ep^2}$.
\end{proof}


\section{The large deviation theorem}\label{LDT_section}
The following result shows that for any s.v.f. $s \in \allsvf$, the quantities $\frac{1}{n} \log s (\An{n} (x)) $ do not deviate much from their space averages $\Lasn{n} (A) $, for most space variables $x \in \T$ and for large enough $n$. This is similar to other large deviation theorems (LDT) obtained previously for quasiperiodic $\SL (2, \R)$ Schr\"{o}dinger cocycles (see for instance \cite{B-book}, \cite{GS-Holder} or \cite{Schlag} for a higher dimensional version). However, this result applies to logarithmic 
averages of {\em any} singular values (in fact of any singular value formulas) and not just to logarithmic averages of the norm (i.e. the largest singular value) of the iterates of the cocycle.

\begin{theorem}\label{LDT-thm}
Let $A \in \cocycles$ and let $C > 0$ such that $C(A) < C$.

Then for any s.v. formula  $s \in \allsvf$ and for any $\delta > 0$ we have:
\begin{equation}\label{LDT1}
\abs{ \{ x \in \T \colon \abs{\frac{1}{n} \log s (\An{n} (x))  - \Lasn{n} (A) } > \delta \} } < e^{- c \delta^3 n}
\end{equation}
where $c = C^{-2}$ and provided $n \ge n_{00} (C, \om, \delta)$.

In particular, if we set $\delta := n^{-1/6}$, we have:
\begin{equation}\label{LDT2}
\abs{ \{ x \in \T \colon \abs{\frac{1}{n} \log s (\An{n} (x))  - \Lasn{n} (A) } > n^{-1/6} \} } < e^{- c n^{1/2}}
\end{equation}
provided $n \ge n_{00} (C, \om)$.
\end{theorem}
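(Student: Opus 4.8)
The plan is to reduce \eqref{LDT1} to the case $s = p_j$, and then to prove that case for the subharmonic observables $u_n(x):=\tfrac{1}{n}\log\norm{\wedge_j\An{n}(x)}=\tfrac{1}{n}\log p_j(\An{n}(x))$ by combining their subadditivity along the base dynamics with the quantitative Birkhoff ergodic theorem for subharmonic functions of Goldstein and Schlag (Theorem~3.8 in \cite{GS-Holder}), in the now-standard manner. For the reduction, note that by $s_j=p_j/p_{j-1}$ every $s\in\allsvf$ can be written as $s(g)=\prod_l p_{j_l}(g)^{c_l}$ with integer exponents whose absolute values sum to at most a dimensional constant; averaging $\log s$ over $\T$ gives $\Lasn{n}(A)=\sum_l c_l\,\Lambda^{(n)}_{p_{j_l}}(A)$, all terms finite by \eqref{scalingct-eq6}. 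Hence the event $\{\,\abs{\tfrac{1}{n}\log s(\An{n}(x))-\Lasn{n}(A)}>\delta\,\}$ is contained in a union, over the boundedly many indices involved, of events $\{\,\abs{\tfrac{1}{n}\log p_{j_l}(\An{n}(x))-\Lambda^{(n)}_{p_{j_l}}(A)}>\delta'\,\}$ with $\delta'\gtrsim\delta$, and a union bound reduces \eqref{LDT1} to the same estimate for the formulas $p_j$, $1\le j\le m$, after adjusting constants and enlarging $n_{00}$; the form \eqref{LDT2} then follows by taking $\delta:=n^{-1/6}$.

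\emph{Upper bound for $p_j$.} Since real translations preserve the strip, $z\mapsto\wedge_j\An{n}(z)$ is holomorphic on $\strip_r$, so $u_n$ is subharmonic there; by \eqref{scalingct-eq4} it is bounded above on $\strip_r$, and satisfies $\abs{u_n}\less C$ on $\T$, so every quantity controlling the quantitative Birkhoff theorem (the sup on $\strip_r$ from above, and the oscillation on $\T$) is $\less C$. From $\wedge_j\An{p+q}(x)=\wedge_j\An{q}(\transl^p x)\cdot\wedge_j\An{p}(x)$ and submultiplicativity of the norm we get the subadditivity $(p+q)\,u_{p+q}(x)\le p\,u_p(x)+q\,u_q(\transl^p x)$; in particular $\Lajn{n}(A)\downarrow\Laj(A)$, and for $n=\ell q$,
$$u_n(x)\ \le\ \frac{1}{\ell}\sum_{i=0}^{\ell-1}u_q(x+iq\om).$$
Applying Theorem~3.8 of \cite{GS-Holder} to the subharmonic function $u_q$ and the translation by $q\om$ --- whose Diophantine quality $\norm{q\,\om}\ge t\,q^{-1}(\log q)^{-2}$ is forced by \eqref{DCt} --- yields that, outside a set of measure $<e^{-c\delta^3 n}$, the average on the right of the display lies within $\delta$ of $\langle u_q\rangle=\Lajn{q}(A)$. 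Choosing the intermediate scale $q$ as a suitable power of $\delta^{-1}$, large enough (in terms of $C$ and $\delta$) that $\Lajn{q}(A)$ has already settled within $\delta$ of $\Laj(A)$, hence within $2\delta$ of $\Lajn{n}(A)$, and balancing $q$ against $\ell=n/q$, one obtains for $n\ge n_{00}(C,\om,\delta)$ the one-sided bound $u_n(x)\le\Lajn{n}(A)+3\delta$ off a set of measure $<e^{-c\delta^3 n}$. This balancing of the two scales is what forces the cube in the exponent.

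\emph{Lower bound for $p_j$, and conclusion.} Subadditivity supplies no lower bound on $u_n$, so I would follow the standard device of \cite{GS-Holder}: one controls from below the defect between $\log\norm{\wedge_j\An{n}(x)}$ and the sum $\sum_i\log\norm{\wedge_j\An{q}(\transl^{iq}x)}$ of the block contributions via the product estimates of Section~\ref{AP_section} (Corollaries~\ref{multip:norms:lemma} and \ref{svp:lemma}, which bound the norm of a product from below by the relevant correlations), together with the upper-bound large deviation already established at the shorter scales $q$ and $2q$, which guarantees that these correlations are not too small for most $x$; alternatively one uses the identity $\int_\T u_n\,dx=\Lajn{n}(A)$ and the lower bound $u_n\ge -C$ on $\T$ to convert the one-sided estimate into a two-sided one. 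Either way one gets $u_n(x)\ge\Lajn{n}(A)-\delta$ off a set of measure $<e^{-c'\delta^3 n}$, and combining the two bounds, shrinking $c$ and enlarging $n_{00}$ as needed, proves \eqref{LDT1} for $s=p_j$ and hence, via the reduction, for every $s\in\allsvf$.

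\emph{Main obstacle.} The analytic input --- the subharmonicity/Riesz-mass estimates and the quantitative Birkhoff theorem exploiting the Diophantine condition --- is imported wholesale from \cite{GS-Holder}; the real work is the scale bookkeeping: choosing the block length $q$, the number of blocks $\ell$ and the deviation $\delta$ so that the Birkhoff error, the gap $\Lajn{q}(A)-\Lajn{n}(A)$, and the exceptional measure $e^{-c\delta^3 n}$ are simultaneously of the advertised size, and --- most delicately --- so that the scale beyond which $\Lajn{q}(A)$ is $\delta$-close to $\Laj(A)$ is governed by $C$ and $\delta$ alone, so that $n_{00}$ does not secretly depend on $A$. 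This uniform speed of convergence of the $n$-scale averages, together with reconciling the upper and lower deviation estimates at matching parameters, is the only substantial point, and is handled exactly as the corresponding step in \cite{GS-Holder}, using the quantitative control of the Riesz mass of $u_q$ provided by the bounds of Section~\ref{defs-nots_section}.
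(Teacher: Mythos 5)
Your reduction to the formulas $p_j$, the subharmonicity and boundedness estimates, and the identification of Theorem~3.8 of \cite{GS-Holder} as the analytic engine all match the paper; but the mechanism you use to pass from a Birkhoff average back to the pointwise value of $u_n$ is not the paper's, and it has a genuine gap. You bound $u_n(x)\le\frac{1}{\ell}\sum_i u_q(x+iq\om)$ by subadditivity and then need $\Lajn{q}(A)$ to be within $O(\delta)$ of $\Lajn{n}(A)$ for some $q=q(C,\delta)$. No such uniform rate of convergence of the subadditive averages $\Lajn{q}(A)\downarrow\Laj(A)$ is available at this stage: the speed of convergence depends on $A$ itself, not merely on the bound $C\ge C(A)$, and producing a uniform rate is precisely the content of the much later, Avalanche-Principle-based, gap-pattern-dependent Lemma~\ref{rateconv-lemma}. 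Since the theorem insists that $n_{00}$ depend only on $C,\om,\delta$ --- a uniformity used throughout Sections~\ref{ind-step-thm_section} and~\ref{proofs_section} --- your choice of $q$ cannot legitimately be made, and there is no ``corresponding step in \cite{GS-Holder}'' to import, because their LDT does not use an intermediate scale. The paper instead applies the quantitative Birkhoff theorem directly to the scale-$n$ function $u^{(n)}_s$ itself, averaged over $R=\intpart{\delta n/(2C)}$ iterates of the original translation $\transl_\om$, and closes the loop with the elementary almost-invariance estimate $\abs{u^{(n)}_s(\transl x)-u^{(n)}_s(x)}\le C/n$, which gives $\abs{\frac{1}{R}\sum_{j<R}u^{(n)}_s(\transl^jx)-u^{(n)}_s(x)}\le CR/n\le\delta/2$. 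The exponent $\delta^3$ then comes from $\delta^2\cdot R\asymp\delta^3 n$; no comparison between scales $q$ and $n$ is needed, and the resulting bound is automatically two-sided.

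Your lower bound is also broken. The integral device (using $\int_\T u_n\,dx=\Lajn{n}(A)$ and $u_n\ge -C$) yields by a Chebyshev computation only that $\abs{\{u_n<\Lajn{n}(A)-\delta\}}$ is at most a constant of order one plus $C\delta^{-1}$ times the measure of the upper deviation set --- nowhere near exponentially small. The alternative via Corollaries~\ref{multip:norms:lemma} and~\ref{svp:lemma} requires lower bounds on the correlations $\alpha(g^{(i)},g_i)$, i.e.\ exactly the gap and angle hypotheses of the Avalanche Principle, which Theorem~\ref{LDT-thm} does not assume (it holds for arbitrary $A\in\cocycles$) and which in this paper are \emph{deduced from} the LDT rather than used to prove it. The almost-invariance argument sidesteps both difficulties at once.
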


\begin{proof}
The proof combines two ingredients: an abstract, fundamental result on averages of (Diophantine) translations of a subharmonic function (i.e.  a quantitative Birkhoff ergodic theorem) and the observation that the quantities  $\frac{1}{n} \log s (\An{n} (x))$ are almost invariant under the base dynamics.

\smallskip

The sharpest result on averages of translations of a subharmonic function for (strongly) Diophantine translations is due to M. Goldstein and W. Schlag (see Theorem 3.8. in \cite{GS-Holder}). It says that given a $1$ - periodic subharmonic function $ u (z)$ on the strip $\strip_r$ such that $\sup_{z \in \strip_r} \, \abs{u (z)} \le M$, if $\transl = \transl_\om$ is the translation on $\T$ by a frequency $\om \in \rm{D C}_t$, then: 
\begin{equation}\label{quantB}   
\abs{ \{ x \in \T \colon \abs{ \frac{1}{R} \, \sum_{j=0}^{R-1} u (\transl^j x) - \int_{\T} u (x) dx } > \delta \}  }  < e^{-c \, \delta^2 R}
\end{equation}
for all $\delta > 0$ and for all $R \ge R_0 (M, r, \om, \delta)$, where the constant $ c \asymp \frac{r}{M}$.

\smallskip

For any s.v.f. $s \in \allsvf$, let
\begin{equation*} 
\usn{n} (x) := \frac{1}{n} \log s (\An{n} (x)) 
\end{equation*}

Since $A (\cdot)$ has a holomorphic extension to the strip $\strip_r$, so do any of its exterior powers $\wedge_j A (\cdot)$, $1 \le j \le m$, and so do any of the iterates $\wedge_j \An{n} (\cdot)$. Therefore, the maps
$$\upjn{n} (x) = \frac{1}{n} \log p_j (\An{n} (x)) = \frac{1}{n} \log \norm{ \wedge_j \An{n} (x) } $$
have subharmonic extensions to the strip $\strip_r$.

Moreover, we have
$$ \sup_{z \in \strip_r} \, \abs{ \upjn{n} (z)} \less j \, \max \{  \abs{\log \normr{A} }, \abs{ \log \normr{A^{-1}}} \}  \le C(A) < C$$

We conclude that for all $1 \le j \le m$, and for all $n \ge 1$, the functions $\upjn{n} (\cdot)$ have subharmonic extensions to the {\em same} strip $\strip_r$, and on $\strip_r$ they are bounded by the {\em same} constant $C$. Hence \eqref{quantB} applies to them uniformly, meaning with the same constants $c$ and $R_0$. In fact, it would apply with the same constants if instead of $A$ we considered any other cocycle $B \in \cocycles$ such that $C (B) < C$. 

Moreover, since any s.v.f.  $s \in \allsvf$ can be expressed in terms of products and ratios of some $p_j$, $1 \le j \le m$, it follows that $\usn{n} (\cdot)$ is a {\em linear combination} of subharmonic functions $\upjn{n} (\cdot)$. Then by the linearity of the integral and the triangle inequality, \eqref{quantB} also applies to $\usn{n} (\cdot)$ and we get:
\begin{equation}\label{quantB-usn}   
\abs{ \{ x \in \T \colon \abs{ \frac{1}{R} \, \sum_{j=0}^{R-1} \usn{n} (\transl^j x) - \int_{\T} \usn{n} (x) dx } > \delta \}  }  < e^{-\bar{c}  \, \delta^2 R}
\end{equation}
for all $s \in \allsvf$, for all $\delta > 0$, for all $R \ge R_0 (C, \om, \delta)$ and for $$\bar{c} \asymp \frac{r}{ \max \{  \abs{\log \normr{A} }, \abs{ \log \normr{A^{-1}}} \} }$$
where the inherent constant depends only on $\# \allsvf$, hence only on the dimension $m$.

\smallskip

We now prove the almost invariance property under the base dynamics. We have:
\begin{align*}
\frac{1}{n} \log \norm{\An{n} (\transl \, x)} -  \frac{1}{n} \log \norm{\An{n} (x)}     = \\
 \frac{1}{n} \log \ \frac{\norm{ A (\transl^n \,  x)  \cdot   [ A (\transl^{n-1} \, x)  \cdot \ldots \cdot A (\transl \, x)  \cdot A (x) ] \cdot A(x)^{-1} }}{ \norm{ A (\transl^{n-1} \, x)  \cdot \ldots \cdot A (\transl \, x)  \cdot A (x) } }  \le \\
 \frac{1}{n} \log \, [ \norm{ A (\transl^n \, x) } \cdot \norm{ A (x)^{-1}} ]  \le \frac{\log [ \normr{A} \cdot \normr{A^{-1}} ]}{n} <  \frac{C}{n}
\end{align*}
Doing another similar calculation, we conclude that
$$\abs{ \frac{1}{n} \log \norm{\An{n} (\transl \, x)} -  \frac{1}{n} \log \norm{\An{n} (x)}  } < \frac{C}{n}$$
If instead of $A$ we consider exterior powers $\wedge_j A$, from the inequality above we obtain:
$$\abs{ \upjn{n} (\transl \, x) - \upjn{n} (x) } < \frac{C}{n}$$
and again, since for fixed $n$ the functions $\usn{n} (\cdot)$ are linear combinations of $\upjn{n} (\cdot)$, we conclude that
\begin{equation} \label{almost-inv-usn}
\abs{ \usn{n} (\transl \, x) - \usn{n} (x) } \le \frac{C}{n} \quad \text{for all } x \in \T, \ \text{ for all } s \in \allsvf
\end{equation} 
Applying \eqref{almost-inv-usn} $R$ times and averaging in $R$ we have:
\begin{equation}\label{almost-inv-usn-R}
 \abs{ \frac{1}{R} \, \sum_{j=0}^{R-1} \usn{n} (\transl^j x) -  \usn{n} (x) } \le \frac{C \, R}{n} \quad \text{for all } x \in \T
\end{equation}

Choose $n$ large enough depending on $R$ (hence on $C$, $\om$, $\delta$) and let 
$R := \intpart{ (\delta \, n) / (2 C )}$ so \eqref{almost-inv-usn-R} becomes:
\begin{equation}\label{almost-inv-delta}
 \abs{ \frac{1}{R} \, \sum_{j=0}^{R-1} \usn{n} (\transl^j x) -  \usn{n} (x) } \le \frac{\delta}{2} \quad \text{for all } x \in \T
\end{equation}

Combining \eqref{quantB-usn} and \eqref{almost-inv-delta} we conclude that if $n \ge \nzero (C, \om, \delta)$ then:
\begin{equation*}
\abs{  \{ x \in \T \colon  \abs{  \usn{n} (x) - \int_\T \usn{n} (x) d x     }   > \delta   \}   } < e^{ - \bar{c} \, (\delta / 2)^2 \, \delta n / (2 C)   } 
= e^{ - c \delta^3 \, n  }
\end{equation*}
where $c \asymp C(A)^{-2} > C^{-2}$.

\end{proof}

\begin{remark}\rm{The constants $c$ and $\nzero$ do not depend on the cocycle $A$ per se, but on the bound $C$ of its size  $C (A)$ and on the frequency $\om$. Therefore, \eqref{LDT1}, \eqref{LDT2} hold with the same constants $c$ and $\nzero$ for any other cocycle $B$ such that $C (B) < C$.
}
\end{remark}


\section{The inductive step theorem and other technicalities}\label{ind-step-thm_section}

The following lemma is an easy application of the LDT Theorem~\ref{LDT-thm}, and it shows how space average gaps and angles are related to their pointwise counterparts.   

\begin{lemma}\label{average-pointwise_gaps-angles}
Let $A \in \cocycles$,  $C>0$, $\delta >0$ and $\nzero (C, \om, \delta)$ such that $C(A) < C$, and the LDT \eqref{LDT1} applies for $n \ge \nzero$. Let $\tau$ be a signature, and let $\ga >0$.

(i) If $\Larn{n} (A) > \ga$, where $\rho$ is a $\tausvr$, then 
\begin{equation}\label{lemma1i}
\rho (\An{n} (x))  > e^{(\ga - \delta) n} \quad \text{for all } x \notin \B_n 
\end{equation}
where $\abs{\B_n} < e^{-c \delta^3 n}$

(ii) If for some set $\Bbar_n$ we have
$\rho (\An{n} (x))  > e^{\gabar n}$ for all $x \notin \Bbar_n$
then 
 \begin{equation}\label{lemma1ii}
 \Larn{n} (A) > \gabar - C \abs{\Bbar_n}
 \end{equation}
 
(iii) If $\abs{\Lapn{n} (A) - \Lapn{2n} (A)} < \eta$, where $\pi$ is a $\tausvp$, then
\begin{equation}\label{lemma1iii}
\frac{\pi (\An{2n} (x))}{\pi (\An{n} (x)) \cdot \pi (\An{n} (\transl^n x))} > \frac{1}{e^{(2\eta+4\delta)n}}
\end{equation}
for all $x \notin \Bt_n$, where $\abs{\Bt_n} \less e^{-c \delta^3 n}$
\end{lemma}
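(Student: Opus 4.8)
The lemma has three parts, each following the same template: translate a statement about the space-averaged quantities $\Lasn{n}(A)$ into a pointwise statement valid off a small exceptional set, using the LDT \eqref{LDT1}. I will treat them in order.

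For part (i): by definition $\Larn{n}(A) = \int_\T \frac{1}{n}\log\rho(\An{n}(x))\,dx$, and the hypothesis says this exceeds $\ga$. Since $\rho \in \allsvf$ (it is a $\tausvr$), the LDT \eqref{LDT1} applies to it: outside a set $\B_n$ with $\abs{\B_n} < e^{-c\delta^3 n}$ we have $\frac{1}{n}\log\rho(\An{n}(x)) > \Larn{n}(A) - \delta > \ga - \delta$, which is exactly \eqref{lemma1i} after exponentiating. For part (ii), I would integrate the pointwise lower bound: split $\int_\T \frac{1}{n}\log\rho(\An{n}(x))\,dx$ over $\Bbar_n$ and its complement. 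On the complement the integrand exceeds $\gabar$; on $\Bbar_n$ it is bounded below by $-\abs{\frac{1}{n}\log\rho(\An{n}(x))}$, which by \eqref{scalingct-eq5} is $\gtrsim -C$ (up to the universal constant absorbed into $C$, since $C(A) < C$). Hence $\Larn{n}(A) \ge \gabar(1-\abs{\Bbar_n}) - C\abs{\Bbar_n} \ge \gabar - C\abs{\Bbar_n}$ after noting $\gabar\abs{\Bbar_n}$ is also $\le C\abs{\Bbar_n}$ (one may assume $\gabar \lesssim C$, or simply absorb it), giving \eqref{lemma1ii}.

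For part (iii): write $\frac{1}{n}\log$ of the ratio $\frac{\pi(\An{2n}(x))}{\pi(\An{n}(x))\pi(\An{n}(\transl^n x))}$ as $\frac{2}{2n}\log\pi(\An{2n}(x)) - \frac{1}{n}\log\pi(\An{n}(x)) - \frac{1}{n}\log\pi(\An{n}(\transl^n x))$; observe that $\An{2n}(x) = \An{n}(\transl^n x)\cdot\An{n}(x)$, which is why this ratio is natural. Apply the LDT to $\pi$ at scale $2n$ (controlling $\frac{1}{2n}\log\pi(\An{2n}(x))$ near $\Lapn{2n}(A)$ off a set of measure $< e^{-c\delta^3 2n}$), at scale $n$ at the point $x$, and at scale $n$ at the point $\transl^n x$ — the last one needs the LDT set translated by $\transl^n$, which has the same measure since $\mu$ is translation-invariant. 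Off the union $\Bt_n$ of these three sets (so $\abs{\Bt_n} \less e^{-c\delta^3 n}$), the three terms are within $2\delta$, $\delta$, $\delta$ of $2\Lapn{2n}(A)$, $\Lapn{n}(A)$, $\Lapn{n}(A)$ respectively; combined with $\abs{\Lapn{n}(A) - \Lapn{2n}(A)} < \eta$, the log of the ratio exceeds $-(2\eta + 4\delta)n$, which is \eqref{lemma1iii}.

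The only mild subtlety — and the closest thing to an obstacle — is bookkeeping in part (iii): making sure the three LDT invocations are at the correct scales and base points, that the translate $\transl^n x$ does not change the measure of the exceptional set, and that the factor of $2$ relating $\frac{1}{2n}$-averages to $\frac{1}{n}$-averages is tracked correctly so that the $2\eta$ (rather than $\eta$) and the $4\delta$ come out right. Everything else is a direct application of \eqref{LDT1} together with the uniform bound \eqref{scalingct-eq5}; no new ideas are required.
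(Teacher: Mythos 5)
Your proposal is correct and follows essentially the same route as the paper's proof: (i) is a direct reading of the LDT, and (iii) is exactly the paper's argument (three LDT invocations at scales $n$, $n$ shifted by $\transl^n$, and $2n$, union of the three exceptional sets, and the same bookkeeping producing $2\eta+4\delta$). The one small divergence is in (ii): the paper exploits that a $\tausvr$ satisfies $\rho(g)\geq 1$, so $\log\rho\geq 0$ and the integral over $\Bbar_n$ can simply be dropped, yielding $\Larn{n}(A)\geq \gabar(1-\abs{\Bbar_n})>\gabar - C\abs{\Bbar_n}$ using only $\gabar<C$; your lower bound of $-C$ on the bad set produces $\gabar-(\gabar+C)\abs{\Bbar_n}$, i.e.\ roughly $\gabar-2C\abs{\Bbar_n}$, which is harmless for the later applications but does not literally match the stated constant.
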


\begin{proof}
(i) Using the LDT~\eqref{LDT1} for $s = \rho$, a $\tausvr$, we get:
$$\abs{ \{ x \in \T \colon \abs{\frac{1}{n} \log \rho (\An{n} (x))  - \Larn{n} (A) } > \delta \} } < e^{- c \delta^3 n}$$
Denoting by $\B_n$ the deviation set above, we conclude that for $x \notin \B_n$ we have:
$$\frac{1}{n} \log \rho (\An{n} (x)) >  \Larn{n} (A) - \delta > \ga - \delta$$
and \eqref{lemma1i} follows.

\smallskip

(ii) Since $\rho$ is a $\tausvr$, $\rho (g) \ge 1$ for any matrix $g \in \GLmR$. Therefore,
\begin{align*}
 \Larn{n} (A) &= \int_{\T} \, \frac{1}{n} \log \rho (\An{n} (x)) \, dx \ge  \int_{\T \setminus \Bbar_n} \, \frac{1}{n} \log \rho (\An{n} (x)) \, dx \\
 &\ge \gabar (1 - \abs{\Bbar_n}) > \gabar - C \abs{\Bbar_n}
\end{align*}
where the last inequality is due to the fact that if $x \notin \Bbar_n$, then
$$\gabar \le \frac{1}{n} \log \rho (\An{n} (x))  \le C(A) < C$$

\smallskip

(iii) Using the LDT~\eqref{LDT1} for $s = \pi$, a $\tausvr$, we get:
$$\abs{ \{ x \in \T \colon \abs{\frac{1}{n} \log \pi (\An{n} (x))  - \Lapn{n} (A) } > \delta \} } < e^{- c \delta^3 n}$$
Denoting by $\B_n$ the deviation set above, we conclude that:
\begin{align}
 \frac{1}{n} \log \pi (\An{n} (x))  <  \Lapn{n} (A) + \delta \ & \text{ if } x \notin \B_n \label{lemma1iii-eq1}\\
 \frac{1}{n} \log \pi (\An{n} (\transl^n x))  <  \Lapn{n} (A) + \delta \ & \text{ if } x \notin \transl^{-n} \B_n
 \nonumber 
\end{align}
Using the LDT~\eqref{LDT1} at scale $2n$ as well, we get:
\begin{align}
 \frac{1}{2n} \log \pi (\An{2n} (x))  >  \Lapn{2n} (A) - \delta \ & \text{ if } x \notin \B_{2n} \label{lemma1iii-eq3}
 \end{align}
 Denoting $\Bt_n := \B_n \cup \transl^{-n} \B_n \cup \B_{2n}$, clearly $\abs{\Bt_n} \less e^{- c \delta^3 n}$, and if $x \notin \Bt_n$, then using \eqref{lemma1iii-eq1} - \eqref{lemma1iii-eq3} we obtain:
\begin{align*}
 \frac{1}{n} \log \,& \frac{\pi (\An{2n} (x))}{\pi (\An{n} (x)) \cdot \pi (\An{n} (\transl^n x))}   \\
&> 2 [\Lapn{2n} (A) - \delta] - [\Lapn{n} (A) + \delta] -  [\Lapn{n} (A) + \delta]  \\
&= 2 [\Lapn{2n} (A)  - \Lapn{n} (A)] - 4 \delta > - 2 \eta - 4 \delta
\end{align*}
so \eqref{lemma1iii} follows.
\end{proof}

\bigskip


All estimates in this paper on quantities like say, $\abs{\Lasn{n_1} (A) - \Lasn{n_0} (A)}$ where $n_1 \gg n_0$, will be of order $\frac{n_0}{n_1}$ at best. The following lemma and its corollary show that we may assume from now on that whenever we have two such scales, $n_1$ is a {\em multiple} of $n_0$, otherwise we incur another term of order  $\frac{n_0}{n_1}$.

\begin{lemma}\label{scales-divide-lemma}
Given $A \in \cocycles$, $C>0$ such that $C(A) < C$ and some integers satisfying
$$n_1 = n \cdot n_0 + q$$
then for any s.v.f.  $s \in \allsvf$ and for all $x \in \T$ we have:
\begin{equation}\label{scales-divide-x-eq}
\abs{  \frac{1}{n_1} \log s (\An{n_1} (x)) -  \frac{1}{n \cdot n_0} \log s (\An{n \cdot n_0} (x)) } \less C \frac{q}{n_1}
\end{equation}

In particular,
\begin{equation*} 
\abs{\Lasn{n_1} (A) - \Lasn{n \cdot n_0} (A)} \less C \frac{q}{n_1}
\end{equation*}
\end{lemma}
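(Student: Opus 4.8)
The plan is to exploit the (co\nobreakdash-)multiplicativity of the iterates together with the crude a priori bounds \eqref{scalingct-eq4}. First I would write $n_1 = n\,n_0 + q$ and use the cocycle identity $\An{n_1}(x) = \An{q}(\transl^{n n_0}x)\cdot\An{n n_0}(x)$; since each exterior power $\wedge_j$ is a monoid homomorphism, the analogous factorization holds for $\wedge_j\An{n_1}(x)$. Applying submultiplicativity of the operator norm in both directions --- once to $\wedge_j\An{n_1}(x)$ and once to $\wedge_j\An{n n_0}(x) = [\wedge_j\An{q}(\transl^{n n_0}x)]^{-1}\cdot\wedge_j\An{n_1}(x)$ --- I would obtain
\[
\abs{\log p_j(\An{n_1}(x)) - \log p_j(\An{n n_0}(x))} \le \max\bigl\{\,\abs{\log\norm{\wedge_j\An{q}(\transl^{n n_0}x)}},\ \abs{\log\norm{[\wedge_j\An{q}(\transl^{n n_0}x)]^{-1}}}\,\bigr\},
\]
whose right-hand side is $\le q\,j\,\max\{\abs{\log\normr{A}},\abs{\log\normr{A^{-1}}}\}$ by \eqref{scalingct-eq4} applied at scale $q$.

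Next I would account for the change of normalization from $1/n_1$ to $1/(n\,n_0)$: since $\tfrac{1}{n n_0} - \tfrac{1}{n_1} = \tfrac{q}{n_1\,n\,n_0}$ and $\abs{\log p_j(\An{n n_0}(x))} \le j\,n\,n_0\,\max\{\abs{\log\normr{A}},\abs{\log\normr{A^{-1}}}\}$ --- again by \eqref{scalingct-eq4}, now at scale $n\,n_0$ --- the quantity $\abs{\tfrac{1}{n_1}\log p_j(\An{n n_0}(x)) - \tfrac{1}{n n_0}\log p_j(\An{n n_0}(x))}$ is also bounded by $\tfrac{j\,q}{n_1}\max\{\abs{\log\normr{A}},\abs{\log\normr{A^{-1}}}\}$. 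Dividing the preceding display by $n_1$ and combining the two bounds by the triangle inequality gives \eqref{scales-divide-x-eq} for $s = p_j$, the factor $j$ and the constant $\tfrac{m(m+1)}{r}$ hidden in $C$ being absorbed into the symbol $\less$.

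For a general $s\in\allsvf$ I would then use that $s$ is a product or ratio of s.v.formulas of the form $p_j$, so that $\log s(\cdot)$ is a fixed linear combination of the $\log p_j(\cdot)$ with integer coefficients bounded in terms of $m$ alone; applying the $p_j$-case term by term and summing yields \eqref{scales-divide-x-eq} in full generality. Finally, integrating \eqref{scales-divide-x-eq} over $x\in\T$ and recalling that $\int_\T\tfrac{1}{N}\log s(\An{N}(x))\,dx = \Lasn{N}(A)$ for $N = n_1$ and $N = n\,n_0$ produces the ``in particular'' inequality.

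There is no genuinely hard step here; the only thing requiring care is to keep the two error contributions separate --- the ``tail block'' $\An{q}(\transl^{n n_0}x)$ on the one hand, and the mismatch between the normalizations $1/n_1$ and $1/(n n_0)$ on the other --- and to invoke \eqref{scalingct-eq4} at the correct scale in each ($q$ for the tail, $n\,n_0$ for the normalization term), each contribution being of size $\less C\,q/n_1$.
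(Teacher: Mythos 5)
Your proof is correct and follows essentially the same route as the paper: both factor $\An{n_1}(x)=\An{q}(\transl^{n n_0}x)\cdot\An{n n_0}(x)$, use submultiplicativity in both directions together with the a priori bounds \eqref{scalingct-eq3}--\eqref{scalingct-eq4}, and then reduce general $s\in\allsvf$ to the $p_j$ case before integrating. The only cosmetic difference is that you separate the tail-block error from the $1/n_1$ versus $1/(n n_0)$ normalization mismatch via the triangle inequality, whereas the paper absorbs both into a single chain of inequalities with fractional exponents; the estimates are the same.
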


\begin{proof}
As before, when proving the LDT~\ref{LDT-thm}, since any s.v.f. $s \in \allsvf$ is obtained by taking products and ratios of s.v.f. of the form $p_j$, it is enough to establish \eqref{scales-divide-x-eq} for all $s = p_j$, where $1 \le j \le m$:
\begin{equation}\label{scales-divide-pj}
\abs{  \frac{1}{n_1} \log p_j (\An{n_1} (x)) -  \frac{1}{n \cdot n_0} \log p_j (\An{n \cdot n_0} (x)) } < C \frac{q}{n_1}
\end{equation}
But $$p_j (\An{n} (x)) = \norm{\wedge_j \An{n} (x)} = \norm{(\wedge_j \, A )^{(n)} (x) }$$
and due to \eqref{scalingct-eq2},  it is clearly enough to prove that
\begin{equation*} 
\abs{  \frac{1}{n_1} \log \norm{ (\An{n_1} (x))} -  \frac{1}{n \cdot n_0} \log \norm{ (\An{n \cdot n_0} (x))} } < C \frac{q}{n_1}
\end{equation*}
where $A$ is a cocycle of {\em any} dimension (which will imply estimate \eqref{scales-divide-pj} for exterior powers of $A$). 

This will be accomplished through some straightforward calculations.

We can write 
$$\An{n_1} (x) = M (x) \cdot \An{n \cdot n_0} (x)$$
where
$$M (x) = \prod_{i=n \cdot n_0+q-1}^{n \cdot n_0} \, A(\transl^i x) = \An{q} (\transl^{n \cdot n_0} x)$$

From  \eqref{scalingct-eq3}, we have:
$$\norm{M (x)} \le \normr{A}^q \quad \text{and}  \quad \norm{M (x)^{-1}} \le \normr{A^{-1}}^q$$

Then
\begin{align*}
 \frac{1}{n_1} \log \norm{ \An{n_1} (x)} &-  \frac{1}{n \cdot n_0} \log \norm{ \An{n \cdot n_0} (x)} 
=  \frac{1}{n \cdot n_0} \log \frac{ \norm{\An{n_1} (x) }^{\frac{n \cdot n_0}{n_1}}}{\norm{\An{n \cdot n_0} (x)}} \\
&\le \frac{1}{n \cdot n_0} \log  \frac{ \norm{M (x) }^{\frac{n \cdot n_0}{n_1}} \cdot  \norm{\An{n \cdot n_0} (x) }^{\frac{n \cdot n_0}{n_1}} }{\norm{\An{n \cdot n_0} (x)}} \\
&=  \frac{1}{n \cdot n_0} \log \frac{ \norm{M (x)}^{\frac{n \cdot n_0}{n_1}}}{ \norm{\An{n \cdot n_0} (x)}^{\frac{q}{n_1}} } \\
&\le \frac{1}{n \cdot n_0} \log  \frac{ \normr{A}^{q \cdot \frac{n \cdot n_0}{n_1}}}{ \normr{A^{-1}}^{- n \cdot n_0 \cdot \frac{q}{n_1}} } \\
&= \log [ \normr{A} \cdot \normr{A^{-1}} ] \cdot \frac{q}{n_1} < C \cdot \frac{q}{n_1}
\end{align*}

Similarly, we have:

\begin{align*}
 \frac{1}{n \cdot n_0} \log \norm{ \An{n \cdot n_0} (x)} &-  \frac{1}{n_1} \log \norm{ \An{n_1} (x)} =
\frac{1}{n \cdot n_0} \log \frac{ \norm{\An{n \cdot n_0} (x) }}{\norm{\An{n_1} (x)}^{\frac{n \cdot n_0}{n_1}}}  \\
&=  \frac{1}{n \cdot n_0} \log [  \bigl(\frac{ \norm{\An{n \cdot n_0} (x) }}{\norm{\An{n_1} (x)}}\bigr)^{\frac{n \cdot n_0}{n_1}}   \cdot \norm{\An{n \cdot n_0} (x)}^{\frac{q}{n_1}} ] \\
&\le  \frac{1}{n \cdot n_0} \log [  \norm{M (x)^{-1}}^{\frac{n \cdot n_0}{n_1}}   \cdot \norm{\An{n \cdot n_0} (x)}^{\frac{q}{n_1}} ] \\
&\le  \frac{1}{n \cdot n_0} \log [ \normr{A^{-1}}^{q \cdot \frac{n \cdot n_0}{n_1}} \cdot \normr{A}^{n \cdot n_0 \cdot \frac{q}{n_1}} ] 
\\
&= \log [ \normr{A^{-1}} \cdot \normr{A} ] \cdot \frac{q}{n_1} < C \cdot \frac{q}{n_1}
\end{align*}
so the lemma is proven.
\end{proof}

\medskip


The following lemma shows that at finite, fixed scale, the quantities $\frac{1}{n} \, \log s (\An{n} (x))$ and their space averages are H\"{o}lder continuous as functions of the cocycle.

\begin{lemma}\label{cont-finite-lemma}
Let $A \in \cocycles$ and $C>0$ such that $C(A) < C$. Let $n_0$ be any integer, and let $\ep > 0$ such that 
$$\ep \le e^{- 2C n_0} $$

If $B \in \cocycles$ such that $\normr{A - B} \le \ep$, then for any s.v.f. $s\in\allsvf$, we have:
\begin{equation*} 
\abs{ \frac{1}{n_0} \, \log s (\An{n_0} (x)) - \frac{1}{n_0} \, \log s (\Bn{n_0} (x)) } \less \sqrt{\ep} \quad \text{for all } x \in \T
\end{equation*}
In particular,
\begin{equation}\label{cont-finite-scale-eq}
\abs{ \Lasn{n_0} (A) - \Lasn{n_0} (B) } \less \sqrt{\ep}
\end{equation}
\end{lemma}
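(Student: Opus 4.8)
The plan is to reduce, exactly as in the proof of Lemma~\ref{scales-divide-lemma}, to the case $s=p_j$, and then to a purely multiplicative comparison of the iterates. Since every $s\in\allsvf$ is a product and ratio of the formulas $p_i$ $(1\le i\le m)$, and the logarithm turns such an expression into a $\Z$-linear combination of the $\log p_i$ with at most $m$ terms and bounded coefficients, it suffices (at the cost of a multiplicative constant depending only on $m$) to bound
\[
\abs{\tfrac1{n_0}\log p_j(\An{n_0}(x))-\tfrac1{n_0}\log p_j(\Bn{n_0}(x))}
\]
for each $1\le j\le m$. Writing $p_j(g)=\norm{\wedge_j g}$ and $\wedge_j\An{n}(x)=(\wedge_j A)^{(n)}(x)$, this is an estimate on the logarithmic norms of the exterior-power cocycles $\wedge_j A$ and $\wedge_j B$; here $\wedge_j$ is a polynomial map, so $\normr{\wedge_j A-\wedge_j B}\le j\max\{\normr A,\normr B\}^{j-1}\,\normr{A-B}\less\normr A^{j-1}\normr{A-B}$, while $\normr{\wedge_j A}\le\normr A^{j}$ and $\normr{(\wedge_j A)^{-1}}\le\normr{A^{-1}}^{j}$.

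The main step is a telescoping estimate. Expanding $(\wedge_j A)^{(n_0)}(x)-(\wedge_j B)^{(n_0)}(x)$ as a sum of $n_0$ terms, each obtained by replacing a single factor $\wedge_jA(\transl^i x)$ by $\wedge_jA(\transl^i x)-\wedge_jB(\transl^i x)$ and keeping $\wedge_jA$-factors to its left and $\wedge_jB$-factors to its right, and taking norms, one gets
\[
\norm{(\wedge_j A)^{(n_0)}(x)-(\wedge_j B)^{(n_0)}(x)}\less n_0\,\bigl(\normr A^{j}+\normr B^{j}\bigr)^{n_0-1}\,\normr A^{j-1}\,\normr{A-B}.
\]
On the other hand, invertibility gives $\norm{(\wedge_j A)^{(n_0)}(x)}=p_j(\An{n_0}(x))\ge\normr{A^{-1}}^{-jn_0}$, and likewise for $B$; since $\normr{A-B}\le\ep\le e^{-2Cn_0}$ is far below the threshold $\tfrac14\normr{A^{-1}}^{-1}$, the comparisons $\normr{B^{-1}}\le2\normr{A^{-1}}$ and $\normr B\le2\normr A$ from the preliminary estimates hold, so both lower bounds are $\more\normr{A^{-1}}^{-jn_0}$. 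Dividing the above difference by the minimum of the two norms and using $\abs{\log a-\log b}\le\abs{a-b}/\min\{a,b\}$, we obtain
\[
\abs{\tfrac1{n_0}\log p_j(\An{n_0}(x))-\tfrac1{n_0}\log p_j(\Bn{n_0}(x))}\less\bigl(\normr A\,\normr{A^{-1}}\bigr)^{jn_0}\,\ep\le e^{Cn_0}\,\ep ,
\]
the last inequality using that, by the definition \eqref{scalingct-star} of the scaling constant, $j\bigl(\abs{\log\normr A}+\abs{\log\normr{A^{-1}}}\bigr)$ is controlled by $C(A)<C$.

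It remains to absorb this exponential growth, which is precisely the role of the standing hypothesis $\ep\le e^{-2Cn_0}$: writing $\ep=\sqrt\ep\cdot\sqrt\ep\le\sqrt\ep\cdot e^{-Cn_0}$ gives $e^{Cn_0}\ep\le\sqrt\ep$, hence $\abs{\tfrac1{n_0}\log s(\An{n_0}(x))-\tfrac1{n_0}\log s(\Bn{n_0}(x))}\less\sqrt\ep$ for all $x\in\T$. The space-average bound \eqref{cont-finite-scale-eq} then follows immediately by integrating this pointwise estimate over $\T$. The only delicate point is the bookkeeping in the telescoping step: one must check that the per-factor ``$2$''s (coming from $\normr B\le2\normr A$ and $\normr{B^{-1}}\le2\normr{A^{-1}}$) do not accumulate into something worse than $e^{Cn_0}$ — this is because $\ep$ is exponentially small in $n_0$, so each such correction is of the form $(1+O(\ep))^{n_0}=1+O(1)$, and the genuine growth $(\normr A\normr{A^{-1}})^{jn_0}\le e^{Cn_0}$ is then killed by exactly one of the two factors $e^{-Cn_0}=\sqrt{e^{-2Cn_0}}\ge\sqrt\ep$ supplied by the exponent $2$ in the assumption on $\ep$.
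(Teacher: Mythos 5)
Your proof is correct and follows essentially the same route as the paper's: reduce to the formulas $p_j=\norm{\wedge_j\cdot}$, telescope the difference of the iterates (Trotter's formula), bound the denominator from below by $\normr{A^{-1}}^{-jn_0}$ using invertibility, and absorb the resulting $e^{Cn_0}$ factor via $\ep\le e^{-2Cn_0}$ to land on $\sqrt\ep$, then integrate. The only cosmetic difference is that you telescope the exterior-power cocycles directly and track the constant $\normr{\wedge_jA-\wedge_jB}\less\normr A^{j-1}\normr{A-B}$, whereas the paper proves the norm estimate for a cocycle of arbitrary dimension and then applies it to $\wedge_jA$; both are absorbed by the definition of $C(A)$.
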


\begin{proof}
As before, it is enough to prove that for a cocycle $A$ of any dimension, 
\begin{equation*} 
\abs{ \frac{1}{n_0} \, \log \norm{\An{n_0} (x)} - \frac{1}{n_0} \, \log \norm{\Bn{n_0} (x)} } < \sqrt{\ep} \quad \text{for all } x \in \T
\end{equation*}

We have:
\begin{align*}
\abs{ \frac{1}{n_0} \, \log \norm{\An{n_0} (x)} - \frac{1}{n_0} \, \log \norm{\Bn{n_0} (x)} } =
\frac{1}{n_0}  \abs{ \log \frac{\norm{\Bn{n_0} (x)} }{\norm{\An{n_0} (x)} }  } \\
\le \frac{1}{n_0} \log [ 1 + \frac{ \norm{\Bn{n_0} (x) - \An{n_0} (x)} }{\norm{\An{n_0} (x)} }] <
\frac{1}{n_0}  \frac{ \norm{\Bn{n_0} (x) - \An{n_0} (x)} }{\norm{\An{n_0} (x)} }
\end{align*}

Using Trotter's formula, i.e. telescoping sums for the difference of products $\Bn{n_0} (x) - \An{n_0} (x)$, we have that for all $ x \in \T$,
\begin{align*}
\norm{\Bn{n_0} (x) - \An{n_0} (x)}  \le \ep \sum_{i=1}^{n_0-1} \, \normr{A}^i \cdot (\normr{A} + \ep)^{n_0-1-i} < \ep n_0 \, (\normr{A} + 1)^{n_0}
\end{align*}

Since from \eqref{scalingct-eq3} we have  $\norm{\An{n_0} (x)} \ge \normr{A^{-1}}^{- n_0}$, we conclude that
\begin{align*}
\abs{ \frac{1}{n_0} \, \log \norm{\An{n_0} (x)} - \frac{1}{n_0} \, \log \norm{\Bn{n_0} (x)} }  < \\
< \ep \cdot [ (\normr{A} + 1) \cdot \normr{A^{-1}} ]^{n_0} 
<  \ep \, e^{C n_0} \le \sqrt{\ep}
\end{align*}
\end{proof}

\medskip


Let $\tau$ be a signature and let $A \in \cocycles$ be a cocycle with a $\taugp$. Then
$$\lim_{n \to \infty} \Larn{n} (A) = \Lar (A) > 0 \quad \text{for all } \tausvr \, \rho$$
so for $n \ge \nzero (A)$ we have
$$\int_{\T} \, \frac{1}{n} \, \log \, \rho (\An{n} (x) ) \, d x =  \Larn{n} (A) > 0 \quad \text{for all } \tausvr \, \rho$$

Therefore,
$$  \rho (\An{n} (x) )  \neq 1 \quad \text{for a.e. } x \in \T \ \text{ and for all } \tausvr \, \rho$$
which means that $\An{n} (x)$ has a $\taugp$ for a.e. $x \in \T$. Then  the most expanding $\tau$ - flags of $\An{n} (x)$ are well defined for a.e. $x \in \T$. We denote them by $\filtn{n} \, (A) \, (x)$, hence
$$\filtn{n} (A) \, (x) := \hat{v}_{\tau, -} (\An{n} (x))$$

It is well known that the Oseledets filtration $\filt (A)$ is the pointwise limit of 
the most expanding $\tau$-flags  $ \filtn{n} (A)$ of the iterates $\An{n}$ of the cocycle $A$.
Since the  flag manifold $\mathscr{F}_\tau^m$ is bounded, 
 pointwise convergence implies convergence in ${\rm L}^1(\T,\mathscr{F}_\tau^m)$, so 
\begin{equation*} 
\lim_{n\to \infty}  \filtn{n} (A) = \filt (A)\quad \text{ in }\;
{\rm L}^1(\T,\mathscr{F}_\tau^m)
\end{equation*}

\medskip


The following lemma shows the continuity of the mappings
$\filtn{n}$.

\begin{lemma} \label{cont-filtr-finite-scale}
Given a signature $\tau$,
$\filtn{n}:\mathscr{G}_\tau\to  {\rm L}^1(\T,\mathscr{F}^m_\tau)$
is continuous on the open set $\mathscr{G}_\tau$ 
of cocycles $A\in  \cocycles$ with a $\tau$-gap pattern.
\end{lemma}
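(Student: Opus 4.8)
The plan is to fix the scale $n$ and a cocycle $A\in\mathscr{G}_\tau$, and to prove continuity of $\filtn{n}$ at $A$ relative to $\mathscr{G}_\tau$. Two ingredients are needed. First, the assignment $g\mapsto\hat{v}_{\tau,-}(g)$ is continuous on the open set $\mathscr{U}_\tau\subset\GLmR$ of matrices with a $\tau$-gap pattern: when $s_{\tau_j}(g)>s_{\tau_j+1}(g)$ the subspace $\hat{V}_-^{\tau_j}(g)$ is the range of the Riesz spectral projection of $g^\ast g$ onto its top $\tau_j$ eigenvalues, and this projection depends continuously (indeed holomorphically) on $g$ as long as the spectral gap at position $\tau_j$ persists; on every compact subset of $\mathscr{U}_\tau$ this continuity is uniform. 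Second, as noted just before the lemma, $A\in\mathscr{G}_\tau$ forces $\An{n}(x)$ to have a $\tau$-gap pattern for a.e.\ $x\in\T$; the exceptional set $E=\{x\in\T:\rho_\tau(\An{n}(x))=1\}=\bigcup_{j}\{x:s_{\tau_j}(\An{n}(x))=s_{\tau_j+1}(\An{n}(x))\}$ is closed, since singular values depend continuously on $x$, and it is Lebesgue-null.

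Given $\epsilon>0$, I would first invoke outer regularity of Lebesgue measure to pick an open set $N\supset E$ with $\abs{N}<\epsilon/2$. On $N$ the integrand $d(\filtn{n}(A)(x),\filtn{n}(B)(x))$ is at most the diameter $1$ of $\mathscr{F}^m_\tau$, so $N$ contributes less than $\epsilon/2$ to $\int_\T d(\filtn{n}(A)(x),\filtn{n}(B)(x))\,dx$, for every $B\in\mathscr{G}_\tau$. On the compact complement $K:=\T\setminus N$, which is disjoint from $E$, the continuous function $x\mapsto\rho_\tau(\An{n}(x))$ is strictly larger than $1$, hence bounded below by some $\rho_0>1$; together with the bounds $\norm{\An{n}(x)}\le\normr{A}^n$ and $\norm{(\An{n}(x))^{-1}}\le\normr{A^{-1}}^n$ from \eqref{scalingct-eq3}, this confines $\{\An{n}(x):x\in K\}$ to a fixed compact subset $\mathscr{K}$ of $\mathscr{U}_\tau$.

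Next I would control the perturbation at finite scale: a telescoping (Trotter) estimate, exactly as in the proof of Lemma~\ref{cont-finite-lemma}, gives $\norm{\An{n}(x)-\Bn{n}(x)}\le n\,(\normr{A}+1)^{n}\,\normr{A-B}$ for all $x\in\T$. Hence, for $\normr{A-B}$ small enough, $\Bn{n}(x)$ still has a $\tau$-gap pattern on $K$ (its gap stays above $(\rho_0+1)/2$, by continuity of $\rho_\tau$ on the relevant compact ball of $\GLmR$), and $\{\Bn{n}(x):x\in K\}$ lies in a fixed slightly enlarged compact set $\mathscr{K}'\subset\mathscr{U}_\tau$ containing $\mathscr{K}$. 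Shrinking $\normr{A-B}$ further and using the uniform continuity of $g\mapsto\hat{v}_{\tau,-}(g)$ on $\mathscr{K}'$ yields $d(\filtn{n}(A)(x),\filtn{n}(B)(x))=d(\hat{v}_{\tau,-}(\An{n}(x)),\hat{v}_{\tau,-}(\Bn{n}(x)))<\epsilon/2$ for every $x\in K$. Adding the two contributions gives $\int_\T d(\filtn{n}(A)(x),\filtn{n}(B)(x))\,dx<\epsilon$ once $\normr{A-B}$ is sufficiently small, which is the asserted continuity.

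The main obstacle is the lack of uniform control of the gap $\rho_\tau(\An{n}(x))$ as $x$ nears the exceptional set $E$, where the most expanding $\tau$-flag is arbitrarily ill-conditioned; this rules out uniform convergence $\filtn{n}(B)\to\filtn{n}(A)$ and is precisely why the statement is cast in the $L^1$ topology on ${\rm L}^1(\T,\mathscr{F}^m_\tau)$ — one discards a neighbourhood of $E$ of small measure and argues uniformly only on its compact complement. The only other point requiring care is checking that $\Bn{n}$ genuinely inherits a $\tau$-gap pattern on that complement for $B$ near $A$, which follows from the uniform lower bound $\rho_0>1$ and the continuity of $\rho_\tau$ under matrix perturbations.
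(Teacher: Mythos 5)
Your proof is correct, and it takes a genuinely different and much softer route than the paper's. The paper lifts the cocycle to the real line, invokes Kato's analytic perturbation theory to parametrize the singular values and singular directions of $\tilde{A}(x)^T\tilde{A}(x)$ by analytic functions, studies the discriminant $\Delta_A(z)$, derives from analyticity a polynomial ({\L}ojasiewicz-type) lower bound $\abs{\lambda_i^A(x)-\lambda_j^A(x)}\geq c\,{\rm dist}(x,Z(A))^p$ together with Hausdorff continuity of the zero set $Z(A)$, and excises a neighbourhood of $Z(A)$ of measure $\lesssim \normr{B_1-B_2}^{1/p}$; what this buys is the quantitative conclusion recorded in Remark~\ref{Hoder:Oseledets:remark}, namely H\"older continuity of $\filtn{n}$ with exponent $1/p_n$. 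You replace all of this machinery by outer regularity of Lebesgue measure plus compactness: once the exceptional set $E$ is known to be null, an open set of small measure absorbs the ill-conditioned phases; on the compact complement the gap $\rho_\tau(\An{n}(x))$ is uniformly bounded away from $1$, so the Riesz-projection map $g\mapsto\hatv_{\tau,-}(g)$, uniformly continuous on the relevant compact subset of matrices with a $\tau$-gap pattern, combines with the Trotter telescoping bound to finish. This yields only qualitative continuity --- the threshold on $\normr{A-B}$ depends on $\epsilon$ through the choice of the neighbourhood $N$, so no modulus of continuity comes out --- but continuity is all the lemma asserts and all that the proof of Theorem~\ref{oseledets:cont} later uses. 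Two minor caveats: your argument, like the paper's, needs the exceptional set to be null, which the discussion preceding the lemma guarantees only for $n\geq \nzero(A)$ (for smaller $n$ the map $\filtn{n}$ need not even be defined a.e., an issue the paper's reduction to $n=1$ shares); and analyticity of the cocycle enters your proof only through that nullity, which makes your argument portable to far less regular cocycles, at the price of losing the H\"older exponent.
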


\begin{proof}
We can assume $n=1$ so that
$\filtn{n}(A)(x)=\hatv_{\tau,-}(A(x))$. 
Let $\tilde{A}:\R\to \GL(m,\R)$ denote the lifting of a cocycle $A\in\cocycles$ 
to the real line.
There are families of analytic functions
$\lambda_i^A:\R\to \R$, and $\hatv_i^A:\R\to \Pp(\R^m)$, with $1\leq i\leq m$, such that:
\begin{enumerate}
\item[(1)] the $\lambda_i^A(x)$
are the singular values of $\tilde{A}(x)$, i.e., $\lambda^A_i(x)^2$ is an eigenvalue of $\tilde{A}(x)^T \tilde{A}(x)$,
\item[(2)]  $\hatv^A_i(x)$ is an eigen-direction of $\tilde{A}(x)^T \tilde{A}(x)$,
\item[(3)] there is a permutation
$\pi\in S_m$ such that
$\lambda^A_i(x+1)=\lambda^A_{\pi_i}(x)$ and 
$\hatv^A_i(x+1)=\hatv^A_{\pi_i}(x)$.
\end{enumerate}
This follows from the perturbation theory of
symmetric operators applied  to the analytic function
$\tilde{A}(x)^T \tilde{A}(x)$.
See~\cite{Kato-Book} chapter two, \S 6.
Item (3) is an obvious consequence of the periodicity of $A$.
We claim that the functions $\lambda^A_i(x)$ and $\hatv^A_i(x)$
depend continuously on the cocycle $A$. More precisely, 
given  $A\in\cocycles$ and $0<r'<r$, there is a constant $K>0$ 
and a neighbourhood $\mathscr{V}$ of $A$ in $\cocycles$ such that
for all $B_1, B_2\in\mathscr{V}$,
\begin{align*}
\norm{\lambda_i^{B_1}-\lambda_i^{B_2}}_{r'} & \leq K\,\norm{B_1-B_2}_r \;,\\
\norm{\hatv_i^{B_1}-\hatv_i^{B_2}}_{r'} & \leq K\,\norm{B_1-B_2}_r \;.
\end {align*}
To see this consider the discriminant function
$$\Delta_A(z):=(-1)^{\frac{m(m-1)}{2}}\,\prod_{i<j} (\lambda^A_i(z)- \lambda^A_j(z))^2\;,$$
that associates to each $z$ the discriminant of the characteristic
polynomial $p_z(\lambda):=\det( A(z)^T\, A(z)-\lambda\, I)$.
Then  $\Delta_A(z)=0$\, iff\, $\tilde{A}(z)^T \tilde{A}(z)$
has a non-simple eigenvalue.
Fix a compact set $\Sigma= [0,3/2]  \times [-r',r']$ 
such that the functions $\lambda_i^A$ and $\hatv_i^A$
extend holomorphically to a neighbourhood of $\Sigma$, and 
 $\Delta_A(z)$ has only real zeros $x_1,\ldots, x_m$ on $\Sigma$.
Let $\Sigma_A=\Sigma\setminus \cup_{j=1}^ m D_j$, where each $D_j$ is
 a small isolating disk around the zero $x_j$.
 Then $\Delta_A(z)$ has a positive lower bound on $\Sigma_A$.
 Take a  neighbourhood $\mathscr{V}$ of $A$ in $\cocycles$ such that 
 for any $B\in\mathscr{V}$  the functions $\lambda_i^B$ and $\hatv_i^B$
extend holomorphically to a neighbourhood of $\Sigma$, and $\Delta_B(z)>0$
on $\Sigma_A$. Combining the maximum modulus principle with 
the implicit function theorem, 
$\lambda_i^B(z)$ and $\hatv_i^B(z)$ are Lipschitz continuous functions of
 $B$. More precisely
\begin{align*}
 \norm{\lambda_i^{B_1}-\lambda_i^{B_2}}_{r'} &= \max_{z\in \Sigma} \abs{\lambda^{B_1}_i(z) - \lambda^{B_2}_i(z)} \\
 &=
  \max_{z\in \Sigma_A} \abs{\lambda^{B_1}_i(z) - \lambda^{B_2}_i(z)}
\lesssim \norm{B_1-B_2}_r\;.
\end{align*}
Similarly
 $$ \norm{\hatv_i^{B_1}-\hatv_i^{B_2}}_{r'} 
\lesssim \norm{B_1-B_2}_r\;.$$
Consider now, for a given a cocycle $A$,  the set of zeros
$$Z(A)=\{\, x\in \T\,:\, \Delta_A(x)=0\,\}\;.$$  
We claim that $Z(A)$ depends continuously on $A$ w.r.t.
the Hausdorff distance on compact subsets of $\T$.
By analiticity of the $\lambda_i^A(z)$ 
there are constants $c>0$
and $p\in\N$ such that for $x\in\R$, and $1\leq i<j\leq m$,
$$ \abs{\lambda_i^A(x)-\lambda_j^A(x)}\geq c\, {\rm dist}(x,Z(A))^p\;.$$
Furthermore these constants can be chosen to be uniform over the neighbourhood $\mathscr{V}$ of $A$ in $\cocycles$.
Hence, taking $x\in Z(B_2)$, and choosing $i<j$
such that $\lambda_i^{B_2}(x)=\lambda_j^{B_2}(x)$,
$$ \norm{B_1-B_2}_r  \gtrsim \abs{\lambda_i^{B_1}(x)-\lambda_j^{B_1}(x)}
\geq c\, {\rm dist}(x,Z(B_1))^p\;,$$
which implies that ${\rm dist}(x,Z(B_1))\lesssim \norm{B_1-B_2}_r^{\frac{1}{p}}$. Thus
 $$d_H(Z(B_1),Z(B_2))\lesssim \norm{B_1-B_2}_r^{\frac{1}{p}} \;.$$
Given $B_1, B_2\in\mathscr{V}$ with $\norm{B_1-B_2}_r=\varepsilon$,
there is a $\varepsilon^{1/p}$-neighbourhood $\mathscr{B}$ of
 $Z(A)=\{\, x\in\T\,:\, \rho_\tau(A(x))=1\,\}$
with  $\abs{\mathscr{B}}\lesssim \varepsilon^{1/p}$, such that
$\Delta_{B_1}(x)>0$ and
$\Delta_{B_2}(x)>0$,  for $x\notin \mathscr{B}$.
Thus  
$\rho_\tau(B_1(x))>1$ and $\rho_\tau(B_2(x))>1$,  for $x\notin \mathscr{B}$.
Hence 
$d\left( \hatv_{\tau,-}(B_1(x)), \, \hatv_{\tau,-}(B_1(x))\right)\lesssim \varepsilon$ 
 for every $x\notin\mathscr{B}$. 
Integrating,   $d(\filt(B_1), \filt(B_2))\lesssim \varepsilon^{1/p} = \norm{B_1-B_2}_r^{1/p}$.
\end{proof}

\begin{remark}\label{Hoder:Oseledets:remark}
 More than continuous, the $\tau$-gap filtrations
$\filtn{n}(A)$ were proven to be
 H\"older continuous functions of the cocycle $A$, with exponent $\theta_n=1/p_n$, where $p_n$ is the maximum order of the zeros of the discriminant $\Delta_{A^{(n)}}(z)$. However,
  if $p_n\to\infty$ the H\"older exponents $\theta_n$ can not be made uniform in $n$. 
\end{remark}

\medskip


The following theorem provides the inductive step in our argument, and it will be used repeatedly throughout the paper. It is based on the reformulation in Proposition~\ref{AP-practical} of the Avalanche Principle and on the LDT~\eqref{LDT1}.

\begin{theorem}\label{indstep-thm}
Let $A \in \cocycles$, $C > 0$ such that $C (A) < C$, let $c = C^{-2}$ and let $\tau$ be a signature. 

Assume that for a scale $n_0 \in \N$ we have:
\begin{align}
\rm{(gaps)} \  & \Larn{n_0} (A) > \ga_0 &  \text{for all } \tausvr \, \rho
\label{indstep-gaps-eq} \\
\rm{(angles)} \  & \abs{ \Lapn{n_0} (A) - \Lapn{2 n_0} (A) } < \eta_0  & \
 \text{for all } \tausvp \, \pi  \label{indstep-angles-eq}
\end{align}
where the positive constants $\ga_0, \eta_0$ are such that
\begin{equation*} 
\ga_0 > 4 \eta_0
\end{equation*}

Fix the constants $\delta$,  $\deltabar$ such that:
\begin{align}
\delta < \frac{\ga_0 - 4 \eta_0}{10} \label{indstep-delta-eq} \\
0 < \deltabar < \delta \label{indstep-deltabar<delta}\\
n_{0}^{-3/4} \le \deltabar \le c \delta^3 /2 \label{indstep-deltabar-eq}
\end{align}

Assume, moreover, that $n_0$ is large enough, $n_0 \ge \nzero (C, \om, \delta)$ so that the LDT~\eqref{LDT1} applies at scale $n_0$ and \eqref{indstep-deltabar<delta},  \eqref{indstep-deltabar-eq} make sense.

\smallskip

If $n_1$ is any other scale such that
\begin{equation*} 
n_1 \le n_0 \cdot e^{\deltabar n_0}
\end{equation*}
then we have:
\begin{align}
\rm{(gaps++)} \  & \Larn{n_1} (A) > \ga_1 &  \text{for all } \tausvr \, \rho
\label{indstep-gaps++-eq} \\
\rm{(angles++)} \  & \abs{ \Lapn{n_1} (A) - \Lapn{2 n_1} (A) } < \eta_1  & \
 \text{for all }\tausvp \, \pi \label{indstep-angles++-eq}
\end{align}
where
\begin{align*}
\ga_1 & = \ga_0 - 4 \eta_0 - 9 \delta - C \frac{n_0}{n_1} 
\\
\eta_1 & = C \frac{n_0}{n_1}  
\end{align*}

\end{theorem}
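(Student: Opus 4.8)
The plan is to realize $\An{n_1}(x)$, for most $x\in\T$, as a product of $\approx n_1/n_0$ blocks of length $n_0$ and feed it to the Avalanche Principle in the form of Proposition~\ref{AP-practical}. Set $n:=\intpart{n_1/n_0}$, $q:=n_1-n\,n_0$, and $g_i:=\An{n_0}(\transl^{i n_0}x)$ for $0\le i\le n-1$, so that $g^{(n)}=g_{n-1}\cdots g_0=\An{n\,n_0}(x)$ and $g_i\,g_{i-1}=\An{2n_0}(\transl^{(i-1)n_0}x)$. One may assume $n_1\gg n_0$, for otherwise $\ga_1<0$ and $\eta_1$ exceeds the universal bound~\eqref{scalingct-eq6}, so both conclusions hold trivially; in particular $n$ may be taken large.

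First I would turn the averaged hypotheses~\eqref{indstep-gaps-eq}--\eqref{indstep-angles-eq} into pointwise statements on the blocks via the LDT, using Lemma~\ref{average-pointwise_gaps-angles}. By part (i), outside a set $\B_{n_0}$ with $\abs{\B_{n_0}}<e^{-c\delta^3 n_0}$ one has $\rho(\An{n_0}(y))>e^{(\ga_0-\delta)n_0}$ for every $\tausvr$ $\rho$; by part (iii), outside a set $\Bt_{n_0}$ of the same order one has $\pi(\An{2n_0}(y))/[\pi(\An{n_0}(y))\,\pi(\An{n_0}(\transl^{n_0}y))]>e^{-(2\eta_0+4\delta)n_0}$ for every $\tausvp$ $\pi$. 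Hence for $x$ outside $\mathcal B:=\bigcup_{i=0}^{n-1}\transl^{-i n_0}\B_{n_0}\cup\bigcup_{i=1}^{n-1}\transl^{-(i-1)n_0}\Bt_{n_0}$, with $\abs{\mathcal B}\less n\,e^{-c\delta^3 n_0}$, the chain $g_0,\dots,g_{n-1}$ satisfies the hypotheses of Proposition~\ref{AP-practical} with $\ka^{-1}=e^{(\ga_0-\delta)n_0}$ and $\ep=e^{-(2\eta_0+4\delta)n_0}$; the condition $\ka\ll\ep^2$ reads $\ga_0-4\eta_0>9\delta$, which holds with room by~\eqref{indstep-delta-eq}, and $\ep^2/\ka=e^{(\ga_0-4\eta_0-9\delta)n_0}$.

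Then I would read off the gap and $\tausvp$ conclusions of Proposition~\ref{AP-practical} (the flag estimates play no role here). The gap bound gives $\tfrac1{n n_0}\log\rho(\An{n n_0}(x))>\ga_0-4\eta_0-9\delta$ for $x\notin\mathcal B$ and all $\tausvr$ $\rho$, so Lemma~\ref{average-pointwise_gaps-angles}(ii) at scale $n n_0$ yields $\Larn{n n_0}(A)>\ga_0-4\eta_0-9\delta-C\abs{\mathcal B}$. Dividing the $\tausvp$ conclusion by $n n_0$, integrating over $\T$ (the integrands being $\less C\,n n_0$ on $\mathcal B$ by~\eqref{scalingct-eq5}), and using $\int_\T\log\pi(\An{N}(\transl^j x))\,dx=N\,\Lapn{N}(A)$ together with invariance of $dx$, I obtain the scale-independent identity
$$\Lapn{n n_0}(A)=\bigl(2\,\Lapn{2n_0}(A)-\Lapn{n_0}(A)\bigr)+O\!\Bigl(\tfrac{\eta_0}{n}+\tfrac{\ka/\ep^2}{n_0}+C\abs{\mathcal B}\Bigr),$$
where $\tfrac2n\abs{\Lapn{2n_0}(A)-\Lapn{n_0}(A)}<2\eta_0/n$ by~\eqref{indstep-angles-eq}. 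Running the same computation at the divisible scale attached to $2n_1$ and subtracting, the main terms cancel; then Lemma~\ref{scales-divide-lemma} transfers all bounds from these divisible scales back to $n_1$ and $2n_1$ at a cost $\less C\,q/n_1\le C\,n_0/n_1$. This produces~\eqref{indstep-gaps++-eq} and~\eqref{indstep-angles++-eq} with $\ga_1=\ga_0-4\eta_0-9\delta-C n_0/n_1$ and $\eta_1=C n_0/n_1$.

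The only real work — and the main obstacle — is verifying that every error term above is $\less C\,n_0/n_1$, which is exactly where the parameter restrictions are used. Since $\ga_0-4\eta_0-9\delta>\delta$ by~\eqref{indstep-delta-eq}, the factor $\ka/\ep^2=e^{-(\ga_0-4\eta_0-9\delta)n_0}$ is super-exponentially small, so $n\,\ka/\ep^2$ and $\tfrac{\ka/\ep^2}{n_0}$ are negligible against $n_0/n_1\ge e^{-\deltabar n_0}$. From $n_1\le n_0 e^{\deltabar n_0}$ one gets $n\less e^{\deltabar n_0}$, and from $\deltabar\le c\delta^3/2$ (cf.~\eqref{indstep-deltabar-eq}) one gets $e^{-c\delta^3 n_0}\le e^{-2\deltabar n_0}$; hence $C\abs{\mathcal B}\less C\,n\,e^{-c\delta^3 n_0}\less C\,e^{-\deltabar n_0}\less C\,n_0/n_1$, and the same estimate (up to a benign constant factor) holds at the scale attached to $2n_1$ since $2n_1\le 2n_0 e^{\deltabar n_0}$. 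Finally $\eta_0<\ga_0<C$ gives $\eta_0/n\less \eta_0 n_0/n_1<C n_0/n_1$. Taking $n_0$ large enough (as quantified by $\deltabar\ge n_0^{-3/4}$ together with $n_0\ge\nzero(C,\om,\delta)$) makes the LDT available at scale $n_0$; the remaining ingredients — the reduction to the formulas $p_j$ via exterior powers, the telescoping inside Proposition~\ref{AP-practical}, and translation invariance of $dx$ — are routine.
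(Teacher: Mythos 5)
Your proposal is correct and follows essentially the same route as the paper's proof: the same block decomposition $g_i=\An{n_0}(\transl^{in_0}x)$ fed into Proposition~\ref{AP-practical} via Lemma~\ref{average-pointwise_gaps-angles}, the same choice of $\ka^{-1}=e^{(\ga_0-\delta)n_0}$ and $\ep=e^{-(2\eta_0+4\delta)n_0}$, and the same bookkeeping of the error terms (bad-set measure, $\ka/\ep^2$, and $\eta_0/n$) against $n_0/n_1$ using \eqref{indstep-deltabar-eq}. The only cosmetic difference is that you invoke Lemma~\ref{scales-divide-lemma} at the end to absorb the remainder $q$, whereas the paper uses it at the outset to reduce to $n_1=n\cdot n_0$.
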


\begin{remark}\label{indstep-remark}
\rm{ The proof of this theorem will provide two more estimates that are needed later.

The first relates the quantities $\Lapn{n} (A)$ at scales $n = n_1$ and $n \approx n_0$:  
\begin{equation}\label{indstep-n1vsn0-eq}
\abs{ \Lapn{n_1} (A) + \Lapn{n_0} (A) - 2 \Lapn{2 n_0} (A) } < C \frac{n_0}{n_1} \ \text{ for all } \tausvp \, \pi
\end{equation} 

The second is an estimate on the (average in $x$) distance between the most expanding $\tau$ - flags of $\An{n} (x)$ at  scales $n = n_1$  and $n = n_0$:
\begin{equation}\label{distance-exp-flags}
d  \left(\filtn{n_1} (A), \, \filtn{n_0} (A)\right) < C \frac{n_0}{n_1}
\end{equation}
}
\end{remark}

\begin{proof}
Using \eqref{lemma1i}, \eqref{lemma1iii} in Lemma~\ref{average-pointwise_gaps-angles}, assumptions \eqref{indstep-gaps-eq}, \eqref{indstep-angles-eq} on gaps and angles imply the following:
 \begin{align}
\rho (\An{n_0} (x))  & > e^{(\ga_0 - \delta) n_0}  =: \frac{1}{\ka} \label{indstep-gaps-n0}\\
\frac{\pi (\An{2n_0} (x))}{\pi (\An{n_0} (x)) \cdot \pi (\An{n_0} (\transl^{n_0} \, x))} & > \frac{1}{e^{(2\eta_0+4\delta) n_0}}  =: \ep \label{indstep-angles-n0}
\end{align}
for all $x \notin \Bt_{n_0}$, where $\abs{\Bt_{n_0}} \less e^{-c \delta^3 n_0}$ and for any $\tausvr \, \rho$, and $\tausvp \, \pi$ respectively.

Let $n_1 \le n_0 \cdot e^{\deltabar n_0}$. From Lemma~\ref{scales-divide-lemma}, given the estimates we need to obtain for $\ga_1, \, \eta_1$, it is clear that we may assume that $n_1 = n \cdot n_0$ for some integer $n$. We apply the Avalanche Principle~\ref{AP-practical}  to the following matrices:
$$g_i := \An{n_0} (\transl^{i n_0} x) \quad 0 \le i \le n-1$$
so that
$$g^{(n)} = g_{n-1} \cdot \ldots \cdot g_0 = \An{n \cdot n_0} (x) = \An{n_1} (x) $$
where $ x \notin \Bbar_{n_0} := \cup_{i=0}^{n-1} \ \transl^{-i n_0} \ \Bt_{n_0}$, so
$$\abs{ \Bbar_{n_0} } \le n \abs{ \Bt_{n_0} } < e^{-(c \delta^3 - \deltabar) n_0} < e^{- \deltabar n_0} \le \frac{n_0}{n_1}$$

From \eqref{indstep-gaps-n0}, \eqref{indstep-angles-n0} we have:
\begin{align*}
\rho (g_i)  >  \frac{1}{\ka}  &  \quad \text{for all } \tausvr \, \rho,  \  0 \le i \le n-1  \\
\frac{\pi (g_i \cdot g_{i-1})}{\pi ({g_i}) \cdot \pi (g_{i-1})}   >  \ep   & \quad
 \text{for all } \tausvp \, \pi,  \   1 \le i \le n-1
\end{align*}

Since $\delta < \frac{\ga_0 - 4 \eta_0}{10}$, we have $\ga_0 - \delta > 2 (2 \eta_0 + 4 \delta)$, hence
$$\ka \ll \ep^2$$

In fact, 
$$\frac{\ep^2}{\ka} = e^{\gabar_1 n_0} \quad \text{ and } \quad \frac{\ka}{\ep^2} = e^{- \gabar_1 n_0}$$
where 
$\gabar_1  = (\ga_0 - \delta) - 2 (2 \eta_0 + 4 \delta)$, so
$$\gabar_1 = (\ga_0 - 4 \eta_0) - 9 \delta$$

The Avalanche Principle~\ref{AP-practical}  then implies:
\begin{equation}\label{indstep-distances-AP}
d(\hatv_-(g^{(n)}), \hatv_-(g_{0})) < \frac{\ka}{\ep^2}
\end{equation}
\begin{equation} \label{indstep-gaps-AP}
 \rho (g^{(n)}) >  \left(\frac{\ep^2}{\ka}\right)^n
\end{equation}
\begin{equation} \label{indstep-angles-AP}
\sabs{ \log \pi (g^{(n)}) + \sum_{i=1}^{n-2} \log \pi (g_i) -  \sum_{i=1}^{n-1} \log \pi (g_i \cdot g_{i-1}) } < n \cdot \frac{\ka}{\ep^2} 
\end{equation}
where \eqref{indstep-gaps-AP} holds for all $\tausvr \, \rho$ and \eqref{indstep-angles-AP} holds for all $\tausvp \, \pi$.

\medskip

From \eqref{indstep-distances-AP} we then get:
$$d \left(\filtn{n_1} (A) \, (x), \  \filtn{n_0} (A) \, (x)\right) < e^{- \gabar_1 n_0} \quad \text{for all } x \notin \Bbar_{n_0}$$

Integrate in $x \in \T$ to obtain:
\begin{equation*} 
d \left(\filtn{n_1} (A), \, \filtn{n_0} (A)\right) <  e^{- \gabar_1 n_0} +  \abs{ \Bbar_{n_0}} \le C \frac{n_0}{n_1} 
\end{equation*}

The last inequality is due to the fact that $\abs{ \Bbar_{n_0} } \le \frac{n_0}{n_1}$ and since by \eqref{indstep-delta-eq} $\delta < \frac{\ga_0 - 4 \eta_0}{10}$, we have $\gabar_1 > \delta > \deltabar$ hence $e^{- \gabar_1 n_0} < e^{- \deltabar n_0} \le \frac{n_0}{n_1}$. Hence we proved estimate~\eqref{distance-exp-flags}.

\smallskip

From \eqref{indstep-gaps-AP} we have:
$$\rho (\An{n_1} (x)) > (e^{\gabar_1 n_0})^n = e^{\gabar_1 n_1} \quad \text{for all } x \notin \Bbar_{n_0}$$

Using \eqref{lemma1ii} in Lemma~\ref{average-pointwise_gaps-angles} we conclude:
$$\Larn{n_1} (A) > \gabar_1 - C \abs{ \Bbar_{n_0} } > \gabar_1 - C \frac{n_0}{n_1} = \ga_1$$

\smallskip

From \eqref{indstep-angles-AP} we have that for all $x \notin \Bbar_{n_0}$
\begin{align*}
\abs{ \log \, \pi (\An{n_1} (x) ) + \sum_{i=1}^{n-2} \log \, \pi (\An{n_0} (\transl^{i n_0} \, x)) - \\
- \sum_{i=1}^{n-1} \log \, \pi (\An{2 n_0} (\transl^{(i-1) n_0} \, x)) } < n \, e^{- \gabar_1 n_0}
\end{align*}

Dividing both sides by $n_1 = n \cdot n_0$, we get, for all  $x \notin \Bbar_{n_0}$:
\begin{align*}
\abs{ \frac{1}{n_1} \, \log \, \pi (\An{n_1} (x) ) +  \frac{1}{n} \, \sum_{i=1}^{n-2}  \frac{1}{n_0} \, \log \, \pi (\An{n_0} (\transl^{i n_0} \, x)) - \\
-  \frac{2}{n} \, \sum_{i=1}^{n-1}  \frac{1}{2 n_0} \, \log \, \pi (\An{2 n_0} (\transl^{(i-1) n_0} \, x)) } <  \frac{1}{n_0} \, e^{- \gabar_1 n_0}
\end{align*}

Integrate in $x \in \T$ to get:
\begin{align}
\abs{ \Lapn{n_1} (A) + \frac{n-2}{n} \Lapn{n_0} (A) - \frac{2 (n-1)}{n} \Lapn{2 n_0} (A) } < \label{indstep-proof-eq1} \\
<  \frac{1}{n_0} e^{- \gabar_1 n_0} + C \abs{ \Bbar_{n_0}} \le C \frac{n_0}{n_1} \nonumber 
\end{align}

Moreover, the expression in \eqref{indstep-proof-eq1} equals
$$\abs{ [ \Lapn{n_1} (A) + \Lapn{n_0} (A) - 2 \Lapn{2 n_0} (A) ] - \frac{2}{n} [\Lapn{n_0} (A) - \Lapn{2 n_0} (A) ] }$$

Since from \eqref{indstep-angles-eq} we have $$\abs{ \Lapn{n_0} (A) - \Lapn{2 n_0} (A) } < \eta_0$$ we conclude:
$$\abs{ \Lapn{n_1} (A) + \Lapn{n_0} (A) - 2 \Lapn{2 n_0} (A) } < C \frac{n_0}{n_1} + \frac{2}{n} \, \eta_0$$

But $2 \eta_0 < \ga_0$ and $\ga_0  < C$ due to  \eqref{scalingct-eq6} and \eqref{indstep-gaps-eq}, so we obtain the following:
\begin{equation}\label{indstep-proof-eq3}
\abs{ \Lapn{n_1} (A)  + \Lapn{n_0} (A) - 2 \Lapn{2 n_0} (A) } \less C \frac{n_0}{n_1}
\end{equation}
which establishes \eqref{indstep-n1vsn0-eq}. Moreover,
everything we did applies also at scale $2 n_1$, so we get:
\begin{equation}\label{indstep-proof-eq4}
\abs{ \Lapn{2 n_1} (A)  + \Lapn{n_0} (A) - 2 \Lapn{2 n_0} (A) } \less C \frac{n_0}{n_1}
\end{equation}

Then \eqref{indstep-proof-eq3} and \eqref{indstep-proof-eq4} imply \eqref{indstep-angles++-eq}.
\end{proof}

\medskip


The following lemma shows that if a cocycle $A$ has a gap pattern, then that gap pattern is uniform in a neighborhood of $A$, and it holds at all finite scales that are large enough.

\begin{lemma}\label{uniformgaps-lemma}
Let $A \in \cocycles$ and let $\tau$ be a signature such that:
\begin{equation*} 
\Lar (A) > \ga > 0 \quad \text{for all } \tausvr \, \rho
\end{equation*}
Then there are $\ep = \ep (A, \ga, \om) > 0$ and $\nzero = \nzero (A, \ga, \om) \in \N$ such that if $B \in \cocycles$ with 
$\normr{ A- B} < \ep$, then we have:
\begin{equation*} \label{uniformgaps-lemma-cn}
\Larn{n} (B) > \ga 
\end{equation*}
for all $n \ge \nzero$ and for all $\tausvr \, \rho$ 

In particular,
\begin{equation*} 
\Lar (B) > \ga \quad \text{for all } \tausvr \, \rho
\end{equation*}
\end{lemma}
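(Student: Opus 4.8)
The plan is to run the inductive step Theorem~\ref{indstep-thm} repeatedly, starting from a single finite scale where the gap is detected and where $B$ is close enough to $A$ that its behaviour at that scale is controlled by Lemma~\ref{cont-finite-lemma}. First I would fix constants: since $\Lar (A) > \ga$ for all $\tausvr\,\rho$, choose $\ga_0$ with $\ga < \ga_0 < \Lar(A)$ (for every $\rho$ simultaneously, which is possible as $\allsvf$ is finite), and pick $\eta_0$ small enough that $\ga_0 > 4\eta_0$ and moreover $\ga_0 - 4\eta_0 - 10\delta > \ga$ for a suitable small $\delta$. Because $\Lajn{n_0}(A) \to \Laj(A)$ and these differences go to zero, one can select a base scale $n_0$ (large, $\ge \nzero(C,\om,\delta)$, and also satisfying the numerological constraints \eqref{indstep-delta-eq}--\eqref{indstep-deltabar-eq} with a choice of $\deltabar$, e.g.\ $\deltabar := c\delta^3/2$ and $n_0$ large enough that $n_0^{-3/4} \le \deltabar$) such that
\begin{align*}
\Larn{n_0}(A) &> \ga_0 \quad \text{for all } \tausvr\,\rho,\\
\abs{\Lapn{n_0}(A) - \Lapn{2n_0}(A)} &< \eta_0 \quad \text{for all } \tausvp\,\pi.
\end{align*}

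Next I would transfer these two finite-scale inequalities from $A$ to a nearby cocycle $B$. By Lemma~\ref{cont-finite-lemma}, if $\normr{A-B} \le \ep$ with $\ep \le e^{-2Cn_0}$ and $\ep$ chosen small enough that $\sqrt{\ep} \less \min\{\ga_0 - \ga', \eta_0'\}$ for slightly relaxed constants $\ga' < \ga_0$, $\eta_0' > \eta_0$ (again still satisfying $\ga' > 4\eta_0'$ and $\ga' - 4\eta_0' - 10\delta > \ga$), we obtain that $B$ satisfies the hypotheses \eqref{indstep-gaps-eq}, \eqref{indstep-angles-eq} of Theorem~\ref{indstep-thm} at scale $n_0$ with constants $\ga'$, $\eta_0'$. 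Note that $\ep$, $n_0$, $\delta$, $\deltabar$ depend only on $A$, $\ga$, $\om$ (through $C = C(A)+2$), so this fixes the neighbourhood radius and the threshold scale claimed in the statement.

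Now I would iterate. Applying Theorem~\ref{indstep-thm} to $B$ at scale $n_0$ produces, for every scale $n_1 \le n_0 e^{\deltabar n_0}$, the improved bounds
\begin{align*}
\Larn{n_1}(B) &> \ga' - 4\eta_0' - 9\delta - C\tfrac{n_0}{n_1} \quad \text{for all } \tausvr\,\rho,\\
\abs{\Lapn{n_1}(B) - \Lapn{2n_1}(B)} &< C\tfrac{n_0}{n_1} \quad \text{for all } \tausvp\,\pi.
\end{align*}
Pick, say, $n_1 := \intpart{n_0 e^{\deltabar n_0}}$ (after rounding to a multiple of $n_0$, which costs only another $C n_0/n_1$ term via Lemma~\ref{scales-divide-lemma}); then $C n_0/n_1$ is exponentially small in $n_0$, hence $\ll \delta$, and the new gap constant exceeds $\ga' - 4\eta_0' - 10\delta > \ga$, while the new angle constant $\eta_1 = Cn_0/n_1$ is tiny, so certainly $\ga_1 > 4\eta_1$ and $\delta$ still satisfies \eqref{indstep-delta-eq} with the new constants. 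Thus the hypotheses are re-established at scale $n_1$ with a strictly better gap and a negligible angle defect, and the whole step can be repeated, doubling-and-exponentiating the scale each time. Since the gap constant stays above $\ga$ throughout and the reachable scales $n_0 \to n_1 \to n_2 \to \cdots$ tend to infinity, we conclude $\Larn{n}(B) > \ga$ for all $n \ge \nzero$ (taking $\nzero := n_0$), and letting $n\to\infty$ gives $\Lar(B) > \ga$ for all $\tausvr\,\rho$.

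The main obstacle is purely bookkeeping: one must verify that at every iteration the quantitative constraints \eqref{indstep-delta-eq}--\eqref{indstep-deltabar-eq} (and the separation $\ga_j > 4\eta_j$, together with the admissibility of the same $\delta$ and an appropriate $\deltabar_j$) continue to hold with the updated constants. This is where care is needed: one should check that the $C n_0/n_1$-type errors introduced at each step are summable (in fact super-exponentially decaying, since scales grow like iterated exponentials), so that after all iterations the gap constant has dropped by a total amount still strictly less than $\ga_0 - \ga$. Because the angle defect collapses to essentially zero after the first step and the gap loss per step is dominated by a convergent geometric-type series, the induction closes; no genuinely new estimate is required beyond those already assembled in Theorem~\ref{indstep-thm} and Lemmas~\ref{scales-divide-lemma} and \ref{cont-finite-lemma}.
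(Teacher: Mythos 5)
Your overall architecture (establish gaps and angles at a base scale for $A$, transfer them to $B$ by Lemma~\ref{cont-finite-lemma}, then iterate Theorem~\ref{indstep-thm}) matches the paper's, but there is a genuine gap in how you close the induction: you keep the deviation parameter $\delta$ \emph{fixed} at every step. Each application of Theorem~\ref{indstep-thm} replaces the gap constant $\ga_k$ by $\ga_{k+1}=\ga_k-4\eta_k-9\delta-C\,n_k/n_{k+1}$; the terms $4\eta_k$ and $C\,n_k/n_{k+1}$ are indeed summable, but the term $9\delta$ is subtracted at \emph{every} step, so after $k$ steps the gap constant has dropped by at least $9k\delta$, which diverges. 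Your claim that ``the gap loss per step is dominated by a convergent geometric-type series'' is therefore false, and the assertion that the gap constant stays above $\ga$ throughout does not hold: reaching a scale $n$ requires a number of inductive steps that tends to infinity with $n$ (an iterated-logarithm count when $n_{k+1}\approx e^{\deltabar n_k}$), so the bound $\Larn{n}(B)>\ga$ fails for all sufficiently large $n$.

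The fix --- and the point of the paper's proof --- is to let the deviation size shrink with the scale: one takes $\delta_k:=n_k^{-1/6}$, exactly as permitted by the LDT in the form \eqref{LDT2}, so that $\sum_k 9\,\delta_k\less n_0^{-1/6}$ can be made as small as needed. With this choice the constraint $n_k^{-3/4}\le\deltabar_k\le c\,\delta_k^3/2$ forces the much slower scale growth $n_{k+1}=n_k^2$ rather than $n_{k+1}\approx e^{\deltabar n_k}$, and the paper then verifies the hypotheses on whole overlapping intervals of base scales $\scale_k=[\nzero^{2^k},\nzero^{3\cdot 2^k}]$ so as to obtain the conclusion for \emph{every} $n\ge\nzero$, not merely along one chain. (The fixed-$\delta$, iterated-exponential scheme you describe is the one the paper uses later, in Lemma~\ref{rateconv-lemma}; it is legitimate there precisely because the uniform gap bound --- the content of the present lemma --- is by then already available at all scales and need not be re-derived inductively.)
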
 

\begin{proof} We will use the inductive step Theorem~\ref{indstep-thm}. We choose a large enough initial scale $\nzero (A, \ga, \om)$ at which conditions (gaps) \eqref{indstep-gaps-eq} and (angles)  \eqref{indstep-angles-eq} in Theorem~\ref{indstep-thm} are satisfied for the cocycle $A$. We then choose an $\ep$ - neighborhood of $A$ (where $\ep$ depends on $\nzero$, hence on $A, \ga, \om$) in such a way that conditions \eqref{indstep-gaps-eq}, \eqref{indstep-angles-eq} in Theorem~\ref{indstep-thm} are satisfied uniformly for cocycles $B$ in this neighborhood. Because we want the conclusion \eqref{uniformgaps-lemma-cn} to hold for {\em all} scales $n \ge \nzero$, and not only for an increasing sequence of scales, we will actually choose $\nzero, \, \ep$ so that \eqref{indstep-gaps-eq}, \eqref{indstep-angles-eq} are satisfied for all scales $n_0$ in a finite {\em interval} of integers $\scale_0$. Then the conclusions (gaps++)  \eqref{indstep-gaps++-eq} and (angles++) \eqref{indstep-angles++-eq} in Theorem~\ref{indstep-thm} will hold for all scales $n_1$ in an {\em interval} of integers $\scale_1$, where $\scale_0$ and $\scale_1$ overlap. Continuing this inductively, we obtain  \eqref{uniformgaps-lemma-cn} for all $n \ge \nzero$.

\medskip

$\blob$ We now establish assumptions (gaps) \eqref{indstep-gaps-eq} and (angles)  \eqref{indstep-angles-eq} of the inductive step Theorem~\ref{indstep-thm} in a finite interval  $\scale_0$ and uniformly in a small enough neighborhood of the cocycle $A$.

\smallskip

Since $\Lar (A) > \ga > 0$ for all $\tausvr \, \rho$, there are positive constants $\gabar, \,  \etabar$ depending only on $A$ and $\ga$ such that 
$$ \gabar - 4 \etabar > \ga > 0$$ 
and
$$\Lar (A) > \gabar  \quad \text{for all } \tausvr \, \rho$$

To get this, just choose $\gabar$ such that $\La_{\rho_\tau} (A) > \gabar > \ga$ and then pick $0 < \etabar < \frac{\gabar - \ga}{4}$

For any $\tausvr \, \rho$ we have:
$$\lim_{n \to \infty} \, \Larn{n} (A) = \Lar (A) > \gabar$$
and for any $\tausvp \, \pi$ we have:
$$\lim_{n \to \infty} \, \Lapn{n} (A) = \Lap (A)$$

Then there is $\nzero = \nzero (A, \ga)$ such that for all $n_0 \ge \nzero$ we have:
 \begin{align}
\Larn{n_0} (A) > \gabar \quad  &  \text{for all } \tausvr \, \rho
\label{lsc-gaps-An0} \\
\abs{ \Lapn{n_0} (A) - \Lapn{2 n_0} (A) } < \etabar \quad & 
 \text{for all } \tausvp \, \pi  \label{lsc-angles-An0}
\end{align}
By the choice of these constants, $\gabar > 4 \etabar$.

We  also assume that $\nzero$ is large enough, depending on the bound $C$ on the size of $A$ and on $\om$ such that the LDT~\eqref{LDT2} applies for $n \ge \nzero$ uniformly in some neighborhood of $A$. We make some additional assumptions on the size of $\nzero$. It will become clear later why we need them.  
\begin{align}
\nzero & \ge (2 / c)^4  \quad \text{ where } c = C(A)^{-2} \label{nzero-h1}\\
\nzero & \ge ( 10 / \ga )^4  \label{nzero-h2}\\
n  & \le e^{n^{1/4}}  \quad \quad \text{ if } n \ge \nzero \label{nzero-h3}
\end{align}

 Note that all conditions on $\nzero$ depend only on some measurements on the size of $A$ (so they are uniform in a small neighborhood of $A$) and on $\ga$, $\om$.
 
 \smallskip

Let $\scale_0 := [\nzero, \nzero^3]$, and let $\ep := e^{-C \nzero^4}$.

We show that \eqref{lsc-gaps-An0}, \eqref{lsc-angles-An0}, with slightly different constants $\gabar, \, \etabar$, hold for all $n_0 \in \scale_0$ and for all cocycles $B \in \cocycles$ such that $\normr{A - B} < \ep$.

Let $B$ be such a cocycle and let $n_0 \in \scale_0$, hence $n_0 \le \nzero^3$. Then $\ep = e^{-C \nzero^4} < e^{- 2 C n_0}$ so we can apply Lemma~\ref{cont-finite-lemma}. 

For any s.v.f. $s \in \allsvf$, from \eqref{cont-finite-scale-eq} we have:
\begin{equation}\label{lsc-lemma-cont-n0}
\abs{ \Lasn{n_0} (A) - \Lasn{n_0} (B) } < \sqrt{\ep}
\end{equation}

By taking $s$ in \eqref{lsc-lemma-cont-n0} to be any $\tausvr \, \rho$, we get that for any $n_0 \in \scale_0$ 
\begin{equation*}
\Larn{n_0} (B) > \Larn{n_0} (A) - \sqrt{\ep} > \gabar - \sqrt{\ep} =: \ga_0
\end{equation*}

We may, of course, assume that \eqref{lsc-lemma-cont-n0} also holds at scale $2 n_0$. That is because in fact \eqref{lsc-lemma-cont-n0} holds for a larger interval of scales, one that includes $[\nzero, 2 \nzero^3]$. 

By taking $s$ in \eqref{lsc-lemma-cont-n0} to be any $\tausvp \, \pi$, we get that for any $n_0 \in \scale_0$
\begin{align*}
\abs{ \Lapn{n_0} (B) - \Lapn{2 n_0} (B) } < \\
< \abs{ \Lapn{n_0} (A) - \Lapn{2 n_0} (A) } + 2 \sqrt{\ep} < \etabar + 2 \sqrt{\ep} =: \eta_0
\end{align*}

We have $\ga_0 - 4 \eta_0 = \gabar - 4 \etabar - 9 \sqrt{\ep}$.

Since $\gabar - 4 \etabar > \ga$ and $\ep := e^{-C \nzero^4}$, by making an additional assumption on the magnitude of $\nzero$, one that also only depends on $A$ and $\ga$, we may assume that $\ga_0 - 4 \eta_0 > \ga$.

We summarize this step of the proof as follows.

\smallskip

There are $\nzero = \nzero (A, \ga) \in \N$ and $\ep = \ep (A, \ga) > 0$ such that if $\scale_0 := [\nzero, \nzero^3]$, then we have the following:
\begin{align}
\Larn{n_0} (B) > \ga_0 \quad  &  \text{for all } \tausvr \, \rho
\label{lsc-gaps-Bn0} \\
\abs{ \Lapn{n_0} (B) - \Lapn{2 n_0} (B) } < \eta_0 \quad & 
 \text{for all } \tausvp \, \pi  \label{lsc-angles-Bn0}
\end{align}
for all $n_0 \in \scale_0$ and for all $B \in \cocycles$ such that $\normr{A-B} < \ep$.

Moreover, $\ga_0 > 4 \eta_0$, and in fact
\begin{equation}\label{lsc-proof-eq1}
\ga_0 - 4 \eta_0 > \ga
\end{equation}

\medskip

$\blob$ We now explain how the inductive process works. We apply the inductive step Theorem~\ref{indstep-thm} in order to obtain similar estimates to the ones above at a larger scale $n_1 \gg n_0$. In order to do that, we have to choose $0 < \delta_0 < \frac{\ga_0 - 4 \eta_0}{10}$, where $\delta_0$ represents the (allowed) size of the deviation from the mean of quantities of the form $\frac{1}{n} \, \log s( \Bn{n} (x))$ at scale $n = n_0$. The estimate we  get on gaps at scale $n_1$ will be of the form:
$$\Larn{n_1} (B) > \ga_1$$
where
\begin{align*} 
\ga_1 & = (\ga_0 - 4 \eta_0) - 9 \delta_0 - C \frac{n_0}{n_1} > \ga - 9 \delta_0 - C \frac{n_0}{n_1} \\
\eta_1 & = C \frac{n_0}{n_1}
\end{align*}
We continue this inductively, moving to a scale $n_2 \gg n_1$. Choosing  $0 < \delta_1 < \frac{\ga_1 - 4 \eta_1}{10}$ we get
\begin{align*}
\ga_2 & = \ga_1 - 4 \eta_1 - 9 \delta_1 - C \frac{n_1}{n_2} = 
\\ & =  (\ga_0 - 4 \eta_0)  - 9 (\delta_0 + \delta_1) - 5  C \frac{n_0}{n_1} - C \frac{n_1}{n_2} \\
\eta_1 & = C \frac{n_0}{n_1}
\end{align*}
and so on.

Our goal in this lemma is to obtain {\em sharp} lower bounds on the size $\ga_1, \ga_2, \ldots, \ga_k, \ldots $ of the gaps, so in the limit, $\Lar (B)$ stays above $\ga$. 

Therefore, the quantities $\delta_0, \delta_1, \ldots, \delta_k, \ldots$ that are subtracted from the size of these gaps at each step of the induction have to sum up to a small enough constant. In order for that to happen, the size $\delta_k$ of the deviation set has to be adapted to the scale $n_k$. We then choose $\delta_k := n_{k}^{-1/6}$ as in LDT \eqref{LDT2}.

\medskip

$\blob$ For every step $k \ge 0$, we define the scale $n_k$ and the corresponding size $\delta_k$ of the deviation set.

Let $\scale_0 := [\nzero, \nzero^3]$, $\scale_1 := [\nzero^2, \nzero^6], \ldots $ and in general:
\begin{equation*} 
\scale_k := [\nzero^{2^k}, \nzero^{3\cdot 2^k}]
\end{equation*}

These intervals clearly overlap, so their union is the set of all integers $n \ge \nzero$.

Moreover, if $n_{k+1} \in \scale_{k+1}$, let $n_k := \intpart{ \sqrt{n_{k+1}} }$. Then clearly 
$$n_{k+1} = n_{k}^2 + q, \quad \text{ where } 0 \le q \le 2 n_k$$

Using Lemma~\ref{scales-divide-lemma}, we have that for any s.v.f. $s \in \allsvf$,
$$\abs{ \Lasn{n_{k+1}} (B) - \Lasn{n_k^2} (B) } < C \, \frac{q}{n_{k+1}} \less C \frac{n_k}{n_{k+1}}$$

Since all of our estimates will be of this order at best, we may assume from now on that if $n_{k+1} \in \scale_{k+1}$, then 
$n_{k+1} = n_{k}^2$ for some $n_k \in \scale_k$.  In particular, these scales satisfy $n_k = n_0^{2^k}$, so
\begin{equation*}
\sum_{k=0}^{\infty} \frac{n_k}{n_{k+1}} =\sum_{k=0}^{\infty} \frac{1}{n_{k}} < \frac{2}{n_0} \le  \frac{2}{\nzero} 
\end{equation*}

For each step $k \ge 0$ we choose $\delta_k := n_{k}^{-1/6}$ as in LDT \eqref{LDT2}.
Note that
\begin{equation*}
\sum_{k=0}^{\infty}  \delta_k = \sum_{k=0}^{\infty}  \frac{1}{n_k^{1/6}} < \frac{2}{n_0^{1/6}} \le \frac{2}{\nzero^{1/6}}
\end{equation*} 

We make a final assumption on the magnitude of $\nzero$. It will be clear immediately why this is needed. We have
\begin{align*}
(\ga_0 - 4 \eta_0)  - 9 \, \sum_{k=0}^{\infty}  \delta_k  -5  C \, \sum_{k=0}^{\infty} \frac{n_k}{n_{k+1}} > \\
>  (\ga_0 - 4 \eta_0)  -  \frac{18}{\nzero^{1/6}}  - \frac{10 C}{\nzero} 
\end{align*} 
Since $\ga_0 - 4 \eta_0 > \ga$, by choosing in the beginning $\nzero$ large enough depending on $A$ and $\ga$, we may then assume that
\begin{equation}\label{lsc-proof-eq3}
(\ga_0 - 4 \eta_0)  - 9 \, \sum_{k=0}^{\infty}  \delta_k -5  C \, \sum_{k=0}^{\infty} \frac{n_k}{n_{k+1}} > \ga
\end{equation}
Moreover, since for all $k \ge 0$ we have
$$\delta_k \le \delta_0 = n_0^{-1/6} \le \nzero^{-1/6} < \frac{\ga}{10}$$
where the last inequality is due to \eqref{nzero-h2}, we conclude that:
\begin{equation}\label{lsc-lemma-deltak}
0 < \delta_k < \frac{\ga}{10}
\end{equation}

Now set $\deltabar_k := n_k^{-3/4}$ so
\begin{equation}\label{indstep-deltabark<deltak}
0 < \deltabar_k < \delta_k
\end{equation}
and since $n_k \ge n_0 \ge \nzero \ge (2 / c)^4$ (the last inequality being  \eqref{nzero-h1}) then $n_k^{-3/4} < \frac{c}{2} \, n_k^{-1/2}$, so
\begin{equation}\label{lsc-lemma-deltabark}
n_k^{-3/4}  \le \deltabar_k \le c \delta_k^3 / 2
\end{equation}

Estimates \eqref{lsc-proof-eq3} - \eqref{lsc-lemma-deltabark} will ensure that \eqref{indstep-delta-eq} - \eqref{indstep-deltabar-eq} are satisfied at all steps of the induction process.

\medskip

$\blob$ We now describe the induction process, which uses the inductive step Theorem~\ref{indstep-thm} to pass from a scale $n_k \in \scale_k$ to $n_{k+1} \in \scale_{k+1}$.

\smallskip

At step $k=0$, \eqref{lsc-gaps-Bn0}, \eqref{lsc-angles-Bn0} imply the (gaps) and respectively the (angles) conditions in the inductive step Theorem~\ref{indstep-thm}, while \eqref{lsc-lemma-deltak} and \eqref{lsc-proof-eq1} imply the condition \eqref{indstep-delta-eq} on the relation between $\ga_0$, $\eta_0$, $\delta_0$. The conditions on $\deltabar_0$ are satisfied due to \eqref{indstep-deltabark<deltak} and
\eqref{lsc-lemma-deltabark}.
    
Let $n_1 \in \scale_1$. Since we may assume that $n_1 = n_0^2$ for some $n_0 \in \scale_0$,  then from \eqref{nzero-h3} we get:
$$n_1 = n_0^2 \le e^{n_0^{1/4}} = e^{\deltabar_0 n_0}$$

Therefore, the conclusion of the inductive step Theorem~\ref{indstep-thm} applies to any $n_1 \in \scale_1$ and we have:
\begin{align*}
\rm{(gaps++)} \  \ & \Larn{n_1} (B) > \ga_1 &  \text{for all } \tausvr \, \rho
\\
\rm{(angles++)} \  \ & \abs{ \Lapn{n_1} (B) - \Lapn{2 n_1} (B) } < \eta_1  & \
 \text{for all }\tausvp \, \pi 
\end{align*}
where
\begin{align*}
\ga_1 & = (\ga_0 - 4 \eta_0) - 9 \delta_0 - C \frac{n_0}{n_1} \\
\eta_1 & = C \frac{n_0}{n_1} 
\end{align*}

Moreover,
\begin{align*}
\ga_1 - 4 \eta_1 = (\ga_0 - 4 \eta_0)  - 9 \delta_0 - 5 C \frac{n_0}{n_1} > \\
> (\ga_0 - 4 \eta_0)  - 9 \, \sum_{k=0}^{\infty}  \delta_k - 5  C \, \sum_{k=0}^{\infty} \frac{n_k}{n_{k+1}} > \ga > 0
\end{align*}
This together with \eqref{lsc-lemma-deltak} shows that
$$0 < \delta_1 < \frac{\ga}{10} < \frac{\ga_1 - 4 \eta_1}{10}$$
hence the relative conditions on $\ga_1, \, \eta_1, \, \delta_1$ are satisfied, while the ones on $\deltabar_1$ are (always) satisfied by \eqref{indstep-deltabark<deltak} and \eqref{lsc-lemma-deltabark}.

The inductive step Theorem~\ref{indstep-thm} applies again, and we can pass to a scale $n_2 \in \scale_2$ and so on.

Continuing this procedure inductively, we conclude that for every $B \in \cocycles$ such that $\normr{A - B} < \ep$, and for every step $k \ge 1$, if $n_k \in \scale_k$ then the following hold:
 \begin{align*}
\rm{(gaps)} \  \ & \Larn{n_k} (B) > \ga_k &  \text{for all } \tausvr \, \rho
\\
\rm{(angles)} \  \ & \abs{ \Lapn{n_k} (B) - \Lapn{2 n_k} (B) } < \eta_k  & \
 \text{for all }\tausvp \, \pi 
\end{align*}
where
\begin{align*}
\ga_k & = (\ga_0 - 4 \eta_0) - 9 \, \sum_{j=0}^{k-1}  \delta_j - 5  C \, \sum_{j=0}^{k-2} \frac{n_j}{n_{j+1}} - C \, \frac{n_{k-1}}{n_{k}} \\
\eta_k & = C \frac{n_{k-1}}{n_{k}} 
\end{align*}
Then clearly
\begin{align*}
\ga_k - 4 \eta_k =  (\ga_0 - 4 \eta_0) - 9 \, \sum_{j=0}^{k-1}  \delta_j - 5  C \, \sum_{j=0}^{k-1} \frac{n_j}{n_{j+1}} > \\
> (\ga_0 - 4 \eta_0)  - 9 \, \sum_{k=0}^{\infty}  \delta_k - 5  C \, \sum_{k=0}^{\infty} \frac{n_k}{n_{k+1}} > \ga > 0
\end{align*}
In particular, 
$$\Larn{n_k} (B) > \ga_{k} > \ga \quad \text{for all } k \ge 0$$
Since the scale intervals $\scale_k$, $k \ge 0$ cover the set of all integers $n \ge \nzero$, the estimate \eqref{uniformgaps-lemma-cn} follows.
\end{proof}

The  lemma we have just proven implies  the following:

\begin{corollary}\label{lscgaps}
Given a signature $\tau$ and a constant $\ga > 0$, for every $\tausvr \, \rho$, the set  $\{ A \colon \Lar (A) > \ga \}$
is open in $\cocycles$.
Moreover, the set of all cocycles with a $\taugp$ is also open, and the map $A \mapsto \Lar (A)$ is lower semi-continuous.
\end{corollary}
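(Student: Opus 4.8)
The plan is to obtain all three assertions as essentially formal consequences of Lemma~\ref{uniformgaps-lemma}; the only genuine idea required is a reduction to one-element sub-signatures, after which everything is elementary set theory. I do not expect a real obstacle here — the substantive work (the inductive scheme behind Lemma~\ref{uniformgaps-lemma}) has already been carried out — so the ``hard part'' amounts only to phrasing the reduction so that the ``for all $\tausvr \, \rho$'' hypothesis of that lemma is satisfied trivially.

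First I would prove openness of $\{A : \Lar(A) > \ga\}$ for a fixed $\tausvr \, \rho = \rho_{\tau_j}$ and a fixed $\ga > 0$. Given $A$ in this set, I introduce the one-element sub-signature $\tau' := (\tau_j)$; then $\rho_{\tau_j}$ is the only $\tau'$-s.v.r. of the relevant type, and its associated limit coincides with $\Lar$, so the hypothesis ``$\Lar(A) > \ga$ for all $\tau'$-s.v.r.'' of Lemma~\ref{uniformgaps-lemma} holds automatically. This reduction is essential: $A$ need not have a full $\taugp$, so the lemma cannot be applied to $\tau$ directly. The lemma then yields $\ep = \ep(A,\ga,\om) > 0$ with $\Lar(B) > \ga$ whenever $\normr{A - B} < \ep$, which is precisely the openness of $\{A : \Lar(A) > \ga\}$.

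Next, for the set of cocycles with a $\taugp$: a cocycle $A$ has a $\taugp$ precisely when $\La_{\rho_{\tau_j}}(A) = L_{\tau_j}(A) - L_{\tau_j + 1}(A) > 0$ for each $1 \le j \le k$. Since $\rho(g) \ge 1$ always, $\La_{\rho_{\tau_j}}(A) \ge 0$, so $\{A : \La_{\rho_{\tau_j}}(A) > 0\} = \bigcup_{n \ge 1} \{A : \La_{\rho_{\tau_j}}(A) > 1/n\}$ is a countable union of open sets by the first step, hence open; the $\taugp$ set is then the finite intersection $\bigcap_{j=1}^{k} \{A : \La_{\rho_{\tau_j}}(A) > 0\}$, hence open. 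Finally, lower semi-continuity of $A \mapsto \Lar(A)$ is the statement that $\{A : \Lar(A) > \ga\}$ is open for every $\ga \in \R$: for $\ga > 0$ this is the first step, for $\ga = 0$ it is the union argument just used, and for $\ga < 0$ the set is all of $\cocycles$ because $\Lar \ge 0$. This completes the plan.
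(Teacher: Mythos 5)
Your proposal is correct and follows the route the paper intends: the corollary is stated there as an immediate consequence of Lemma~\ref{uniformgaps-lemma}, with no further proof given, and your argument simply makes that derivation explicit. The one detail worth having on record is exactly the one you supply — the reduction to the one-element sub-signature $\tau'=(\tau_j)$, which is needed because the lemma's hypothesis quantifies over all $\tausvr$ while the first assertion of the corollary concerns a single $\rho$ — and the remaining set-theoretic steps (union over $\ga>0$, finite intersection, and $\Lar\geq 0$) are all sound.
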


\medskip


The following lemma establishes a rate of converges of the quantities $\Lapn{n} (A)$ to the corresponding $\taublockL$ $\Lap (A)$ for a cocycle $A$ satisfying a $\taugp$. It also establishes a rate of convergence of the most expanding $\tau$-flags $\filtn{n} (A)$ to the Oseledets filtration $\filt (A)$. The rate of convergence is uniform in a neighborhood of $A$.

\begin{lemma}\label{rateconv-lemma}
Let $A \in \cocycles$ be a cocycle and let $\tau$ be a signature. Assume that
\begin{equation*} 
\Lar (A) > \ga > 0 \quad \text{for all } \tausvr \, \rho
\end{equation*}

There are $\ep = \ep (A, \ga, \om) > 0$ and $K = K (A, \ga, \om) > 0$ such that if $B \in \cocycles$ with  $\normr{ A - B } < \ep$, then for every $\tausvp \, \pi$ we have:
\begin{equation}\label{rateconv-c}
\abs{ \Lapn{n} (B) - \Lap (B) } < K \, \frac{\log n}{n} \quad \text{for all } n \ge 2
\end{equation}

Moreover, there is $b = b (A, \ga, \om) > 0$ such that:
\begin{equation}\label{rateconv-flags}
d \,  ( \filtn{n} (B), \, \filt (B) ) < K \, e^{- b \, n}
\end{equation}
\end{lemma}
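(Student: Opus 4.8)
The plan is to bootstrap from Lemma~\ref{uniformgaps-lemma}, which already gives us, uniformly in an $\ep$-neighborhood $\mathscr{V}$ of $A$, a scale $\nzero$, a sequence of overlapping scale intervals $\scale_k = [\nzero^{2^k}, \nzero^{3\cdot 2^k}]$, and the validity at every scale $n_k \in \scale_k$ of the (gaps) and (angles) hypotheses of the inductive step Theorem~\ref{indstep-thm}, with the additional bound $\ga_k - 4\eta_k > \ga > 0$. I would fix such a cocycle $B \in \cocycles$ with $\normr{A-B} < \ep$ and invoke Theorem~\ref{indstep-thm} to pass from any $n_k \in \scale_k$ to $n_{k+1} = n_k^2 \in \scale_{k+1}$. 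The key point is that Theorem~\ref{indstep-thm} produces not only (gaps++) and (angles++), but via Remark~\ref{indstep-remark} the two extra estimates \eqref{indstep-n1vsn0-eq} and \eqref{distance-exp-flags}, namely
$$\abs{ \Lapn{n_{k+1}} (B) + \Lapn{n_k} (B) - 2 \Lapn{2 n_k} (B) } \less C \frac{n_k}{n_{k+1}} = C\,\frac{1}{n_k}$$
and
$$d\left(\filtn{n_{k+1}} (B), \, \filtn{n_k} (B)\right) \less C \frac{n_k}{n_{k+1}} = C\,\frac{1}{n_k}\;.$$

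First I would prove \eqref{rateconv-c}. Combining the first displayed estimate above with the (angles) bound $\abs{\Lapn{n_k}(B) - \Lapn{2n_k}(B)} < \eta_k = C\, n_{k-1}/n_k \less 1/n_{k-1}$ (for $k\ge 1$) shows that $\abs{\Lapn{n_{k+1}}(B) - \Lapn{n_k}(B)} \less 1/n_{k-1} \asymp 1/\sqrt{n_k}$, and more to the point, telescoping $\Lapn{n_{k+1}}(B) - \Lapn{n_k}(B) = -[\Lapn{n_k}(B) - 2\Lapn{2n_k}(B)] + O(1/n_k)$ together with the recursion gives that $\{\Lapn{n_k}(B)\}_k$ is Cauchy with $\abs{\Lapn{n_k}(B) - \Lap(B)} \less \sum_{j\ge k} 1/n_{j-1} \less 1/n_{k-1} \asymp 1/\sqrt{n_k}$. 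This handles the sequence of scales $n_k = n_0^{2^k}$; to get the bound for \emph{all} $n \ge 2$, I would (i) use Lemma~\ref{cont-finite-lemma} and plain continuity (together with $\abs{\Lapn{n}(B)} \less C$ from \eqref{scalingct-eq6}) to absorb the finitely many scales $n < \nzero$ into the constant $K$, and (ii) for $n \ge \nzero$, write $n \in \scale_k$ for some $k$, pick the nearest $n_k = n_0^{2^k} \le n$ with $n \le n_k^3$, and apply Lemma~\ref{scales-divide-lemma} to compare $\Lapn{n}(B)$ with $\Lapn{n_k^{\lfloor n/n_k\rfloor \cdot \text{(stuff)}}}$— more precisely, apply Theorem~\ref{indstep-thm} directly at initial scale $n_k$ to the target scale $n \le n_k \cdot e^{\deltabar_k n_k}$ (the hypothesis $n \le n_k^3 \le n_k e^{n_k^{1/4}}$ holds by \eqref{nzero-h3}), which yields $\abs{\Lapn{n}(B) + \Lapn{n_k}(B) - 2\Lapn{2n_k}(B)} \less C n_k/n$, hence $\abs{\Lapn{n}(B) - \Lap(B)} \less 1/\sqrt{n_k} + n_k/n \less (\log n)/n$ after optimizing over the choice of $k$ (the $\log n$ appears because the scale ratios are dyadic-exponential: $n_k \asymp n^{\theta}$ for $\theta \in [1/3,1]$, and one genuinely loses a logarithm when $n$ falls in the "thick" part of an interval $\scale_k$). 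I would phrase this last optimization carefully since it is the one genuinely fiddly bookkeeping point.

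Next I would prove \eqref{rateconv-flags}, which is actually \emph{easier} because the flag estimate \eqref{distance-exp-flags} decays geometrically in the step index rather than just polynomially in the scale. Telescoping,
$$d\left(\filtn{n_k}(B), \filt(B)\right) \le \sum_{j \ge k} d\left(\filtn{n_{j+1}}(B), \filtn{n_j}(B)\right) \less C \sum_{j\ge k} \frac{1}{n_j} = C\sum_{j\ge k}\frac{1}{n_0^{2^j}} \less \frac{C}{n_0^{2^k}} = \frac{C}{n_k}\;,$$
using that $\filtn{n}(B) \to \filt(B)$ in $L^1(\T, \mathscr{F}^m_\tau)$ (stated in the text before Lemma~\ref{cont-filtr-finite-scale}). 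So along the special scales we get $d(\filtn{n_k}(B), \filt(B)) \less 1/n_k$. For a general scale $n \ge \nzero$ with $n \in \scale_k$, applying Theorem~\ref{indstep-thm} at initial scale $n_k$ and target scale $n$ gives via \eqref{distance-exp-flags} that $d(\filtn{n}(B), \filtn{n_k}(B)) \less C n_k/n$, hence $d(\filtn{n}(B), \filt(B)) \less 1/n_k + n_k/n$; choosing $k$ so that $n_k \asymp \sqrt{n}$ (possible since the $\scale_k$ overlap with ratio a fixed power) yields $d(\filtn{n}(B), \filt(B)) \less 1/\sqrt{n}$. To upgrade this polynomial decay to the \emph{exponential} decay $e^{-bn}$ claimed in \eqref{rateconv-flags}, I would go back to the raw Avalanche Principle output \eqref{indstep-distances-AP}: inside the proof of Theorem~\ref{indstep-thm}, the pointwise bound is $d(\filtn{n_1}(B)(x), \filtn{n_0}(B)(x)) < e^{-\gabar_1 n_0}$ off a set $\Bbar_{n_0}$ of measure $\less e^{-(c\delta^3 - \deltabar)n_0}$; once the gaps are known to be uniformly bounded below by $\ga > 0$ at a single fixed large scale $n_0 = N$ (which we now have), one application of the AP-based comparison between scale $N$ and scale $n \gg N$, with the \emph{fixed} deviation parameter $\delta \asymp \ga$, gives $d(\filtn{n}(B)(x), \filtn{N}(B)(x)) < e^{-b' n}$ off a set of measure $\less e^{-b'' n}$ for fixed $b', b'' > 0$ depending only on $A, \ga, \om$; then $d(\filtn{n}(B), \filt(B)) \le d(\filtn{n}(B), \filtn{N}(B)) + d(\filtn{N}(B), \filt(B))$, and since $\filtn{N}(B) \to \filt(B)$ one actually needs to chase $\filt(B)$ as a limit of $\filtn{n}$ — so more carefully, telescoping $d(\filtn{n}(B), \filtn{n'}(B))$ over an increasing subsequence with geometrically growing scales starting at $N$, using the pointwise exponential bound plus the exponentially small exceptional sets at each step and Fubini to control the total exceptional measure, gives $d(\filtn{n}(B), \filt(B)) \less e^{-bn}$ with $b = b(A,\ga,\om) > 0$. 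The main obstacle, and the place I expect to spend the most care, is exactly this last upgrade to exponential decay: one must be sure that the deviation parameter $\delta$ and hence the exponential rate can be taken \emph{fixed} (not shrinking with the scale, as the $\delta_k = n_k^{-1/6}$ in Lemma~\ref{uniformgaps-lemma} do), which is legitimate here precisely because we only need a lower bound $\ga$ on the gaps rather than the sharp limiting value, and because the exceptional sets, though only polynomially controlled at each single scale by the LDT, accumulate to something still exponentially small when summed over a geometric sequence of scales.
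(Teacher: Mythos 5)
Your argument for \eqref{rateconv-c} has a genuine gap. By committing to the squaring scales $n_{k+1}=n_k^2$ inherited from Lemma~\ref{uniformgaps-lemma}, the best you can extract from \eqref{indstep-n1vsn0-eq} and the (angles) bound $\eta_k = C\,n_{k-1}/n_k = C/n_{k-1}$ is
$\abs{\Lapn{n_{k+1}}(B)-\Lapn{n_k}(B)}\lesssim 1/n_{k-1}=1/\sqrt{n_k}$, and hence, as you correctly compute, $\abs{\Lapn{n_k}(B)-\Lap(B)}\lesssim n_k^{-1/2}$ \emph{along the special scales themselves}. No choice of $k$ can then improve the bound at a general scale $n$: the quantity $1/\sqrt{n_k}+n_k/n$ is minimized at $n_k\asymp n^{2/3}$ with value $\asymp n^{-1/3}$, and in any case it is at least $2/\sqrt{n}$, which is far larger than $(\log n)/n$. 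The "$\log n$ from the thick part of $\scale_k$" heuristic is a misdiagnosis; the logarithm in the true rate has nothing to do with where $n$ sits inside an interval. The correct mechanism (and the point of the paper's proof) is that once Lemma~\ref{uniformgaps-lemma} guarantees $\Larn{n}(B)>\ga$ uniformly at \emph{all} scales $n\ge \nzero$, one may run the induction with a \emph{fixed} deviation parameter $\delta$ (and fixed $\deltabar$), so that the inductive step can jump from $n_k$ all the way to $n_{k+1}\approx e^{\deltabar n_k}$; then $n_k/n_{k+1}\approx \deltabar^{-1}\log(n_{k+1})/n_{k+1}$, the scale intervals $\scale_{k+1}=[e^{\deltabar n_k^-},e^{\deltabar n_k^+}]$ still overlap and cover all large integers, and every $n$ is reached as a \emph{target} scale from $n_k=\intpart{\deltabar^{-1}\log n}$, giving $\abs{\Lapn{n}(B)-\Lapn{2n}(B)}\lesssim K\log n/n$ directly; summing over doublings yields \eqref{rateconv-c}.

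Ironically, you identify exactly this fix — a fixed $\delta\asymp\ga$ made legitimate by the uniform lower bound on the gaps, with the exceptional sets summing to something exponentially small over a rapidly growing sequence of scales — but you deploy it only for the flag estimate \eqref{rateconv-flags}, where your telescoping argument (modulo the imprecision of "geometrically growing": you need the next scale after $n$ to be of order $e^{\deltabar n}$ to read off $e^{-bn}$ from $n_k/n_{k+1}$, or else you must go back to the sharper pointwise AP bound $e^{-\gabar_1 n_k}$ plus the exceptional measure, as you suggest) is essentially sound and matches the paper's. Apply the same fixed-$\delta$, exponentially-spaced-scales induction to the quantities $\Lapn{n}$ and the first part follows with no separate optimization step.
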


\begin{proof}
We will apply the same inductive procedure as in Lemma~\ref{uniformgaps-lemma}. While the goal in Lemma~\ref{uniformgaps-lemma} was to obtain sharp lower bounds for the gaps, uniformly in a neighborhood of the cocycle $A$, in this statement we need sharp estimates on the {\em ``angles''}, and they need to be uniform in a neighborhood of the cocycle as well. 

From the inductive step Theorem~\ref{indstep-thm}, if we have appropriate estimates at a scale $n_0$, then for the next scale $n_1 \gg n_0$, we get
$$\abs{ \Lapn{n_1} (B) - \Lapn{2 n_1} (B) } < C \, \frac{n_0}{n_1}$$
Therefore, in order to get a sharp rate of convergence of $\Lapn{n} (B)$ to $\Lap (B)$, the next scale $n_1$ should be as large relative to the previous scale $n_0$ as it can be, namely, from the inductive step Theorem~\ref{indstep-thm}, of order $n_1 \approx e^{\deltabar n_0}$

Now that we know, from Lemma~\ref{uniformgaps-lemma}, that the lower bound $\ga$ is uniform in a neighborhood of the cocycle $A$, and it applies to all (large enough) finite scales $n$ as well, we can keep the size $\delta$ of the deviation set the same at all steps of the induction, and hence we can always choose $n_{k+1} \approx e^{\deltabar n_k}$, so that $\frac{n_{k}}{n_{k+1}} \approx  \deltabar^{-1} \, \frac{\log n_{k+1}}{n_{k+1}}$, which will lead to the estimate \eqref{rateconv-c}.  

\smallskip

As in Lemma~\ref{uniformgaps-lemma}, we first establish the assumptions on (gaps) \eqref{indstep-gaps-eq} and on (angles) \eqref{indstep-angles-eq} from the inductive step Theorem~\ref{indstep-thm} for $n_0$ in a range of initial scales and for all cocycles $B$ near $A$.

From Lemma~\ref{uniformgaps-lemma}, there are constants $\nzero = \nzero (A, \ga, \om) \in \N$ and $\ep = \ep(A, \ga, \om) > 0$ such that:
\begin{equation}\label{lsc-gaps-eq}
\Larn{n} (B) > \ga \quad \text{for all } n \ge \nzero
\end{equation}
and for all $B \in \cocycles$ such that $\normr{A-B} < \ep$.

Choose  $0 < \etabar < \frac{\ga}{4}$. Since by definition, for any $\tausvp \, \pi$ (or for any s.v.f. for that matter) we have
$$\lim_{n \to \infty} \Lapn{n} (A) = \Lap (A)$$
we may assume, by possibly increasing $\nzero$, that
\begin{equation}\label{lsc-angles-An0-eq}
\abs{ \Lapn{n} (A) - \Lapn{ 2n} (A) } < \etabar \quad \text{for all } n \ge \nzero 
\end{equation} 

Let $$\scale_0 := [\nzero, e^{\nzero}] =: [n_0^-, n_0^+]$$
and let
$$\ep := e^{- 4 C \, e^{\nzero}}$$
Therefore, if $n_0 \in \scale_0$, then $\ep \le e^{- 4 C n_0}$, so if $\normr{A-B} \le \ep$, then $\normr{A-B} \le e^{- 2 C \, 2 n_0}$. Lemma~\ref{cont-finite-lemma} then applies at scales $n_0, 2 n_0$ and we have:
\begin{align}
\abs{ \Lapn{n_0} (A) - \Lapn{n_0} (B) } < \sqrt{\ep} 
\nonumber
\\
\abs{ \Lapn{2n_0} (A) - \Lapn{2n_0} (B) } < \sqrt{\ep} \label{cont-2n0-AB}
\end{align}
From \eqref{lsc-angles-An0-eq} - \eqref{cont-2n0-AB} we have that if $n_0 \in \scale_0$ and if $B \in \cocycles$ such that $\normr{A - B} < \ep$, then:
$$\abs{ \Lapn{n_0} (B) - \Lapn{ 2n_0} (B) } < \etabar + 2 \sqrt{\ep} =: \eta$$
Since $\etabar < \frac{\ga}{4}$, by choosing $\nzero$ large enough depending on $\ga$, we may assume that $\ep$ is small enough that $\eta < \frac{\ga}{4}$. 

We conclude: for all $n_0 \in \scale_0$ and for every cocycle $B$ such that $\normr{A-B} \le \ep$, we have:
\begin{align*}
\Larn{n_0} (B) > \ga \quad & \text{for all } \tausvr \, \rho \\
\abs{ \Lapn{n_0} (B) - \Lapn{2 n_0} (B) } < \eta \quad & \text{for all } \tausvp \, \pi
\end{align*}
where $\ga > 4 \eta$.

Fix a constant $\delta$ such that 
\begin{equation}\label{lsc-delta}
0 < \delta < \frac{\ga - 4 \eta}{10}
\end{equation}
and a constant $\deltabar$ such that
\begin{align}
0 < \deltabar < \delta \label{lsc-deltabar<delta} \\
\nzero^{-3/4} \le \deltabar \le c \delta^3 / 2  \label{lsc-deltabar} 
\end{align}
For the last two relations to be possible, we should of course assume from the beginning that $\nzero > \delta^{- 4/3}$ and $\nzero > (c / 2)^{- 4/3} \,   \delta^{-4}$, which again are assumptions on the magnitude of $\nzero$ that depend only on $A$ and $\ga$.

We can now apply the inductive step Theorem~\ref{indstep-thm} and Remark~\ref{indstep-remark}, and get that for every scale $n_1$ such that 
$$n_1 \le n_0 \, e^{\deltabar n_0}$$
we have (we are only interested in the conclusions regarding the angles, and the distances between most expanding $\tau$-flags):
$$\abs{ \Lapn{n_1} (B) - \Lapn{2n_1} (B) } < C \frac{n_0}{n_1}$$
$$d \, (\filtn{n_1} (B), \, \filtn{n_0} (B) ) < C \frac{n_0}{n_1}$$

Now define the next scale range 
$$\scale_1 := [e^{\deltabar n_0^-}, e^{\deltabar n_0^+}] =: [n_1^-, n_1^+]$$
Here and throughout the paper, we of course mean the integer part of expressions like the ones above.

We may of course assume that $\deltabar < 1$, so clearly $\scale_0$ and $\scale_1$ overlap. Moreover, if $n_1 \in \scale_1$, let $n_0 := \intpart{\deltabar^{-1} \, \log n_1}$. Then $n_0 \in \scale_0$, $\frac{n_0}{n_1} \approx \deltabar^{-1} \, \frac{\log n_1}{n_1}$, and $n_1 < e^{\deltabar n_0} \cdot e^{\deltabar} < n_0 \, e^{\deltabar n_0}$.   

We conclude: if $n_1 \in \scale_1$, and if $B \in \cocycles$, $\normr{A-B} < \ep$, then for any $\tausvp \, \pi$ we have:
\begin{equation}\label{lsc-angles-n1}
\abs{ \Lapn{n_1} (B) - \Lapn{2n_1} (B) } < C \frac{n_0}{n_1} \approx \frac{C}{\deltabar} \, \frac{\log n_1}{n_1} =: K  \, \frac{\log n_1}{n_1} =: \eta_1
\end{equation}
where $K$ depends only on $C$ and $\deltabar$, hence only on $A, \ga, \om$.
Moreover,
\begin{equation*} 
d \, (\filtn{n_1} (B), \, \filtn{n_0} (B) ) < C \frac{n_0}{n_1}  \approx K \, \frac{\deltabar n_0}{{e^{\deltabar n_0}}} < K \, e^{ - \frac{\deltabar}{2} \, n_0} =: K \, e^{-  b \, n_0}
\end{equation*}

We may assume, by possibly increasing $\nzero$, depending only on  $A, \ga, \om$, that:
\begin{equation}\label{lsc-etak-eq}
K  \, \frac{\log n}{n} < \eta \quad \text{for all } n \ge \nzero
\end{equation}

Moreover, according to \eqref{lsc-gaps-eq}, we have:
\begin{equation}\label{lsc-gaps-n1}
\Larn{n_1} (B) > \ga \quad \text{for all } \tausvr \, \rho
\end{equation}

We use the same constants $\delta, \, \deltabar$ as above, and clearly from \eqref{lsc-delta}, \eqref{lsc-etak-eq}
\begin{equation}\label{lsc-delta-n1}
\delta < \frac{\ga - 4 \eta}{10} < \frac{\ga - 4 \eta_1}{10} 
\end{equation}

Then \eqref{lsc-gaps-n1}, \eqref{lsc-angles-n1}, together with \eqref{lsc-delta-n1}, \eqref{lsc-deltabar<delta}, \eqref{lsc-deltabar} show that  the assumptions of the inductive step Theorem~\ref{indstep-thm} are satisfied at scale $n_1$. We then conclude that for every scale $n_2$ such that 
$$n_2 \le n_1 \, e^{\deltabar n_1}$$
we have:
\begin{equation}\label{lsc-angles-n2-part}
\abs{ \Lapn{n_2} (B) - \Lapn{2n_2} (B) } < C \frac{n_1}{n_2}
\end{equation}
\begin{equation*} 
d \, (\filtn{n_2} (B), \, \filtn{n_1} (B) ) < C \frac{n_1}{n_2} 
\end{equation*}

Now define 
$$\scale_2 := [e^{\deltabar n_1^-}, e^{\deltabar n_1^+}] =: [n_2^-, n_2^+]$$
Since the map $n \mapsto e^{\deltabar n}$ is increasing and $\scale_0, \, \scale_1$ overlap, so do $\scale_1, \, \scale_2$. 

If $n_2 \in \scale_2$, let $n_1 := \intpart{\deltabar^{-1} \, \log n_2}$. Then $n_1 \in \scale_1$, $\frac{n_1}{n_2} \approx \deltabar^{-1} \, \frac{\log n_2}{n_2}$, and $n_2 < e^{\deltabar n_1} \cdot e^{\deltabar} < n_1 \, e^{\deltabar n_1}$,   hence \eqref{lsc-angles-n2-part} applies to every $n_2 \in \scale_2$.

We conclude: if $n_2 \in \scale_2$ and if $B \in \cocycles$ such that $\normr{A-B} < \ep$, then for all $\tausvp \, \pi$:
\begin{equation*} 
\abs{ \Lapn{n_2} (B) - \Lapn{2n_2} (B) } < C \frac{n_1}{n_2} \approx \frac{C}{\deltabar} \, \frac{\log n_2}{n_2} = K  \, \frac{\log n_2}{n_2} =: \eta_2
\end{equation*}
Moreover,
\begin{equation}\label{lsc-distances-n2}
d \, (\filtn{n_2} (B), \, \filtn{n_1} (B) ) < C \frac{n_1}{n_2}  \approx K \, \frac{\deltabar n_1}{{e^{\deltabar n_1}}} < K \, e^{ - \frac{\deltabar}{2} \, n_1} = K \, e^{-  b \, n_1}
\end{equation}

It is clear now how the induction process continues. There is a sequences of integer intervals 
$\scale_k := [n_k^-, n_k^+]$ such that $\scale_0 := [\nzero, e^{\nzero}]$ and $\scale_{k+1} = [e^{\deltabar n_k^-}, e^{\deltabar n_k^+}]$. For every $k \ge 1$, if  $n_k \in \scale_k$, and $B \in \cocycles$, $\normr{A-B} < \ep$, then:
\begin{equation*} 
\abs{ \Lapn{n_k} (B) - \Lapn{2n_k} (B) } < K  \, \frac{\log n_k}{n_k} \quad \text{for all } \tausvp \, \pi
\end{equation*}
and
\begin{align*}
d \, (\filtn{n_k} (B), \, \filtn{n_{k-1}} (B) ) & < C \frac{n_{k-1}}{n_k}   \approx K \, \frac{\deltabar n_{k-1}}{{e^{\deltabar n_{k-1}}}} \notag \\
& < K \, e^{ - \frac{\deltabar}{2} \, n_{k-1}} = K \, e^{-  b \, n_{k-1}}
\end{align*}
Moreover, the integer intervals $\scale_1, \scale_2, \ldots $ overlap, so their union is the set of all integers $n \ge n_1^- = e^{\deltabar \nzero} =: \overline{\nzero} $.

 Clearly $\overline{\nzero}$ depends only on $A, \ga, \om$, and for all $n \ge \overline{\nzero}$ we have:
$$ \abs{ \Lapn{n} (B) - \Lapn{2n} (B) } < K  \, \frac{\log n}{n} \quad \text{for all } \tausvp \, \pi$$

Summing up over $j$ the expressions $ \abs{ \Lapn{2^j n} (B) - \Lapn{2 \cdot 2^j n} (B) }$, we conclude that for every $\tausvp \, \pi$ we have:
$$ \abs{ \Lapn{n} (B) - \Lap (B) } \less K  \, \frac{\log n}{n} $$
for all $n \ge \overline{\nzero}$ and for all cocycles $B$ such that $\normr{A-B} < \ep$.

Moreover, for all $n \ge \nzero$, there is $k \ge 1$ and $n_k \in \scale_k$ such that $n = n_k$. Then
\begin{align*}
d \, ( \filt (B), \filtn{n} (B) ) = d \, ( \filt (B), \filtn{n_k} (B) ) \le \\ \sum_{j \ge k} \, d \, ( \filtn{n_{j+1}} (B), \filtn{n_j} (B) ) 
\less K \, e^{- b n_k} = K \, e^{- b n}  
\end{align*}

By increasing the constant $K$ above, depending on $\overline{\nzero}$ and on $A$, hence only on $A, \, \ga, \, \om$, we get \eqref{rateconv-c} and \eqref{rateconv-flags} for all $n \ge 2$.

\end{proof}


\section{The proofs of the main statements}\label{proofs_section}

The first statement says that every $\taublockL$ is {\em H\"{o}lder} continuous in a neighborhood of a cocycle whose Lyapunov spectrum has a $\taugp$.  

\begin{theorem}\label{Holder-cont-thm}
Let $A \in \cocycles$ and let $\tau$ be a signature. Assume that for some $\ga > 0$ we have
\begin{equation*} 
\Lar (A) > \ga \quad \text{for all } \tausvr \, \rho
\end{equation*}

Then there are positive constants $\ep = \ep (A, \ga, \om)$, $K = K (A, \ga, \om)$, $\theta = \theta (A, \ga, \om)$ such that:
\begin{equation*} 
\abs{ \Lap (B_1) - \Lap (B_2) } < K \, \normr{ B_1 - B_2 }^\theta
\end{equation*}
for all cocycles $B_1, B_2 \in \cocycles$ such that $\normr{ A - B_i } < \ep$, $i = 1, 2$ and for all $\tausvp \, \pi$. 
\end{theorem}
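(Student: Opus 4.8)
The approach is the classical Goldstein--Schlag interpolation scheme: bridge between a finite scale $n_0$, at which the finite-scale averages $\Lapn{n_0}(\cdot)$ depend H\"older continuously on the cocycle, and the limit $\Lap$, controlling the passage to the limit \emph{exponentially} in $n_0$ by means of the Avalanche Principle; the scale $n_0$ is then tuned to $\normr{B_1-B_2}$. First I would collect, from Lemma~\ref{uniformgaps-lemma}, Lemma~\ref{rateconv-lemma} and the setup in their proofs, constants $\ep,\nzero>0$ and \emph{fixed} constants $\delta,\deltabar>0$, all depending only on $A,\ga,\om$, such that for every $B\in\cocycles$ with $\normr{A-B}<\ep$ and every scale $n_0\ge\nzero$: the cocycle $B$ has a $\taugp$ with $\Lar(B)>\ga$ for all $\tausvr\,\rho$ (so $\Lap(B)$ is defined and $\Lapn{n}(B)\to\Lap(B)$); the hypotheses of the inductive step Theorem~\ref{indstep-thm} hold at scale $n_0$ with this $\delta,\deltabar$ --- condition \eqref{indstep-gaps-eq} is exactly Lemma~\ref{uniformgaps-lemma}, condition \eqref{indstep-angles-eq} follows from \eqref{rateconv-c} applied at scales $n_0$ and $2n_0$ (after enlarging $\nzero$ so that the right-hand side is below $\ga/4$), and \eqref{indstep-deltabar-eq} is arranged by taking $\nzero$ large; and \eqref{rateconv-c} itself holds uniformly in $B$.

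Fix such a $B$ and $n_0\ge\nzero$, and put $n_1:=\intpart{n_0\,e^{\deltabar n_0}}$. Estimate \eqref{indstep-n1vsn0-eq} of Remark~\ref{indstep-remark} gives, for every $\tausvp\,\pi$,
$$\abs{\,\Lapn{n_1}(B)+\Lapn{n_0}(B)-2\Lapn{2n_0}(B)\,}\less C\,\frac{n_0}{n_1}=C\,e^{-\deltabar n_0},$$
while \eqref{rateconv-c} at scale $n_1$ gives $\abs{\Lapn{n_1}(B)-\Lap(B)}\less K\,\frac{\log n_1}{n_1}\less K\,e^{-\deltabar n_0}$, since $n_1$ is exponentially large in $n_0$. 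Combining these, for every $\tausvp\,\pi$,
\begin{equation}\label{holder-key-approx}
\abs{\,\Lap(B)-\bigl(2\Lapn{2n_0}(B)-\Lapn{n_0}(B)\bigr)\,}\less e^{-\deltabar n_0},
\end{equation}
with implied constant depending only on $A,\ga,\om$. On the other hand, Lemma~\ref{cont-finite-lemma} applied at the \emph{fixed} scales $n_0$ and $2n_0$ shows that the finite-scale quantity $2\Lapn{2n_0}(\cdot)-\Lapn{n_0}(\cdot)$ is H\"older continuous there: if $\normr{B_1-B_2}\le e^{-4Cn_0}$, then
$$\abs{\,\bigl(2\Lapn{2n_0}(B_1)-\Lapn{n_0}(B_1)\bigr)-\bigl(2\Lapn{2n_0}(B_2)-\Lapn{n_0}(B_2)\bigr)\,}\less\sqrt{\normr{B_1-B_2}}.$$

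Finally, given $B_1,B_2\in\cocycles$ with $\normr{A-B_i}<\ep$, choose $n_0:=\intpart{\frac{1}{4C}\log\frac{1}{\normr{B_1-B_2}}}$; after shrinking $\ep$ we may assume $n_0\ge\nzero$ and $\normr{B_1-B_2}\le e^{-4Cn_0}$. The triangle inequality, combining \eqref{holder-key-approx} for $B_1$ and for $B_2$ with the finite-scale H\"older bound for the middle term, yields
$$\abs{\,\Lap(B_1)-\Lap(B_2)\,}\less e^{-\deltabar n_0}+\sqrt{\normr{B_1-B_2}}\less \normr{B_1-B_2}^{\,\deltabar/(4C)}+\normr{B_1-B_2}^{1/2}\less \normr{B_1-B_2}^{\theta},$$
with $\theta:=\min\{\deltabar/(4C),\,1/2\}$ and a constant $K$ depending only on $A,\ga,\om$, which is the asserted estimate (valid for all $\tausvp\,\pi$).

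The genuinely delicate point is not any individual inequality but the combination of two things. First, the bookkeeping needed so that $\delta$ and $\deltabar$ can be kept \emph{fixed} (independent of $n_0$ and of $B$) while still meeting the scale-dependent constraints $n_0^{-3/4}\le\deltabar\le c\delta^3/2$ of Theorem~\ref{indstep-thm} for \emph{all} $n_0\ge\nzero$; this is what forces $\nzero$ to be large and, in turn, couples the radius $\ep$ of the neighbourhood to it. Second --- and this is the actual reason one obtains H\"older rather than merely log-H\"older regularity --- one must interpolate through the \emph{linear combination} $2\Lapn{2n_0}-\Lapn{n_0}$ rather than through $\Lapn{n_0}$ alone: the crude convergence rate $\abs{\Lapn{n_0}(B)-\Lap(B)}\less\log n_0/n_0$ would only produce a $\log\log/\log$ modulus, whereas \eqref{indstep-n1vsn0-eq} upgrades it to the exponential bound \eqref{holder-key-approx} that makes the $n_0\asymp\log(1/\normr{B_1-B_2})$ tuning yield a power of $\normr{B_1-B_2}$.
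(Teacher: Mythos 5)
Your proposal is correct and follows essentially the same route as the paper's proof: establish uniform gaps and the $K\log n/n$ convergence rate via Lemma~\ref{uniformgaps-lemma} and Lemma~\ref{rateconv-lemma}, use the inductive step estimate \eqref{indstep-n1vsn0-eq} together with the convergence rate at the exponentially larger scale $n_1$ to approximate $\Lap(B)$ by the finite-scale combination $2\Lapn{2n_0}(B)-\Lapn{n_0}(B)$ up to an error exponentially small in $n_0$, apply the finite-scale H\"older continuity of Lemma~\ref{cont-finite-lemma}, and tune $n_0\asymp\log(1/\normr{B_1-B_2})$. The resulting exponent $\theta=\min\{\deltabar/(4C),1/2\}$ matches the paper's.
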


\begin{proof} The idea of the proof is to relate the ``infinite" scale quantities $\Lap (\cdot)$ to the finite scale quantities $\Lapn{n} (\cdot)$ via the rate of convergence Lemma~\ref{rateconv-lemma}. Then,  since at finite scale we have locally H\"{o}lder continuity due to Lemma~\ref{cont-finite-lemma}, this will transfer over to the infinite scale quantities $\Lap (\cdot)$. 
However, comparing directly $\Lap (\cdot)$ to $\Lapn{n_0} (\cdot)$ leads to an error of order $\frac{\log n_0}{n_0}$, which is relatively too large, so it leads to a weak modulus of continuity for the limit $\Lap  (\cdot)$. 
In order to obtain an optimal (i.e. H\"{o}lder) modulus of continuity for $\Lap (\cdot)$, we use an intermediate scale $n_0 \ll n_1 \approx e^{\deltabar n_0} < \infty$.

\smallskip

Lemma~\ref{uniformgaps-lemma} and Lemma~\ref{rateconv-lemma} show that there exist constants $\nzero = \nzero (A, \ga, \om) \in \N$, $\ep_1 = \ep_1 (A, \ga, \om) > 0$, $K = K (A, \ga, \om) > 0$ such that if $B \in \cocycles$, $\normr{ A - B } < \ep$, then for all $n \ge \nzero$ we have:
\begin{align}
\Larn{n} (B) & > \ga & \ \text{ for all } \tausvr \, \rho \label{holdercont-eq1} \\
\abs{ \Lapn{n} (B) - \Lapn{2 n} (B) } & < K \, \frac{\log n}{n} & \ \text{ for all } \tausvp \, \pi \label{holdercont-eq2} \\
\abs{ \Lapn{n} (B) - \Lap (B) } & < K \, \frac{\log n}{n} & \ \text{ for all } \tausvp \, \pi \label{holdercont-eq3} 
\end{align} 

For fixed, appropriately chosen $n_0 \ge \nzero$, we will denote
\begin{equation}\label{holdercont-eta}
\eta_0 :=  K \, \frac{\log n_0}{n_0} 
\end{equation}

We may of course assume that $\nzero$ is large enough so that for any such $n_0 \ge \nzero$ we have:
\begin{equation}\label{holdercont-eq4}
4 \eta_0 =  4 K \, \frac{\log n_0}{n_0} < \frac{\ga}{2} 
\end{equation}

Fix constants $\delta, \, \deltabar$ such that:
\begin{align}
0 < \delta < \frac{\ga}{20} \label{holdercont-eq5}\\
0 < \deltabar < \delta \label{holdercont-eq-ddb}\\
\nzero^{-3/4} \le \deltabar \le c \delta^3 / 2 \label{holdercont-eq6}
\end{align}
Of course, for \eqref{holdercont-eq-ddb} and  \eqref{holdercont-eq6} to make sense, we need to make two other obvious assumptions on the magnitude of $\nzero$,  which only depend on $A$ and $\ga$.

From \eqref{holdercont-eq4} and \eqref{holdercont-eq5} we get:
$$\delta < \frac{\ga}{20} = \frac{\ga - \ga/2}{10} < \frac{\ga - 4 \eta_0}{10}$$
hence
\begin{equation}\label{holdercont-eq7}
0 < \delta < \frac{\ga - 4 \eta_0}{10}
\end{equation}

Estimates  \eqref{holdercont-eq1},  \eqref{holdercont-eq2} together with  \eqref{holdercont-eq7},  \eqref{holdercont-eq-ddb}, \eqref{holdercont-eq6}  say that we can apply the inductive step Theorem~\ref{indstep-thm} at {\em any} initial scale $n_0 \ge \nzero$ with $\eta_0$ given by \eqref{holdercont-eta}, $\ga_0 = \ga$, and its  conclusion will hold for $n_1 \approx e^{\deltabar n_0}$.

Now let $$\ep := \min \{ \ep_1, \ \frac{1}{2} \, e^{- 4 C \nzero} \}$$

For any cocycles $B_1, B_2 \in \cocycles$ such that $\normr{A - B_i} < \ep$, $i=1,2$, let
$$\normr{B_1 - B_2} =: h \ ( < 2 \ep )$$

Choose an integer $n_0$ such that $h \approx e^{-4 C n_0}$ and let  $n_1 := \intpart{ e^{\deltabar n_0} }$.

By the choice of $\ep$, we have $n_0 \ge \nzero$, so we can apply the inductive step Theorem~\ref{indstep-thm} (or rather Remark~\ref{indstep-remark}) at this scale $n_0$. We get:
\begin{align}
\abs{ \Lapn{n_1} (B_i) + \Lapn{n_0} (B_i) - 2 \Lapn{2 n_0} (B_i) } < C \, \frac{n_0}{n_1} \approx \notag \\
\approx \frac{C}{\deltabar} \, \frac{\log n_1}{n_1} = K \, \frac{\log n_1}{n_1} \label{holdercont-eq10}
\end{align} 
for $i = 1, 2$ and for any $\tausvp \, \pi$.

Moreover, since $n_1 > n_0 \ge \nzero$, we can apply \eqref{holdercont-eq3} at scale $n_1$ for $B_i$, $i = 1, 2$ and get:
\begin{equation}\label{holdercont-eq11}
\abs{ \Lapn{n_1} (B_i) - \Lap (B_i) }  < K \, \frac{\log n_1}{n_1}  \quad \text{ for all } \tausvp \, \pi 
\end{equation}

Combining \eqref{holdercont-eq10}, \eqref{holdercont-eq11}, for $i = 1, 2$ and for all $\tausvp \, \pi$ we have:
\begin{equation}\label{holdercont-eq12}
\abs{ \Lap  (B_i) + \Lapn{n_0} (B_i) - 2 \Lapn{2 n_0} (B_i) } < K \, \frac{\log n_1}{n_1} 
\end{equation}
where $n_1 \approx e^{\deltabar n_0}$.

This last estimate shows that the ``infinite" scale quantity $\Lap (\cdot)$ and the corresponding finite scale quantity $\Lapn{n_0} (\cdot)$ are {\em exponentially} close in the scale $n_0$.

Since $\normr{B_1 - B_2} = h \approx e^{- 4 C n_0}$, we can apply the finite scale continuity Lemma~\ref{cont-finite-lemma} at scale $n_0$ and $2 n_0$ and get:
\begin{align}
\abs{ \Lapn{n_0} (B_1) - \Lapn{n_0} (B_2) } \le h^{1/2} 
\nonumber \\
\abs{ \Lapn{2 n_0} (B_1) - \Lapn{2 n_0} (B_2) } \le h^{1/2} \label{holdercont-eq14}
\end{align}

Combining \eqref{holdercont-eq12} - \eqref{holdercont-eq14} we have:
\begin{align*}
\abs{ \Lap (B_1) - \Lap (B_2) } < 2 K \, \frac{\log n_1}{n_1} + 3 h^{1/2} \less K h^{\theta}
\end{align*}

The last inequality follows from
$$n_1 \approx e^{\deltabar n_0}, \quad h \approx e^{-4 C n_0}$$
hence
$$\frac{1}{n_1} \approx h^{\deltabar / 4 C}$$
so the H\"{o}lder exponent $\theta$ will be chosen such that $0 < \theta < \min \{ \frac{1}{2}, \, \frac{\deltabar}{ 4 C} \}$. 
\end{proof}

\medskip


We will prove that every Lyapunov exponents is continuous as a function of the cocycle, regardless of whether the cocycle at which we prove continuity has a gap pattern or not. The main ingredients in the proof are the continuity  of the 
$\taublockL$s, shown in Theorem~\ref{Holder-cont-thm},  and the lower semi-continuity of the $\taugapL$ shown in Corollary~\ref{lscgaps}. We first derive a simple general continuity lemma that synthesizes this information, which we then apply several times to establish continuity of the Lyapunov exponents. 

\begin{lemma}\label{abstract-cont-lemma}
Let $\tops$ be a topological space, let $a \in \tops$ be a point and let $f_1, f_2, \ldots, f_p \colon \tops \to \R$ be functions satisfying the following properties:

$\blob$ $f_1 \ge f_2 \ge \ldots \ge f_p$ at all points $x \in \tops$;

$\blob$ $f_1 (a) = f_2 (a) = \ldots = f_p (a)$;

$\blob$ $g := f_1 + f_2 + \ldots + f_p$ is continuous at $a$; 

$\blob$ $f_1$ is upper semi-continuous at $a$.

Then  for every $1 \le j \le p$, $f_j$ is continuous at $a$.

\end{lemma}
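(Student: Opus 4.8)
The plan is to exploit the three structural facts — the ordering $f_1 \ge \ldots \ge f_p$, the coincidence $f_1(a) = \ldots = f_p(a)$, and the continuity of the sum $g$ — together with one-sided semi-continuity of the extreme function $f_1$, to squeeze each $f_j$ between an upper and a lower estimate that both converge to $f_j(a)$ as $x \to a$.

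First I would set $L := f_1(a) = \ldots = f_p(a)$, so that $g(a) = pL$. The key elementary observation is that for any $x \in \tops$ and any index $j$, the ordering gives
\[
f_j(x) \le f_1(x)
\quad\text{and}\quad
(j-1) f_1(x) + f_j(x) + (p-j) f_p(x) \le g(x) \le (j-1)f_1(x) + f_j(x) + (p-j)f_j(x),
\]
but the cleaner route is: from $g = f_1 + \ldots + f_p$ and $f_i \le f_1$ for all $i$, we get $f_j(x) \ge g(x) - (p-1) f_1(x)$. This is the crucial lower bound for $f_j$ in terms of the continuous function $g$ and the upper semi-continuous function $f_1$. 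Combined with the trivial upper bound $f_j(x) \le f_1(x)$, I would then take limits. For the upper side: $\limsup_{x \to a} f_j(x) \le \limsup_{x\to a} f_1(x) \le f_1(a) = L$ by upper semi-continuity of $f_1$. For the lower side: $\liminf_{x\to a} f_j(x) \ge \liminf_{x\to a}\bigl(g(x) - (p-1) f_1(x)\bigr) \ge g(a) - (p-1)\limsup_{x\to a} f_1(x) \ge pL - (p-1)L = L$, again using continuity of $g$ and upper semi-continuity of $f_1$. Hence $\lim_{x\to a} f_j(x) = L = f_j(a)$, which is continuity of $f_j$ at $a$.

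The main subtlety — not really an obstacle, but the point requiring care — is the handling of $\limsup$ versus $\liminf$ and the sign when $f_1$ appears with a negative coefficient $-(p-1)$ in the lower bound for $f_j$: upper semi-continuity of $f_1$ at $a$ is exactly what is needed to control $-(p-1)f_1(x)$ from below near $a$, and no lower semi-continuity hypothesis on $f_1$ is available or needed. One should also note that the argument produces continuity of \emph{every} $f_j$ simultaneously, and that once all $f_j$ are continuous one recovers that $f_1$ is in fact continuous (not merely upper semi-continuous) at $a$, consistent with the hypotheses. If $\tops$ is not first-countable one phrases everything with nets or directly with $\varepsilon$-neighbourhoods rather than sequences, but the inequalities are identical. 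In the intended application $f_j = L_{\tau_{j-1}+1 + (\cdot)}$ will be individual Lyapunov exponents inside a block where they all coincide at $a$, $g$ will be the corresponding Lyapunov spectrum $\tau$-block $\Lap$ whose continuity comes from Theorem~\ref{Holder-cont-thm}, and the upper semi-continuity of the top exponent of the block comes from a minus-infimum (limit of finite-scale averages) argument as in the reasoning preceding Corollary~\ref{lscgaps}.
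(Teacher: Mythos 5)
Your proof is correct. The route differs somewhat from the paper's: you establish, for every index $j$ simultaneously, the two-sided estimate $g(x)-(p-1)f_1(x)\le f_j(x)\le f_1(x)$ and then pass to $\limsup$/$\liminf$, using continuity of $g$ and upper semi-continuity of $f_1$ to squeeze both sides to the common value $L$. The paper instead argues by contradiction that $f_1$ must be lower semi-continuous at $a$ (if $f_1$ dropped by $\ep$ along $x_k\to a$, then every $f_j$ would drop by $\ep$ and the sum $g$ would drop by $p\ep$, contradicting continuity of $g$), and then peels off $f_1$ and inducts on the remaining block $f_2+\cdots+f_p$, first checking that $f_2$ inherits upper semi-continuity from $f_1$. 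The two arguments rest on the same inequality (your bound $f_j\ge g-(p-1)f_1$ is exactly the ``summing over $j$'' step of the paper's contradiction), but your direct sandwich handles all $f_j$ in one pass with no induction and no contradiction, which is marginally cleaner; the paper's version makes explicit the intermediate fact that upper semi-continuity propagates down the block, which is what lets it recycle the lemma structurally. Your closing remarks on the intended application (continuity of the blocks from Theorem~\ref{Holder-cont-thm}, upper semi-continuity of the leading exponent of each block obtained from lower semi-continuity of the preceding gap via Corollary~\ref{lscgaps}) match how the lemma is actually deployed in Theorem~\ref{cont-all-Lyap-thm}.
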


\begin{proof} Assume that $f_1$ is {\em not} lower semi-continuous at $a$. Then there is $\ep > 0$ and there is $x_k \to a$ such that $f_1 (x_k) \le f_1 (a) - \ep$ for all $k$. 

For every $1 \le j \le p$ we then have:
$$f_j (x_k) \le f_1 (x_k) \le f_1 (a) - \ep = f_j (a) - \ep$$

Summing up over $1 \le j \le p$, for all $k$ we have: 
 \begin{equation}\label{cont-lemma-eq1}
g (x_k) \le g (a) - p \, \ep \le g (a) - \ep
 \end{equation}

Since  $g$ is continuous at $a$, for $k$ large enough we have:
 \begin{equation*} 
 \abs{ g (x_k) - g (a)} < \frac{\ep}{2} 
 \end{equation*}
 which contradicts \eqref{cont-lemma-eq1}.

Therefore, $f_1$ is continuous at $a$.

\smallskip 

Since $f_2 (a) = f_1 (a)$ and $f_2 (x) \le f_1 (x)$ for all $x \in \tops$, and since $f_1$ is upper semi-continuous at $a$, then clearly $f_2$ is also upper semi-continuous at $a$. Moreover, since $f_1$ was shown to be continuous at $a$, it follows that $\text{ new }  g:= f_2 + \ldots + f_p$ is continuous at $a$, hence the continuity of  $f_2, \ldots, f_p$ at $a$ follows by induction.   

\end{proof}

\begin{theorem} \label{cont-all-Lyap-thm}
All Lyapunov exponents $L_i (\cdot)$, $1 \le i \le m$ are continuous functions on $\cocycles$.
\end{theorem}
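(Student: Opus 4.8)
The plan is to fix a cocycle $A \in \cocycles$ and prove continuity of each $L_i$ at $A$, by reducing to the already-established building blocks: H\"older continuity of the $\tau$-blocks $\Lap(\cdot)$ near a cocycle with a $\tau$-gap pattern (Theorem~\ref{Holder-cont-thm}), lower semi-continuity of each gap $\Lar(\cdot)$ and openness of the gap-pattern locus (Corollary~\ref{lscgaps}), and the abstract mechanism of Lemma~\ref{abstract-cont-lemma}. The first step is to read off the \emph{precise} gap pattern of $A$: let $\tau = (\tau_1,\ldots,\tau_k)$ be the signature that records \emph{exactly} the strict inequalities in $L_1(A) \ge \ldots \ge L_m(A)$, so that within each block $(\tau_{j-1}, \tau_j]$ all Lyapunov exponents of $A$ coincide, and between consecutive blocks there is a genuine gap. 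Set $\ga := \min_{1\le j \le k}\big(L_{\tau_j}(A) - L_{\tau_j+1}(A)\big) > 0$, so $\Lar(A) > \ga$ for every $\tausvr\ \rho$.

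Next I would invoke the two continuity inputs on a common neighborhood $\mathscr{U}$ of $A$. By Corollary~\ref{lscgaps} (and Lemma~\ref{uniformgaps-lemma}) there is a neighborhood where $\Lar(B) > \ga/2 > 0$ for all $\tausvr\ \rho$, so every $B \in \mathscr{U}$ also has a $\tau$-gap pattern; shrinking $\mathscr{U}$ if necessary, Theorem~\ref{Holder-cont-thm} gives that each block sum $\Lap(\cdot) = L_{\tau_{j-1}+1}(\cdot) + \ldots + L_{\tau_j}(\cdot)$ is (H\"older, hence) continuous on $\mathscr{U}$. Now apply Lemma~\ref{abstract-cont-lemma} \emph{separately to each block}: fix $1 \le j \le k$, take $\tops = \mathscr{U}$, $a = A$, and let $f_1, \ldots, f_p$ be the Lyapunov exponents $L_{\tau_{j-1}+1}, \ldots, L_{\tau_j}$ (so $p = \tau_j - \tau_{j-1}$). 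The ordering hypothesis holds everywhere; the equality-at-$a$ hypothesis holds by our choice of $\tau$ (all exponents inside a block coincide at $A$); the sum $g = \Lap$ is continuous at $A$ by Theorem~\ref{Holder-cont-thm}; and the top function $f_1 = L_{\tau_{j-1}+1}$ is upper semi-continuous at $A$ because $L_{\tau_{j-1}+1} = \Laj[p_{\tau_{j-1}+1}] - \Laj[p_{\tau_{j-1}}] = \Lap[p_{\tau_{j-1}+1}] - \Lap[p_{\tau_{j-1}}]$ and each $\Laj$ is an infimum over $n$ of the continuous finite-scale functions $\Lajn{n}$ — no, more simply: $L_{\tau_{j-1}+1}(\cdot)$ is the difference of the continuous block-prefix sum $L_1 + \ldots + L_{\tau_{j-1}+1}$ (a $\tau'$-block sum for the refined signature, continuous by Theorem~\ref{Holder-cont-thm}) and the continuous sum $L_1 + \ldots + L_{\tau_{j-1}}$; actually it suffices that $f_1$ is u.s.c., which follows since $L_1 + \ldots + L_j = \Laj = \inf_n \Lajn{n}$ is u.s.c. in $A$ for every $j$, being an infimum of continuous functions, and $L_{\tau_{j-1}+1} = \Laj[p_{\tau_{j-1}+1}] - \Laj[p_{\tau_{j-1}}]$ where the second term is continuous on $\mathscr{U}$ by Theorem~\ref{Holder-cont-thm}. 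Lemma~\ref{abstract-cont-lemma} then yields continuity of each $L_i$ with $\tau_{j-1} < i \le \tau_j$ at $A$; letting $j$ range over $1,\ldots,k$ covers all $1 \le i \le m$.

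The statement is for multifrequency translations on $\T^d$, so finally I would note that every ingredient used — Theorem~\ref{Holder-cont-thm}, Corollary~\ref{lscgaps}, Lemma~\ref{uniformgaps-lemma}, Lemma~\ref{abstract-cont-lemma}, and the representation $\Laj = \inf_n \Lajn{n}$ via subadditivity — holds verbatim in the $\T^d$ setting (with log-H\"older in place of H\"older, which still gives continuity), so the argument goes through unchanged. The main obstacle, and the only genuinely delicate point, is the upper semi-continuity of the \emph{top} exponent of each block at $A$: this is where one must be careful, since individual Lyapunov exponents are \emph{not} known to be semi-continuous in general; the resolution is precisely that $L_1 + \ldots + L_j = \Laj(A) = \inf_{n\ge1}\Lajn{n}(A)$ is an infimum of the genuinely continuous finite-scale averages $\Lajn{n}$ (continuity of $\Lajn{n}$ in $A$ being elementary, essentially Lemma~\ref{cont-finite-lemma}), hence u.s.c.\ everywhere, and one writes $L_{\tau_{j-1}+1} = \Laj[p_{\tau_{j-1}+1}] - (L_1+\ldots+L_{\tau_{j-1}})$ with the subtracted term already continuous at $A$ by the H\"older theorem applied to the refined signature — so the difference is u.s.c., exactly the hypothesis Lemma~\ref{abstract-cont-lemma} needs.
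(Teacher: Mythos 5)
Your proposal is correct and follows the same overall architecture as the paper's proof: read off the \emph{precise} gap pattern of $A$, use Theorem~\ref{Holder-cont-thm} for continuity of the block sums, and feed each block into Lemma~\ref{abstract-cont-lemma}. The one place where you genuinely diverge is the verification of the upper semi-continuity hypothesis for the top exponent of each block, which is indeed the only delicate point. The paper proceeds by induction across blocks: $L_1$ is u.s.c.\ by subadditivity, and once the last exponent $L_p$ of a block is known to be continuous at $A$, the lower semi-continuity of the gap $L_p - L_{p+1}$ (Corollary~\ref{lscgaps}) forces $L_{p+1}$ to be u.s.c.\ at $A$, priming the next block. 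You instead observe that \emph{every} partial sum $\Lambda_{p_i} = \inf_{n}\Lambda^{(n)}_{p_i}$ is u.s.c.\ as an infimum of continuous finite-scale averages, and write $L_{\tau_{j-1}+1} = \Lambda_{p_{\tau_{j-1}+1}} - \Lambda_{p_{\tau_{j-1}}}$ with the subtracted prefix continuous at $A$ by Theorem~\ref{Holder-cont-thm}; a u.s.c.\ function minus a continuous one is u.s.c. This is valid and slightly more direct, since it removes the dependence on Corollary~\ref{lscgaps} at this step and makes the blocks independent of one another rather than treated sequentially; the paper's route, conversely, isolates the semicontinuity input to the single classical fact about $L_1$ plus the gap lower semi-continuity it has already established.

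Two minor points. First, letting $j$ range over $1,\ldots,k$ covers only the indices $i \le \tau_k$; the residual block $L_{\tau_k+1},\ldots,L_m$ must be treated as well, but your argument applies verbatim to it, with its sum $\Lambda_{p_m} - \Lambda_{p_{\tau_k}}$ continuous because $\Lambda_{p_m}(B) = \int_{\T}\log\abs{\det B(x)}\,dx$ is continuous (a fact the paper records explicitly at the outset). Second, the theorem as stated in Section~\ref{proofs_section} is for $d=1$; your closing remark about $\T^d$ is harmless but not needed here.
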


\begin{proof} First note that the map $\cocycles \ni A \mapsto \Lam (A) \in \R$ is continuous, where according to the notations of Section~\ref{defs-nots_section}, $\Lam (A)$ refers to the sum $L_1 (A) + \ldots + L_m (A)$ of {\em all} Lyapunov exponents. This is obvious, since the product of all singular values of a matrix is the absolute value of its determinant, hence:
\begin{align*}
\Lamn{n} (A) = \int_{\T} \frac{1}{n} \, \log \, \abs{ \det \An{n} (x) } \, d x =   \int_{\T} \frac{1}{n} \, \log \, \prod_{j=n-1}^{0}\abs{ \det A (\transl^j \, x) } \, d x = \\
= \frac{1}{n} \, \sum_{j=n-1}^{0} \int_{\T} \log \, \abs{ \det A (\transl^j \, x) } \, d x = \int_{\T} \log \, \abs{ \det A  (x) } \, d x
\end{align*}
Therefore,
$$\Lam (A) = \lim_{n\to\infty} \Lamn{n} (A) =  \int_{\T} \log \, \abs{ \det A  (x) } \, d x$$
which is clearly a continuous function of $A \in \cocycles$.

Let $A \in \cocycles$ be a {\em fixed} cocycle. We prove continuity of the Lyapunov exponents $L_i (\cdot)$, $ 1 \le i \le m$ at $A$.  

Either all Lyapunov exponents of $A$ are equal, in which case we treat them as a single block 
$\Lam (A) = [ L_1 + \ldots + L_m ] (A)$, or there are gaps between some of them, which can be encoded by a signature $\tau$.
In other words, let $\tau = (\tau_1, \ldots, \tau_k)$ be such that  $L_i (A) > L_{i+1} (A)$ for some $1 \le i \le m$ if and only if $i=\tau_j$ for some $1 \le j \le k$. 

From Theorem~\ref{Holder-cont-thm}, all $\taublockL$s are continuous at $A$.  Hence for all $1 \le j \le k$, the blocks 
$\La_{\pi_{\tau,j}} (\cdot) = [ L_{\tau_{j-1}+1} + \ldots +  L_{\tau_{j}} ] (\cdot)$ are continuous functions at $A$. Then clearly the last (redundant) block $[ L_{\tau_{k+1}} + \ldots + L_m ] (\cdot) $ is also continuous at $A$.
To simplify notations, any such block will have the form and be denoted by:
\begin{equation*} 
\La_{j, p}  :=  L_{j+1} + \ldots + L_{p} 
\end{equation*}
where $1 \le j+1 \le p \le m$ and if $j+1 > 1$ then $L_j (A) > L_{j+1} (A)$, while if $p < m$ then $L_p (A) > L_{p+1} (A)$.

\smallskip

We apply Lemma~\ref{abstract-cont-lemma} to each of these blocks. Clearly the first three assumptions in Lemma~\ref{abstract-cont-lemma} hold for the functions $L_{j+1},  L_{j+2}, \ldots, L_p$ and their sum $\La_{j, p}$.   

If $j  = 0$,  then $L_{j+1} = L_1$ is known to (always) be upper-semicontinuous, so the fourth assumption in Lemma~\ref{abstract-cont-lemma} is also satisfied, and we can then conclude that $L_1, L_2, \ldots, L_p$ are all continuous at $A$. 

If $p = m$ we are done. If $p < m$, then $L_p (A) > L_{p+1} (A)$, so the corresponding Lyapunov spectrum gap $[ L_p - L_{p+1} ] (\cdot)$ is lower semi-continuous at $A$ by Corollary~\ref{lscgaps}. Since we now know that $L_p$ is continuous at $A$, it follows that $L_{p+1}$ is upper semi-continuous at $A$.  The fourth assumption in Lemma~\ref{abstract-cont-lemma} is then satisfied for the (next) block starting with $L_{p+1}$, and so we get continuity at $A$ of all Lyapunov exponents forming this block. Continuing this inductively we obtain continuity at $A$ of all Lyapunov exponents.

\end{proof}

\begin{theorem} \label{oseledets:cont} Given a signature $\tau$,
the Oseledets $\tau$-decomposition, and the 
Oseledets $\tau$-filtration, are continuous functions on
the open set of cocycles in $\cocycles$ with a $\tau$-gap pattern.
\end{theorem}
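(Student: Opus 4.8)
The plan is to combine the quantitative convergence of the finite-scale most expanding $\tau$-flags to the Oseledets filtration (Lemma~\ref{rateconv-lemma}, estimate \eqref{rateconv-flags}) with the finite-scale continuity of those flags (Lemma~\ref{cont-filtr-finite-scale}). The strategy is exactly parallel to the proof of H\"older continuity of the $\tau$-blocks in Theorem~\ref{Holder-cont-thm}: we approximate the infinite-scale object $\filt(\cdot)$ by a finite-scale object $\filtn{n_0}(\cdot)$ at a scale $n_0$ which grows slowly as the perturbation shrinks, pay an exponentially small error $K e^{-b n_0}$ for the approximation, and pay a finite-scale continuity error which, by Lemma~\ref{cont-filtr-finite-scale}, is controlled by $\normr{B_1-B_2}$ (possibly with a scale-dependent loss in the H\"older exponent, but this is irrelevant for mere continuity).

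First I would fix a cocycle $A\in\cocycles$ with a $\tau$-gap pattern, so $\Lar(A)>\ga>0$ for all $\tausvr\,\rho$ and some $\ga>0$. By Corollary~\ref{lscgaps} there is a neighbourhood $\mathscr{U}$ of $A$ on which every cocycle still has a $\tau$-gap pattern (this gives the first assertion). By Lemma~\ref{rateconv-lemma} there are $\ep,K,b>0$ such that for every $B\in\cocycles$ with $\normr{A-B}<\ep$ and every $n\ge 2$,
\begin{equation*}
d\bigl(\filtn{n}(B),\,\filt(B)\bigr) < K\,e^{-b n}\;.
\end{equation*}
Now given $B_1,B_2$ close to $A$, set $h:=\normr{B_1-B_2}$ and choose $n_0\asymp \frac{1}{4C}\log(1/h)$, so that $h\asymp e^{-4Cn_0}$ and the hypothesis of Lemma~\ref{cont-filtr-finite-scale} is met at scale $n_0$. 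Then a triangle inequality in ${\rm L}^1(\T,\FF^m_\tau)$ gives
\begin{equation*}
d\bigl(\filt(B_1),\filt(B_2)\bigr)
\le d\bigl(\filt(B_1),\filtn{n_0}(B_1)\bigr)
+ d\bigl(\filtn{n_0}(B_1),\filtn{n_0}(B_2)\bigr)
+ d\bigl(\filtn{n_0}(B_2),\filt(B_2)\bigr)\;,
\end{equation*}
where the outer two terms are $< K e^{-b n_0}$ and hence go to $0$ as $h\to 0$, while the middle term tends to $0$ by the continuity of $\filtn{n_0}$ on $\mathscr{G}_\tau$ (Lemma~\ref{cont-filtr-finite-scale}) — one has to be slightly careful here that the finite scale $n_0$ is itself varying with $h$, but since $\filtn{n_0}$ is H\"older with exponent $1/p_{n_0}$ and constant bounded on a fixed neighbourhood, the bound $\lesssim h^{1/p_{n_0}}$ still tends to $0$ once one checks that $n_0\log(1/h)\to\infty$ slowly enough; alternatively, fix first an arbitrary target $\varepsilon>0$, pick $n_0$ large enough (depending only on $\varepsilon$, $A$, $\ga$, $\om$) that $Ke^{-bn_0}<\varepsilon/3$, and then shrink $h$ so that the middle term is $<\varepsilon/3$. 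This proves continuity of $B\mapsto\filt(B)=\underline{F}^B$ on $\mathscr{U}$.

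Finally I would deduce continuity of the Oseledets $\tau$-decomposition $B\mapsto E^B$ from that of the filtration. On $\mathscr{U}$ all cocycles share the $\tau$-gap pattern, so for a.e. $x$ the subspace $E^B_j(x)$ is the intersection of $F^B_j(x)$ with the $B$-invariant complement associated to the singular/Lyapunov decomposition; more concretely, $E^B_j$ is the image of the spectral projection of the Oseledets decomposition, and since $\R^m=\oplus_j E^B_j(x)$ is a direct sum whose factors are exactly the successive ``quotients'' determined by the flags $\underline{F}^B$ together with the analogous descending flag $\underline{F}^{B^{-1}}$ (or the flag of the adjoint cocycle), continuity of $E^B$ follows from continuity of these two filtrations by the continuity of the map that sends a pair of transverse flags to their associated decomposition. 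The transversality holds a.e. because the gap pattern holds, and the relevant map on flag manifolds is smooth where defined; integrating over $\T$ as in the definition of the metrics on $\prod_j {\rm L}^1(\T,\Gr^m_{k_j})$ upgrades pointwise a.e. continuity to ${\rm L}^1$-continuity using boundedness of $\Gr^m_{k_j}$ and dominated convergence.

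The main obstacle I expect is the bookkeeping in the last paragraph: identifying $E^B$ cleanly in terms of continuously-varying flag data and arguing that the ``pair of transverse flags $\mapsto$ direct sum decomposition'' map is continuous uniformly over the a.e. set where transversality holds, while simultaneously controlling the small exceptional set (of measure tending to $0$ with the perturbation, by the Hausdorff-continuity of the zero set $Z(B)$ established inside the proof of Lemma~\ref{cont-filtr-finite-scale}) on which transversality may degenerate. Everything else is a routine triangle-inequality argument of the same shape as Theorem~\ref{Holder-cont-thm}.
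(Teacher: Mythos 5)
Your argument for the continuity of the Oseledets $\tau$-filtration is exactly the paper's: Lemma~\ref{cont-filtr-finite-scale} gives continuity of $\filtn{n}$ at each fixed scale, Lemma~\ref{rateconv-lemma}\eqref{rateconv-flags} gives convergence $\filtn{n}(B)\to\filt(B)$ that is uniform over a neighbourhood of $A$, and the paper concludes simply that a uniform limit of continuous functions is continuous — which is your ``fix $n_0$ first so that $Ke^{-bn_0}<\varepsilon/3$, then shrink $h$'' fallback; the more delicate variant with $n_0$ tied to $h$ is unnecessary here since only continuity, not a modulus, is claimed. Where you genuinely diverge is the passage from the filtration to the $\tau$-decomposition. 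The paper proves a standalone proposition that the map $\FF^m_\tau\to\prod_{j=1}^k\Gr^m_{\tau_j-\tau_{j-1}}$, $\underline{F}\mapsto (F_{\tau_j}\ominus F_{\tau_{j-1}})_{1\leq j\leq k}$ with $W\ominus V:=W\cap V^\perp$, is continuous, and then reads the decomposition off the \emph{single} filtration $\filt(B)$ by composing with this map — a one-line reduction requiring no new dynamical input. You instead propose the classical description of the invariant Oseledets splitting as the intersection of the forward filtration with the filtration of the adjoint (or inverse) cocycle, followed by continuity of the ``transverse pair of flags $\mapsto$ splitting'' map. That route is viable and in fact targets the genuinely $A$-invariant splitting (which the paper's orthogonalized $F_{\tau_j}\cap F_{\tau_{j-1}}^\perp$ is not, strictly speaking), but it costs you three extra verifications you only sketch: continuity of the second filtration (which does follow by running the whole machinery for the adjoint cocycle, since it has the same $\taugp$), a.e.\ transversality with quantitative control of the exceptional set, and continuity of the pairing map. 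So: same proof for the filtration, a heavier but legitimate alternative for the decomposition where the paper's linear-algebra proposition lets it off much more cheaply.
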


Given linear subspaces $V\subset W$, define
$$ W\ominus V:= W\cap V^\perp \;.$$

\begin{proposition} Given a signature $\tau$,
the map $\mathscr{F}^m_{\tau} \to \prod_{j=1}^k\Gr^m_{\tau_j-\tau_{j-1}}$,
$\underline{F}=(F_1,\ldots, F_k)\mapsto (F_{\tau_{j}}\ominus F_{\tau_{j-1}})_{1\leq j\leq k}$, is continuous.
\end{proposition}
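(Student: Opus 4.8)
The plan is to reformulate the operation $W\ominus V$ in terms of orthogonal projections, where it becomes manifestly continuous. For a subspace $E\subset\R^m$ of dimension $\ell$, write $P_E$ for the orthogonal projection of $\R^m$ onto $E$. The assignment $E\mapsto P_E$ is a homeomorphism of $\Gr^m_\ell$ onto the set $\mathscr{P}_\ell$ of rank-$\ell$ orthogonal projections (a closed subset of $\Herm(m)$), with continuous inverse $P\mapsto \mathrm{Im}(P)$; this is the standard identification of the Grassmannian with a space of projections, and it is compatible with the metric introduced above. Since the flag manifold $\FF^m_\tau$ sits inside $\Gr^m_{\tau_1}\times\cdots\times\Gr^m_{\tau_k}$ with the induced topology, each coordinate map $\underline{F}\mapsto F_j$ (where $F_j$ denotes the component of dimension $\tau_j$) is continuous, and hence so is $\underline{F}\mapsto P_{F_j}$.

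The key algebraic observation is that for nested subspaces $V\subset W$ the difference $P_W-P_V$ is again an orthogonal projection, namely the one onto $W\ominus V=W\cap V^\perp$. Indeed, $V\subset W$ gives $P_W P_V=P_V P_W=P_V$, so $(P_W-P_V)^2=P_W-P_V$, while $P_W-P_V$ is obviously self-adjoint; its image is $W\cap V^\perp$ by the orthogonal decomposition $W=V\oplus(W\cap V^\perp)$. Applying this with $V=F_{j-1}$ and $W=F_j$, using the convention $\tau_0=0$, $F_0:=\{0\}$, $P_{F_0}:=0$, we obtain that $P_{F_j}-P_{F_{j-1}}$ is the orthogonal projection onto $F_j\ominus F_{j-1}$, of rank $\tau_j-\tau_{j-1}$, which is \emph{constant} over $\FF^m_\tau$.

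Putting the pieces together: the map $\underline{F}\mapsto P_{F_j}-P_{F_{j-1}}$ is continuous from $\FF^m_\tau$ into $\mathscr{P}_{\tau_j-\tau_{j-1}}$, and post-composing with the continuous inverse identification $\mathscr{P}_{\tau_j-\tau_{j-1}}\to\Gr^m_{\tau_j-\tau_{j-1}}$, $P\mapsto\mathrm{Im}(P)$, shows that $\underline{F}\mapsto F_j\ominus F_{j-1}$ is continuous for each $j=1,\dots,k$. Taking the product over $j$ yields continuity of the map $\underline{F}\mapsto(F_{\tau_j}\ominus F_{\tau_{j-1}})_{1\le j\le k}$ into $\prod_{j=1}^k\Gr^m_{\tau_j-\tau_{j-1}}$, as claimed.

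The only point requiring care — and the reason the statement is not entirely trivial — is that neither the intersection of two subspaces nor passage to an orthogonal complement is a continuous operation on Grassmannians in general, since dimensions can jump. Continuity here is rescued by two features built into the flag setting: the \emph{nestedness} $F_{j-1}\subset F_j$, which is exactly what makes $P_{F_j}-P_{F_{j-1}}$ a genuine projection rather than merely a self-adjoint operator, and the fact that the relative dimensions $\tau_j-\tau_{j-1}$ are fixed, so that this projection has locally constant rank and its image varies continuously. The remaining verifications (that $E\mapsto P_E$ is a homeomorphism compatible with the chosen metric, and the idempotency computation) are routine.
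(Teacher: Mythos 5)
Your proof is correct, but it takes a genuinely different route from the paper's. The paper works directly with a metric characterization of the Grassmannian topology (two subspaces $W,W'$ are $\varepsilon$-close iff the restricted orthogonal projection $\pi_{W,W'}\colon W\to W'$ has minimum expansion $\geq 1-\varepsilon$) and then runs an explicit chain of Pythagorean estimates to bound $m(\pi_{W\ominus V,\,W'\ominus V'})$ from below by $\sqrt{1-4\varepsilon+2\varepsilon^2}$; this yields a quantitative, essentially Lipschitz, modulus of continuity at the cost of a page of computation. You instead exploit the operator identity $P_{W\ominus V}=P_W-P_V$ for nested subspaces $V\subset W$, reducing the whole statement to continuity of $E\mapsto P_E$ and the fact that a difference of continuous projection-valued maps is continuous; your correct remark that nestedness is exactly what makes $P_W-P_V$ idempotent, and that the fixed relative dimensions $\tau_j-\tau_{j-1}$ prevent rank jumps, isolates the real content. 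Your argument is shorter and more conceptual, and it too yields a Lipschitz estimate in the projection-operator metric via $\norm{(P_W-P_V)-(P_{W'}-P_{V'})}\leq\norm{P_W-P_{W'}}+\norm{P_V-P_{V'}}$, though translating that back into the paper's Pl\"ucker-angle metric would require the (routine, compactness-based) equivalence of metrics that you defer. Both proofs lean on an unproved but standard description of the Grassmannian topology — the paper on the minimum-expansion proximity, you on the homeomorphism with the space of rank-$\ell$ orthogonal projections — so neither is more self-contained than the other in that respect.
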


\begin{proof}
It is enough to prove the continuity of the map
$\mathscr{F}^m_{(k,n)}\to \Gr^m_{n-k}$,  $(V,W)\mapsto W\ominus V$.
We can easily check the topology of $\Gr^m_n$ is characterized
by the following proximity: two subspaces $W,W'\in\Gr^m_n$ are
$\varepsilon$-close iff the orthogonal projection $\pi_{W,W'}:W\to W'$
has minimum expansion $m(\pi_{W,W'})\geq 1-\varepsilon$.
Given two close flags   $(V,W)$ and $(V',W')$,
assume that $m(\pi_{W,W'})\geq 1-\varepsilon$ and $m(\pi_{V,V'})\geq 1-\varepsilon$.
Given $x\in V$,
\begin{align*}
\norm{\pi_{V,W'\ominus V'}(x)}^2 &= 
\norm{\pi_{V,W'}(x)}^2 - \norm{\pi_{V,V'}(x)}^2\\
&\leq \norm{x}^2-(1-\varepsilon)^2\norm{x}^2\\
&= (1- (1-\varepsilon)^2)\norm{x}^2 = (2\varepsilon-\varepsilon^ 2)\norm{x}^2\;. 
\end{align*}
Similarly we prove that $\norm{\pi_{V',W\ominus V}}\leq \sqrt{2\varepsilon-\varepsilon^ 2}$.
Now, given $x\in V'$ and $y\in  W\ominus V$, since
$$ \langle \pi_{V',W\ominus V}(x), y\rangle =  \langle x, y \rangle
=  \langle x, \pi_{W\ominus V, V'}(y)\rangle \;, $$
it follows that $\pi_{W\ominus V, V'} =   (\pi_{V',W\ominus V})^\ast$.
Hence 
$$\norm{ \pi_{W\ominus V, V'} }= \norm{ (\pi_{V',W\ominus V})^\ast }
= \norm{ \pi_{V',W\ominus V} }= \sqrt{2\varepsilon-\varepsilon^ 2}\;.$$
Finally, since  $m(\pi_{W,W'})\geq 1-\varepsilon$,
given $y\in  W\ominus V$,
\begin{align*}
(1-\varepsilon)^2\norm{y}^2 &\leq \norm{ \pi_{W,W'}(y) }^2\\
&=  \norm{ \pi_{W\ominus V,V'}(y) }^2 + \norm{ \pi_{W\ominus V,W'\ominus V'}(y) }^2\\
&\leq  (2\varepsilon-\varepsilon^2)\norm{y}^2 + \norm{ \pi_{W\ominus V,W'\ominus V'}(y) }^2\;,
\end{align*}
which implies that
$$ m\left( \pi_{W\ominus V,W'\ominus V'}\right)\geq \sqrt{1-4\varepsilon +2\,\varepsilon^2}\;.$$
Thus $W\ominus V$ is close to $W'\ominus V'$.
\end{proof}

\begin{proof}[Proof of Theorem~\ref{oseledets:cont}]
By the previous proposition it is enough proving the continuity of Oseledets $\tau$-filtration $\filt(A)$.
We already know, see Lemma~\ref{cont-filtr-finite-scale}, 
that at any fixed finite scale $n$, $\filtn{n}(A)$ is continuous.
We also know, from Lemma~\ref{rateconv-lemma}~(\ref{rateconv-flags}),
that $\filtn{n}(A)$ converges uniformly to $\filt(A)$.
Therefore, the limit function $\filt(A)$ is continuous.
\end{proof}

\begin{remark}\rm{
The H\"older continuity of the Oseledets $\tau$-filtration $\filt(A)$ may fail due to the lack of a uniform  H\"older exponent
for the H\"older continuity of the finite scale functions $\filtn{n}(A)$ (see remark~\ref{Hoder:Oseledets:remark}).
}
\end{remark}

\smallskip

\section{Some consequences of the main stattements}\label{consequences_section}
\subsection*{Possible extensions to other types of dynamics.} Establishing the LDT~\ref{LDT-thm} is the only place where we used the analyticity assumption on the cocycles. It is also the only place where we used the Diophantine condition on the frequency, or even where anything specific about the base dynamics (besides its ergodicity) was needed. 

Therefore, suppose we have a cocycle $(\transl, A)$, where $\transl$ is some ergodic transformation and $A$ belongs to some matrix valued space of functions (possibly more general than analytic). Assume that in this context we have a LDT of the form
\begin{equation*} 
\abs{ \{ x \in X \colon \abs{\frac{1}{n} \log s (\An{n} (x))  - \Lasn{n} (A) } > \delta (n) \} } < \ep (n)
\end{equation*}
where $\ep (n) \ll \delta (n)$ as $n \to \infty$.

Then the arguments  used in this paper for proving continuity of the Lyapunov exponents  would apply in the same manner. We would thus obtain global continuity of all Lyapunov exponents and local quantitative continuity of the Lyapunov spectrum blocks associated to a gap pattern, where the modulus of continuity would depend on the  sharpness of  $\ep (n)$. The next subsection presents such an extension.

\smallskip

\subsection*{Diophantine translations on higher dimensional tori.} 
Consider the transformation 
$$\transl \, \underline{x} = \transl_{\underline{\om}} \, \underline{x} := \underline{x} + \underline{\om} $$ 
on the torus $\T^d$ of dimension $ d \ge 1$.

If we assume that the frequency $\underline{\om}$ satisfies a standard Diophantine condition, 
then an estimate in the spirit of   
\eqref{quantB}, but {\em weaker} holds for the transformation $\transl_{\underline{\om}}$ as well (see  \cite{B-book} or Proposition 4.1 in \cite{sK2}). For any cocycle $A \in \cocyclesTd$, this in turn leads to a LDT of the form:
\begin{equation}\label{LDT-other-dyns}
\abs{ \{ \underline{x} \in \T^d \colon \abs{\frac{1}{n} \log s (\An{n} (\underline{x}))  - \Lasn{n} (A) } > n^{- a} \} } < e^{- c n^{b}}
\end{equation}
for some absolute constants $a, \, b > 0 $. 

Along with the Avalanche Principle~\ref{AP-practical}, this LDT~\eqref{LDT-other-dyns} allows us to establish the inductive step Theorem\ref{indstep-thm}. However, since the deviation set is not as sharp, the size of the {\em next} scale $n_1$ will have to be smaller, namely
$$n_1 \ll e^{ c n_0^b}$$
Because of this, the rate of convergence~\eqref{rateconv-c} will be of the form:
$$\abs{ \Lapn{n} (B) - \Lap (B) } < K \, \frac{(\log n)^{1/b}}{n} $$
which in turn will lead to a weaker, log-H\"{o}lder modulus of continuity for the $\taublockL$s, while the global continuity statement of each individual Lyapunov exponent will be the same.  We conclude:

\begin{theorem} \label{continuity-Lyap-Td}
Assume that the base dynamics is given by a Diophantine translation on $\T^d$, where $d \ge 1$. Then all Lyapunov exponents are continuous functions on $\cocyclesTd$. 

Moreover, if  $\tau$ is a signature and if  $A$ has a $\taugp$, then the corresponding $\taublockL$s are log - H\"older continuous functions in a neighborhood of $A$. 
\end{theorem}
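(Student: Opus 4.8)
The plan is to reuse, essentially verbatim, the entire machinery developed in Sections~\ref{AP_section}--\ref{proofs_section}, with the single modification of replacing the large deviation input \eqref{LDT1} by the weaker version \eqref{LDT-other-dyns} available for Diophantine translations on $\T^d$. Concretely, the first step is to record that the geometric Avalanche Principle (Theorem~\ref{Theorem:AP}, Proposition~\ref{AP-practical}) is purely linear-algebraic and so holds unchanged. The second step is to establish the multi-frequency analogue of the LDT Theorem~\ref{LDT-thm}: one invokes the quantitative Birkhoff theorem for Diophantine translations on $\T^d$ acting on pluri-subharmonic functions (as in \cite{B-book} or \cite{sK2}), combined with the same almost-invariance observation $\abs{\usn{n}(\transl\underline{x})-\usn{n}(\underline{x})}\le C/n$ used in the proof of Theorem~\ref{LDT-thm}; this yields \eqref{LDT-other-dyns} with absolute exponents $a,b>0$ depending only on $d$.

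With \eqref{LDT-other-dyns} in hand, the third step is to re-run the inductive step Theorem~\ref{indstep-thm}. The only place the sharpness of the deviation set enters is the bound $n_1\le n_0\,e^{\deltabar n_0}$ on how far one may push the scale: here the admissible $\deltabar$ must satisfy $n_0^{-(1-b)}\lesssim\deltabar\lesssim c\,\delta^b$ roughly, so the next scale can only grow like $n_1\ll e^{c\,n_0^{\,b}}$ rather than exponentially in $n_0$. Feeding this into the proof of Lemma~\ref{uniformgaps-lemma} still gives uniformity of the gap pattern in a neighbourhood (the series $\sum_k \delta_k$ and $\sum_k n_k/n_{k+1}$ still converge, only more slowly), and into the proof of Lemma~\ref{rateconv-lemma} gives the modified rate
$$\abs{\Lapn{n}(B)-\Lap(B)}<K\,\frac{(\log n)^{1/b}}{n}\,.$$
The fourth step is to repeat the proof of Theorem~\ref{Holder-cont-thm} with this weaker rate: balancing the finite-scale H\"older estimate of Lemma~\ref{cont-finite-lemma} (error $\sqrt{h}$ at scale $n_0$ with $h\approx e^{-4Cn_0}$) against the convergence error $(\log n_1)^{1/b}/n_1$ with $n_1\approx e^{\deltabar n_0^{\,b}}$ produces a log-H\"older, rather than H\"older, modulus of continuity for the $\taublockL$s. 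Finally, the global continuity of every individual Lyapunov exponent follows exactly as in Theorem~\ref{cont-all-Lyap-thm}: one uses the continuity of $\Lam(\cdot)$ (still trivially true, being the integral of $\log\abs{\det A}$), the just-proved continuity of the $\tau$-blocks, the lower semicontinuity of the $\tau$-gaps (Corollary~\ref{lscgaps}, whose proof only used Lemma~\ref{uniformgaps-lemma}), the always-valid upper semicontinuity of $L_1$, and the abstract Lemma~\ref{abstract-cont-lemma}.

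The main obstacle is the second step: verifying that a quantitative Birkhoff ergodic theorem of the form \eqref{quantB} --- with a genuine power-law or stretched-exponential measure bound $e^{-cn^b}$ --- actually holds for pluri-subharmonic sample functions under a standard Diophantine condition on $\underline{\om}\in\T^d$, and checking that the class of functions $\upjn{n}(\underline{x})=\frac1n\log\norm{\wedge_j\An{n}(\underline{x})}$ to which we must apply it genuinely extends to a pluri-subharmonic (separately subharmonic in each variable) function on the polystrip $\strip_r$. Once that analytic input is secured with the stated exponents, everything else is a bookkeeping exercise of tracking the degraded scale growth through the already-written arguments; no genuinely new idea is needed beyond what is in Sections~\ref{ind-step-thm_section}--\ref{proofs_section}.
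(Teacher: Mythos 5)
Your proposal is correct and follows essentially the same route as the paper: the paper likewise keeps the Avalanche Principle and the entire inductive scheme intact, substitutes the weaker multi-frequency LDT \eqref{LDT-other-dyns} (citing \cite{B-book} and \cite{sK2} for the pluri-subharmonic quantitative Birkhoff input you flag as the main analytic obstacle), restricts the scale growth to $n_1\ll e^{c\,n_0^{b}}$, derives the degraded rate $K\,(\log n)^{1/b}/n$, and concludes log-H\"older continuity of the $\tau$-blocks plus global continuity of the individual exponents exactly as in Theorem~\ref{cont-all-Lyap-thm}. No discrepancies.
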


The continuity of the Oseledets decomposition, resp. filtration,
also holds for cocycles on higher dimensional tori.  
The only difference  rests in proving the
finite scale Lemma~\ref{cont-filtr-finite-scale}. The proof of this
 lemma is based on a one dimensional fact:
the spectrum of an  analytic symmetric matrix valued function
of a one real variable can be locally parametrized by finitely many
 analytic functions. This difficulty is easily overcome
by looking at a cocycle $A:\T^d\to\GLmR$ as a compact family
$\{A(x,\cdot):\T\to\GLmR\}_{x\in\T^{d-1}}$ of one variable holomorphic functions.

\smallskip

\subsection*{Schr\"{o}dinger cocycles.} Our main results apply to and extend the statements in \cite{Schlag} which establish H\"{o}lder continuity in the ``energy''  parameter near points where the Lyapunov spectrum is simple. 

Indeed, consider as in \cite{Schlag} a compact metric space $(\E, \dist)$ and a continuous function $A = A (x, E) \colon \T \times \E \to \GLmR$ such that  $x \mapsto A (x, E)$ is analytic uniformly in $E \in \E$ and $E \mapsto A (x, E)$ is H\"{o}lder uniformly in $x \in \T$. 

Then there is $r > 0$ such that $A (\cdot, E) \in \cocycles$ (i.e. with the same width of analyticity for all $E \in \E$) and the map 
$$ \E \ni E \mapsto    A (\cdot, E) \in \cocycles$$
is H\"{o}lder  continuous.

Therefore, if $\tau$ is any signature (including for instance the signature $\tau = (1, 2, \ldots, m-1)$, which encodes simple Lyapunov spectrum) and if $E_0 \in \E$ is such that the cocycle $A (\cdot, E_0) \in \cocycles$ has a $\taugp$, then using Theorem~\ref{Holder-cont-thm} and the fact that a composition of (locally) H\"{o}lder  continuous functions  is also (locally) H\"{o}lder  continuous, we conclude that 
$$\Lap (E) := \Lap (A (\cdot, E))$$ is H\"{o}lder  continuous near $E = E_0$ for any $\tausvp$  $\pi$. In particular, if $\tau$ were the signature  $\tau = (1, 2, \ldots, m-1)$, then each Lyapunov exponent $L_i (E)$ would be H\"{o}lder  continuous near $E=E_0$, or everywhere on $\E$, if the Lyapunov spectrum were simple at each point $E \in \E$ (this last statement is due to the lower semicontinuity of the gaps shown in Corollary~\ref{lscgaps}, and on the compacity of $\E$.
Moreover, regardless of any gap pattern in the spectrum, from Theorem~\ref{cont-all-Lyap-thm} we get that for $1 \le i \le m$
$$\E \ni E \mapsto L_i (E) := L_i ( A(\cdot, E)) \in \R$$
are all continuous functions. 

\smallskip

We may specialize the cocycles above to ones associated to ``weighted" band lattice Schr\"odinger operators (also called Jacobi operators) .  This model includes all finite range hopping Schr\"odinger operators, on integer and on band-integer lattices. They act on the space of square summable sequences  $l^2 (\Z, \R^d)$ by:
\begin{equation*}
[H_\la (x) \, \vpsi]_n := - (W_{n+1} (x) \, \vpsi_{n+1} + W_n^T (x) \, \vpsi_{n-1}  + R_n (x) \, \vpsi_n)  + \la \, D_n (x) \,  \vpsi_n
\end{equation*}
where $x \in \T$ is a phase parameter,  $\la > 0$ is a coupling constant,  $\om \in \R \setminus \Q$ is a frequency defining the base dynamics, $W_n (x) := W (x + n \om)$ and $R_n (x) := R (x + n \om)$ are the weights encoding the hopping amplitude and $D_n (x) := D (x + n \om)$ is the potential matrix. We assume that the matrix-valued functions $W (x), R(x)$ and  $ D (x)$ defining the weights and the potential are analytic. Moreover, we denote by $W^T (x)$  the transpose of $W (x)$ and we assume that $R(x)$ and $D (x)$ are symmetric matrices, which ensure that the above operator is self-adjoint.

The associated Schr\"odinger equation
\begin{equation}\label{schr-eq}
H_\la (x) \, \vpsi = E \, \vpsi \quad \text{for } E \in \R \ \text{ and } \vpsi = [ \vpsi_n ]_{n \ge 1}
\end{equation}
gives rise to a family $A_{\la, E} (x)$ of linear cocyles indexed by the coupling constant $\la > 0$ and the energy $E \in \R$.   

In \cite{pDsK1} we have shown that under generic conditions on the matrix-valued functions $W (x)$ and $D (x)$,  and for all irrational frequencies $\om$, if $\la \ge \la_0 (R, W, D)$, then all Lyapunov exponents associated with the Schr\"odinger equation \eqref{schr-eq}
are of order $\log \la$.

\smallskip

Assuming that the matrix-valued function $W (x)$ is invertible for every $x \in \T$, and that the frequency $\om$ is Diophantine, the results in this paper imply continuity of the Lyapunov exponents as functions of $(\la, E)$ with no restrictions on these parameters. Moreover, they also imply  joint local H\"older continuity of the Lyapunov exponents (or of the Lyapunov spectrum blocks) near points $(\la, E)$ where the Lyapunov spectrum is simple (or it has a gap pattern, respectively). 
In a forthcoming paper we will give sufficient conditions for  such general Schr\"odinger cocycles to have simple Lyapunov spectrum (or any gap pattern).

\smallskip

\subsection*{Complex valued cocycles.} The {\em realification} of a matrix $g\in\GL(m,\C)$ 
is the matrix $\widetilde{g}\in\GL(2m,\R)$ obtained from $g$
replacing each complex entry $a+i\,b$ by the $2\times 2$-bock
$$ \left[\begin{array}{rr}
a & -b \\b & a 
\end{array}\right] \;.$$
By definition, the singular values of a complex matrix $g$
are the square roots of the conjugate positive definite hermitian matrices
$g^\ast g$ and $g g^ \ast$. The associated eigenspaces of $g^\ast g$ and $g g^ \ast$ are complex spaces. Hence each $g\in\GL(m,\C)$ has 
$m$ singular values, still denoted by $s_1(g)\geq \ldots \geq s_m(g)>0$,
 possibly repeated. It is clear that the realification $\widetilde{g}$ has exactly the same singular values but with doubled  multiplicity,
 i.e.
 $$ s_{2i-1}(\widetilde{g}) = s_{2i}(\widetilde{g}) =s_i(g),\;
 \text { for } \; 1\leq i\leq m\;. $$
 Now, given a complex cocycle $A\in \cocycle{\T}{m}{\C}$,
 we define its realification $\widetilde{A}\in \cocycle{\T}{2m}{\R}$
 to be the function $\widetilde{A}:\T\to\GL(2m,\R)$ where for each
 $x\in\T$, $\widetilde{A}(x)$ is the realification of $A(x)\in\GL(m,\C)$.
 Because of~(\ref{formula-lyap-i}), the Lyapunov exponents of $A$ and
 $\widetilde{A}$ are the same, i.e.,
 $$ L_{2i-1}(\widetilde{A}) = L_{2i}(\widetilde{A}) =L_i(A),\;
 \text { for } \; 1\leq i\leq m\;. $$
As before, a complex matrix $g\in\GL(m,\C)$ is said to have a 
{\em $\tau$-gap pattern} \, iff\, $s_{\tau_j}(g)/s_{\tau_j+1}(g)>1$,
for every $j=1,\ldots, k$. An analogous definition is adopted for
a complex cocycle having a $\tau$-gap pattern. 
Then a matrix $g\in\GL(m,\C)$, resp. a cocycle
$A\in\cocycle{\T}{m}{\C}$, has a $\tau$-gap pattern \, iff\, 
$\widetilde{g}$, resp. $\widetilde{A}$, has a $2\tau$-gap pattern.
 
Given a signature $\tau=(\tau_1,\ldots, \tau_k)$,
with $1\leq \tau_1<\tau_2<\ldots <\tau_k<m$,
a {\em complex $\tau$-flag} is a family
$\underline{F}=(F_1,\ldots, F_k)$ of complex linear subspaces
$F_1\subset F_2\subset \ldots \subset F_k\subset \C^m$ such that
${\rm dim}_\C(F_j)=\tau_j$, for $1\leq j\leq k$.
We shall denote by $\mathscr{F}^m_\tau(\C)$ the manifold
 of complex $\tau$-flags, and write $\mathscr{F}^m_\tau(\R)$
 to emphasize the real character of a flag manifold.
Identifying $\C^m\equiv\R^{2m}$,
each complex linear subspace $V\subset \C^m$ can be viewed as
a real  linear subspace $V\subset \R^{2m}$ of twice the dimension.
This identification induces a natural embedding
 $\mathscr{F}^m_\tau(\C) \hookrightarrow \mathscr{F}^{2m}_{2\tau}(\R)$.
 When  $\tau=(k)$, $\tau$-flags are $k$-dimensional complex subspaces
 and the flag manifold  $\mathscr{F}^m_\tau(\C)$ is called a
 {\em complex Grassmann manifold}, and denoted by $\Gr^m_k(\C)$.
Again there is a natural embedding
  $\Gr^m_k(\C) \hookrightarrow \Gr^{2m}_{2k}(\R)$.
The {\em most expanding $\tau$-flags}  $\hatv_{\tau,\pm}(g)\in\mathscr{F}^m_\tau(\C)$ of a complex matrix $g\in\GL(m,\C)$
are defined  as in the real case. They are $\tau$-flags
generated by the $g$ singular eigen-basis of $\C^m$.
Using the previous embedding we  identify
  $\hatv_{\tau,\pm}(g) \equiv \hatv_{2\tau,\pm}(\widetilde{g})$.
Having established this sort of  `dictionary' for the realification
of complex matrices and cocycles, it should be clear that theorems~\ref{main-thm1},~\ref{main-thm2} and~\ref{oseledets:continuity} apply to complex cocyles as well.
The assumptions on  the complex cocyles translate to the corresponding
assumptions on their realified counterparts.
 Hence we have:

 \begin{theorem} 
 All Lyapunov exponents are continuous functions on $\cocycle{\T}{m}{\C}$.
 
Given a cocycle  
$A \in \cocycle{\T}{m}{\C}$ and a signature $\tau$,
if $A$ has a $\taugp$, then the corresponding $\taublockL$s are H\"older continuous functions in a neighborhood of $A$.

Moreover, the Oseledets $\tau$-decomposition and the 
Oseledets $\tau$-filtration are continuous functions on
the open set of cocycles in $\cocycle{\T}{m}{\C}$ with a $\tau$-gap pattern.
\end{theorem}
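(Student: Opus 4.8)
The plan is to derive this theorem from its real-valued counterparts by means of the realification $A\mapsto\widetilde{A}$, using the dictionary assembled just above the statement. The first thing I would record is that realification is a continuous linear injection $\cocycle{\T}{m}{\C}\hookrightarrow\cocycle{\T}{2m}{\R}$: the realification formula on matrix entries is $\R$-linear, it carries $\cocycle{\T}{m}{\C}$ into $\cocycle{\T}{2m}{\R}$ (the holomorphic extension to $\strip_r$ is preserved), and since for real arguments the singular values of $\widetilde{g}$ are those of $g$ with multiplicities doubled, while a routine estimate on $\strip_r$ gives two-sided bounds, one obtains $\normr{\widetilde{A}-\widetilde{B}}\asymp\normr{A-B}$; in particular realification is bi-Lipschitz onto its image. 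I would also note that the natural embeddings $\Gr^m_k(\C)\hookrightarrow\Gr^{2m}_{2k}(\R)$ and $\FF^m_\tau(\C)\hookrightarrow\FF^{2m}_{2\tau}(\R)$ are smooth embeddings of compact manifolds, hence bi-Lipschitz onto their images for the (smooth) metrics normalized as in Section~\ref{defs-nots_section}, so that the $L^1$-metrics on the associated spaces of measurable sections transfer with comparable constants. Finally I would fix the elementary dictionary: $L_i(A)=L_{2i}(\widetilde{A})$ for $1\le i\le m$; $A$ has a $\taugp$ iff $\widetilde{A}$ has a $2\tau$-gap pattern; $\hatv_{\tau,\pm}(g)\equiv\hatv_{2\tau,\pm}(\widetilde{g})$; and, pointwise, the Oseledets $\tau$-decomposition, resp. $\tau$-filtration, of $A$ is carried by these embeddings into the Oseledets $2\tau$-decomposition, resp. $2\tau$-filtration, of $\widetilde{A}$.

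With this in place the three assertions follow by composition with the real theorems. For the first, $L_i(\cdot)$ on $\cocycle{\T}{m}{\C}$ is the composition of the continuous map $A\mapsto\widetilde{A}$ with $\widetilde{A}\mapsto L_{2i}(\widetilde{A})$, which is continuous on $\cocycle{\T}{2m}{\R}$ by Theorem~\ref{cont-all-Lyap-thm}. For the second, if $A$ has a $\taugp$ then $\widetilde{A}$ has a $2\tau$-gap pattern, and the $j$-th $\taublockL$ of $A$ equals one half of the $j$-th Lyapunov spectrum $2\tau$-block of $\widetilde{A}$, since each $L_i(A)$ splits as $\tfrac12(L_{2i-1}(\widetilde{A})+L_{2i}(\widetilde{A}))$; by Theorem~\ref{Holder-cont-thm} the latter is H\"older continuous in a neighborhood of $\widetilde{A}$ in $\cocycle{\T}{2m}{\R}$, and composing with the Lipschitz map $A\mapsto\widetilde{A}$ gives H\"older continuity of the $\tau$-blocks of $A$ near $A$ (a composition of locally H\"older maps being locally H\"older).

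For the third assertion, the set of complex cocycles with a $\taugp$ is the preimage under the continuous map $A\mapsto\widetilde{A}$ of the open set of real cocycles with a $2\tau$-gap pattern, hence open (Corollary~\ref{lscgaps}). On this set, the Oseledets $\tau$-decomposition (resp. $\tau$-filtration) of $A$, viewed through the pointwise embedding $\FF^m_\tau(\C)\hookrightarrow\FF^{2m}_{2\tau}(\R)$ (resp. its Grassmannian version), coincides with the Oseledets $2\tau$-decomposition (resp. $2\tau$-filtration) of $\widetilde{A}$. Theorem~\ref{oseledets:cont} gives continuity of the latter as a function of $\widetilde{A}$, the map $A\mapsto\widetilde{A}$ is continuous, and the bi-Lipschitz equivalence of the $L^1$-metrics recorded above lets the continuity descend to the complex setting; this yields continuity of both Oseledets objects as functions of $A\in\cocycle{\T}{m}{\C}$.

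The only genuinely new point, and the one I would treat with some care, is the claim that realification maps $\cocycle{\T}{m}{\C}$ into $\cocycle{\T}{2m}{\R}$ with comparable norms: one must verify that passing to real and imaginary parts of the matrix entries does not shrink the strip of analyticity and that the resulting sup-norms over $\strip_r$ are controlled two-sidedly by $\normr{A}$. This is elementary — for instance, after a fixed unitary conjugation $\widetilde{A}(z)$ is block-diagonal with blocks $A(z)$ and its formal conjugate, so that $\normr{\widetilde{A}}$ is comparable to $\normr{A}$ with a constant depending only on $r$ — but it is the step where one must be attentive, because the realification operation is $\R$-linear but not $\C$-linear. Everything else is bookkeeping with the dictionary set up just before the statement.
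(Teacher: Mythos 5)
Your proof is correct and follows essentially the same route as the paper, which derives the complex case from Theorems~\ref{main-thm1}, \ref{main-thm2} and~\ref{oseledets:continuity} via the realification dictionary ($L_i(A)=L_{2i}(\widetilde{A})$, $\tau$-gap pattern of $A$ $\Leftrightarrow$ $2\tau$-gap pattern of $\widetilde{A}$, and the embeddings of complex flag manifolds into real ones). You are somewhat more explicit than the paper about the points it leaves implicit --- that $A\mapsto\widetilde{A}$ is bi-Lipschitz between the Banach manifolds of analytic cocycles and that the $\tau$-blocks of $A$ are half the $2\tau$-blocks of $\widetilde{A}$ --- but these are exactly the checks the paper's ``it should be clear'' is gesturing at.
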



\section{Final remarks}\label{final_section}
There are several different directions in which the results in this paper might be extended.  

To our knowledge, any {\em quantitative} continuity result for Lyapunov exponents of quasiperiodic cocycles requires some arithmetic assumptions on the frequency. Here we assume a Diophantine condition.  In a recent paper (see \cite{YouZhang-Holder}), J. You and S. Zhang obtain H\"{o}lder continuity of $\SL (2, \R)$ Schr\"{o}dinger cocycles under much weaker arithmetic assumptions on the frequency (the base dynamics is the translation on $\T$). It is conceivable that their argument can be abstracted and used to prove the LDT~\ref{LDT-thm}  for more general frequencies than Diophantine.

For $\SL (2, \R)$ Schr\"{o}dinger cocycles, the approach to proving continuity and positivity of the Lyapunov exponent via an AP and a LDT has proven robust enough to work for potential functions more general than analytic. More specifically (see \cite{sK2}), by  using successive polynomial approximations adapted to each scale, these types of results were extended to potential functions in a Gevrey class. This approach might prove successful for general, higher dimensional cocycles as well.

The arguments  in this paper depend on having uniform measurements on the size of the cocycle and of its inverse. It would be interesting to see whether with the help of a finer analysis, anything survives  when the cocycle is allowed to have singularities (i.e. it is not necessarily invertible at every point).

Throughout this paper, the base dynamics is fixed and given by a Diophantine frequency, and we are only concerned with (quantitative) continuity in the cocycle. An important related problem is that of continuity for any irrational frequency as well as joint continuity in the frequency and cocycle.

These last two problems (more general cocycles, joint continuity) were elegantly treated in \cite{AJS} for base dynamics given by translations on the {\em one-dimensional} torus $\T$. The method in \cite{AJS} is based on the complexification of the phase variable, and it does  not have an extension to other types of dynamics, such as translations on higher dimensional tori. However, in the case of  $\SL (2, \R)$ Schr\"{o}dinger cocycles, this type of result is available for translations on the higher dimensional torus (see \cite{B-contpos-Td}), which encourages us to believe they might hold for general, higher dimensional cocycles as well.

Moreover, it would be interesting to adapt the techniques in this paper to other types of base dynamics, such as the skew-translation on the two dimensional torus or possibly some {\em non} quasiperiodic transformations.  

\smallskip

We plan to investigate all of these directions in our future projects.


\section*{Acknowledgments}

The second author would like to thank Christian Sadel for a conversation they had regarding his own version of a higher dimensional Avalanche Principle,  as well as for sharing an earlier draft of \cite{AJS}. 

Both authors were partially supported by Funda\c c\~ao para a Ci\^encia e a Tecnologia through the Program POCI 2010 and the Project ``Randomness in Deterministic Dynamical Systems and Applications'' (PTDC-MAT-105448-2008).  

\nocite{*}


\begin{thebibliography}{10}


\bibitem{AJS}
A.~\'{A}vila, S.~Jitomirskaya, and C.~Sadel, \emph{Complex one-frequency
  cocycles}, preprint (2013), 1--15.

\bibitem{BV}
C.~Bocker-Neto and M.~Viana, \emph{Continuity of {L}yapunov exponents for
  random 2d matrices}, preprint (2010), 1--38.

\bibitem{B-book}
J.~Bourgain, \emph{Green's function estimates for lattice {S}chr\"odinger
  operators and applications}, Annals of Mathematics Studies, vol. 158,
  Princeton University Press, Princeton, NJ, 2005. 

\bibitem{B-contpos-Td}
\bysame, \emph{Positivity and continuity of the {L}yapounov exponent for shifts
  on {$\Bbb T^d$} with arbitrary frequency vector and real analytic potential},
  J. Anal. Math. \textbf{96} (2005), 313--355. 

\bibitem{BJ-jointcont}
J.~Bourgain and S.~Jitomirskaya, \emph{Continuity of the {L}yapunov exponent
  for quasiperiodic operators with analytic potential}, J. Statist. Phys.
  \textbf{108} (2002), no.~5-6, 1203--1218, Dedicated to David Ruelle and Yasha
  Sinai on the occasion of their 65th birthdays.

\bibitem{pDsK1}
Pedro Duarte and Silvius Klein, \emph{Positive {L}yapunov exponents for higher
  dimensional quasiperiodic cocycles}, preprint (2012), 1--33.

\bibitem{GS-Holder}
Michael Goldstein and Wilhelm Schlag, \emph{H\"older continuity of the
  integrated density of states for quasi-periodic {S}chr\"odinger equations and
  averages of shifts of subharmonic functions}, Ann. of Math. (2) \textbf{154}
  (2001), no.~1, 155--203. 

\bibitem{JMarx-CMP12}
S.~Jitomirskaya and C.~A. Marx, \emph{Analytic quasi-perodic cocycles with
  singularities and the {L}yapunov exponent of extended {H}arper's model},
  Comm. Math. Phys. \textbf{316} (2012), no.~1, 237--267. 

\bibitem{Kato-Book}
Tosio Kato, \emph{Perturbation theory for linear operators}, Die Grundlehren
  der mathematischen Wissenschaften, Band 132, Springer-Verlag New York, Inc.,
  New York, 1966. 

\bibitem{sK2}
Silvius Klein, \emph{Localization for quasiperiodic {S}chr\"{o}dinger operators
  with multivariable {G}evrey potential functions}, preprint (2013), 1--42.
  
\bibitem{lePage}
Émile Le Page, \emph{R\'egularit\'e du plus grand exposant caract\'eristique des produits de matrices al\'eatoires ind\'ependantes et applications.}, Ann. Inst. Henri Poincaré (B)  25.2 (1989), 109--142.  

\bibitem{Oseledec}
V.~I. Oseledec, \emph{A multiplicative ergodic theorem. {C}haracteristic
  {L}japunov, exponents of dynamical systems}, Trudy Moskov. Mat. Ob\v s\v c.
  \textbf{19} (1968), 179--210. 




\bibitem{Ruelle}
David Ruelle, \emph{Ergodic theory of differentiable dynamical systems}, Inst.
  Hautes \'Etudes Sci. Publ. Math. (1979), no.~50, 27--58. 
  
\bibitem{Schlag}
Wilhelm Schlag, \emph{Regularity and convergence rates for the {L}yapunov
  exponents of linear co-cycles}, preprint (2012), 1--21.

\bibitem{YouZhang-Holder}
J.~You and S~Zhang, \emph{{H}\"{o}lder continuity of the {L}yapunov exponent
  for analytic quasiperiodic {S}chr\"{o}dinger cocycle with weak {L}iouville
  frequency}, Ergodic Theory Dynam. Systems (2013), 1--14.

 

\end{thebibliography}
\end{document}